\newtheorem{theorem}{Theorem}[section]
\newtheorem{lemma}[theorem]{Lemma}
\newtheorem{theoremAlph}{Theorem}
\newtheorem{question}[theorem]{Question}
\newtheorem{proposition}[theorem]{Proposition}
\newtheorem{corollary}[theorem]{Corollary}
\theoremstyle{definition}
\newtheorem{definition}[theorem]{Definition}
\newtheorem*{definition-nono}{Definition}
\newtheorem{remark}[theorem]{Remark}
\newtheorem*{acknowledgement}{Acknowledgements}
\newtheoremstyle{case}{}{}{}{}{}{:}{ }{}
\theoremstyle{case}
\newtheorem{case}{\textbf{Case}}
\newcommand{\N}{\mathbb{N}}
\newcommand{\Z}{\mathbb{Z}}
\newcommand{\Q}{\mathbb{Q}}
\newcommand{\R}{\mathbb{R}}
\newcommand{\C}{\mathbb{C}}
\newcommand{\mc}{\mathcal}
\newcommand{\mbf}{\mathbf}
\newcommand{\mrm}{\mathrm}
\newcommand{\msc}{\mathscr}
\renewcommand{\a}{\alpha}
\renewcommand{\b}{\beta}
\newcommand{\g}{\gamma}
\newcommand{\G}{\Gamma}
\renewcommand{\d}{\delta}
\newcommand{\e}{\varepsilon}
\renewcommand{\L}{\Lambda}
\newcommand{\w}{\omega}
\newcommand{\s}{\sigma}
\newcommand{\vp}{\varphi}
\renewcommand{\t}{\tau}
\renewcommand{\th}{\theta}
\renewcommand{\k}{\kappa}
\newcommand{\set}[1]{\left\{#1\right\}}
\renewcommand{\r}{\rightarrow}
\def\multiset#1#2{\ensuremath{\left(\kern-.3em\left(\genfrac{}{}{0pt}{}{#1}{#2}\right)\kern-.3em\right)}}
\newcommand{\norm}[1]{\left\lVert#1\right\rVert}
\newcommand{\bx}{\mathbf{x}}
\newcommand{\Lcal}{\mc{L}}
\newcommand{\Kcal}{\mc{K}}
\newcommand{\Fcal}{\mc{F}}
\newcommand{\Pcal}{\mc{P}}
\newcommand{\diam}[1]{\mrm{diam}\left(#1\right)}
\newcommand{\Fsc}{\msc{F}}
\numberwithin{equation}{section}
\title[Singular Vectors on Fractals]{Singular Vectors on Fractals and Projections of Self-similar Measures}
\author{Osama Khalil}
\address{Department of Mathematics, Ohio State University, Columbus, OH}
\email{khalil.37@osu.edu}
\subjclass[2010]{11J13, 11J83, 28A80, 37A17}
\keywords{singular vectors, fractals, Hausdorff dimension, translation surfaces}
\date{}
\begin{document}

\begin{abstract}

    Singular vectors are those for which the quality of rational approximations provided by Dirichlet's Theorem can be improved by arbitrarily small multiplicative constants. We provide an upper bound on the Hausdorff dimension of singular vectors lying on self-similar fractals in $\R^d$ satisfying the open set condition. The bound is in terms of quantities which are closely tied to Frostman exponents of projections of the Hausdorff measure supported on the fractal.
    Our bound is optimal in the sense that it agrees with the exact dimension of singular vectors obtained by Cheung and Chevallier when the fractal is trivial (i.e. has non-empty interior).
    As a corollary, we show that if the fractal is the product of $2$ copies of Cantor's middle thirds set or the attractor of a planar homogeneous irrational IFS, then the upper bound is $2/3$ the dimension of the fractal.
    This addresses the upper bound part of a question raised by Bugeaud, Cheung and Chevallier.
    We apply our method in the setting of translation flows on flat surfaces to show that the dimension of non-uniquely ergodic directions belonging to a fractal is at most $1/2$ the dimension of the fractal.
    
\end{abstract}

\maketitle

{
\newcommand{\hK}{\hat{\Kcal}}
\newcommand{\hP}{\hat{\Pcal}}
\newcommand{\Kw}{\Kcal_\w}
\newcommand{\sing}{\mrm{Sing}}
\renewcommand{\l}{\lambda}
\newcommand{\Ccal}{\mc{C}}

    \section{Introduction}

\subsection{Statement of results}
    The goal of this article is to study the dimension of singular vectors on fractals.
    The notion of singular vectors is motivated by Dirichlet's theorem. It states that for every vector $\bx\in\R^d$ and for every $N\in \N$, the inequalities 
    \begin{equation}\label{eq: Dir ineq}
    		\norm{ \mathbf{q}\cdot\bx +p   } \leqslant \e/N, \qquad
            \norm{\mathbf{q}} \leqslant  N^{1/d},
    \end{equation}
    admit a non-zero solution $(p,\mathbf{q}) \in \Z\times \Z^{d}$ when $\e=1$.
    A vector $\bx\in \R^d$ is then said to be \textbf{singular} if for every $0<\e<1$, there exists $N_0 \in \N$ so that the inequalities in~\eqref{eq: Dir ineq} admit a non-zero integral solution for all $N \geqslant N_0$.
    We will denote by $\mrm{Sing}(d)$ the set of singular vectors in $\R^d$.
    It is well-known~\cite{DavenportSchmidt,KleinbockWeiss-Dirichlet} that the Lebesgue measure of $\sing(d)$ is $0$. 
    
    In a remarkable article, the Hausdorff dimension of $\mrm{Sing}(2)$ was determined by Cheung~\cite{Cheung-SingularPairs}. This result was extended by Cheung and Chevallier in~\cite{CheungChevallier} to higher dimensions. They showed that for $d\geq 2$,
    \begin{equation}\label{eq: cheung}
        \dim_H (\mrm{Sing}(d)) = \frac{d^2}{d+1},
    \end{equation}
    where $\dim_H$ denotes Hausdorff dimension.
    More recently, a sharp upper bound on the dimension of singular $(m\times n)$-matrices was found in~\cite{KKLM} and the exact lower bound was later determined in~\cite{DasFishmanSimmonsUrbanski}. 
    On the other hand, the study of Diophantine properties of fractals has attracted a lot of interest in recent years, begining with the work of Kleinbock, Lindenstrauss, and Weiss in~\cite{KleinbockLindenstraussWeiss}.
    This study is motivated by Sprind\v{z}uk's conjecture, resolved in~\cite{KleinbockMargulis}, and concerns finding optimal conditions on measures and subsets of $\R^d$ under which they inherit the Diophantine properties of the ambient space.
    Subsequently, Bugeaud, Cheung, and Chevallier raised the natural question of determining the Hausdorff dimension of singular vectors on fractals in $\R^d$.
    \begin{question}
    [Problem 6,~\cite{BugeaudCheung}] \label{question: BCC}
    What is the dimension of the set of vectors in $\mrm{Sing}(2)$ whose coordinates belong to Cantor's middle thirds set?
    \end{question}

    In this article, we give an upper bound on the dimension of singular vectors belonging to a large class of fractals which arise as the limit sets of iterated function systems (IFS) of contractive similarities on $\R^d$. This class includes such familiar examples as products of the same Cantor set, Koch snowflakes, Sierpi\'nski gaskets, etc.
    Our result is new even in the setting of Question~\ref{question: BCC}.
    We refer the reader to Section~\ref{sec: prelim} for detailed definitions.
    For prior work on Diophantine properties of fractals, see Section~\ref{section: related}.

    To state the result, we need some notation.
    For $1\leq k\leq d$, we denote by $\mc{A}(d,k)$ the collection of all affine subspaces of $\R^d$ of dimension $k$.
    For $\mc{L}\in \mc{A}(d,k)$ and $\e>0$, we use $\mc{L}^{(\e)}$ to denote the open $\e$-neighborhood of $\mc{L}$ in the Euclidean metric.
    Given a compactly supported Borel measure $\mu$ on $\R^d$ and $1\leq \ell\leq d$, we define $\a_\ell(\mu)$ by
    \begin{equation}\label{eqn: proj spec}
        \a_\ell(\mu) =  \liminf_{\e\r 0} \frac{\log \sup_{\mc{L} \in\mc{A}(d, d-\ell)}
        \mu \left(\mc{L}^{(\e)}\right) }{\log \e }.
    \end{equation}
    The quantities $\a_\ell(\mu)$ quantify the concentration of the support of $\mu$ near proper subspaces: the smaller $\a_\ell(\mu)$ is, the more concentrated its support is near affine subspaces of dimension $d-\ell$. On the other hand, proper rational affine subspaces of $\R^d$ are contained in $\sing(d)$.
    Our first main result shows that one can provide an upper bound on the dimension of singular vectors on fractals in terms of these quantities. It is a special case of Theorem~\ref{main dynamics thm} below.
    \begin{theoremAlph}\label{main thm}
    Suppose $\Fcal$ is an IFS consisting of contractive similarities on $\R^d$ and satisfying the open set condition. Let $\mc{K}$ be the limit set of $\Fcal$ and $\mu$ be the restriction of the $s$-dimensional Hausdorff measure to $\Kcal$, where $s = \dim_H(\Kcal)$.
    Then,
    \[ 
        \mrm{dim}_H \left( \mrm{Sing}(d) \cap \Kcal  \right) \leqslant
        s - \min_{1\leq \ell \leq d} \frac{(d-\ell+1)\a_\ell(\mu)}{d+1}.
    \]
    
    \end{theoremAlph}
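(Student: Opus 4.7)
The plan is to reduce the problem via the Dani correspondence to a divergence statement for a diagonal flow on the space of unimodular lattices, and then combine the self-similar structure of $\Kcal$ with a quantitative non-divergence estimate for $\mu$, in the spirit of Kleinbock--Margulis and Kadyrov--Kleinbock--Lindenstrauss--Margulis. By the Dani correspondence, $\mathbf{x}\in \sing(d)$ iff the orbit $\{a_t u(\mathbf{x})\Z^{d+1}: t\geq 0\}$ in $X_{d+1}=\mrm{SL}(d+1,\R)/\mrm{SL}(d+1,\Z)$ is divergent, where $a_t=\mrm{diag}(e^{t/d},\ldots,e^{t/d},e^{-t})$ and $u(\mathbf{x})$ is the upper-unitriangular matrix whose last column is $\binom{\mathbf{x}}{1}$. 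It therefore suffices to bound the Hausdorff dimension of those $\mathbf{x}\in\Kcal$ whose $a_t$-orbit diverges.

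Next, I would introduce a Margulis-type height function $\psi:X_{d+1}\to [1,\infty)$ built from exterior-power norms. For each $1\leq\ell\leq d$, set $\psi_\ell(\L) = \sup \mrm{covol}(\L')^{-\k_\ell}$ where the supremum runs over primitive $\ell$-dimensional sublattices $\L'\leq \L$, and let $\psi=\max_\ell \psi_\ell$. The exponents $\k_\ell$ are tuned so that $\psi$ contracts uniformly under $a_t$; they will ultimately produce the factor $(d-\ell+1)/(d+1)$ appearing in the theorem. Divergence of the orbit in $X_{d+1}$ is then equivalent to $\psi(a_tu(\mathbf{x})\Z^{d+1})\to\infty$ as $t\to\infty$.

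The core of the proof is a quantitative non-divergence estimate: for every cylinder $\Kw$ of $\Fcal$ and every suitably matched pair $(t,R)$,
\[
    \mu\bigl(\{\mathbf{x}\in \Kw: \psi(a_tu(\mathbf{x})\Z^{d+1})>R\}\bigr) \leqslant C R^{-\eta}\,\mu(\Kw),
\]
for an exponent $\eta = \eta(\a_1(\mu),\ldots,\a_d(\mu))>0$. To prove this, fix $\ell$ and a primitive $\ell$-sublattice $\L'\leq \Z^{d+1}$ with associated decomposable wedge $\mathbf{v}\in\wedge^\ell\Z^{d+1}$; writing the action of $u(\mathbf{x})$ on $\mathbf{v}$ explicitly in coordinates and using that $a_t$ is diagonal on the standard basis of $\wedge^\ell\R^{d+1}$, the set $\{\mathbf{x}\in\R^d: \|a_tu(\mathbf{x})\mathbf{v}\|\leqslant \e\}$ is contained in an $\e'$-neighborhood of a well-defined affine subspace $\Lcal_{\L'}\in\mc{A}(d,d-\ell)$, with $\e'$ depending polynomially on $\e$, $t$, and $\L'$. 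The definition~\eqref{eqn: proj spec} of $\a_\ell(\mu)$ then controls the $\mu$-mass of each such neighborhood by $\lesssim (\e')^{\a_\ell(\mu)}$. Summing over the boundedly many relevant $\L'$ contributing at a given $(t,R)$, and applying self-similarity via the open set condition to transfer the bound to the cylinder scale $\Kw$, yields the estimate.

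A standard covering argument then converts the non-divergence estimate into the dimension bound: at each large time $t$, cover $\sing(d)\cap\Kcal$ by those cylinders $\Kw$ of generation matched to $t$ on which $\psi(a_tu(\cdot)\Z^{d+1})>R=R(t)$; the non-divergence estimate limits the number of such cylinders, and the geometric sum over scales yields $\dim_H(\sing(d)\cap\Kcal)\leqslant s-\eta$ with $\eta = \min_\ell (d-\ell+1)\a_\ell(\mu)/(d+1)$. The main obstacle will be the non-divergence estimate itself: extracting the precise weights $(d-\ell+1)/(d+1)$ requires careful bookkeeping of how $a_t$ acts on $\wedge^\ell\R^{d+1}$ (expansion at rate $e^{\ell t/d}$ on multi-indices avoiding the last coordinate versus the slower $e^{(\ell-1)t/d-t}$ when the last coordinate is involved), identification of the $(d-\ell)$-dimensional affine subspace of $\R^d$ implicit in each primitive rational $\ell$-sublattice of $\Z^{d+1}$, and verification that these two expansion rates combine with the $\a_\ell$-bound to yield exactly the claimed weights.
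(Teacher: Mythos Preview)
Your outline follows the same architecture as the paper (Dani correspondence, height functions from exterior powers, linear-algebra estimate producing the $\a_\ell(\mu)$, covering), but two steps are genuinely incomplete and would not go through as written.

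First, the phrase ``summing over the boundedly many relevant $\L'$ contributing at a given $(t,R)$'' hides the main difficulty in the non-divergence estimate. There is no a~priori bound on the number of primitive rank-$\ell$ sublattices of small covolume at a given point, so you cannot simply sum. The paper (following Eskin--Margulis--Mozes) handles this via the \emph{system} of integral inequalities: if two distinct rank-$\ell$ sublattices are both short, then their sum and intersection produce short sublattices of ranks $\ell\pm j$, and one obtains a recursive inequality for $\vp_\ell$ in terms of $\sqrt{\vp_{\ell+j}\vp_{\ell-j}}$ (Lemma~\ref{lemma: mini mother inequality}, Proposition~\ref{prop: combine complexities}). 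Only after combining all ranks with carefully chosen weights $\e^\ell$ does one get a single function satisfying a clean contraction (Theorem~\ref{thm: constraction in X}); your single-rank bound for $\psi_\ell$ alone does not close. Relatedly, the assertion that $\{\mbf{x}:\norm{a_tu(\mbf{x})\mbf{v}}\le\e\}$ lies in an $\e'$-neighborhood of a single $(d-\ell)$-plane is not automatic from ``writing the action explicitly'': each expanding coordinate $\langle u(\mbf{x})v,\mbf{e}_I\rangle$ defines an affine \emph{hyperplane}, and one must prove that $\ell$ of these are uniformly transverse (Proposition~\ref{prop: transversality}) in order to intersect down to codimension $\ell$ (Proposition~\ref{prop: transverse implies integrable}).

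Second, the ``standard covering argument'' matching cylinder generation to the time $t$ does not work when the contraction ratios $\rho_i$ are not all equal: cylinders of a fixed generation have diameters at different exponential scales, so there is no single generation to match with $t$. The paper explicitly flags this as a serious obstacle and develops a different scheme (Section~\ref{section: abstract setup}, Proposition~\ref{prop: H-measure bound}): rather than counting covers, it bounds the Hausdorff $(s-\g)$-sum directly by an integral of the height function weighted by the multiplicative cocycle $\rho(\mbf{x},k)^{-\g}$, and iterates a contraction inequality with a \emph{non-uniform} contraction factor (Definition~\ref{defn: height functions}(3)). If your IFS is homogeneous this subtlety disappears and your outline is essentially the KKLM strategy; for the general open-set-condition IFS you would need to supply this machinery.
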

    
    \begin{remark}
    It is known that in the setting of Theorem~\ref{main thm}, $\mu$ is finite and non-zero, and $\a_d(\mu)=s$ (Proposition~\ref{propn: properties of self similar measure}).
    When $\Fcal$ is irreducible, i.e. no proper affine subspace is invariant by all the maps in $\Fcal$, then $\a_\ell(\mu)>0$ for each $\ell$ (Corollary~\ref{cor: alpha >0}). 
    We also show in Proposition~\ref{prop: existence proj spec} that for self-similar measures, the $\liminf$ in the definition of $\a_\ell(\mu)$ is in fact a limit.
    \end{remark}
    
    \begin{remark}
    If $\mu$ is the Lebesgue measure on $\R^d$, then $\a_\ell(\mu) = \ell$. Moreover, one can realize the unit cube as the attractor of an IFS as in Theorem~\ref{main thm}. Since singular vectors are invariant by integer translations, Theorem~\ref{main thm} shows that $\sing(d)$ has dimension at most 
    $ d^2/(d+1)$, which agrees with the exact dimension obtained by Cheung and Chevallier.
    \end{remark}

    In the setting of Question~\ref{question: BCC}, Theorem~\ref{main thm} yields the following.
    \begin{corollary}\label{cantor prod cor}
    Suppose $\Kcal = \mc{C} \times \mc{C}$, where $\mc{C}\subset [0,1]$ is Cantor's middle thirds set. Then,
    \begin{equation*}
        \dim_H( \sing(2)\cap \Kcal) \leqslant \frac{2\dim_H(\Kcal)}{3} = \frac{4\log 2}{3\log 3}. 
    \end{equation*}
    \end{corollary}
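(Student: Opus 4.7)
The plan is to specialize Theorem~\ref{main thm} to $d = 2$ and $\Kcal = \Ccal\times\Ccal$. Writing $s = \dim_H(\Kcal) = 2\log 2/\log 3$ and letting $\mu$ be the self-similar measure in the statement, the minimum appearing in Theorem~\ref{main thm} is
\[
\min\!\left(\frac{2\a_1(\mu)}{3},\; \frac{\a_2(\mu)}{3}\right).
\]
Since $\a_2(\mu) = s$ by the remark following Theorem~\ref{main thm}, the $\ell = 2$ term equals $s/3$. The corollary therefore reduces to showing
\[
\a_1(\mu) \;\geqslant\; \frac{s}{2} \;=\; \frac{\log 2}{\log 3},
\]
after which $\dim_H(\sing(2)\cap\Kcal) \leqslant s - s/3 = 2s/3$.

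Unwinding the definition~\eqref{eqn: proj spec}, it suffices to produce a constant $C>0$ such that for every affine line $\mc{L}\subset\R^2$ and every small $\e>0$,
\[
\mu\!\left(\mc{L}^{(\e)}\right) \;\leqslant\; C\,\e^{\log 2/\log 3}.
\]
I would verify this on the ternary scale $\e = 3^{-n}$: decompose $\Kcal$ into its $4^n$ level-$n$ cells, each of diameter $\sqrt{2}\cdot 3^{-n}$ and $\mu$-mass $4^{-n}$. A cell meeting the strip $\mc{L}^{(\e)}$ is within distance $O(\e)$ of $\mc{L}$, so the estimate reduces to the combinatorial claim
\[
N_n(\mc{L}) \;:=\; \#\set{\text{level-}n \text{ cells of } \Kcal \text{ within distance } O(3^{-n}) \text{ of } \mc{L}} \;\leqslant\; C\cdot 2^n
\]
uniformly in $\mc{L}$; multiplying by $4^{-n}$ then yields $\mu(\mc{L}^{(\e)})\leqslant C\cdot 2^{-n} = C\e^{\log 2/\log 3}$.

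I would prove the combinatorial bound by a self-similar recursion. The four level-$1$ subcells of $\Kcal$ are the corner $1/3$-squares $[0,1/3]^2$, $[0,1/3]\times[2/3,1]$, $[2/3,1]\times[0,1/3]$, and $[2/3,1]^2$, separated by an empty ``middle cross'' of width $1/3$. A short case analysis, organized according to which edges of $[0,1]^2$ the chord $\mc{L}\cap[0,1]^2$ enters and exits, shows that any line meets at most two of these four subcells. Conjugating by the inverse similarities $F_i^{-1}$ (each of ratio $3$) turns $\mc{L}$ into another line and transforms $3^{-n}$-neighborhoods into $3^{-(n-1)}$-neighborhoods, producing the recursion $N_n(\mc{L}) \leqslant 2\max_{\mc{L}'} N_{n-1}(\mc{L}')$, valid once $3^{-n}$ is smaller than the $1/3$ separation between adjacent corner cells; absorbing the finitely many initial levels into the constant gives $N_n(\mc{L}) \leqslant C\cdot 2^n$.

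The main obstacle is the level-$1$ case analysis: a priori one might fear lines that graze three corner cells, but the $1/3$-wide empty cross in the middle (arising from the middle-thirds deletion in $\Ccal$) forces any chord of $[0,1]^2$ to pass through at most two corners, since entering a third would require a turn. Once this recursion and the resulting Frostman estimate $\a_1(\mu)\geqslant \log 2/\log 3$ are in place, the corollary follows at once from Theorem~\ref{main thm}.
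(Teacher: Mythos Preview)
Your overall strategy matches the paper's: apply Theorem~\ref{main thm} with $d=2$, note $\a_2(\mu)=s$, and reduce to $\a_1(\mu)\geq \log 2/\log 3$ via a count of level-$n$ cells that a line can hit. However, your level-$1$ case analysis has a genuine gap. The assertion that any line meets at most two of the four corner subcells is false: the lines $y = x \pm 1/3$ and $y = -x + 1 \pm 1/3$ each touch \emph{three} of the closed squares $h_v([0,1]^2)$, at their corners. For instance, $y=x+1/3$ passes through $(0,1/3)$, $(1/3,2/3)$, and $(2/3,1)$, which are corners of the bottom-left, top-left, and top-right cells respectively. Your heuristic ``entering a third would require a turn'' is valid for the \emph{open} cells, but your count $N_n(\Lcal)$ is a neighborhood count, so these grazing diagonals---and all their images $h_\w(\Lcal)$ under the IFS---violate the recursion $N_n\leq 2\sup N_{n-1}$ at the corresponding levels.

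The paper's proof (Lemma~\ref{lem: alpha_1 for Cantor product}) confronts exactly this. It first reduces the strip estimate $\mu(\Lcal^{(3^{-n})})$ to counting squares meeting a single line (by passing to $\Lcal$ together with the two parallel boundary lines of the strip), then isolates the four diagonals above and their IFS-images as \emph{exceptional} lines. Non-exceptional lines satisfy your recursion and give $\leq 2^n$ directly. For an exceptional line $\Lcal$, a short arithmetic argument (the $x$-intercept of $h_\w(\Lcal_i)$ has reduced denominator exactly $3^{|\w|+1}$) shows that $\Lcal=h_\w(\Lcal_i)$ only for words $\w$ of a single length $k$; hence the factor $3$ enters the recursion at precisely one level, and the total count is $\leq (3/2)\cdot 2^n$. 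This extra bookkeeping is the ingredient your sketch is missing.
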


    The quantities $\a_\ell(\mu)$ are closely tied to Frostman exponents of projections of $\mu$, cf.~\cite[Section 1.3]{Shmerkin} and Section~\ref{section: frostman intro} below.
    In a breakthrough article, these exponents were determined by Shmerkin for planar homogeneous irrational fractals in~\cite[Theorem 8.2]{Shmerkin}.
    This result, combined with Theorem~\ref{thm: alpha equal inf} below, allows us to evaluate the formula in Theorem~\ref{main thm} explicitly, yielding Corollary~\ref{homogeneous cor}.
    Recall that a planar IFS $\Fcal = \set{h_i:i\in\L}$ is \textbf{homogeneous} if each $h_i$ is of the form $\rho R_\a + b_i$ where $0<\rho<1$ and $R_\a$ is a rotation by angle $\a$, each of which is independent of $i$. We say $\Fcal$ is \textbf{irrational} if $\a \notin \Q\pi$.

    \begin{corollary} \label{homogeneous cor}
    Suppose $\Fcal$ is a homogeneous irrational IFS on $\R^2$ satisfying the open set condition and let $\Kcal$ be its limit set. Then,
    \begin{equation*}
        \dim_H\left(\sing(2)\cap \Kcal\right) \leqslant \frac{2\dim_H(\Kcal)}{3}.
    \end{equation*}
    \end{corollary}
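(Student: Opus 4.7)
My plan is to specialize Theorem~\ref{main thm} to $d=2$ and then invoke Shmerkin's projection theorem (through Theorem~\ref{thm: alpha equal inf}) to evaluate the resulting minimum. Writing $s = \dim_H(\Kcal)$ and letting $\mu$ be the restriction of $s$-dimensional Hausdorff measure to $\Kcal$, Theorem~\ref{main thm} in dimension $2$ yields
\[
\dim_H\!\left(\mrm{Sing}(2) \cap \Kcal\right) \;\leq\; s - \min\!\left\{\tfrac{2\alpha_1(\mu)}{3},\ \tfrac{\alpha_2(\mu)}{3}\right\}.
\]
Because $\alpha_2(\mu) = s$ by Proposition~\ref{propn: properties of self similar measure}, the $\ell=2$ term already equals $s/3$, so it suffices to show $\alpha_1(\mu) \geq s/2$. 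This forces the minimum to equal $s/3$ and produces the advertised bound $2s/3$.

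The key observation is that $\alpha_1(\mu)$ measures the worst-case decay of $\mu(\mathcal{L}^{(\varepsilon)})$ as $\mathcal{L}$ ranges over affine lines in $\R^2$. Such $\varepsilon$-neighborhoods are preimages, under orthogonal projection onto a direction perpendicular to $\mathcal{L}$, of intervals of length $2\varepsilon$. Thus $\alpha_1(\mu)$ is morally a Frostman-type exponent for the family of one-dimensional projections of $\mu$, uniform in the projection direction. Theorem~\ref{thm: alpha equal inf}, referenced in the excerpt, makes this correspondence precise in the self-similar setting.

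I would then apply~\cite[Theorem~8.2]{Shmerkin}, which in the homogeneous irrational planar setting asserts that every projection of $\mu$ to a one-dimensional subspace obeys a uniform power-law upper bound on the measure of intervals with exponent $\min(s,1)$. Combining this with Theorem~\ref{thm: alpha equal inf} gives $\alpha_1(\mu) \geq \min(s,1)$. Since $\Kcal \subset \R^2$ forces $s \leq 2$, a short case analysis (separating $s \leq 1$ and $1 < s \leq 2$) yields $\min(s,1) \geq s/2$. Consequently the minimum in Theorem~\ref{main thm} is attained at $\ell = 2$ and equals $s/3$, completing the argument.

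The only genuine input is Shmerkin's projection theorem; the rest is a careful chase through Theorem~\ref{main thm} and Theorem~\ref{thm: alpha equal inf}. The main subtlety I would need to verify is that Shmerkin's estimate is uniform enough in the direction of projection to control the supremum over affine lines in~\eqref{eqn: proj spec}, and that the translation between Frostman exponents of one-dimensional projections and $\alpha_1(\mu)$ provided by Theorem~\ref{thm: alpha equal inf} goes in the direction required, turning a projection lower bound into the desired lower bound $\alpha_1(\mu) \geq \min(s,1)$.
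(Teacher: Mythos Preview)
Your proposal is correct and follows the same route as the paper: apply Theorem~\ref{main thm} with $d=2$, use $\a_2(\mu)=s$, and invoke Shmerkin's result~\cite[Theorem~8.2]{Shmerkin} together with Theorem~\ref{thm: alpha equal inf} to obtain $\a_1(\mu)=\min(s,1)\geq s/2$, forcing the minimum to equal $s/3$. Your residual worries dissolve because Theorem~\ref{thm: alpha equal inf} gives the \emph{equality} $\a_1(\mu)=\inf_{\pi_\b}\dim_\infty((\pi_\b)_\ast\mu)$ and Shmerkin's theorem holds for \emph{every} direction, so no additional uniformity argument is needed.
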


    For more general self-similar fractals, when the co-dimension of $\Kcal$ is $<1$, it is possible to get an explicit, yet crude, estimate on $\a_\ell(\mu)$, yielding the following corollary.

    \begin{corollary}\label{small codim cor}
    Suppose $\Fcal$, $\Kcal$, and $\mu$ are as in Theorem~\ref{main thm}. Assume further that $s = \dim_H(\Kcal) > d-1$. Then, $\a_\ell(\mu)\geqslant s-d+\ell$ and, hence,
    \[ 
        \mrm{dim}_H \left( \mrm{Sing}(d) \cap \Kcal  \right) \leqslant
        s -  \frac{d(s-d+1)}{d+1}.
    \]
    \end{corollary}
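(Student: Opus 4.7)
The plan is to first establish the pointwise bound $\a_\ell(\mu) \geq s - d + \ell$ by combining Ahlfors regularity of $\mu$ with an elementary Euclidean covering, and then to substitute into the formula of Theorem~\ref{main thm} and optimize over $\ell$.

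For the lower bound on $\a_\ell(\mu)$, I would use the fact that the self-similar Hausdorff measure $\mu$ on an IFS attractor satisfying the open set condition is $s$-Ahlfors regular: there exists $C>0$ such that $\mu(B(x,\e)) \leq C\e^s$ for all $x\in \R^d$ and $\e\in(0,1)$. Fix $R>0$ with $\Kcal \subseteq B(0,R)$. Given any affine subspace $\Lcal \in \mc{A}(d, d-\ell)$, the set $\Lcal^{(\e)} \cap B(0,R)$ is contained in a tubular neighborhood of $\Lcal \cap B(0, R+\e)$ of thickness $\e$ in the $\ell$ orthogonal directions. Subdividing $\Lcal \cap B(0,R+\e)$ into pieces of diameter $\e$ produces a cover of $\Lcal^{(\e)} \cap B(0,R)$ by at most $C'\e^{-(d-\ell)}$ balls of radius $2\e$. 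Summing the Ahlfors estimate over this cover gives $\mu(\Lcal^{(\e)}) \lesssim \e^{-(d-\ell)}\cdot \e^s = \e^{s-d+\ell}$. Since the hypothesis $s > d-1$ ensures that $s-d+\ell > 0$ for every $1\leq\ell\leq d$, taking logarithms and dividing by $\log\e<0$ yields $\a_\ell(\mu) \geq s-d+\ell$, as claimed.

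For the dimension estimate, I would substitute these lower bounds into the conclusion of Theorem~\ref{main thm}. This reduces the statement to the elementary fact that
\[
\min_{1\leq \ell \leq d}(d-\ell+1)(s-d+\ell) = d(s-d+1).
\]
Writing $g(\ell) = (d-\ell+1)(s-d+\ell)$, one computes $g'(\ell) = 2d+1-s-2\ell$, so $g$ is concave in $\ell$ and achieves its maximum at $\ell^\star = (2d+1-s)/2$, which lies strictly between $(d+1)/2$ and $(d+2)/2$ under the hypothesis $d-1 < s \leq d$. The minimum over integer $\ell \in [1,d]$ is therefore attained at an endpoint, and the comparison $g(1) - g(d) = d(s-d+1) - s = (d-1)(s-d) \leq 0$ identifies $\ell=1$ as the minimizer, yielding the claimed bound.

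I do not anticipate a substantive obstacle: the content lies in Theorem~\ref{main thm}, while this corollary is an application involving only Ahlfors regularity, a standard covering estimate, and a one-variable optimization. The small-codimension hypothesis $s > d-1$ is used exactly to ensure $s-d+\ell > 0$ in the covering step and to make the comparison $(d-1)(s-d)\leq 0$ point the right way in the optimization step.
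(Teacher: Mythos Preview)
Your proof is correct and matches the paper's approach (Lemma~\ref{lem: estimate alpha small codim}): Ahlfors regularity of $\mu$ together with a covering of $\Lcal^{(\e)}\cap\Kcal$ by $O(\e^{-(d-\ell)})$ balls of radius $\e$ gives $\a_\ell(\mu)\geq s-d+\ell$, and then one substitutes into Theorem~\ref{main thm} and performs the elementary optimization you describe. One small correction: the endpoint comparison $(d-1)(s-d)\leq 0$ in your optimization uses $s\leq d$, not the hypothesis $s>d-1$; the latter is what makes each $s-d+\ell$ positive and hence the subtracted term $d(s-d+1)/(d+1)$ positive, so that the final bound is nontrivial.
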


    \begin{remark}
    Corollary~\ref{small codim cor} along with~\eqref{eq: cheung} show that $\dim_H(\sing(d)\cap\Kcal) < \dim_H(\sing(d))$, under the hypotheses of Theorem~\ref{main thm}, whenever $\dim_H(\Kcal)<d$.
    \end{remark}
    We emphasize that we do not expect the estimated upper bound in Corollary~\ref{small codim cor} to agree with the exact lower bound in general.
    However, we conjecture that the upper bound in Theorem~\ref{main thm} (and, in particular, Corollary~\ref{cantor prod cor}) is sharp when the fractal contains a dense set of rational vectors, cf. Question~\ref{question: minimal case}.

    It is worth noting that computing Frostman exponents of projections of self-affine (and in particular self-similar) measures is a rather delicate problem in general. For instance, when $\mu$ is the natural measure supported on the product of $2$ Cantor sets with multiplicatively independent dissection ratios, this problem constitutes the content of Furstenberg's well-known intersection conjecture, recently resolved by Shmerkin in~\cite{Shmerkin}, and independently by Wu in~\cite{MengWu-FurstenbergConj}.


    \subsection{Divergent orbits of the Teichm\"uller flow}
    Under Dani's correspondence, it is known that $\sing(d)$ corresponds to certain divergent orbits on the space of unimodular lattices in $\R^{d+1}$, see Theorem~\ref{main dynamics thm} below for details.
    In Section~\ref{sec: Teich}, we adapt our techniques to the closely related problem of divergent orbits of the Teichm\"uller geodesic flow.
    
    In what follows, we fix a stratum $\mc{H}$ of abelian differentials over a compact oriented surface (see Section~\ref{sec: teich defs} for definitions).
    Then, $\mc{H}$ admits a natural action by $\mrm{SL}(2,\R)$ and, in particular, by the following one parameter subgroups:
    \begin{equation}\label{eq: a_t r_th}
        a_t = \begin{pmatrix} e^t & 0 \\ 0 & e^{-t} \end{pmatrix}, \qquad
        	u(s) = \begin{pmatrix}
    		1 & s \\ 0 & 1
    	\end{pmatrix}, \qquad
        r_\th = \begin{pmatrix} \cos \th & \sin \th \\ -\sin \th & \cos\th \end{pmatrix}.
    \end{equation}
    The $a_t$ action induces the Teichm\"uller geodesic flow on $\mc{H}$.
    For $\w\in \mc{H}$, we say the orbit $(a_t \w)_{t\geqslant 0}$ is \emph{divergent on average}, if for every compact set $Q \subset \mc{H}$, one has
    \begin{equation} \label{defn: divergence on average}
    	\lim_{T\r\infty} \frac{1}{T} \int_0^T \chi_Q(a_t \w)\;dt =0,
    \end{equation}
    where $\chi_Q$ denotes the indicator function of $Q$.

    \begin{theoremAlph} \label{teich thm}
    Suppose $\Fcal$ is an IFS consisting of similarities on $\R$ satisfying the open set condition and let $\mc{K}$ be its limit set. 
    Then, for every $\w \in \mc{H}$, the Hausdorff dimension of the set of $s\in \Kcal$ such that the orbit $(a_t u(s)\w)_{t\geqslant 0}$  diverges on average in $\mc{H}$ is at most $\frac{\dim_H(\Kcal)}{2}$.
    \end{theoremAlph}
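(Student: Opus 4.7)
The plan is to follow the same strategy used for Theorem~\ref{main thm}, with the homogeneous flow on the space of lattices replaced by the Teichm\"uller geodesic flow on $\mc{H}$. Since the fractal $\Kcal$ lies in $\R$, one has $d=1$ and the only relevant projection exponent is $\a_1(\mu) = s := \dim_H(\Kcal)$ by the remark following Theorem~\ref{main thm}; substituting $d=\ell=1$ and $\a_1(\mu)=s$ into the bound of Theorem~\ref{main thm} yields exactly $s/2$, which accounts for the form of the conclusion. The goal is thus to run the proof of Theorem~\ref{main dynamics thm} in the Teichm\"uller setting.

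The first step is to reformulate divergence on average via a Margulis-type proper height function $\eta : \mc{H} \to [1,\infty)$ which contracts under $u$-horocycle averaging along $a_t$, as furnished by the quantitative non-divergence estimates of Eskin--Masur, Minsky--Weiss, and Athreya. The output of such an estimate is that there exist a compact set $Q \subset \mc{H}$ and constants $C, \tau > 0$ such that for every $\w \in \mc{H}$ and every sufficiently large $t$, the Lebesgue measure of the set of $s \in [0,1]$ with $a_{t'} u(s)\w \notin Q$ for most $t' \in [0,t]$ is at most $Ce^{-\tau t}$. Divergence on average of $(a_t u(s)\w)_{t \geq 0}$ then becomes a shrinking-target statement for the sublevel sets $\{\eta \leq R_n\}$ with $R_n \to \infty$ at a controlled rate.

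The main step is to combine this estimate with the self-similar structure of $\Kcal$ to obtain an efficient cover of the divergent-on-average set. Writing $\Fcal^n = \{h_{i_1} \circ \cdots \circ h_{i_n}\}$ for compositions of depth $n$ and using that each such $h = \rho_h R_h + b_h$ is itself a similarity, the conjugation identity $a_{\log(1/\rho_h)} u(h(s)) = u(\rho_h^{-1} b_h) R_h \, a_{\log(1/\rho_h)} u(s)$ renormalizes the $n$-th level cylinder back to unit scale at the cost of replacing $\w$ by the iterate $a_{\log(1/\rho_h)} u(\rho_h^{-1}b_h) R_h \w$. Applying the quantitative non-divergence bound uniformly over the renormalized basepoints and summing over the $\mu$-mass of the cylinders, one shows that the $\mu$-measure of branches whose trajectories fail to return to $Q$ for a long time decays with exponent essentially $\a_1(\mu)/2 = s/2$. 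A standard Frostman-type covering argument, exactly parallel to the proof of Theorem~\ref{main dynamics thm}, converts this into the claimed Hausdorff dimension bound of $s/2$.

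The main obstacle is that the Teichm\"uller flow admits no clean substitute for Margulis's inequality on the space of lattices, so the uniformity of the contraction estimate for $\eta$ in the basepoint $\w$ is delicate: the renormalization scheme replaces $\w$ by arbitrarily deep $a_t$-iterates, and one must ensure the constants in the non-divergence bound do not deteriorate along this sequence. The one-dimensionality of $\Kcal \subset \R$ works in our favor here, since only a single projection exponent $\a_1(\mu) = s$ is in play and only horocyclic (as opposed to higher-dimensional) averaging of $\eta$ is needed, so that the sharper estimates of Athreya and Eskin--Mirzakhani--Mohammadi are strong enough to close the argument with the factor $1/2$.
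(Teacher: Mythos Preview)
Your opening paragraph is correct and matches the paper: $d=1$, $\a_1(\mu)=s$, and the bound $s/2$ should follow by running the engine behind Theorem~\ref{main dynamics thm}. But the mechanism you then outline is not the one the paper uses, and as written it has a gap. You propose to start from a \emph{Lebesgue}-measure non-divergence estimate on $\mc{H}$, renormalize cylinders $\Kcal_\w$ back to unit scale via your conjugation identity, and then sum over their $\mu$-masses to obtain $\mu$-decay at rate $s/2$. You never explain how a Lebesgue rate $\t$ combines with the self-similar structure to produce the specific rate $s/2$; since $\mu$ is singular with respect to Lebesgue, a Lebesgue bound on the bad set says nothing directly about its $\mu$-measure or about how many level-$n$ cylinders it meets, so the summation step is unjustified as stated.

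The paper bypasses Lebesgue entirely. The key point is that the Eskin--Masur system of integral inequalities (Theorem~\ref{thm: eskin masur}) holds for \emph{any} compactly supported probability measure $\nu$ on $\mrm{SL}(2,\R)$ that is $(a,\a)$-linearly expanding on $\R^2$, with constants depending only on $a$, $\a$, and $\norm{\mrm{supp}(\nu)}$. One takes $\nu$ to be the push-forward of $\rho(\bx,k)^{-\g}\,d\mu(\bx)$ under $\bx\mapsto g_{\rho(\bx,k)}u(\bx)$ and invokes Proposition~\ref{propn: expansion in linear representations} with $d=\ell=1$ and $\l=\a_1(\mu)=s$ to see that this $\nu$ is $(a,\d s)$-linearly expanding; the exponent $\k=\d s/2$ appearing there is precisely the source of the factor $1/2$. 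Athreya's linear-combination trick (Corollary~\ref{cor: teich f}) then collapses the Eskin--Masur system to a single height function $f_{\e(k)}$ verifying the Contraction Hypothesis with $\b=\d s/2$, and Theorem~\ref{thrm: Hdim and non-divergence} finishes after letting $\d\to 1$. In particular, your concern about uniformity of the constants as the basepoint moves under renormalization is moot: the constants in Theorem~\ref{thm: eskin masur} are already uniform in $x_0$, so no appeal to Eskin--Mirzakhani--Mohammadi is needed.
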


    Masur showed in~\cite{Masur_nonergodic} that the set of directions $\th$ around any point $\w\in\mc{H}$ for which the orbit $(a_t r_{\th}\w)_{t\geqslant 0}$ is divergent have dimension at most $1/2$. This was recently extended in~\cite{Khalil-moduli} to show that this upper bound in fact holds for divergent on average directions. Theorem~\ref{teich thm} generalizes both these results.
    More recently, the lower bound of $1/2$ on the dimension of divergent on average directions was established in~\cite{ApisaMasur}.

    The motivation for studying divergent Teichm\"uller geodesics comes from the study of the ergodic properties of billiard flows and interval exchange transformations (IETs). 
    Masur's criterion states that if the vertical straight line flow on $r_\th\w$ is non-uniquely ergodic (NUE), then $(a_t r_{\th}\w)_{t\geqslant 0}$ diverges in $\mc{H}$~\cite{Masur_nonergodic}.
    In this vein, Theorem~\ref{teich thm} has the following corollary.
    \begin{corollary} \label{cor: nue}
    Let $\Kcal$ be as in Theorem~\ref{teich thm}.
    Then, for every $\w \in \mc{H}$, the set of directions $\th\in \arctan(\Kcal)$ such that the vertical flow on $r_\th \w$ is not uniquely ergodic has dimension at most $\frac{\dim_H(\Kcal)}{2}$.
    \end{corollary}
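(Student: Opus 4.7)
The plan is to derive this corollary from Theorem~\ref{teich thm} via Masur's criterion, after converting the rotation parametrization $r_\th$ into the unipotent parametrization $u(s)$ using the $KAK$-style decomposition of $\mrm{SL}(2,\R)$.

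First, I would recall Masur's criterion: if the vertical flow on $r_\th\w$ fails to be uniquely ergodic, then the forward orbit $(a_t r_\th \w)_{t\geqslant 0}$ leaves every compact subset of $\mc{H}$. In particular, such an orbit is divergent on average in the sense of~\eqref{defn: divergence on average}. It therefore suffices to bound the Hausdorff dimension of the set of $\th\in\arctan(\Kcal)$ for which $(a_t r_\th \w)_{t\geqslant 0}$ diverges on average.

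The key computation is the following decomposition, valid for $\th\in(-\pi/2,\pi/2)$ with $s=\tan\th$:
\begin{equation*}
    r_\th = \begin{pmatrix} 1 & 0 \\ -s & 1 \end{pmatrix} \, a_{\log \cos\th} \, u(s).
\end{equation*}
Conjugating the lower-triangular unipotent factor past $a_T$ gives
\begin{equation*}
    a_T r_\th \w = \begin{pmatrix} 1 & 0 \\ -s e^{-2T} & 1 \end{pmatrix} \, a_{T+\log\cos\th} \, u(s)\w.
\end{equation*}
Since the left factor tends to the identity as $T\to\infty$ (uniformly for $s$ in a bounded set), and since $\log\cos\th$ is bounded on any compact subset of $(-\pi/2,\pi/2)$, the orbit $(a_T r_\th \w)_{T\geqslant 0}$ diverges on average if and only if $(a_T u(s) \w)_{T\geqslant 0}$ does, where $s=\tan\th$.

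To conclude, I would note that $\Kcal\subset\R$ is compact, so $\arctan(\Kcal)$ lies in a compact subinterval of $(-\pi/2,\pi/2)$ on which $\arctan$ is bi-Lipschitz and thus preserves Hausdorff dimension. Hence the set of NUE directions $\th\in\arctan(\Kcal)$ has dimension equal to that of the set of $s\in\Kcal$ for which $(a_T u(s)\w)_{T\geqslant 0}$ diverges on average, which by Theorem~\ref{teich thm} is at most $\dim_H(\Kcal)/2$. The only mildly delicate point is verifying that the residual shear $n_{-}(-se^{-2T})$ really is harmless for the divergence-on-average criterion, but this is immediate: multiplying by an element converging to the identity eventually keeps $a_T u(s)\w$ inside a given compact set $Q$ precisely when $a_T r_\th \w$ lies in a slightly enlarged compact neighborhood of $Q$, so the time averages in~\eqref{defn: divergence on average} agree up to $o(1)$.
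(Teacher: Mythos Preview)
Your argument is correct and follows the same route as the paper's own proof: invoke Masur's criterion to reduce to divergence on average, use the decomposition $r_\th = \prescript{t}{}{u}(-\tan\th)\, a_{\log\cos\th}\, u(\tan\th)$, observe that the lower-triangular factor is contracted by $a_T$ so that the $r_\th$- and $u(\tan\th)$-orbits have the same divergence-on-average behavior, and conclude via the bi-Lipschitz property of $\arctan$ on the compact set $\Kcal$. Your explicit justification that the residual shear $\begin{pmatrix} 1 & 0 \\ -se^{-2T} & 1\end{pmatrix}$ is harmless for the time averages is a welcome elaboration of what the paper states more tersely.
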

    The size of the set of NUE directions has been extensively studied.
    A celebrated theorem of Kerckhoff, Masur, and Smilie shows that the Lebesgue measure of the set of NUE directions is $0$~\cite{KerchoffMasurSmilie}.
    This result was generalized by Veech in~\cite{Veech-MeasuresUE} to a broader class of measures which includes Lebesgue and the natural measure on a Cantor set.
    Additionally, it is known that first return maps of straight line flows give rise to IETs. This observation can be packaged in the form of a locally defined map from a stratum of abelian differentials to the space of parameters of IETs with a given permutation (cf.~\cite{Masur-IETsMFs} for details).
    Minsky and Weiss characterized the straight lines in the space of IETs that arise as the image of orbits of the horocycle flow $U=\set{u(s):s\in\R}$ on strata~\cite{MinskyWeiss} under such maps and showed that non-uniquely ergodic IETs belonging to these special lines have $0$ mass with respect to a broad class of measures


    \subsection{Relation to Frostman exponents of projections}
    \label{section: frostman intro}
    
    In the Appendix, we study the relationship between the quantities $\a_\ell(\mu)$ and the Frostman exponents of the projections of $\mu$ and prove Theorem~\ref{thm: alpha equal inf} below.
    This result allows us to apply a result of Shmerkin to deduce Corollary~\ref{homogeneous cor} from Theorem~\ref{main thm}.
    To motivate the theorem, we first give an equivalent definition of the quantities $\a_\ell(\mu)$.
    Let $\mrm{Gr}(d,\ell)$ be the Grassmanian of vector subspaces of $\R^d$ of dimension $\ell$.
    We identify subspaces $\pi\in \mrm{Gr}(d,\ell)$ with the associated canonical orthogonal projection from $\R^d$.
    For $\pi\in \mrm{Gr}(d,\ell)$, $x\in \pi$, and $\e>0$, let $B(x,\e)$ be the ball of radius $\e$ around $x$.
    If $X$ is a measure space with measure $\mu$ and $f: X \r Y$ is a measurable map, we denote by $f_\ast \mu$ the push-forward measure.
    Given a compactly supported Borel measure $\mu$ on $\R^d$ and $1\leq \ell\leq d$, $\a_\ell(\mu)$ can be alternatively defined by
    \begin{equation} \label{eq: alpha and projections}
        \a_\ell(\mu) =  \liminf_{\e\r 0} \frac{\log \sup_{\pi\in \mrm{Gr}(d,\ell)}\sup_{x\in \pi} \pi_\ast \mu (B(x,\e)) }{\log \e }.
    \end{equation}
    Given a finite measure $\nu$ on $\R^\ell$, the \textbf{Frostman exponent} of $\nu$, denoted $\dim_\infty(\nu)$, is defined by
    
    \begin{equation}\label{def: frostman}
        \dim_\infty(\nu) = \liminf_{\e \r 0} \frac{\log \sup_{x\in \R^\ell} \nu(B(x,\e)) }{\log \e}.
    \end{equation}
    By definition, we have
    $  \a_\ell(\mu) \leqslant \inf_{\pi_\ast\in \mrm{Gr}(d,\ell)} \dim_\infty(\pi \mu)$.
    For self-similar measures, we show the following.

    \begin{theorem}\label{thm: alpha equal inf}
    Suppose $\Fcal$ is an irrational homogeneous IFS on $\R^2$ satisfying the open set condition with limit set $\Kcal$.
    Let $\mu$ be the restriction of the $s$-dimensional Hausdorff measure to $\Kcal$, where $s=\dim_H(\Kcal)$.
    Then, for Lebesgue almost every $\pi_\th\in \mrm{Gr(2,1)}$,
    \begin{equation*}
        \a_1(\mu) = \inf_{\pi_\b\in \mrm{Gr(2,1)}} \dim_\infty((\pi_\b)_\ast \mu)
        = \dim_\infty ((\pi_\th)_\ast\mu).
    \end{equation*}
    \end{theorem}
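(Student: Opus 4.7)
Let $\nu_\th := (\pi_\th)_\ast\mu$ and $F_\th(\e) := \sup_{x\in\R} \nu_\th(B(x,\e))$, so that $\varphi(\th) := \dim_\infty(\nu_\th) = \liminf_{\e\to 0}\log F_\th(\e)/\log\e$. The inequality $\a_1(\mu) \leq \inf_\b \varphi(\b)$ is already recorded in the text and follows from $F_\pi(\e)\leq \sup_{\pi'}F_{\pi'}(\e)$. The plan is to establish the matching lower bound $\a_1(\mu) \geq c$, where $c$ will be identified as the almost-everywhere value of $\varphi$, and simultaneously to show this is the common value of all three quantities in the statement.

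The engine of the proof is a single self-similar recursion. Writing each $h_i = \rho R_\a + b_i$ and exploiting the identity $\pi_\th \circ R_\a = \pi_{\th-\a}$, the Hutchinson identity $\mu = |\Lambda|^{-1}\sum_i (h_i)_\ast \mu$ unpacks to
\begin{equation*}
\nu_\th(B(x,\e)) = |\Lambda|^{-1}\sum_i \nu_{\th-\a}\bigl(B((x-\pi_\th(b_i))/\rho,\ \e/\rho)\bigr),
\end{equation*}
and taking $\sup_x$ yields the functional inequality $F_\th(\e) \leq F_{\th-\a}(\e/\rho)$, iterating to
\begin{equation*}
F_\th(\e) \leq F_{\th-n\a}(\e/\rho^n) \qquad \text{for every } n \geq 0.
\end{equation*}
Passing to the liminf shows $\varphi(\th) \geq \varphi(\th-\a)$ pointwise. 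The super-level sets $\{\varphi > t\}$ are therefore forward invariant under $R_\a$, hence $R_\a$-invariant modulo null sets by measure preservation; ergodicity of the irrational rotation forces $\varphi$ to be almost everywhere equal to some constant $c$.

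To conclude $\a_1(\mu) \geq c$, fix $\eta > 0$. By Egorov's theorem applied to the almost-everywhere statement ``$F_\th(\e) \leq \e^{c-\eta}$ for all $\e$ small'', there exist a measurable set $E\subset[0,2\pi)$ of near-full measure and constants $\e_0, C > 0$ such that $F_\th(\e) \leq C\e^{c-\eta}$ for every $\th\in E$ and every $\e \leq \e_0$. Since $\th\mapsto F_\th(\e)$ is lower semicontinuous (a consequence of weak-$\ast$ continuity of $\th\mapsto \nu_\th$ applied to open balls), one can refine $E$ to an open subset $U$ of positive measure on which the same uniform bound survives. Uniform minimality of the irrational rotation then yields $N_0$, depending on $U$ and $\a$ but not on $\phi$, such that every orbit segment $\{\phi - n\a : 0\leq n \leq N_0\}$ meets $U$. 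Choosing such an $n$ and applying the iterated recursion,
\begin{equation*}
F_\phi(\e) \leq F_{\phi-n\a}(\e/\rho^n) \leq C(\e/\rho^n)^{c-\eta} \leq C\rho^{-N_0(c-\eta)}\,\e^{c-\eta}
\end{equation*}
uniformly in $\phi$, valid for $\e \leq \rho^{N_0}\e_0$. Taking the supremum over lines yields $\a_1(\mu) \geq c - \eta$, and letting $\eta \downarrow 0$ finishes. Combined with the trivial upper bound, this gives $\a_1(\mu) = c = \inf_\b \varphi(\b)$, while the a.e.~identity $\varphi(\th) = c$ supplies the last equality.

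The central obstacle is the passage from a.e.~constancy of $\varphi$ to a uniform Frostman bound holding at \emph{every} direction: this is precisely where the irrationality of $\a$ is indispensable, since the iterated recursion rotates any exceptional direction back into the good set within a bounded number of steps, incurring only a multiplicative constant that does not affect the exponent. The delicate technical point is arranging the Egorov set so that uniform minimality applies, which is enabled by the lower semicontinuity of $F_\th(\e)$ in $\th$.
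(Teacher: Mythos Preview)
Your recursion $F_\th(\e)\leq F_{\th-\a}(\e/\rho)$ is correct and elegant, and the deduction that $\varphi$ is a.e.\ constant equal to some $c$ via ergodicity is fine. The gap is in the passage from the Egorov set $E$ to an open subset $U$. Lower semicontinuity of $\th\mapsto F_\th(\e)$ means the \emph{super}-level sets $\{F_\th(\e)>t\}$ are open, so the good sets $\{F_\th(\e)\leq C\e^{c-\eta}\}$ are \emph{closed}, not open; you can enlarge $E$ to a closed set but not refine it to an open one. A closed set of positive Lebesgue measure on the circle need not contain any interval (think of a fat Cantor set), and hence need not meet every $R_{-\a}$-orbit, let alone within bounded time. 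This blocks the appeal to uniform minimality, and the uniform bound $\sup_\phi F_\phi(\e)\leq C'\e^{c-\eta}$ does not follow. Switching to closed balls gives upper semicontinuity of $F^c_\th(\e)$ at each fixed scale, but you still need the bound simultaneously at countably many scales, and the one-sided recursion alone does not propagate a single-scale open good set to all smaller scales with a uniform constant.

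The paper handles exactly this difficulty by a different route. It first proves, independently via irreducibility, that $\a_1(\mu)>0$, and uses this decay to establish genuine \emph{continuity} of $\th\mapsto t(\th,\e)$ (Proposition~\ref{prop: tau cont}); it then proves a full submultiplicative cocycle inequality $\tau(\th,m+n)\leq D\,\tau(\th,m)\,\tau(\th-m\a,n)$ (Proposition~\ref{prop: tau submul cocycle}) rather than your one-sided bound $\tau(\th,m+n)\leq\tau(\th-m\a,n)$, and applies Furman's uniform subadditive ergodic theorem (Theorem~\ref{thm subadd UE}) over the irrational rotation. Furman's theorem delivers the uniform-in-$\th$ upper bound on $\phi_n(\th)/n$ directly, which is precisely what your Egorov/minimality step was meant to supply. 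Your argument can be repaired by inserting a proof of continuity (this genuinely requires the input $\a_1(\mu)>0$) before the Egorov step; once $\th\mapsto F_\th(\e)$ is continuous, the good set is open and your minimality argument goes through.
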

    
    It is reasonable to expect the first equality in Theorem~\ref{thm: alpha equal inf} to hold for self-similar measures in greater generality. We hope to address this question in future work.
    
    The proof proceeds by realizing the quantities in question as limits of a certain sub-additive cocycle over an irrational rotation and utilizing an extension of the classical sub-additive ergodic theorem due to Furman~\cite{Furman-MultErgThm}.
    This technique has been used for similar problems in~\cite{PeresShmerkin,NazarovPeresShmerkin,GalicerSagliettieShmerkin}. 
    \begin{remark}
    \begin{enumerate}
        \item Under the hypotheses of Theorem~\ref{thm: alpha equal inf}, it is shown in~\cite[Theorem 8.2]{Shmerkin} that $\dim_\infty((\pi_\th)_\ast\mu) = \min\set{\dim_H(\Kcal), 1}$ for \emph{every} $\th\in [0,2\pi)$.
        The point of Theorem~\ref{thm: alpha equal inf} is establishing equality between $\a_1(\mu)$ and these Frostman exponents. This statement is perhaps not surprising to experts, though we could not locate a reference in the literature.
    
    \item Suppose $\Kcal = \mc{C}\times \mc{C}$ where $\mc{C}$ is Cantor's middle thirds set realized as the attractor of the natural IFS, denoted by $\Fcal$. A remarkable result of Shmerkin in~\cite[Theorem 6.2, Corollary 6.4]{Shmerkin} shows that $\dim_\infty ((\pi_\th)_\ast\mu ) = 1$ whenever $\th\notin \Q\pi$. On the other hand, the projections of $\mu$ on the coordinate axes have Frostman exponent equal to $\dim_H(\mc{C})<1$. This shows that the minimality assumption (irrationality of $R_\a$) cannot be dropped for the second equality in the conclusion of Theorem~\ref{thm: alpha equal inf}.
    \end{enumerate}
    \end{remark}
    
    Theorem~\ref{thm: alpha equal inf} suggests an affirmative answer to the following question.
    \begin{question}
    \label{question: minimal case}
    Suppose $\Fcal, \Kcal, s$, and $\mu$ are as in Theorem~\ref{main thm}.
    Assume that the group generated by the rotation parts of the maps in the IFS is dense in $\mrm{SO}(d,\R)$.
    Is it true that $\a_\ell(\mu) = \dim_\infty(\pi_\ast \mu) = \min(s,\ell)$ for every $\pi\in \mrm{Gr}(d,\ell)$?
    If we further assume that the maps in $\Fcal$ are all defined by rational parameters, is it true that $\dim_H(\sing(d)\cap\Kcal)=sd/(d+1)$?
    \end{question}
    
    
     \subsection{Prior work}\label{section: related}
     Kleinbock and Weiss showed in~\cite{KleinbockWeiss-SingOnFriendly} that irreducible self-similar measures satisfying the open set condition (OSC) give $0$ mass to $\sing(d)$. Indeed, they establish this result for the much wider class of \emph{friendly measures} introduced in~\cite{KleinbockLindenstraussWeiss}.
     When $0<\e<1$ is fixed, the set of vectors $\bx$ which admit non-trivial solutions to the inequalities in~\eqref{eq: Dir ineq} for all large $N$ are referred to as \emph{Dirichlet} $\e$-\emph{improvable} and are denoted $\mrm{DI}_\e$.
     In~\cite{SimmonsWeiss}, generalizing Benoist and Quint's fundamental measure rigidity results for random walks on homogeneous spaces, Simmons and Weiss showed that irreducible self-similar measures with the OSC give $0$ mass to $\mrm{DI}_\e$ for every $0<\e<1$.
     Special cases of this result were obtained previously by Einsiedler, Fishman, and Shapira for measures admitting invariance by expanding maps~\cite{EinsiedlerFishmanShapira}. This latter result relied on entropy methods and measure rigidity results for higher rank diagonlizable actions.
     
     The above results indicate scarcity of singular vectors (and indeed of $\mrm{DI}_\e$ vectors) in the support of self-similar measures.
     On the other hand, Kleinbock and Weiss showed that badly approximable vectors have full dimension in the support of absolutely friendly measures (which include the measures in Theorem~\ref{main thm}) ~\cite{KleinbockWeiss-BadFractals}, cf.~\cite{Fishman-winning}. An observation of Davenport and Schmidt~\cite[Theorem 2]{DavenportSchmidt-DI} shows that badly approximable vectors belong to $\cup_{0<\e<1}\mrm{DI}_\e$. It follows that the set $\cup_{0<\e<1} \mrm{DI}_\e$ has full dimension in the support of these measures.
     Finally, the reader may wish to consult~\cite{DasEtal-QuasiDecaying} for more recent developments in the study of \emph{extremality} of fractal measures.
     
     On the fractal geometric side, dimensions of projections of self-similar sets and measures have been extensively studied, see~\cite{Shmerkin-survey} for a survey.
     It is shown in~\cite[Section 6]{Shmerkin} that the Frostman exponent of the projection in an irrational direction of the restriction of the Hausdorff measure to a Sierpi\'nski carpet of dimension $>1$ or to the $1$-dimensional Sierpi\'nski gasket in the plane is equal to $1$.
     These results build on prior work of Hochman in~\cite{Hochman-InverseEntropy} and the observation that the projections of such sets are themselves self-similar.
     Less is known about projections of these sets in rational directions. However, several regularity results on the dimension of slices of the aforementioned sets with lines of rational slopes have been established in~\cite{BaranyFergusonSimon,BaranyRams} and references therein. 
     When $\Kcal$ is the limit set of an IFS $\Fcal$ satisfying a stronger separation condition than OSC and such that the rotation parts of the maps in $\Fcal$ generate a dense subgroup of $\mrm{SO}(d,\R)$, it is shown in~\cite{HochmanShmerkin} that the image of $\Kcal$ under a projection onto an $\ell$-dimensional subspace has Hausdorff dimension $\min(\ell,\dim_H(\Kcal))$.
     Moreover, under the same hypotheses, it is shown in \textit{loc.cit.} that projections of the associated self-similar measures onto subspaces of dimension $\ell$ are exact-dimensional, with dimension equal to $\min(\ell,\dim_H(\Kcal))$.
     The reader is referred to~\cite{PeresShmerkin,NazarovPeresShmerkin,HochmanShmerkin} and references therein for more results in that direction.

    
    \subsection{Overview of the proof and reduction to dynamics}

    We will deduce Theorem~\ref{main thm} from a stronger dynamical statment, Theorem~\ref{main dynamics thm} below.
    Let $G= \mrm{SL}(d+1,\R)$, $\G= \mrm{SL}(d+1,\Z)$, and $X = G/\G$.
    For $t>0$ and $\bx \in \R^d$, define the following elements of $G$.
    \begin{equation}\label{eq: a_t and u(x)}
    	a_t = \begin{pmatrix}
    	e^{dt}  & \mbf{0}\\
        \mbf{0} & e^{-t}\mrm{I}_d
    	\end{pmatrix}, \qquad
    	u(\bx) = \begin{pmatrix}
    		1 & \bx \\ \mathbf{0} & \mathrm{I}_d
    	\end{pmatrix}        
    \end{equation}
    where $\mathrm{I}_d$ denotes the $d\times d$ identity matrix.
    It was shown by Dani in~\cite{Dani-Divergent} that $\bx$ is singular if and only if the orbit $(a_t u(\bx) \G)_{t\geqslant 0}$ diverges in $X$.
    Theorem~\ref{main thm} follows from the following result.

    \begin{theorem}\label{main dynamics thm}
    Suppose $\Fcal$ is an irreducible IFS on $\R^d$ satisfying the open set condition and let $\mc{K}$ be its limit set. 
    Let $s = \dim_H(\Kcal)$ and $\mu$ be the restriction of the $s$-dimensional Hausdorff measure to $\Kcal$.
    Then, for every $x_0\in G/\G$, the Hausdorff dimension of the set of vectors $\bx\in \Kcal$ such that the forward orbit $(a_t u(\bx)x_0)_{t\geqslant 0}$ diverges on average in $G/\G$ is at most
    \[ 
        s - \min_{1\leq \ell \leq d} \frac{(d-\ell+1)\a_\ell(\mu)}{d+1}.
    \]
    \end{theorem}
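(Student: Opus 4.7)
The plan is to adapt the Kadyrov--Kleinbock--Lindenstrauss--Margulis (KKLM) covering strategy for bounding dimensions of divergent-on-average orbits, replacing Lebesgue non-concentration on $\R^d$ by the self-similar non-concentration exponents $\a_\ell(\mu)$. For each $1\leq \ell\leq d$, let $f_\ell\colon X=G/\G\to\R$ denote the Margulis-type height given by $-\log$ of the minimal covolume among rank-$\ell$ primitive subgroups of the lattice represented by $x\in X$. Since the maximal expansion rate of $a_t$ on $\Lambda^\ell\R^{d+1}$ equals $e^{(d-\ell+1)t}$, one has $f_\ell(a_tx)\leq f_\ell(x)+(d-\ell+1)t$. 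By standard arguments (see~\cite{KKLM,Khalil-moduli}), divergence on average of $(a_tu(\bx)x_0)_{t\geq 0}$ forces that for every $R>0$ and $\eta>0$, and all $T$ sufficiently large, the set of $t\in[0,T]$ with $\max_\ell f_\ell(a_tu(\bx)x_0)>R$ has Lebesgue measure at least $(1-\eta)T$.

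I would next fix $T$ large and cover $\Kcal$ by IFS cylinders $\Kcal_\w=h_\w(\Kcal)$ of diameter $\asymp e^{-(d+1)T}$; by the OSC there are $\asymp e^{(d+1)Ts}$ such cylinders, each of $\mu$-mass $\asymp e^{-(d+1)Ts}$, and the scaling is chosen so that $\bx\mapsto a_Tu(\bx)x_0$ has $O(1)$ image on each cylinder (since $a_tu(\bx)a_{-t}=u(e^{(d+1)t}\bx)$). The essential geometric input is the following: if $f_\ell(a_tu(\bx)x_0)>R$, then $\bx$ lies within Euclidean distance $\e_t\lesssim_{R,x_0} e^{-(d-\ell+1)t}$ of some affine subspace $\mc{L}_t\in\mc{A}(d,d-\ell)$. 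Indeed, a short rank-$\ell$ subgroup $V\subset u(\bx)\G$ arises from a primitive rank-$\ell$ subgroup of $\Z^{d+1}$, and unpacking $|a_tu(\bx)\wedge^\ell V|<e^{-R}$ in terms of the $u$-coordinates forces $\bx$ to be $\e_t$-close to a rational affine subspace of dimension $d-\ell$ associated to $V$. By the definition of $\a_\ell(\mu)$ in~\eqref{eqn: proj spec}, the $\mu$-mass of any such neighborhood is $\lesssim \e_t^{\a_\ell(\mu)-\eta}$, uniformly over the family of possible $\mc{L}_t$'s (this is where the supremum over $\mc{L}$ inside~\eqref{eqn: proj spec} is crucial), so the number of cylinders of diameter $e^{-(d+1)T}$ meeting $\mc{L}_t^{(\e_t)}$ is $\lesssim e^{(d+1)Ts}\cdot \e_t^{\a_\ell(\mu)-\eta}$.

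Finally, I would discretize $[0,T]$ into steps of a fixed length $\Delta$ and build a hierarchical cover: at each of the $\geq(1-\eta)T/\Delta$ cusp-excursion time-steps, refine the current cover using the one-step counting estimate above, while summing over the polynomially many candidate witnessing subgroups $V$ at each step. Combining the resulting geometric series and optimizing $\Delta$ yields a cover of the divergent-on-average set in $\Kcal$ by at most $e^{T[(d+1)s-(1-\eta)(d-\ell+1)\a_\ell(\mu)+o_\Delta(1)]}$ cylinders of diameter $e^{-(d+1)T}$, which translates into the upper box-counting (hence Hausdorff) dimension bound $s-(d-\ell+1)\a_\ell(\mu)/(d+1)$ after letting $\eta\to 0$, $R\to\infty$, and minimizing over $\ell$. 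The hardest part will be the geometric reduction from a cusp excursion at a single time to genuine proximity of $\bx$ to an affine subspace of the correct codimension at the correct rate $e^{-(d-\ell+1)t}$; this requires carefully selecting the witnessing primitive subgroup $V$, tracking its projection to $\R^d$, and absorbing the sum over the possible $V$'s into the counting without losing the favorable exponent. Positivity of $\a_\ell(\mu)$, which follows from irreducibility of $\Fcal$ (first remark after Theorem~\ref{main thm}), is needed to make the resulting bound non-trivial.
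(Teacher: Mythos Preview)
Your outline follows the KKLM covering strategy, and for a \emph{homogeneous} IFS (all $\rho_i$ equal) it is broadly on track; the geometric reduction you single out as hardest is exactly the transversality content of Propositions~\ref{prop: transversality} and~\ref{prop: transverse implies integrable}. One caveat already in that case: the step ``sum over the polynomially many candidate witnessing subgroups $V$'' is not a direct count. In~\cite{KKLM} and here this is absorbed by the Eskin--Margulis--Mozes mechanism (Lemma~\ref{lemma: mini mother inequality}, Proposition~\ref{prop: combine complexities}), which trades the sum over competing short subgroups for a cross-term involving $\sqrt{\vp_{\ell+j}\vp_{\ell-j}}$ and kills it by a choice of weights; a naive count would lose the exponent.

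The genuine gap is the non-homogeneous case. When the $\rho_i$ differ, cylinders $\Kcal_\w$ at a fixed generation have diameters ranging over $[\rho_{\min}^n,\rho_{\max}^n]$, so there is no single $T$ for which $\bx\mapsto a_Tu(\bx)x_0$ has $O(1)$ oscillation on every cylinder, and the hierarchical refinement with a fixed step $\Delta$ across all branches breaks down. The paper flags this explicitly in the overview (\S1.5) as the reason the covering approach fails, and its fix is structurally different: rather than counting covers, it proves a Margulis inequality (Theorem~\ref{thm: constraction in X}) whose averaging operator carries a \emph{branch-dependent} weight $\rho(\bx,k)^{-\g}$, and then bounds the $(s-\g)$-Hausdorff content of the divergent set directly via inequality~\eqref{eq: intro ineq} and the inductive Proposition~\ref{prop: H-measure bound}, iterating along the coding rather than along a uniform time grid. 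Your linear input is fed into an $L^p$ expansion estimate (Proposition~\ref{propn: expansion in linear representations}) rather than a pointwise covering bound. If you want to rescue your approach, you would need to replace the fixed $\Delta$ by a branch-dependent stopping time and show the resulting combinatorics still closes up; this is essentially what the cocycle $\rho(\bx,k)$ accomplishes in the paper's argument.
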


    For convenience of the reader, we outline the proof of Theorem~\ref{main dynamics thm}.
    The proof has two main steps: a linear argument and a probablistic scheme.
    The linear argument is concerned with estimating the average rate of expansion of vectors in the exterior powers of the standard representation of $\mrm{SL}(d+1,\R)$ with respect to any measure $\nu$ satisfying $\a_\ell(\nu)>0$ for every $\ell$.
    Roughly, we show that $\norm{a_t u(\bx) v}^{-1}\in L^{\a_\ell(\nu)-\e}(\nu)$, for every $\e>0$, where $v$ is any non-zero vector in $\bigwedge^\ell \R^{d+1}$, equipped with the standard representation of $G$. This is the content of Section~\ref{section: linear expansion}.
    
    The proof of this step is based on a simple but crucial observation regarding \emph{transversality} of the expanding coordinates of $a_t$.
    This is carried out in Propositions~\ref{prop: transversality} and~\ref{prop: transverse implies integrable}.
    Roughly speaking, we show that if the projections of $u(\bx)v$ onto each expanding coordinate of $a_t$ are simultaneously small, this implies that $\bx$ belongs to a neighborhood of an affine subspace of low dimension.
    
    The other ingredient is to use the method of integral inequalities, first introduced in~\cite{EskinMargulisMozes}, to translate the results on linear expansion to recurrence results on the space $G/\G$.
    This is carried out in Section~\ref{section: integ ineq}.
    We construct a Margulis function (see Def.~\ref{defn: height functions}) $f:G/\G\r \R_+$ which measures the depth of orbits into the cusp.
    We show that the average value of $f(a_tu(\bx)x_0)$, for a fixed $t>0$, and with respect to any non-planar measure $d\nu(\bx)$, is a fraction of $f(x_0)$, whenever $x_0\in G/\G$ is sufficiently deep in the cusp.
    This concludes the linear argument.

    The probablistic scheme takes as input the average height contraction established in the previous step and converts it into an upper dimension estimate on singular vectors in the support of the measure, Theorem~\ref{thrm: Hdim and non-divergence}.
    This step is the technical heart of this article and is carried out in Section~\ref{section: abstract setup}.
    Before explaining the strategy, it is worth noting at this stage that we have not yet used the assumption that the measures under consideration are self-similar.
    However, this assumption is indispensible for the probablistic scheme. For example, using Frostman's Lemma, and the fact that $\dim_H(\sing(d)) >d-1$, one can find a measure $\nu$, whose support is contained $\sing(d)$, and satisfying $\a_d(\nu)>d-1$. This in particular implies that $\a_\ell(\nu)>0$ for all $\ell$ (cf. proof of Lemma~\ref{lem: estimate alpha small codim}).
    The results of the previous step (the linear argument) apply to such a measure, however clearly singular vectors have full dimension in the support of $\nu$.
    
    The main step in proving Theorem~\ref{thrm: Hdim and non-divergence} is Proposition~\ref{prop: H-measure bound}.
    The key property of self-similar measures we use is a \emph{renormalization} mechanism to convert global information (average contraction over the entire support) obtained via the linear argument at a small fixed time scale $t$ to local information (average contraction over small pieces of the support) at large time scales.
    We make use of the existence of a faithful representation $\mrm{Sim}(\R^d) \hookrightarrow  N_G(U)$, where $\mrm{Sim}(\R^d)$ is the group of similarities of $\R^d$, $U=\set{u(\bx):\bx\in\R^d}$, and $N_G(U)$ is the normalizer of $U$ in $G$.
    Under this inclusion, the scaling subgroup of $\mrm{Sim}(\R^d)$ corresponds to $\set{a_t:t\in\R}$.

    Analogous steps have been studied before in~\cite{KKLM} in the case of Lebesgue measure on $\R^d$ and in~\cite{Khalil-Curves} for measures supported on curves. The strategy in those cases is to show that the probability that an orbit segment $(a_tu(\bx)x_0)_{0\leq t\leq N}$ spends a large proportion of its time in the cusp decays exponentially with a precise rate.
    One then uses continuity of the flow to show that if $\bx$ is one such point, then a whole neighborhood of $\bx$ of radius $e^{-(d+1)N}$ has roughly the same behavior.
    This converts measure estimates into a count on covers and an estimate on the box dimension.
    
    Unfortunately, this strategy seems to fail when the contraction ratios of the maps in the IFS are not all the same and this introduces considerable difficulties in our case. This is due to the fact that the natural neighborhoods used in constructing the covers are pieces of the fractal whose diameters have distinct exponential decay rates to $0$.
    We introduce a method which does not rely on counting covers and rather works by estimating the Hausdorff dimension directly.
    The key starting inequality of our estimates roughly takes the form:
    \abovedisplayskip=10pt
     \belowdisplayskip=10pt
    \begin{equation}\label{eq: intro ineq}
        \sum_{\Kcal_\w \cap B(N)\neq\emptyset} \diam{\Kcal_\w}^{s-\g} \ll \int_{B(N-1)} \t(\bx)^{-\g} f(a_{\t(\bx)} u(\bx)x_0)\;d\mu(\bx),
    \end{equation}
    where the sum is over self-similar pieces of the limit set $\Kcal$ which meet the set $B(N)$ of vectors $\bx$ whose orbit segment of length $N$ spends a large proportion of its time in the cusp.
    Here, we use the fact that $\mu$ is a Hausdorff measure and, in particular, that $\mu(\Kcal_\w) \asymp \diam{\Kcal_\w}^s$. This allows us to interpret this sum as an integral of a \emph{cocycle} $\t(\bx)^{-\g}$ measuring the diameter of the self-similar pieces.
    Moreover, and crucially to our method, we interpret the factor $\t(\bx)^{\g}$ as being a non-constant contraction ratio of the averaging operator given by integrating against $\mu$ as in the right side of~\eqref{eq: intro ineq}.
    This is to be contrasted with the more standard integral inequalities of Margulis functions of the form $\int f\;d\nu \leq a f+b$, with a uniform contraction factor $0<a<1$. 
    We also estimate the indicator of $B(N)$ by a product of the height function $f$ and the indicator of $B(N-1)$.
    Accordingly, our linear argument is modified to take into account the additional cocycle factor inside the integral.
    An inductive procedure is then carried out to bound the $(s-\g)$-Hausdorff measure of the set $\liminf_N B(N)$.
    
    We conclude this introduction with a natural problem arising from our investigations.
    
    \begin{question}\label{Q: sing kleinian}
    What is the Hausdorff dimension of singular vectors belonging to the limit set of a Zariski-dense, convex cocompact, discrete group of M\"obius transformations of $\R^d$?
    \end{question}
    We refer the reader to~\cite{BeresnevichGhoshEtal-IntrinsicSing} where a related notion of \textit{intrinsic} singular vectors on the limit set of geometrically finite manifolds is introduced and studied. We remark that the Diophantine problems studied in~\cite{BeresnevichGhoshEtal-IntrinsicSing} correspond with the recurrence behavior of the geodesic flow on hyperbolic manifolds, while Question~\ref{Q: sing kleinian} is related to the recurrence of diagonal flows on $\mrm{SL}(d+1,\R)/\mrm{SL}(d+1,\Z)$ via Dani's correspondence in the spirit of Theorem~\ref{main dynamics thm}.
    
    \begin{acknowledgement}
    I would like to thank the anonymous referees for a careful reading and for numerous valuable comments that improved the presentation and corrected several inaccuracies in the initial version of the article.
    I would like to thank Jon Chaika, Yitwah Cheung and Barak Weiss for their interest in the project and for comments on an earlier version of the manuscript. I would also like to thank Pablo Shmerkin for several valuable suggestions and for providing the proof of Lemma~\ref{lem: alpha_1 for Cantor product}.
    \end{acknowledgement}
    

    \section{Preliminaries} \label{sec: prelim}
    We recall some properties of self-similar measures to be used in later sections.
    
\subsection{Hausdorff dimension}\label{sec: hdim}
    We recall the definition of the Hausdorff dimension.    
    Let $A$ be a subset of a metric space $X$. For any  $\kappa,s >0$, we define
        \begin{equation}\label{eq: H-premeasure}
            H_\kappa^s (A) = \inf\set{ \sum_{I\in \mc{U}} \mrm{diam}(I)^s: \mc{U} \text{ is a cover of A by balls of diameter } <\kappa  }.
        \end{equation} 
        The $s$-dimensional Hausdorff measure of $A$ is defined to be
        \[ H^s (A) = \lim_{\kappa \r 0^+} H_\kappa^s(A) = \sup_{\kappa > 0} H_\kappa^s(A). \]
        The Hausdorff dimension of $A$ is defined by
        \[ \dim_{H}(A) = \inf\set{\g\geq 0: H^\g(A) = 0}= \sup \set{\g\geq 0: H^\g(A) = \infty}. \]

\subsection{IFS Notation}
    Fix a finite set $\L$.
    An \emph{iterated function system} (IFS for short) is a finite collection $\mc{F} = \set{h_i:i\in\L}$ of contractive similarities of $\R^d$, i.e., for each $i\in \L$, $h_i$ has the form
    \[ h_i = \rho_i O_i + b_i, \]
    where $0<\rho_i <1$, $O_i \in \mrm{SO}(d,\R)$, and $b_i \in \R^d$.
    The \textbf{similarity dimension} of $\Fcal$ is defined to be the unique solution of the equation $\sum_{i\in\L} \rho_i^s=1$.
    It is shown in~\cite{Hutchinson} that there exists a unique compact set $\mc{K} \subset \R^d$ which is invariant by $\mc{F}$ in the following sense.
    \begin{equation}\label{eq: K self-similar}
    	\mc{K} = \bigcup_{i\in \L} h_i(\mc{K})
    \end{equation}
    We refer to the set $\mc{K}$ as the \textbf{limit set} of $\mc{F}$.
    Hutchinson introduced a notion of separation, the open set condition, that will allow us to treat the union in~\eqref{eq: K self-similar} as if it were disjoint, cf. Proposition~\ref{prop: null overlap} below.
    Following~\cite{Hutchinson}, we say $\Fcal$ satisfies the \textbf{open set condition} (OSC for short) if there exists a non-empty open set $U\subset \R^d$ such that the following holds:
    \begin{equation}\label{eq: OSC}
        \begin{cases}
            h_i(U) \subseteq U, & \text{for every } i\in \L,\\
            h_i(U)\cap h_j(U) =\emptyset, & \text{for every }i\neq j \in \L.
        \end{cases}
    \end{equation}
	Given $\omega=(\omega_i)\in\L^{k}$, we let
    \begin{equation}\label{eq: composition order}
      h_{\omega}=h_{\omega_1}\circ\cdots\circ h_{\omega_{k}}, \qquad
        \Kcal_\w = h_\w(\Kcal),\qquad \Pcal_n = \set{\Kcal_\w: \w\in \L^n}.
    \end{equation}
    The maps $h_\w$ take the form $\rho_\w O_\w + b_\w$, where
    \begin{equation}\label{eq: composition parameters}
     \rho_\w = \prod_{i=0}^{k} \rho_{\w_i},
     \quad O_\w = O_{\w_{1}} \cdots O_{\w_k},
     \quad b_\w = h_\w(0) \in \R^d  .
    \end{equation}
    It will be convenient for us to consider the numbers $\rho_\w$ for finite prefixes of infinite words $\w$. We do this by means of a multiplicative cocycle.
    To this end, let $\s: \L^\N \r \L^\N$ be the shift map, i.e., $\s((\w_k)_{k\in\N}) = (\w_{k+1})_{k\in\N}$.
    Consider the function $\rho: \L^\N \times \N \r \R_+ $ defined as follows for $\w = (\w_k)_{k\in\N} $
    \begin{equation}\label{defn: rho}
        \rho(\w,n) = \prod_{k=1}^n \rho_{\w_k}.
    \end{equation}
    Then, $\rho$ satisfies the cocycle relation: $\rho(\w,m+n) = \rho(\w,m)\rho(\s^m(\w),n)$.

    We wish to regard $\rho(\cdot,n)$ as a function on $\Kcal$.
    Since members of $\Pcal_n$ may not be disjoint, there is ambiguity on the overlaps. For this purpose, we introduce a modified partition $\hat{\Pcal}_n$ consisting of disjoint sets as follows.
    Using any fixed order on the elements of $\L$, we can endow $\L^n$ with a lexicographic order and define for each $\w\in \L^n$:
    \begin{align}\label{eq: disjoint images}
        \hat{\Kcal}_\w = \Kcal_\w \setminus \left( \bigcup_{\a > \w} \Kcal_\w\cap\Kcal_\a \right),
        \qquad  \hat{\Pcal}_n = \set{\hat{\Kcal}_\w:\w\in \L^n}.
    \end{align}
    Let $\bx\in\Kcal$ and let $\w\in\L^n$ be the unique word such that $\bx\in \hat{\Kcal}_\w$. Then, we define
    \begin{equation} \label{eq: cocycle on K}
        \rho(\bx,k) := \rho(\w,k).
    \end{equation}
    The following lemma shows that $\hat{\Pcal}_n$ form a refining sequence of partitions. This fact is used in the inductive procedure of constructing covers for singular vectors by elements of $\hat{\Pcal}_n$.
    \begin{lemma} \label{lem: hK refining}
    For every $m<n\in \N$ and every $\b\in \L^m$, we have $\hK_\b =\bigcup \hK_\a$, where the union is taken over words $\a\in\L^n$ such that $\b \in \L^m$ is the prefix of $\a$ of length $m$.
    \end{lemma}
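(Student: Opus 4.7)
The plan is to prove the identity by double inclusion, after reformulating the definition in~\eqref{eq: disjoint images} as follows: a point $x\in \Kcal$ belongs to $\hK_\w$ if and only if $\w$ is the lex-maximal element of $\{\a\in \L^n : x\in \Kcal_\a\}$. The only other ingredient is a trivial compatibility property of the lexicographic order, namely that if $\a,\a'\in \L^n$ have distinct length-$m$ prefixes $\a|_m,\a'|_m\in \L^m$, then $\a<\a'$ in $\L^n$ if and only if $\a|_m<\a'|_m$ in $\L^m$; this is because the order on $\L^n$ is decided by the first coordinate where the two words differ.

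For the inclusion $\hK_\b\supseteq \bigcup_{\a|_m=\b}\hK_\a$, I fix $x\in \hK_\a$ with $\a|_m=\b$. Then $x\in\Kcal_\a\subseteq \Kcal_\b$, so it suffices to rule out $x\in \Kcal_{\b'}$ for any $\b'>\b$ in $\L^m$. Suppose such a $\b'$ exists. Iterating the self-similarity relation~\eqref{eq: K self-similar} gives $\Kcal_{\b'}=\bigcup_{\a'|_m=\b'}\Kcal_{\a'}$, so $x\in\Kcal_{\a'}$ for some $\a'\in \L^n$ extending $\b'$. The lex-compatibility then forces $\a'>\a$ in $\L^n$, contradicting the defining condition for $x\in\hK_\a$.

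For the inclusion $\hK_\b\subseteq \bigcup_{\a|_m=\b}\hK_\a$, I fix $x\in\hK_\b$ and let $\a^\ast$ be the lex-maximal element of the set $\{\a\in \L^n : \a|_m=\b,\ x\in \Kcal_\a\}$, which is nonempty by another application of~\eqref{eq: K self-similar}. I claim $x\in \hK_{\a^\ast}$. Otherwise, by the reformulated definition there is some $\a'>\a^\ast$ in $\L^n$ with $x\in\Kcal_{\a'}$. If $\a'|_m=\b$, this contradicts the maximality of $\a^\ast$; if $\a'|_m\neq \b$, lex-compatibility gives $\a'|_m>\b$ in $\L^m$, so $x\in\Kcal_{\a'|_m}$ contradicts $x\in\hK_\b$.

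There is no real obstacle here: once the reformulation of $\hK_\w$ as a lex-maximum is in place, both inclusions reduce to bookkeeping around prefixes, and no geometric or analytic input is required (in particular the open set condition plays no role in this lemma).
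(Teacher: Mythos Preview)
Your proof is correct and follows essentially the same approach as the paper's own argument: both directions proceed by the same maximality/lex-compatibility reasoning (extend a shorter word containing $x$ to a longer one via~\eqref{eq: K self-similar}, then compare prefixes). Your explicit reformulation of $\hK_\w$ as the lex-maximum and the isolated statement of prefix compatibility make the bookkeeping slightly cleaner than in the paper, and your observation that the open set condition is not needed here is correct.
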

   \begin{proof}
   Suppose $\a \in \L^n$ is such that $\b$ is a prefix of $\a$.
   Suppose $\w\in \L^m$ is such that $\w>\b$ and suppose $\bx\in \Kcal_\b\cap\Kcal_\w$. By~\eqref{eq: K self-similar}, we have $\Kcal_\w = \cup_{\t\in \L^{n-m}} \Kcal_{\w\t}$, where $\Kcal_{\w\t}$ denotes $h_\w(\Kcal_\t)$. Let $\t\in \L^{n-m}$ be such that $\bx \in \Kcal_{\w\t}$.
   Then, since $\w>\b$ and $\b$ is a prefix of $\a$, it follows that $\w\t > \a$ by definition of the lexicographic order. This implies that $\bx\notin \hK_\a$. This shows that $\hK_\a \subseteq \hK_\b$.
   
   To show the reverse containment, fix $\bx\in\hK_\b$. Let $\a \in \L^n$ be the maximal word in $\L^n$ in the lexicographic order having $\b$ as a prefix and satisfying 1) $\bx\in \Kcal_\a$, and 2) $\a \geqslant \a'$ for every $\a'\in \L^n$ such that $\bx\in \Kcal_{\a'}$.
   Suppose for contradiction that $\bx\notin \hK_\a$. Then, there exists some $\w\in \L^n$ such that $\w>\a$ and $\bx\in \Kcal_\a\cap\Kw$.
   Let $\w_0\in\L^m$ be the prefix of $\w$. 
   By maximality of $\a$, $\w_0\neq \b$. However, by definition of the lexicographic order, since $\w>\a$, it must be that $\w_0 > \b$. In particular, since $\bx\in \Kw\subset \Kcal_{\w_0}$, it follows that $\bx\notin \hK_\b$, contrary to our assumption and concluding the proof.
   \end{proof}

  \subsection{Self-similar measures}  
    Fix a probability vector $\l$ with full support on $\L$. 
    That is $\l = (\l_i)_{i\in\L}$, $\l_i>0$ for every $i$, and $\sum_i \l_i = 1$.
    The \emph{Markov-Feller Operator} $P_\l$ is defined by  
     \begin{equation*}
            P_\l(\nu) =  \sum_{i\in \L}\l_i (h_i)_\ast\nu,
    \end{equation*}
    for all Borel measures $\nu$ on $\R^d$.
    It is shown in~\cite{Hutchinson} that there exists a unique probability measure $\mu$ supported on $\mc{K}$ and satisfying
    \begin{equation} \label{eqn: self similar measure}
    	\mu = P_\l(\mu).
    \end{equation}
    We refer to measures satisfying~\eqref{eqn: self similar measure} as \textbf{self-similar measures} for $(\mc{F},\l)$. Simple induction applied to~\eqref{eqn: self similar measure} shows that
    \begin{equation}\label{eq: iterated P_l}
        \mu = P_\l^k(\mu)= \sum_{\w\in \L^k} \l_\w (h_\w)_\ast \mu,
    \end{equation}
     where $\l_\w = \prod_{i=1}^{k}\l_i$.
     Under the open set condition, Hutchinson showed that the self-similar measure for the natural probability vector $(\rho_i^s)_{i\in\L}$ is in fact the $s$-dimensional Hausdorff measure $H^s$.

    \begin{proposition}[Theorem 5.3(1),~\cite{Hutchinson}]
    \label{propn: properties of self similar measure}
    Suppose $\mc{F}$ is an IFS satisfying the open set condition with similarity dimension $s$ and let $\mc{K}$ denote its limit set.
    Then, $0<H^s(\Kcal)<\infty$. In particular, $dim_H(\Kcal) = s$.
    Let $\mu$ denote the normalized restriction of $H^s$ to $\Kcal$. Then, $\mu$ is the self-similar measure for the probability vector $(\rho_i^s)_{i\in\L}$.
    Moreover, there exist constants $a,b>0$, such that for every $x\in \mc{K}$ and every $r\in (0,1)$,
    \begin{equation*}
    	a r^s \leqslant \mu(B(x,r)) \leqslant b r^s.
    \end{equation*}
    \end{proposition}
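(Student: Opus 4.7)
The plan is to follow Hutchinson's classical argument in four stages: (i) the upper bound $H^s(\Kcal)<\infty$; (ii) existence of the self-similar measure $\mu$ for the probability vector $(\rho_i^s)_{i\in\L}$ together with the two-sided ball estimate $a r^s\leq \mu(B(x,r))\leq b r^s$; (iii) positivity of $H^s(\Kcal)$ via the mass-distribution principle; (iv) identification of $\mu$ with the normalized restriction of $H^s$ to $\Kcal$. For (i), I would iterate~\eqref{eq: K self-similar} to write $\Kcal=\bigcup_{\w\in\L^n}\Kcal_\w$ with $\diam{\Kcal_\w}=\rho_\w\diam{\Kcal}$. The defining equation $\sum_i\rho_i^s=1$ combined with the multiplicativity of $\rho_\w$ gives $\sum_{\w\in\L^n}\rho_\w^s=1$. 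Choosing $n$ large enough that $\max_\w\rho_\w\diam{\Kcal}<\kappa$ yields $H_\kappa^s(\Kcal)\leq\diam{\Kcal}^s$, and letting $\kappa\to 0$ concludes finiteness.

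For (ii), existence and uniqueness of $\mu$ come from the fact that $P_\l$ with $\l_i=\rho_i^s$ is a strict contraction of the space of compactly supported Borel probabilities on $\R^d$ in a Wasserstein-type metric (Hutchinson's contraction principle). The key technical point is the Ahlfors estimate. Fix the open set $U$ from the OSC, pick $x_0\in U$ and $r_0>0$ with $B(x_0,r_0)\subset U$, and set $D=\diam{U}$. For $r\in(0,1)$, define the stopping-time family
\[
    W(r)=\set{\w\in\L^\ast:\rho_\w D\leq r<\rho_{\w^-}D},
\]
where $\w^-$ is the prefix of $\w$ of length $|\w|-1$. The set $W(r)$ partitions $\L^\N$ by prefixes, so iterating~\eqref{eqn: self similar measure} gives $\mu=\sum_{\w\in W(r)}\rho_\w^s (h_\w)_\ast\mu$. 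Applying the OSC inductively shows that $\{h_\w(U):\w\in W(r)\}$ are pairwise disjoint; each has diameter $\leq r$ and contains a ball of radius $\rho_\w r_0\gtrsim (\min_i\rho_i) r_0\, r/D$. A Euclidean volume-packing argument therefore bounds the number of $\w\in W(r)$ for which $h_\w(U)$ meets $B(x,2r)$ by a constant $N_0=N_0(d,r_0,D)$, giving the upper bound $\mu(B(x,r))\leq N_0(r/D)^s$. For the lower bound, given $x\in\Kcal$, pick a descending sequence $\w_n\in\L^n$ with $x\in\Kcal_{\w_n}$ and select the smallest $n$ with $\rho_{\w_n}\diam{\Kcal}\leq r$; then $\rho_{\w_n}\asymp r$, $\Kcal_{\w_n}\subset B(x,r)$, and $\mu(\Kcal_{\w_n})\geq\rho_{\w_n}^s\mu(\Kcal)\gtrsim r^s$.

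For (iii), the upper bound from (ii) together with the mass distribution principle gives, for any cover $\mc{U}$ of $\Kcal$ by sets of diameter $<\kappa$,
\[
    1=\mu(\Kcal)\leq\sum_{I\in\mc{U}}\mu(I)\leq b\sum_{I\in\mc{U}}\diam{I}^s,
\]
so $H^s(\Kcal)\geq 1/b>0$. For (iv), since $0<H^s(\Kcal)<\infty$, the normalized restriction $\nu$ of $H^s$ to $\Kcal$ is a probability. The scaling identity $H^s(h_i(A))=\rho_i^s H^s(A)$ for similarities, together with the fact (a standard consequence of the OSC that I would need to record separately) that $H^s(h_i(\Kcal)\cap h_j(\Kcal))=0$ for $i\neq j$, implies $\nu=\sum_i\rho_i^s(h_i)_\ast\nu$, and uniqueness of the self-similar measure forces $\nu=\mu$.

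The main obstacle is the uniform upper bound in (ii) on the number of cylinders $h_\w(U)$ with $\rho_\w\asymp r$ that intersect a fixed ball of radius $r$. One cannot simply use the self-similar structure because different $h_\w$ carry different rotational components; the bound must come from the disjointness supplied by the OSC combined with an ambient Euclidean volume comparison, and it is this geometric packing estimate that drives both sides of the Ahlfors regularity.
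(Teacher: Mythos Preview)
The paper does not give its own proof of this proposition: it is stated with an attribution to Hutchinson~\cite[Theorem~5.3(1)]{Hutchinson} and used as a black box. Your sketch is precisely Hutchinson's original argument (stopping-time cylinders, OSC-driven packing bound, mass distribution principle, and identification via uniqueness of the self-similar measure), so there is nothing to compare against and your outline is correct as a reconstruction of the cited proof.
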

    
    
    \subsection{Consequences of null overlaps}
    
    In general, the overlap between members of $\Pcal_n$ causes serious problems in the analysis.
    However, the following result, obtained in~\cite{Hutchinson}, shows that the OSC insures that these overlaps are negligible from the point of view of self-similar measures.
    \begin{proposition}[Proposition 5.1(4), Theorem 5.3(1)\cite{Hutchinson}]
    \label{prop: null overlap}
        Suppose $\mc{F}$ satisfies the open set condition and let $s$ be its similarity dimension.
        Let $\l = (\rho_i^s)_{i\in\L}$ and $\mu$ be the self-similar probability measure for $(\mc{F},\l)$. Then, for every $k\in \N$ and all $\a\neq \w\in \L^k$, $\mu(\Kcal_\w \cap \Kcal_\a) =0$.
    \end{proposition}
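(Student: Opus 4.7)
The plan is to exploit Proposition~\ref{propn: properties of self similar measure}, which identifies $\mu$ with the normalized restriction of $H^s$ to $\Kcal$ and guarantees $0<H^s(\Kcal)<\infty$. Once one knows $\mu(\Kcal_\w)$ exactly for every individual cylinder, the Proposition reduces to the saturation of a finite subadditivity inequality.

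First, I would compute $\mu(\Kcal_\w)=\rho_\w^s$ for every $\w\in\L^k$ by applying the standard scaling behavior of Hausdorff measure under similarities, namely $H^s(h_\w(A))=\rho_\w^s H^s(A)$, to $A=\Kcal$ and dividing by $H^s(\Kcal)$. Second, using the cocycle relation~\eqref{defn: rho} together with the defining identity $\sum_{i\in\L}\rho_i^s=1$ of the similarity dimension, I would deduce $\sum_{\w\in\L^k}\rho_\w^s=1$. Third, iterating~\eqref{eq: K self-similar} gives the covering $\Kcal=\bigcup_{\w\in\L^k}\Kcal_\w$, so that subadditivity forces the chain
\[
1=\mu(\Kcal)\leq \sum_{\w\in\L^k}\mu(\Kcal_\w)=\sum_{\w\in\L^k}\rho_\w^s=1
\]
to be an equality. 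For any designated pair $\a\neq\w$ in $\L^k$, a one-line inclusion--exclusion step then isolates that pair: writing $\Kcal$ as the union of $\Kcal_\w\cup\Kcal_\a$ with the remaining $\Kcal_\b$ and bounding the remainder by subadditivity gives $\mu(\Kcal_\w\cap\Kcal_\a)\leq\sum_{\b\in\L^k}\mu(\Kcal_\b)-\mu(\Kcal)=0$.

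I do not anticipate a substantive obstacle; the whole argument is bookkeeping around the exact value of $\mu(\Kcal_\w)$ supplied by Proposition~\ref{propn: properties of self similar measure}. The only points deserving minor care are invoking the standard scaling formula for Hausdorff measure under similarities, and isolating a single pair in the subadditivity step so as to extract a pairwise null-overlap conclusion rather than merely the joint statement that the total measure of the overlaps vanishes.
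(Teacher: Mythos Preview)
The paper does not supply its own proof of this proposition; it simply cites Hutchinson. So there is no in-paper argument to compare against.

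Your argument is correct. The scaling identity $H^s(h_\w(\Kcal))=\rho_\w^s H^s(\Kcal)$ together with $0<H^s(\Kcal)<\infty$ from Proposition~\ref{propn: properties of self similar measure} gives $\mu(\Kcal_\w)=\rho_\w^s$ exactly, and then the saturation of subadditivity forces every pairwise overlap to have measure zero, as you say. The inclusion--exclusion step isolating a single pair is clean: writing $\Kcal=(\Kcal_\w\cup\Kcal_\a)\cup\bigcup_{\b\neq\w,\a}\Kcal_\b$ and applying subadditivity and then exact inclusion--exclusion on the first union yields $1\le 1-\mu(\Kcal_\w\cap\Kcal_\a)$.

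One remark on logical dependencies: you are invoking Proposition~\ref{propn: properties of self similar measure}, which is itself cited from Hutchinson rather than proved here. In Hutchinson's original development, the positivity $H^s(\Kcal)>0$ is obtained from the open set condition via a mass distribution argument, independently of the null overlap statement, so your derivation is not circular. You are in effect observing that the null overlap is an elementary consequence of the harder fact $0<H^s(\Kcal)<\infty$, which is a perfectly legitimate route within the paper's framework.
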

    
    We now state two consequences of Proposition~\ref{prop: null overlap} which we use in our proof.
    For a Borel set $A$ and a Borel measure $\mu$, we denote by $\mu\vert_A$ the restriction of $\mu$ to $A$. That is for every Borel set $B$, $\mu\vert_A(B) = \mu(B\cap A)$.
    The following lemma will be useful in estimating Hausdorff dimension.
    \begin{lemma}\label{lem: diam K = K hat}
    Suppose $\Fcal$ satisfies the open set condition.
    Then, for every $n\in \N$ and $\w\in \L^n$, $\Kcal_\w$ and $\hat{\Kcal}_\w$ have the same diameter.
    \end{lemma}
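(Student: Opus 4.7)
The inclusion $\hat{\Kcal}_\w \subseteq \Kcal_\w$ immediately gives $\diam(\hat{\Kcal}_\w) \leq \diam(\Kcal_\w)$, so the whole content of the lemma lies in the reverse inequality. Since the diameter of a set equals the diameter of its closure, it suffices to show that $\hat{\Kcal}_\w$ is dense in $\Kcal_\w$. My plan is to establish density by a measure-theoretic argument: show that the bad set $\Kcal_\w\setminus\hat{\Kcal}_\w$ is $\mu$-null, while every relatively open subset of $\Kcal_\w$ has strictly positive $\mu$-mass.

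The first ingredient is immediate from Proposition~\ref{prop: null overlap}: by definition
\[
\Kcal_\w\setminus\hat{\Kcal}_\w = \bigcup_{\a>\w,\;\a\in\L^n}\bigl(\Kcal_\w\cap\Kcal_\a\bigr),
\]
which is a finite union of sets of $\mu$-measure zero, hence $\mu$-null. For the second ingredient I would argue, using the self-similarity relation $\mu=\sum_{\w\in\L^n}\rho_\w^s (h_\w)_\ast\mu$ from~\eqref{eq: iterated P_l} together with Proposition~\ref{prop: null overlap}, that
\[
\mu\vert_{\Kcal_\w}=\rho_\w^s\,(h_\w)_\ast\mu
\]
as Borel measures on $\R^d$: the other terms in the sum contribute $\mu$-mass only on the overlaps $\Kcal_\w\cap\Kcal_\a$, which are null. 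Given any $y\in\Kcal_\w$ and any $r>0$, write $y=h_\w(z)$ with $z\in\Kcal$. Because $h_\w$ is a similarity with ratio $\rho_\w$, it maps $B(z,r/\rho_\w)\cap\Kcal$ into $B(y,r)\cap\Kcal_\w$, and the Ahlfors regularity bound of Proposition~\ref{propn: properties of self similar measure} applied at $z$ yields
\[
\mu\bigl(B(y,r)\cap\Kcal_\w\bigr) = \rho_\w^s\,\mu\bigl(B(z,r/\rho_\w)\cap\Kcal\bigr) \geq \rho_\w^s\cdot a\,(r/\rho_\w)^s = a\,r^s > 0.
\]

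Combining the two ingredients, $\mu\bigl(B(y,r)\cap\hat{\Kcal}_\w\bigr)=\mu\bigl(B(y,r)\cap\Kcal_\w\bigr)\geq ar^s>0$ for every $y\in\Kcal_\w$ and every $r>0$. In particular $B(y,r)\cap\hat{\Kcal}_\w$ is nonempty, so $\hat{\Kcal}_\w$ is dense in $\Kcal_\w$, and therefore $\diam(\hat{\Kcal}_\w)=\diam(\overline{\hat{\Kcal}_\w})\geq\diam(\Kcal_\w)$, completing the proof.

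I do not foresee a significant obstacle: the identification $\mu\vert_{\Kcal_\w}=\rho_\w^s(h_\w)_\ast\mu$ is the only step requiring care, and it is a direct consequence of the iterated self-similarity relation together with Proposition~\ref{prop: null overlap}. The argument is otherwise a routine combination of Ahlfors regularity with the fact that measure-zero subsets of an Ahlfors-regular set cannot be dense in any relatively open subset of its support.
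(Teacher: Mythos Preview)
Your proof is correct and follows essentially the same strategy as the paper: reduce to showing $\hat{\Kcal}_\w$ is dense in $\Kcal_\w$ by noting that $\Kcal_\w\setminus\hat{\Kcal}_\w$ is $\mu$-null while $\mu$ charges every relatively open subset of $\Kcal_\w$. The only difference is in how the latter fact is established: the paper pulls back along the coding map $\pi:\L^\N\to\Kcal$ and uses that the Bernoulli measure $\l^\N$ has full support on the cylinder $\pi^{-1}(\Kcal_\w)$, whereas you argue directly in $\R^d$ via the transformation rule $\mu\vert_{\Kcal_\w}=\rho_\w^s(h_\w)_\ast\mu$ and Ahlfors regularity---your route is arguably more self-contained since it avoids introducing the symbolic model.
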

    
    \begin{proof}
    It suffices to show that $\hat{\Kcal}_\w$ is dense in $\Kcal_\w$.
    For $\a\in\L^\N$ and $k\in\N$, denote by $\a|_k$ its length $k$ prefix.
    Let $\pi: \L^\N \r \Kcal$ denote the coding map defined by
    \begin{equation*}
        \pi(\a) = \lim_{k\r\infty} h_{\a|_k}(0).
    \end{equation*}
    The space $\L^\N$ is endowed with the product topology induced from the discrete topology on $\L$.
    In this topology, the map $\pi$ is onto and continuous~\cite[Theorem 3.1.3(vii)]{Hutchinson}. Moreover, if $\l=(\rho_i^s)$ and $\mu$ is the unique self-similar measure satisfying~\eqref{eqn: self similar measure}, then
    \[ \pi_\ast(\l^\N) = \mu. \]
    Let $A = \pi^{-1}(\Kcal_\w)$ and $\hat{A} = \pi^{-1}(\hK_\w)$. It suffices to show that $\hat{A}$ is dense in $A$ by continuity of $\pi$. 
    Note that $A$ is the cylinder set consisting of all sequences whose prefix of length $n$ is $\w$. Moreover, by Proposition~\ref{prop: null overlap}, $\l^\N(\hat{A}) = \l^\N(A)>0$. Since $\l^\N\vert_A$ has full support in $A$, $\hat{A}$ is dense in $A$.
    \end{proof}

    We record another useful consequence of the null overlaps.
    \begin{lemma}\label{lem: transformation of self-similar measures}
    Suppose $\mc{F}$ satisfies the open set condition, $\l=(\rho_i^s)$ and $\mu = P_\l(\mu)$. Then, for every $k\in \N$ and all $\w\in \L^k$,
    \begin{equation*}
         \mu \vert_{\Kcal_\w}= \l_\w (h_\w)_\ast\mu.
    \end{equation*}
    \end{lemma}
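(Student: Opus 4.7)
The plan is to combine the iterated self-similarity identity~\eqref{eq: iterated P_l} with the null overlap Proposition~\ref{prop: null overlap}. Starting from
\[
\mu = \sum_{\a \in \L^k} \l_\a (h_\a)_\ast \mu,
\]
I would evaluate both sides on a Borel set of the form $B \cap \Kcal_\w$ for an arbitrary Borel $B \subseteq \R^d$ and a fixed $\w \in \L^k$, obtaining
\[
\mu(B \cap \Kcal_\w) = \sum_{\a \in \L^k} \l_\a (h_\a)_\ast\mu(B \cap \Kcal_\w).
\]
The goal is then to show that all terms with $\a \neq \w$ vanish, so that only the $\a = \w$ term survives; since $(h_\w)_\ast \mu$ is supported in $\Kcal_\w$, this surviving term equals $\l_\w (h_\w)_\ast \mu(B)$, which is exactly the claim.

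The main point (and the only mildly subtle step) is upgrading the null overlap statement $\mu(\Kcal_\a \cap \Kcal_\w) = 0$ for $\a \neq \w$, given by Proposition~\ref{prop: null overlap}, to the statement $(h_\a)_\ast \mu(\Kcal_\a \cap \Kcal_\w) = 0$. I would do this by plugging $\Kcal_\a \cap \Kcal_\w$ into the iterated identity itself:
\[
0 = \mu(\Kcal_\a \cap \Kcal_\w) = \sum_{\b \in \L^k} \l_\b (h_\b)_\ast \mu(\Kcal_\a \cap \Kcal_\w) \;\geq\; \l_\a (h_\a)_\ast \mu(\Kcal_\a \cap \Kcal_\w),
\]
and since $\l_\a > 0$, this forces $(h_\a)_\ast\mu(\Kcal_\a \cap \Kcal_\w) = 0$.

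To finish, I use that $(h_\a)_\ast\mu$ is supported on $h_\a(\Kcal) = \Kcal_\a$, so for $\a \neq \w$,
\[
(h_\a)_\ast\mu(B \cap \Kcal_\w) \;\leq\; (h_\a)_\ast\mu(\Kcal_\a \cap \Kcal_\w) = 0,
\]
and for $\a = \w$, again by the support property, $(h_\w)_\ast \mu(B \cap \Kcal_\w) = (h_\w)_\ast \mu(B)$. Substituting back yields $\mu(B \cap \Kcal_\w) = \l_\w (h_\w)_\ast \mu(B)$ for every Borel $B$, which is the desired identity $\mu\vert_{\Kcal_\w} = \l_\w (h_\w)_\ast \mu$. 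No step here is computationally heavy; the only "trick" worth highlighting is the self-referential use of the iterated identity to transfer the null overlap from $\mu$ to each pushforward $(h_\a)_\ast \mu$.
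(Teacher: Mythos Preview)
Your proof is correct and follows exactly the approach the paper indicates: the paper's proof is a one-liner citing Proposition~\ref{prop: null overlap} and equation~\eqref{eq: iterated P_l}, and you have simply spelled out the details of how those two ingredients combine. The self-referential step you highlight---plugging $\Kcal_\a \cap \Kcal_\w$ back into the iterated identity to transfer the null overlap from $\mu$ to each $(h_\a)_\ast\mu$---is precisely the content the paper leaves implicit.
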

    \begin{proof}
    This follows from Proposition~\ref{prop: null overlap} and equation~\eqref{eq: iterated P_l}.
    \end{proof}
    
 
 \subsection{Elementary facts on projections}
    We say $\mc{F}$ is \textbf{irreducible} if no finite collection of proper affine subspaces of $\R^d$ is invariant by each $h_i\in \mc{F}$.
    When $s$ is the similarity dimension of $\mc{F}$, Proposition~\ref{propn: properties of self similar measure} implies that
    \begin{equation} \label{eq: alpha_d is the dimension}
        \a_d(\mu) = s,
    \end{equation}
    where $\mu$ is the restriction of the $s$-Hausdorff measure to $\Kcal$. Moreover, it is easy to see that
    \begin{equation}\label{eq: proj spec increasing}
        i\leq j \Longrightarrow \a_i(\mu) \leq \a_j(\mu).
    \end{equation}
    
    The following lemma provides a simple lower esimtate for $\a_\ell(\mu)$.

    \begin{lemma}
    \label{lem: estimate alpha small codim}
    Suppose $\Fcal$ is an irreducible IFS on $\R^d$ satisfying the open set condition and let $\Kcal$ be its limit set. Suppose $s=\dim_H(\Kcal) >d-1$. Then, for each $1\leq \ell \leq d$, $\a_\ell(\mu)\geqslant s-d+\ell$, where $\mu$ is the restriction of $H^s$ to $\Kcal$.
    \end{lemma}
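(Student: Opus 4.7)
The plan is to combine the Ahlfors $s$-regularity of $\mu$ from Proposition~\ref{propn: properties of self similar measure} with a standard covering estimate for $\e$-neighborhoods of affine subspaces intersected with a bounded region. The point is that near a $(d-\ell)$-dimensional affine subspace $\Lcal$, the set $\Lcal^{(\e)}$ is ``thin'' in $\ell$ directions, so any bounded portion of it is efficiently coverable by balls of radius $\e$, and the mass that a Frostman-type measure can assign to such a cover is controlled by $s$ and the number of balls.

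First I would fix $R > 0$ with $\Kcal \subset B(0,R)$ and apply Proposition~\ref{propn: properties of self similar measure} to obtain a constant $b > 0$ with $\mu(B(x,r)) \leq b r^s$ for every $x \in \R^d$ and every $r \in (0,1)$. Next, for an arbitrary $\Lcal \in \mc{A}(d,d-\ell)$ and $0 < \e < R$, I would observe that if $\Lcal \cap B(0,R+\e)$ is nonempty, then it is contained in an affine ball of dimension $d-\ell$ and radius at most $2R$, and so admits a covering by at most $C_{d,R}\,\e^{-(d-\ell)}$ balls of radius $\e$, for a constant $C_{d,R}$ depending only on $d$ and $R$. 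Thickening each of these balls to radius $2\e$ yields a covering of $\Lcal^{(\e)} \cap B(0,R)$ by the same number of balls of radius $2\e$. Using $\mrm{supp}(\mu) \subset \Kcal \subset B(0,R)$, one obtains
\begin{equation*}
    \mu\bigl(\Lcal^{(\e)}\bigr) \;=\; \mu\bigl(\Lcal^{(\e)} \cap B(0,R)\bigr) \;\leq\; C_{d,R}\,\e^{-(d-\ell)} \cdot b\,(2\e)^s \;=\; C\,\e^{s-d+\ell},
\end{equation*}
for a constant $C$ independent of $\Lcal$ and $\e$.

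Taking logarithms and dividing by $\log \e < 0$, then passing to the supremum over $\Lcal$ and the $\liminf$ as $\e \to 0^+$ in the defining formula~\eqref{eqn: proj spec}, the additive term $\log C / \log \e$ vanishes in the limit, yielding $\a_\ell(\mu) \geq s - d + \ell$. The second claim of the lemma then follows by substituting this lower bound into the formula in Theorem~\ref{main thm}: since $\a_\ell(\mu) \geq s - d + \ell$ for every $\ell$, the quantity $(d-\ell+1)\a_\ell(\mu)/(d+1)$ is at least $(d-\ell+1)(s-d+\ell)/(d+1)$, a concave quadratic in $\ell$ that, on $\{1,\dots,d\}$, is minimized at $\ell = 1$ (as well as $\ell = d$, where both give the same value $d(s-d+1)/(d+1)$), giving the stated bound on the dimension.

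There is no real obstacle here: the hypothesis $s > d-1$ is used only to guarantee that the exponent $s - d + 1$ is positive, so that the lower bound on $\a_1(\mu)$ is non-trivial (and consequently the estimates $\a_\ell(\mu) \geq s - d + \ell > 0$ feed usefully into the formula of Theorem~\ref{main thm}). Irreducibility of $\Fcal$ is not actually invoked; it is present only to maintain the standing hypotheses shared with Theorem~\ref{main thm}.
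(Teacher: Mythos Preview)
Your proof is correct and follows essentially the same route as the paper's: cover the bounded portion of $\Lcal^{(\e)}$ by $O(\e^{-(d-\ell)})$ balls of radius $O(\e)$ and apply the Ahlfors $s$-regularity from Proposition~\ref{propn: properties of self similar measure}. One minor remark: the lemma as stated has only the single conclusion $\a_\ell(\mu)\geqslant s-d+\ell$; what you call ``the second claim of the lemma'' is really the content of Corollary~\ref{small codim cor}, so that paragraph is extra commentary rather than part of the proof.
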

    \begin{proof}
    This result is well-known, we provide a proof for completeness. Given an affine subspace $\Lcal$ of dimension $d-\ell$ and $\e>0$, the boundedness of $\Kcal$ implies that we can cover the set $\Kcal \cap \Lcal^{(\e)}$ with $O(\e^{-(d-\ell)})$ balls of radius $\e$. By Proposition~\ref{propn: properties of self similar measure}, each such ball has measure at most $O(\e^s)$. In particular, $\mu(\Lcal^{(\e)}) \ll \e^{s-d+\ell}$. As $\Lcal$ and $\e$ were arbitrary, this completes the proof.
    \end{proof}

    
    \section{The Contraction Hypothesis and Divergent Trajectories}
\label{section: abstract setup}
	In this section, we prove an abstract recurrence result for orbits of the diagonal flow in~\eqref{eq: a_t and u(x)} starting from fractals in actions of $\mrm{SL}(n,\R)$ on metric spaces.
    Theorem~\ref{thrm: Hdim and non-divergence} is the main result of this section establishing a bound on the dimension of divergent orbits.
    In later sections, we verify the hypotheses of this theorem in the settings of the results stated in the introduction.

	\subsection{The Contraction Hypothesis for Actions of SL(n,R)}
    We fix the following more convenient parametrization of the diagonal subgroup $a_t$ of $\mrm{SL}(d+1,\R)$, defined in~\eqref{eq: a_t and u(x)}, which we denote by $g_t$ for $t>0$,
    \begin{equation} \label{linear forms g_t}
    	g_t = \begin{pmatrix}
    	t^{-d/(d+1)}  & \mbf{0}\\
        \mbf{0} & t^{1/(d+1)}\mrm{I}_d
    	\end{pmatrix},  
    \end{equation}
    where $\mathrm{I}_d$ denotes the $d\times d$ identity matrix.
    Note that in this parametrisation, one has for every $t,s >0$,
    \begin{equation*}
        g_t \circ g_s = g_{ts}.
    \end{equation*}

    Recall the definition of self-similar measures in~\eqref{eqn: self similar measure}.
    The following is the key recurrence property for the action and the measure which underlies the results stated in the introduction.
	\begin{definition} [The Contraction Hypothesis]
      \label{defn: height functions}
      Suppose $X$ is a metric space equipped with an action of $G=\mrm{SL}(d+1,\R)$ and let $\mu$ be a self-similar probabilty measure for an IFS $\Fcal = \set{h_i=\rho_i O_i +b_i :i\in \L}$ on $\R^d$.
      Given a collection of functions $\mathscr{F}=\set{ f_k:X\r (0,\infty]: k\in \N}$ and real numbers $0\leq \g_0 <\b$, we say that $\mu$
      satisfies the $(\Fcal,\Fsc,\b,\g_0)$-\textbf{contraction hypothesis} on $X$ if the following properties hold:
      \begin{enumerate}
      \item The set $Z =  \set{f_k=\infty}$ is independent of $k$ and is $G$-invariant.
      \item For every $k\in\N$, $f_k $ is $\mrm{SO}(d+1,\R)$-invariant and uniformly log Lipschitz with respect to the $G$ action. That is for every bounded neighborhood $\mc{O}$ of identity in $G$, there exists a constant $C_\mc{O}\geq 1$ such that for every $g\in \mc{O}$, $x\in X$ and $k\in \N$,
      \begin{equation}\label{defn: log lipschitz}
          C_\mc{O}^{-1} f_k(x) \leqslant f_k(gx) \leqslant C_\mc{O} f_k(x).
      \end{equation}
        
		\item There exists a constant $c \geq 1$ such that the following holds:
        for every $k\in \N$ and $\g_0 \leq \g \leq \b$, there exists $T >0$ such that for all $y \in X$ with $f_k(y)>T$,
        \begin{equation} \label{eqn: CH}
        	\int_\Kcal \rho(\bx,k)^{-\g} f_k(g_{\rho(\bx,k)} u(\bx)y) \;d\mu(\bx) 
            		\leqslant c  f_k(y) \left(\int_\Kcal \rho(\bx,k)\;d\mu(\bx)\right)^{\b-\g},
        \end{equation}
        where $\rho(\bx,k)$ is the cocycle defined in~\eqref{eq: cocycle on K} and $\Kcal$ is the limit set of the IFS $\Fcal$.
      \end{enumerate}
   The functions $f_k$ will be referred to as \textbf{height functions}.
    \end{definition}
    
    The notion of height functions was introduced in homogeneous dynamics in~\cite{EskinMargulisMozes} and was used in~\cite{KKLM} to find a sharp upper bound on the dimension of singular systems of linear forms. The ``Contraction Hypothesis" terminology is due to~\cite{BQ-RandomWalkRecurrence}.
    
    \begin{remark}
    \begin{enumerate}

        \item 
    Inequality~\eqref{eqn: CH} should be thought of as a Margulis inequality with a \emph{non-uniform} contraction ratio. To best illustrate this analogy, consider the case where $\rho(\cdot,1) \equiv \rho$ for some fixed constant $\rho\in (0,1)$. Setting $\g=\b$ in~\eqref{eqn: CH} yields
    \begin{equation}\label{eq: CH simplified}
        \int_\Kcal f_k(g_{\rho^k} u(\bx)y) \;d\mu(\bx) \leqslant c\rho^{k\b} f_k(y) + \rho^{-k\b} T,
    \end{equation}
    for all $k\in\N$ and and all $y\in X$, where $T$ is the constant in Def.~\ref{defn: height functions}(3). In particular, if $k$ is large enough so that $\rho^{k\b}c < 1$, the inequality in~\eqref{eq: CH simplified} recovers the classical form of Margulis inequalities of the form (cf.~\cite{EskinMargulisMozes,EskinMargulis-RandomWalks,BQ-RandomWalkRecurrence,EskinMasur,EMM,KKLM,Khalil-Curves})
    \begin{equation*}
        \int_\Kcal f(g_t u(\bx)y) \;d\mu \leqslant a f(y) + b,
    \end{equation*}
    where $f=f_k$, $t= \rho^k$, $a= c\rho^{k\b} <1$, and $b=T\rho^{-k\b}$.
    
    \item The collection of height functions we use in our applications  will be equivalent in the following sense: for all $k,\ell\in \N$, there exists a constant $C=C(k,\ell)\geq 1$, such that 
        \begin{equation*}
            C^{-1}f_k \leqslant f_\ell \leqslant C f_k.
        \end{equation*}
        Allowing this flexibility in Def.~\ref{defn: height functions} is necessary however for verifying the contraction inequality~\eqref{eqn: CH} in the settings of flows on homogeneous spaces and strata of abelian differentials. We refer the reader to the proofs of Theorem~\ref{thm: constraction in X} and Corollary~\ref{cor: teich f} where this dependence of the height functions on $k$ is exploited. For the purposes of the discussion in this section, this flexibility does not play a role in the proofs and the reader may wish to regard the collection $\Fsc$ as consisting of a single height function.
    
    \item In our applications, the constant $T$, in Def.~\ref{defn: height functions}(3), can be chosen to be independent of $\g$. 
    \end{enumerate}
    \end{remark}
    
    We note that allowing height functions to assume the value $\infty$ has proven useful in several important applications~\cite{BQ-RandomWalkRecurrence,EMM}.

    \begin{definition}\label{def: doa collection}
    In the presence of a collection $\Fsc=\set{f_k}$ of height functions on a metric space $X$ with a $G$-action, we say an orbit $(a_t x)_{t \geqslant 0}$ is $\Fsc$-\textbf{divergent on average}, if for every $k\in\N$ and every $M>0$,
   \[ \frac{1}{T} \int_0^T \chi_{M,k}(a_t x)\;dt \r 0, \]
    where $\chi_{M,k}$ is the indicator function of $\set{y\in X: f_k(y) \leq M}$. We often drop $\Fsc$ from the notation: $\Fsc$-divergent on average when $\Fsc$ is understood from context.  
    \end{definition}

    The following is the main result of this section.
    \begin{theorem} \label{thrm: Hdim and non-divergence}
    	Let $X$ be a metric space equipped with an action by $G=\mrm{SL}(d+1,\R)$. Suppose $\mc{F}$ is an IFS on $\R^d$ satisfying the open set condition and denote by $\mc{K}$ its limit set. Let $s=\dim_H(\Kcal)$ and let $\mu$ be the restriction of the $s$-dimensional Hausdorff measure to $\Kcal$.
    	Assume that $\mu$ satisfies the $(\Fcal,\Fsc,\b,\g_0)$-contraction hypothesis on $X$ for a collection $\Fsc=\set{f_k:k\in\N}$ of height functions and real numbers $0\leq \g_0 <\b \leq s$.
        Then, for all $x_0\in X\backslash \set{f_1=\infty}$, 
        \begin{equation*}
           \dim_H \left(\bx\in \mc{K}: (a_tu(\bx)x_0)_{t\geqslant 0} \text{ is } \Fsc\text{-divergent on average}
           \right) \leqslant s-\b.
        \end{equation*}
    \end{theorem}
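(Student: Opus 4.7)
Fix $\gamma$ with $\gamma_0 \leq \gamma < \beta$ and a height function $f_k \in \Fsc$; I aim to show that the set of $\bx \in \Kcal$ for which $(a_t u(\bx) x_0)_{t \geq 0}$ is $\Fsc$-divergent on average has Hausdorff dimension at most $s - \gamma$, and then send $\gamma \nearrow \beta$ along a countable sequence. Choose a cusp level $M > 0$ and $\delta \in (0, 1)$ small, and set
\[
    B_n \;:=\; \{\bx \in \Kcal : f_k(g_{\rho(\bx, n)} u(\bx) x_0) > M\},
\]
the set of $\bx$ whose orbit sits above level $M$ at the discrete stage $n$. By Definition~\ref{def: doa collection}, if $\bx$ is $\Fsc$-divergent on average then for every sufficiently large $n$ the fraction of $j \in \{1, \ldots, n\}$ with $f_k(g_{\rho(\bx, j)} u(\bx) x_0) > M$ is at least $1 - \delta$; pigeonhole then forces at least one such $j$ to lie in $[(1 - \delta) n, n]$. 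Thus the divergent-on-average set lies inside $E := \bigcap_{N \geq 1} \bigcup_{n \in [(1-\delta) N, N]} B_n$, and it suffices to show $\dim_H(E) \leq s - \gamma$.

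The central step is a covering estimate for each $B_n$. The pieces $\hat{\Kcal}_\w$, $\w \in \Lambda^n$, have diameters $\asymp \rho_\w$ (Lemma~\ref{lem: diam K = K hat}) and $\mu$-mass $\asymp \rho_\w^s$ (Proposition~\ref{propn: properties of self similar measure}); those meeting $B_n$ cover it. The commutation $g_{\rho(\bx, n)} u(\bx) g_{\rho(\bx, n)}^{-1} = u(\rho(\bx, n)^{-1}\bx)$, combined with $|\rho(\bx, n)^{-1}(\bx - \bx')| \lesssim 1$ for $\bx, \bx' \in \hat{\Kcal}_\w$ and the log-Lipschitz hypothesis~\eqref{defn: log lipschitz}, shows $f_k(g_{\rho(\bx, n)} u(\bx) x_0)$ is comparable across each such piece; hence $f_k \gtrsim M$ uniformly on every piece meeting $B_n$. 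A Chebyshev-type manipulation then yields
\[
    \sum_{\w \in \Lambda^n,\ \hat{\Kcal}_\w \cap B_n \neq \emptyset} \mathrm{diam}(\Kcal_\w)^{s - \gamma}
    \;\lesssim\; \frac{1}{M} \int_\Kcal \rho(\bx, n)^{-\gamma} f_k(g_{\rho(\bx, n)} u(\bx) x_0)\, d\mu(\bx).
\]
The right-hand side is the left-hand side of the contraction hypothesis~\eqref{eqn: CH} at step $n$, after passing from $f_k$ to $f_n$ via the equivalence in the remark following Definition~\ref{defn: height functions}. Applying CH therefore bounds it by a constant multiple of $\max(f_k(x_0), T) \cdot q^{n(\beta - \gamma)}$, where $q := \int_\Kcal \rho(\bx, 1)\, d\mu = \sum_i \rho_i^{s+1} \in (0, 1)$; the $\max(\cdot, T)$ handles the case $f_k(x_0) \leq T$ via the additive correction implicit in~\eqref{eqn: CH}.

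Summing the previous bound over $n \in [(1-\delta) N, N]$ produces a cover of $E$ at scale $\rho_{\max}^{(1-\delta) N} \to 0$ with total $(s-\gamma)$-premeasure bounded by the convergent tail $\sum_{n \geq (1-\delta) N} q^{n(\beta - \gamma)}$ (absorbing the subexponential equivalence factors $C(k,n)$), which vanishes as $N \to \infty$. Hence $H^{s-\gamma}_\kappa(E) = 0$ for every $\kappa > 0$, so $\dim_H(E) \leq s - \gamma$; letting $\gamma \nearrow \beta$ completes the proof. The hardest step is the covering estimate above: lifting the pointwise condition $f_k > M$ to a whole-piece bound hinges on the precise matching between the diameter $\rho_\w$ of $\hat{\Kcal}_\w$ and the contraction rate of $g_{\rho(\bx, n)}$ in the $U$-direction, a feature particular to the self-similar structure. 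Equivalently, and in line with the inductive procedure sketched in the Introduction, one iterates CH one step at a time via the decomposition $\mu = \sum_i \rho_i^s (h_i)_\ast \mu$, absorbing the IFS rotations $O_i$ through the $\mathrm{SO}(d+1)$-invariance of $f_k$ and the translations $b_i$ through the identity $g_t u(t^{-1} v) = u(v) g_t$, arriving at the same exponential decay.
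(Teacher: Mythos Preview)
Your covering inequality
\[
    \sum_{\w \in \Lambda^n,\ \hat{\Kcal}_\w \cap B_n \neq \emptyset} \mathrm{diam}(\Kcal_\w)^{s-\gamma}
    \;\lesssim\; \frac{1}{M} \int_\Kcal \rho(\bx,n)^{-\gamma}\, f_k(g_{\rho(\bx,n)} u(\bx) x_0)\, d\mu(\bx)
\]
is correct (this is Lemma~\ref{lem: H-measure to mu} in the paper), but the subsequent step---bounding the right-hand side by a constant times $q^{n(\beta-\gamma)}$---fails. The contraction hypothesis~\eqref{eqn: CH} only applies when $f_k(y)>T$; it has no additive correction, and the implicit one you invoke is exactly the problem. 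If you iterate~\eqref{eqn: CH} one step at a time (as in Lemma~\ref{lem: contract once}), then over each piece $\hat{\Kcal}_\w$ at level $n-1$ you get the contraction factor $\zeta=(\int\rho\,d\mu)^{\beta-\gamma}$ only when the intermediate point $y_\w=g_{\rho_\w}k_\w u(b_\w)x_0$ lies in the cusp. When $f_1(y_\w)\le T$ you are forced to use the log-Lipschitz bound, which contributes an additive term of size $\asymp T\xi$ with $\xi=\int\rho^{-\gamma}\,d\mu>1$. Iterating the recursion $J_n\le cA\zeta\,J_{n-1}+C'\xi^n$ yields $J_n\lesssim \xi^n$, not $\zeta^n$. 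Since $\sum_{\w\in\Lambda^n}\mathrm{diam}(\Kcal_\w)^{s-\gamma}=\xi^n$ exactly, your Chebyshev bound collapses to the trivial estimate and gives no dimension information. Your set $E$ only records \emph{one} cusp visit in $[(1-\delta)n,n]$; that single contraction cannot beat $n-1$ possible expansions.

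The attempted shortcut of applying~\eqref{eqn: CH} directly at level $n$ (with $f_n$ in place of $f_k$) does not rescue this: the equivalence constants $C(k,n)$ between the height functions are not asserted to be subexponential, and in the paper's concrete construction $f_n=f_{\varepsilon_0(n),\varrho}$ with $\varepsilon_0(n)$ depending on $\omega_0(n)$ (see Theorem~\ref{thm: constraction in X}), which grows with $n$. The paper avoids this entirely by fixing a single $k$ and never switching height functions. What makes the argument go through is Proposition~\ref{prop: H-measure bound}: one works not with $B_n$ but with $Z_x(M,N,k,\delta)$, the set where a \emph{proportion} $\delta$ (eventually sent to $1$) of the first $N$ discrete times are in the cusp. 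The proof decomposes $\{1,\dots,N\}$ into maximal cusp intervals (where Lemma~\ref{lem: induction on contraction} gives repeated contraction by $\zeta$) and maximal non-cusp intervals (where Lemma~\ref{lem: bound over good intervals} gives the honest expansion by $\xi$), producing the bound $c_0^N\,\xi^{(1-\delta)N}\zeta^{\delta N}$. One then chooses $\delta$ close enough to $1$ that $\zeta^\delta\xi^{1-\delta}<1$, and finally $k$ large enough that the product beats $c_0$. Tracking the full cusp itinerary, not just its last entry, is essential.
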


    The main applications of our results are to the $G$ action on the space of unimodular lattices in $\R^d$  and to $\mrm{SL}(2,\R)$ actions on moduli spaces of abelian differentials.
    The remainder of this section is dedicated to the proof of Theorem~\ref{thrm: Hdim and non-divergence}.
    
    \subsection{Notation}
   Throughout the remainder of this section, we let $X,G,\Fcal$ and $\mu$ be as in Theorem~\ref{thrm: Hdim and non-divergence}. We fix a finite set $\L$ so that $\Fcal = \set{h_i:i\in \L}$ and denote by $\Kcal$ the limit set of $\Fcal$.
   We use the notation of Definition~\ref{defn: height functions} pertaining to the height functions $f_k$. In particular, for $s=\dim \Kcal$, we have
   \begin{equation*}
       \g_0 < \b \leqslant s.
   \end{equation*}
   
   Moreover, we retain the iterated function systems notation of Section~\ref{sec: prelim}.
   In particular, given a word $\w\in \L^n$, we use the notation:
   \begin{equation*}
       \Kcal_\w := h_\w(\Kcal),
   \end{equation*}
   where $h_\w$ is given by~\eqref{eq: composition order}.
   For an integrable function $\vp$ on $\R^d$, we use $\int \vp \;d\mu$ and $\int_\Kcal \vp\;d\mu$ interchangeably.

\subsection{Integral inequalities and covering estimates}
The goal of this section is to use the contraction hypothesis to control the Hausdorff dimension of divergent orbits. 

    Using the log Lipschitz property $(2)$ of Definition~\ref{defn: height functions}, one can easily verify that the orbit $(a_tu(\bx)x_0)_{t\geqslant 0}$ diverges on average if and only if
    \begin{equation}\label{eq: doa characterization}
        \frac{1}{N} \sum_{l=1}^N \chi_{M,k}(g_{\rho(\bx,lk)}u(\bx)x_0) \r 0, \qquad \text{as } N \r\infty,
    \end{equation}
    for all $k\in\N$ and $M>0$, where $\chi_{M,k}$ is the indicator of $\set{f_k \leq M}$.
    This observation is very useful in handling the case where the contraction ratios of maps in $\Fcal$ are not all the same. This case poses significant difficulties in the proof. We are thus naturally led to studying the following sets:
    for $0<\d < 1$, $M>0$, $x\in X$, and $N,k \in \N$, define $Z_x(M,N,k,\d)$ by
	\begin{equation} \label{defn: Z_x(M, N, delta)}
	Z_x(M, N,k, \d) := \set{ \bx\in \mc{K}: \#\set{ 1\leq l \leq N: f_k(g_{\rho(\bx,lk)}u(\bx)x)>M} > \d N}.
    \end{equation}

    The following proposition is one of the main technical results of this article.
    \begin{proposition} \label{prop: H-measure bound}
    There exists a constant $c_0 \geq 1$, depending only on the support of $\mu$ and on the constant $c$ in Def.~\ref{defn: height functions}(3), such that the following holds.
    For every $k\in \N$, $ \g \in (\g_0,\b)$ and $x\in X\backslash\set{f_1=\infty}$, there exists $M_0 = M_0(k,x,\g) >0$, so that for all $M>M_0$ the following holds.
     For all $0<\d<1$, $N\geq 1$, one has that
     \begin{equation*}
       \sum \mrm{diam}(\hK_\w)^{s-\g} \leqslant c_0^N 
       \bigg(\int_\Kcal \rho(\bx,k)^{-\g} \;d\mu\bigg)^{(1-\d)N}
\bigg(\int_\Kcal \rho(\bx,k) \;d\mu\bigg)^{\d(\b-\g) N},
     \end{equation*}
     where the sum is taken over words $\w \in \L^N$ satisfying $\hK_\w \cap Z_x(M,N,k,\d) \neq \emptyset$.
     Moreover, $M_0$ can be chosen uniformly as $x$ varies in a fixed sub-level set $\set{f_k\leq L}$ for any $L>0$.
    \end{proposition}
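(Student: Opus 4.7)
The plan is to reduce the sum to an integral inequality and then close it by induction on $N$, using self-similarity to peel off one block of $k$ IFS maps per iteration while the Contraction Hypothesis handles the height contributions.

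First I would translate the sum into an integral. By Lemma~\ref{lem: diam K = K hat}, Proposition~\ref{prop: null overlap}, and Proposition~\ref{propn: properties of self similar measure}, one has $\diam{\hK_\w}\asymp\rho_\w$ and $\mu(\hK_\w)=\rho_\w^s$, so $\diam{\hK_\w}^{s-\g}\asymp\rho(\bx,N)^{-\g}\mu(\hK_\w)$ on $\hK_\w$. The sum therefore equals, up to a multiplicative constant, $\int_{\widetilde{Z}_x}\rho(\bx,N)^{-\g}\,d\mu$, where $\widetilde{Z}_x$ is the union of those $\hK_\w$ meeting $Z_x(M,N,k,\d)$. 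Since two points of $\hK_\w$ lie within distance $\asymp\rho(\bx,N)$ of each other, the log-Lipschitz property~\eqref{defn: log lipschitz} of $f_k$ combined with $g_t u(\mathbf{v})=u(\mathbf{v}/t)g_t$ yields $\widetilde{Z}_x \subseteq Z_x(M/C,N,k,\d)$ for a constant $C=C(\Fcal)$, reducing the problem to bounding $J_N(x):=\int_\Kcal\rho(\bx,N)^{-\g}\mathbf{1}_{Z_x(M,N,k,\d)}\,d\mu$.

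Next I would prove the required upper bound on $J_N(y)$ by induction on $N$, uniformly for $y$ in a fixed sub-level set of $f_k$. The inductive step rests on the identity
\begin{equation*}
f_k\bigl(g_{\rho(h_\w(\by),lk)}u(h_\w(\by))y\bigr) \;=\; f_k\bigl(g_{\rho(\by,(l-1)k)}u(\by)y_\w\bigr),\qquad l\geq 1,\; \w\in\L^k,
\end{equation*}
where $y_\w:=\tilde{O}_\w^{-1} g_{\rho_\w} u(b_\w) y$ and $\tilde{O}_\w\in\mrm{SO}(d+1,\R)$ lifts the rotation $O_\w$. It follows from the commutation $g_t u(\mathbf{v})=u(\mathbf{v}/t)g_t$, the embedding $\mrm{Sim}(\R^d)\hookrightarrow N_G(U)$ (under which $u(O_\w\mathbf{v})=\tilde{O}_\w u(\mathbf{v})\tilde{O}_\w^{-1}$), and the $\mrm{SO}(d+1,\R)$-invariance of $f_k$. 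Applied to $\bx=h_\w(\by)$, it reduces the $Z_y$-condition to an analogous condition on $\by$ at the new basepoint $y_\w$ with one fewer iteration of the $k$-step cocycle, the $l=1$ term corresponding to $f_k(u(\by)y_\w)$.

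I would then split on whether the $l=1$ step is "good" ($f_k(u(\by)y_\w)\leq M$) or "bad". The good contribution, summed over $\w\in\L^k$, produces a factor $\sum_\w\l_\w\rho_\w^{-\g}=\int\rho(\bx,k)^{-\g}\,d\mu$ times the inductive bound at $N-1$ (with $\d$ shifted by $O(1/N)$, absorbed into the constants). The bad contribution is dominated by $f_k(u(\by)y_\w)/M$ times the indicator at $N-1$; summing over $\w$, the expression $\sum_\w\l_\w\rho_\w^{-\g}\int f_k(u(\by)y_\w)\,d\mu(\by)$ matches, up to constants, the integral $\int\rho(\bx,k)^{-\g}f_k(g_{\rho(\bx,k)}u(\bx)y)\,d\mu$ to which~\eqref{eqn: CH} applies, extracting the factor $c\cdot f_k(y)\cdot(\int\rho(\bx,k)\,d\mu)^{\b-\g}/M$. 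Choosing $M$ large enough that $f_k(y)/M$ is absorbed into $c_0$, the two contributions combine over $N$ iterations (with $\d N$ bad and $(1-\d)N$ good steps) to yield the claimed product, while the combinatorial count of bad patterns ($\binom{N}{\lceil\d N\rceil}\leq C^N$) is absorbed into $c_0^N$. The main obstacle is ensuring that the threshold $f_k(y_\w)>T$ required by~\eqref{eqn: CH} is met at every invocation in the recursion: the basepoints $y_\w$ evolve under the unbounded elements $g_{\rho_\w}$ and $u(b_\w)$, so their heights fluctuate irregularly with $\w$. This is why $M_0$ must depend on $y$, but the claimed uniformity over $\{f_k\leq L\}$ follows because only a bounded initial segment of the recursion depends sensitively on $y$. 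The distinguishing feature of this setting, absent from the uniform-contraction case of~\cite{KKLM}, is that the cocycle weight $\rho(\bx,N)^{-\g}$ is genuinely non-constant on $\Kcal$ and must be threaded through the induction as a weight rather than a global factor.
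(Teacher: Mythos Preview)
Your reduction of the sum to the integral $J_N(x)=\int_\Kcal \rho(\bx,Nk)^{-\g}\mathbf{1}_{Z_x(M/C,N,k,\d)}\,d\mu$ and the self-similarity identity relating $f_k(g_{\rho(h_\w(\by),lk)}u(h_\w(\by))y)$ to the shifted orbit at the new basepoint $y_\w$ are both correct and match the paper's setup. The gap is in the bad branch of your recursion.

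You claim that the bad contribution, summed over $\w\in\L^k$, coincides up to constants with the integral $\int\rho(\bx,k)^{-\g}f_k(g_{\rho(\bx,k)}u(\bx)y)\,d\mu$ to which~\eqref{eqn: CH} applies. But the bad contribution retains the factor $\rho(\by,(N-1)k)^{-\g}\,\mathbf{1}_{Z_{y_\w}(M,N-1,k,\d'')}(\by)$ inside the integrand, and this factor depends both on $\by$ and on $\w$ (through $y_\w$); it cannot be stripped away without destroying the inductive structure. If instead you pull out $f_k(u(\by)y_\w)\le A f_k(y_\w)$ pointwise, you are left with $\frac{A}{M}\sum_\w \l_\w\rho_\w^{-\g} f_k(y_\w)\, J_{N-1}(y_\w)$, which does not factor usefully. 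More decisively, the quantity $f_k(y)/M$ that you propose to ``absorb into $c_0$'' is the height of the \emph{current} recursion basepoint; during a run of consecutive bad steps this height exceeds $M$ at every step (that is exactly what ``bad'' means), so your per-step factor is $>1$ and the product over a long bad run explodes rather than contracts.

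The paper resolves this by first fixing the entire pattern $Q\subseteq\{1,\dots,N\}$ of bad times and then treating each maximal bad run $B_q\subseteq Q$ as a single block. The height weight $f$ is \emph{carried through} the run by iterating~\eqref{eqn: CH} (Lemma~\ref{lem: induction on contraction}), each invocation being licensed because the bad condition itself guarantees $f>T$; this yields a net factor $(cA\zeta)^{|B_q|}$ with the $f$-weight transported from the end of the run to its start. The $f$-weight is then discharged exactly once, at the start of the run, using that the preceding step is good so $f\le MA$ there by log-Lipschitz. Thus the cancellation of $M$ against $f$ happens once per run, at its boundary, not once per step. The obstacle you single out---the lower threshold $f_k(y_\w)>T$---is in fact automatic at bad steps; the real obstacle is the \emph{upper} bound on $f_k$ along the recursion, which your scheme does not supply.
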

    
    \subsection*{Notational Convention} For the remainder of this section, we use $f$ to denote $f_1$ to simplify notation.

    We need technical preparation before the proof which occupies the next $3$ subsections. 
    Define the following constants:
    \begin{equation}\label{eq: R}
        K = \diam{\Kcal},\qquad  R = \sup_{\bx\in \Kcal} \norm{\bx}.
    \end{equation}
    Using $(2)$ of Definition~\ref{defn: height functions}, we can find $A \geq 1$ such that 
        \begin{equation} \label{eq: A}
        	A^{-1} f_k(y) \leqslant  f_k(u(\bx) y) \leqslant A f_k(y),
        \end{equation}
    for all $k\in\N$, $y\in X$ and all $\bx$ in a ball around $\mathbf{0}\in \R^d$ of radius $2R$.
    We also fix a constant $B\geq 1$ so that
    \begin{equation} \label{eq: B}
        	B^{-1} f_k(y) \leqslant  f_k(g_{\rho(\bx,1)} y) \leqslant B f_k(y),
    \end{equation}
    for all $k\in\N$, $\bx\in \Kcal$ and $y\in X$.
    For $x\in X$, $M>0$ and natural numbers $m,n \in \N$, we define the following sets:
	\begin{equation*} \label{defn: B_x(M,t; m+n)}
		B_x(M, m;n) = \set{ \bx\in \mc{K}:
        	f_1(g_{\rho(\bx,m+l)}u(\bx)x) \geqslant M, \text{ for } 1 \leq l \leq n}.
	\end{equation*}
	Recall the definition of the sets $\hat{\Kcal}_\w$ and the partitions $\hat{\Pcal}_n$ in~\eqref{eq: disjoint images}.
	We frequently use the fact that $\diam{\Kcal_\w} = \diam{\hK_\w} = K\rho_\w$ for every $\w \in \cup_k \L^k$ and that
	\begin{equation}
	    \mu(\Kcal_\w) = \mu(\hK_\w) = \rho_\w^s.
	\end{equation}
	These facts follow from Lemmas~\ref{lem: diam K = K hat} and~\ref{lem: transformation of self-similar measures}.

\subsection{Averages of multiplicative cocycles}

    We record the following cocycle relation.
    \begin{lemma}\label{lem: time cocycle on K}
    Let $\w \in \L^n$. Then, for every $m\in \N$ and $\bx\in\hK_\w$,
    \begin{equation*}
        \rho(\bx,n+m) = \rho(h_\w^{-1}(\bx),m)\rho(\bx,n).
    \end{equation*}
    \end{lemma}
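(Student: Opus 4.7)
The strategy is to unfold the definition~\eqref{eq: cocycle on K} of $\rho$ on $\Kcal$ using the refining property of the partitions $\hat{\Pcal}_k$ provided by Lemma~\ref{lem: hK refining}. Since $\bx \in \hK_\w$ with $\w \in \L^n$, applying Lemma~\ref{lem: hK refining} at scale $n+m$ yields a unique $\a \in \L^{n+m}$ with $\bx \in \hK_\a$, and $\a$ necessarily has $\w$ as its length-$n$ prefix; write $\a = \w\t$ with $\t \in \L^m$. The multiplicative definition~\eqref{defn: rho} then gives
\[
    \rho(\bx, n+m) \;=\; \rho(\w\t, n+m) \;=\; \rho(\w, n)\,\rho(\t, m) \;=\; \rho(\bx, n)\,\rho(\t, m),
\]
where the last equality uses $\rho(\bx, n) = \rho(\w, n)$ directly from~\eqref{eq: cocycle on K}. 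The proof therefore reduces to identifying $\rho(\t, m)$ with $\rho(h_\w^{-1}(\bx), m)$, which by~\eqref{eq: cocycle on K} amounts to showing $h_\w^{-1}(\bx) \in \hK_\t$.

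This last claim is the crux. Since $\bx \in \Kcal_{\w\t} = h_\w(\Kcal_\t)$ and $h_\w$ is a bijection of $\R^d$, one has $h_\w^{-1}(\bx) \in \Kcal_\t$, so there is a unique $\t' \in \L^m$ with $h_\w^{-1}(\bx) \in \hK_{\t'}$. I plan to show $\t' = \t$ by two lexicographic inequalities. First, applying $h_\w$ to $h_\w^{-1}(\bx) \in \Kcal_{\t'}$ gives $\bx \in \Kcal_{\w\t'}$; combined with $\bx \in \hK_{\w\t}$ and the definition~\eqref{eq: disjoint images} of $\hK_{\w\t}$, this forces $\w\t' \leq \w\t$ in lexicographic order, whence $\t' \leq \t$. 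Conversely, $h_\w^{-1}(\bx) \in \Kcal_\t \cap \hK_{\t'}$ together with the definition of $\hK_{\t'}$ forces $\t \leq \t'$, yielding $\t = \t'$.

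The only real subtlety I anticipate is this compatibility check between the partitions at scales $n+m$ and $m$: it hinges on the elementary fact that the lexicographic order on $\L^{n+m}$, when restricted to words sharing a common prefix $\w$, coincides with the lexicographic order on their length-$m$ suffixes. Once this is in hand, the rest is a matter of unwinding~\eqref{defn: rho} and~\eqref{eq: cocycle on K}.
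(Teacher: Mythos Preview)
Your proposal is correct and follows essentially the same route as the paper: identify the unique word $\w\t \in \L^{n+m}$ with $\bx \in \hK_{\w\t}$, split $\rho(\w\t,n+m) = \rho_\w \rho_\t$, and then verify that $h_\w^{-1}(\bx) \in \hK_\t$. The paper simply cites Lemma~\ref{lem: hK refining} for this last step, whereas you spell out the lexicographic argument explicitly (which is in fact what the proof of Lemma~\ref{lem: hK refining} does internally); your version is thus a bit more self-contained, but there is no substantive difference in strategy.
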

    
    \begin{proof}
    Let $u\in \L^m$ be such that $\bx\in \hK_{\w u}$, where $\w u$ is the concatenated word. In particular, by definition
    \[ \Kcal_{\w u} = h_\w (h_u(\Kcal)). \]
    Then, we have $\rho(\bx,m+n) = \rho_\w\rho_u$, where $\rho_\w,\rho_u$ are the contraction ratios of $h_\w$ and $h_u$ respectively.
    Moreover, Lemma~\ref{lem: hK refining} implies that $h_\w^{-1}(\bx) \in \hK_u$.
    Hence, $\rho(h_\w^{-1}(\bx),m) = \rho_u$.
    Finally, since $\hK_u \subset \Kcal$, we see that $\bx\in \hK_\w$ and, in particular, $\rho(\bx,n) = \rho_\w$.
    \end{proof}
    
    The next lemma is a special case of a general principle: averages of ``locally constant" submultiplicative cocycles form a submultiplicative sequence. 
    \begin{lemma}\label{lem: average of submul coc is subadditive}
    For all $ \g \in \R$ and all $n\in \N$,
    \begin{equation*}
        \int \rho(\bx,n)^{\g} \;d\mu(\bx) = \left[\int \rho(\bx,1)^\g \;d\mu(\bx)\right]^{ n}
    \end{equation*}
    \end{lemma}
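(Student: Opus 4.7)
The plan is to prove the identity by induction on $n$, using two ingredients already available: the multiplicative cocycle relation from Lemma~\ref{lem: time cocycle on K}, and the self-similarity of $\mu$ recorded in Lemma~\ref{lem: transformation of self-similar measures}. The key observation that makes the whole thing work is that $\rho(\bx,1)$ is constant on each cylinder $\hK_i$, $i\in\L$ --- this is precisely the ``locally constant'' feature mentioned in the statement.

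First I would verify the $n=1$ base case explicitly. Partitioning $\Kcal$ into the (mutually $\mu$-null-overlap) pieces $\hK_i$, and using $\rho(\bx,1) = \rho_i$ on $\hK_i$ together with $\mu(\hK_i) = \rho_i^s$, one gets
\begin{equation*}
    \int_\Kcal \rho(\bx,1)^\g\;d\mu(\bx) = \sum_{i\in\L} \rho_i^\g\,\mu(\hK_i) = \sum_{i\in\L} \rho_i^{\g+s}.
\end{equation*}
This identifies the ``per-step'' factor.

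For the inductive step, assume the formula for $n-1$. Decompose again along $\hat{\Pcal}_1$:
\begin{equation*}
    \int_\Kcal \rho(\bx,n)^\g\;d\mu(\bx) = \sum_{i\in\L} \int_{\hK_i} \rho(\bx,n)^\g\;d\mu(\bx).
\end{equation*}
On $\hK_i$, Lemma~\ref{lem: time cocycle on K} gives $\rho(\bx,n) = \rho_i\,\rho(h_i^{-1}(\bx),n-1)$. By Lemma~\ref{lem: transformation of self-similar measures}, $\mu\vert_{\hK_i} = \mu\vert_{\Kcal_i} = \rho_i^s (h_i)_\ast\mu$, so the change of variables $y = h_i^{-1}(\bx)$ yields
\begin{equation*}
    \int_{\hK_i} \rho(\bx,n)^\g\;d\mu(\bx) = \rho_i^{\g+s} \int_\Kcal \rho(y,n-1)^\g\;d\mu(y).
\end{equation*}
Summing over $i\in\L$ and applying the inductive hypothesis together with the base case gives
\begin{equation*}
    \int_\Kcal \rho(\bx,n)^\g\;d\mu(\bx) = \Bigl(\sum_{i\in\L}\rho_i^{\g+s}\Bigr)\Bigl(\int_\Kcal \rho(\bx,1)^\g\;d\mu\Bigr)^{n-1} = \Bigl(\int_\Kcal \rho(\bx,1)^\g\;d\mu\Bigr)^n,
\end{equation*}
completing the induction.

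There is no real obstacle here; the only point requiring a bit of care is the bookkeeping between $\hK_i$ and $\Kcal_i$, but since they differ by a $\mu$-null set (Proposition~\ref{prop: null overlap}), every integral identity that holds on $\Kcal_i$ transfers verbatim to $\hK_i$. All the structural inputs --- null-overlap, the explicit push-forward description of $\mu\vert_{\Kcal_i}$, and the cocycle relation --- are already proved earlier in the paper, so the proof is essentially an exact, two-line induction once the right decomposition is fixed.
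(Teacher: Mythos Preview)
Your proof is correct and follows essentially the same approach as the paper: decompose over cylinders, apply the cocycle relation from Lemma~\ref{lem: time cocycle on K}, and change variables via Lemma~\ref{lem: transformation of self-similar measures}. The only cosmetic difference is that the paper establishes the full multiplicativity $a_{m+n}=a_m a_n$ (decomposing over $\L^m$) and then invokes induction, whereas you do the induction step $a_n = a_1 a_{n-1}$ directly.
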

    
    \begin{proof}
    Let $a_n = \int \rho(\bx,n)^{\g} \;d\mu$.
    Then, for all $m,n \in \N$, by Lemma~\ref{lem: time cocycle on K}, we obtain
    \begin{align*}
        a_{m+n} &= \int \rho(\bx,m+n)^\g \;\mrm{d}\mu 
        = \sum_{\w\in \L^{m}} \int_{\hK_\w} \rho(\bx,m)^\g \rho(h_{\w}^{-1}(\bx),n)^\g \;\mrm{d}\mu\\
        &=  \sum_{\w\in \L^{m}} \rho_\w^\g \int_{\hK_\w} \rho(h_{\w}^{-1}(\bx),n)^\g \;\mrm{d}\mu.
    \end{align*}
   By Lemma~\ref{lem: transformation of self-similar measures}, applied with $\l_\w = \rho(\w,m)^s$, it follows that
    \begin{equation*}
       (h_\w^{-1})_\ast \left(\mu\vert_{\hK_\w}\right) = \rho_\w^s  \mu = \mu(\hK_\w) \mu.
    \end{equation*}
    Hence, we get that
    \begin{equation*}
        a_{m+n} = \int \rho(\bx,n)^\g\;\mrm{d}\mu \sum_{\w\in \L^{m}} \rho_\w^\g \mu(\hK_\w)
        = a_n a_m.
    \end{equation*}
    The lemma follows by induction.
    \end{proof}
    
     The following lemma allows us to control complete sums over covers. 
    \begin{lemma} \label{lem: bound over good intervals}
    Let $k,m \in \N$.
    For every $\a\in \L^k$ and $\g\in \R$,
    \begin{equation*}
        \sum_{\w \in \L^{m}} \diam{\hK_{\a\w}}^{s-\g}
        \leqslant  \diam{\hK_\a}^{s-\g} 
        \int \rho(\bx,m)^{-\g} \;d\mu(\bx) ,
    \end{equation*}
    where, for $\w\in \L^m$, $\a\w$ denotes the concatenation of $\a$ and $\w$. In particular, $\Kcal_{\a\w} = h_\a(\Kcal_\w)$.
    \end{lemma}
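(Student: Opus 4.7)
The plan is to prove the stated bound by direct calculation; in fact, both sides will turn out to be equal, and the only real content is a clean application of self-similarity.

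First, I would rewrite the left-hand side using the multiplicativity of the contraction parameters. Since $h_{\a\w} = h_\a \circ h_\w$, we have $\rho_{\a\w} = \rho_\a \rho_\w$. Combining with Lemma~\ref{lem: diam K = K hat} and the fact that $h_\w$ is a similarity of ratio $\rho_\w$, I get $\diam{\hK_\w} = \diam{\Kcal_\w} = K\rho_\w$, and hence
\[
\diam{\hK_{\a\w}}^{s-\g} = K^{s-\g}(\rho_\a \rho_\w)^{s-\g} = \diam{\hK_\a}^{s-\g}\cdot \rho_\w^{s-\g}.
\]
Summing over $\w\in\L^m$ therefore reduces the left-hand side to $\diam{\hK_\a}^{s-\g}\sum_{\w\in\L^m}\rho_\w^{s-\g}$.

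Second, I would evaluate the integral on the right by decomposing $\Kcal$ along the partition $\hPcal_m = \{\hK_\w : \w\in\L^m\}$. By Proposition~\ref{prop: null overlap} the sets $\{\Kcal_\w\}_{\w\in\L^m}$ have pairwise $\mu$-null intersections, so $\mu$-almost every $\bx\in\Kcal$ lies in a unique $\hK_\w$. By the definition~\eqref{eq: cocycle on K} of the cocycle on $\Kcal$, one has $\rho(\bx,m) = \rho_\w$ on $\hK_\w$, and by the normalization of $\mu$ recorded just above the lemma (a consequence of Proposition~\ref{propn: properties of self similar measure} and Lemma~\ref{lem: transformation of self-similar measures}), $\mu(\hK_\w) = \rho_\w^s$. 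Therefore
\[
\int_\Kcal \rho(\bx,m)^{-\g}\,d\mu(\bx) = \sum_{\w\in\L^m} \rho_\w^{-\g}\,\mu(\hK_\w) = \sum_{\w\in\L^m}\rho_\w^{s-\g}.
\]

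Combining the two computations gives the desired bound, actually as an equality:
\[
\sum_{\w\in\L^m}\diam{\hK_{\a\w}}^{s-\g} = \diam{\hK_\a}^{s-\g}\int_\Kcal \rho(\bx,m)^{-\g}\,d\mu(\bx).
\]
There is no substantive obstacle here; the only points that require mild care are (i) keeping the concatenation convention $\rho_{\a\w} = \rho_\a\rho_\w$ consistent with the composition convention in~\eqref{eq: composition order}, and (ii) invoking Proposition~\ref{prop: null overlap} to justify integrating over $\Kcal$ by summing over $\hPcal_m$ instead of over $\Pcal_m$, so that the formula $\rho(\bx,m) = \rho_\w$ is unambiguous on the relevant piece.
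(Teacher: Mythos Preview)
Your proof is correct and follows essentially the same approach as the paper: both are direct computations showing the two sides are in fact equal, using $\diam{\hK_{\a\w}} = K\rho_\a\rho_\w$, $\mu(\hK_\w)=\rho_\w^s$, and that $\rho(\cdot,m)$ is constant on each $\hK_\w$. Your organization is slightly more streamlined---you factor out $\diam{\hK_\a}^{s-\g}$ at the outset and compute $\sum_{\w}\rho_\w^{s-\g}$ directly---whereas the paper first rewrites the sum as $\int_{\hK_\a}\rho(\bx,m+k)^{-\g}\,d\mu$ and then applies the cocycle identity (Lemma~\ref{lem: time cocycle on K}) together with the change of variables from Lemma~\ref{lem: transformation of self-similar measures}; the content is the same.
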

    
    \begin{proof}
    By Lemma~\ref{lem: diam K = K hat} and the cocycle property of $\rho$, we have
    \[   \diam{\hK_{\a\w}}=\diam{\Kcal_{\a\w}}=K \rho(\a\w,k+ m) =
            K \rho(\a,k) \rho(\w,m).
    \]
    Moreover, we have that $\rho(\bx,m+k)$ is constant almost everywhere on $\hK_{\a\w}$ and equal to $\rho(\a\w,m+k)$ for every $\w\in\L^m$.
    Thus, using the fact that $\mu(\Kcal_{\a\w})=\rho(\a\w,m+k)^s$, we obtain
    \begin{align*}
        \sum_{\w \in \L^{m}} \diam{\hK_{\a\w}}^{s-\g}
        = K^{s-\g} \sum_{\w\in\L^m} \rho_{\a\w}^{s-\g}
        = K^{s-\g} \sum_{\w\in\L^m} \rho_{\a\w}^{-\g} \mu(\Kcal_{\a\w})
        =  K^{s-\g} \int_{\hK_\a} \rho(\bx,m+k)^{-\g}\;d\mu. 
    \end{align*}
    By Lemma~\ref{eq: cocycle on K}, for almost every $\bx\in \Kcal_\a$
    \begin{align*}
        \rho(\bx,m+k) = \rho(\bx,k)\rho(h_{\a}^{-1}(\bx),m) = \rho(\a,k) \rho(h_{\a}^{-1}(\bx),m).
    \end{align*}
    It follows that
    \begin{align*}
    \sum_{\w \in \L^{m}} \diam{\hK_{\a\w}}^{s-\g}
    =K^{s-\g} \rho(\a,k)^{-\g} \int_{\hK_\a} \rho(h_\a^{-1}(\bx),m)^{-\g}\;d\mu
    \end{align*}
    By Lemma~\ref{lem: transformation of self-similar measures}, for every integrable function $\vp$, we have $\int_{\Kcal_\a}\vp(h_\a^{-1}(\bx))\;d\mu = \mu(\Kcal_\a) \int \vp(\bx)\;d\mu $.
    This implies that
    \begin{align*}
    \sum_{\w \in \L^{m}} \diam{\hK_{\a\w}}^{s-\g}
        &=  K^{s-\g} \rho(\a,k)^{-\g} \mu(\hK_\a) 
        \int \rho(\bx,m)^{-\g}\;d\mu\\
        &= K^{s-\g} \rho(\a,k)^{s-\g} 
        \int \rho(\bx,m)^{-\g}\;d\mu
        = \diam{\hK_\a}^{s-\g} \int \rho(\bx,m)^{-\g}\;d\mu.
    \end{align*}
    \end{proof}
    

\subsection{Consequences of the log-Lipschitz property}
    
    The next $3$ lemmas provide us with simple consequences of the log-Lipschitz property of the function $f$ in Definition~\ref{defn: height functions}.

    \begin{lemma} \label{lem: 0-1 law}
    Suppose $\hat{\Kcal}_\w \in \hat{\Pcal}_{m+n}$ is such that $ \hat{\Kcal}_\w \cap B_x(M,m;n) \neq \emptyset$. Then, 
    \[ \hat{\Kcal}_\w \subseteq B_x(M/A,m;n).\]
    \end{lemma}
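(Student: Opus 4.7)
The plan is to transfer the lower bound on $f$ at one point of $\hat{\Kcal}_\w$ to every other point by exploiting two facts: the cocycle $\rho(\cdot, m+l)$ is constant on the cell for $1\leq l\leq n$, and the commutation of $g_t$ with $u(\cdot)$ scales the translation by $t^{-1}$, which matches the cell's diameter. There are three ingredients.

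First, I would verify that on a cell $\hat{\Kcal}_\w \in \hat{\Pcal}_{m+n}$, the quantity $\rho(\bx, m+l)$ is independent of $\bx$ for each $1\leq l\leq n$. By Lemma \ref{lem: hK refining}, $\hat{\Kcal}_\w$ sits inside $\hat{\Kcal}_{\w|_{m+l}}$ (the cell of the length-$(m+l)$ prefix of $\w$), so by the definition in \eqref{eq: cocycle on K} every $\bx\in\hat{\Kcal}_\w$ satisfies $\rho(\bx, m+l) = \rho(\w, m+l)$. In particular $g_{\rho(\bx, m+l)} = g_{\rho(\bx', m+l)}$ for any $\bx,\bx'\in \hat{\Kcal}_\w$.

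Second, the matrix forms \eqref{linear forms g_t} and \eqref{eq: a_t and u(x)} yield the commutation $g_t u(\by) = u(t^{-1}\by)\, g_t$ for all $t>0$ and $\by\in\R^d$. Writing $u(\bx') = u(\bx' - \bx)\, u(\bx)$ and applying this identity with $t_l := \rho(\bx, m+l)$ gives
\[ g_{t_l} u(\bx') x = u\bigl(\by_l\bigr)\, g_{t_l} u(\bx) x, \qquad \by_l := t_l^{-1}(\bx' - \bx). \]
Since $\bx,\bx'\in\hat{\Kcal}_\w$ and $\diam{\hat{\Kcal}_\w} = K\rho(\w, m+n)$ by Lemma \ref{lem: diam K = K hat}, while $\rho$ is nonincreasing in its second argument so that $\rho(\w, m+n)\leq \rho(\w, m+l) = t_l$, we obtain $\|\by_l\|\leq K\leq 2R$. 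This places $\by_l$ in the range where \eqref{eq: A} applies, so $f(u(\by_l) y)\geq A^{-1} f(y)$ for any $y\in X$.

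Third, taking $y = g_{t_l} u(\bx) x$ in the log-Lipschitz estimate and assuming $\bx \in \hat{\Kcal}_\w \cap B_x(M, m; n)$, the right side is bounded below by $A^{-1} M$ for all $1\leq l\leq n$ by the definition of $B_x(M,m;n)$. Hence every $\bx'\in \hat{\Kcal}_\w$ satisfies $f(g_{\rho(\bx', m+l)} u(\bx') x)\geq M/A$ for all $1\leq l\leq n$, i.e. $\bx' \in B_x(M/A, m; n)$. The only nontrivial point in this plan is the uniform bound $\|\by_l\| \leq 2R$, which relies on matching the cell diameter with the renormalization scale $t_l$; once that is in place the rest is routine bookkeeping with the commutation relation and the log-Lipschitz property.
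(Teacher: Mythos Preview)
Your proof is correct and follows essentially the same approach as the paper: both use constancy of $\rho(\cdot,m+l)$ on the cell, the commutation identity $g_t u(\by)=u(t^{-1}\by)g_t$, a bound on the rescaled displacement, and the log-Lipschitz estimate~\eqref{eq: A}. The only cosmetic difference is that the paper bounds $\norm{\bx'-\bx}$ by writing $\bx=h_\w(\mbf{y})$, $\bx'=h_\w(\mbf{y}')$ with $\mbf{y},\mbf{y}'\in\Kcal$ to get $2\rho(\w,m+n)R$ directly, whereas you invoke the diameter $K\rho(\w,m+n)$ and then note $K\leq 2R$.
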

    \begin{proof}
    Suppose $\bx_0 \in  \hat{\Kcal}_\w \cap  B_x(M,m;n)$.
    Let $N = m+n$ and let $m< l \leq N$ be such that $f(g_{\rho(\bx_0 , l)}u(\bx_0)x)$ is greater than $M$.
    Let $\bx\in \hat{\Kcal}_\w$ be any other vector.
    Note that $\rho(\cdot,l)$ is constant on elements of $\hat{\Pcal}_{N}$.
    This implies
    \begin{equation*}
        g_{\rho(\bx,l)}u(\bx) = u\left(\rho(\bx_0,l)^{-1}(\bx-\bx_0)\right)g_{\rho(\bx_0,l)}u(\bx_0).
    \end{equation*}
    Let $\mbf{y},\mbf{y}_0 \in \hat{\Kcal}$ be such that $\bx = h_\w(\mbf{y})$ and $\bx_0 = h_\w(\mbf{y}_0)$. The invariance of the Euclidean norm by $\mrm{SO}(d,\R)$ implies 
    \begin{equation*}
         \norm{\bx - \bx_0 } = \norm{h_\w(\mbf{y}) -h_\w(\mbf{y}_0)  } = \rho(\w,N) \norm{\mbf{y}-\mbf{y}_0}\leqslant 2\rho(\w,N)R.
    \end{equation*}
    Thus, since $\rho(\bx_0,l) \geq \rho(\bx_0,N) = \rho(\w,N)$, the choice of the constant $A$ in~\eqref{eq: A} implies
    \begin{equation*}
        f(g_{\rho(\bx,l)}u(\bx)x) \geqslant M/A.
    \end{equation*}
    This being true for all $\bx\in \hat{\Kcal}_\w$ concludes the proof.
    \end{proof}
    
    \begin{lemma}\label{lem: b_omega is also bad}
    Suppose $f( g_{\rho(\bx_0,\ell)} u(\bx_0)y) > M $ for some $y\in X$, $\bx_0\in\hK_\w$ and some $\w \in \L^\ell$. Then,
    \[f(g_{\rho(\w,\ell)} u(b_\w) y) > M/A,\] where $b_\w = h_\w(0)$.
    \end{lemma}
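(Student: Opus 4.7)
The plan is to reduce to a left-multiplication by $u$ of a vector that lies in the ball of radius $2R$ around $\mathbf{0}\in\R^d$, so that the constant $A$ from~\eqref{eq: A} applies. Since $\bx_0\in \hK_\w$, the cocycle is locally constant and $\rho(\bx_0,\ell)=\rho(\w,\ell)=\rho_\w$, so the conclusion amounts to comparing $f(g_{\rho_\w}u(\bx_0)y)$ with $f(g_{\rho_\w}u(b_\w)y)$.

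First, I would write $\bx_0=h_\w(\mathbf{y}_0)$ for some $\mathbf{y}_0\in\Kcal$. Since $h_\w$ has the form $\rho_\w O_\w+b_\w$ with $O_\w\in\mrm{SO}(d,\R)$, one gets
\[
    \bx_0-b_\w = \rho_\w O_\w \mathbf{y}_0, \qquad \norm{\bx_0-b_\w} \leqslant \rho_\w R,
\]
using the definition of $R$ in~\eqref{eq: R}. Because the group $U=\set{u(\bx):\bx\in\R^d}$ is abelian, $u(\bx_0)=u(\bx_0-b_\w)\,u(b_\w)$.

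Next I would commute $u(\bx_0-b_\w)$ past $g_{\rho_\w}$. A direct block-matrix computation with $g_t$ as in~\eqref{linear forms g_t} gives the relation $g_t u(\bx)=u(t^{-1}\bx)g_t$. Applying this with $t=\rho_\w$ and $\bx=\bx_0-b_\w$ yields
\[
    g_{\rho_\w}u(\bx_0)y \;=\; u\bigl(\rho_\w^{-1}(\bx_0-b_\w)\bigr)\, g_{\rho_\w}u(b_\w)y,
\]
and the displacement satisfies $\norm{\rho_\w^{-1}(\bx_0-b_\w)}\leqslant R<2R$.

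Finally, by the choice of the constant $A$ in~\eqref{eq: A}, the left translate by such a $u(\cdot)$ changes $f=f_1$ by at most a factor of $A$:
\[
    f\bigl(g_{\rho_\w}u(\bx_0)y\bigr) \leqslant A\, f\bigl(g_{\rho_\w}u(b_\w)y\bigr).
\]
Combining with the hypothesis $f(g_{\rho(\bx_0,\ell)}u(\bx_0)y)>M$ gives $f(g_{\rho(\w,\ell)}u(b_\w)y)>M/A$, as desired. I do not expect any real obstacle: the argument is a routine assembly of the abelianness of $U$, the $g_t$–$u$ commutation, and the uniform log-Lipschitz constant $A$ already isolated in~\eqref{eq: A}.
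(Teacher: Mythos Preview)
Your proof is correct and is exactly the argument the paper has in mind: the paper says the proof is ``completely analogous'' to that of Lemma~\ref{lem: 0-1 law}, and the explicit computation in the adjacent Lemma~\ref{lem: f is harmonic} records precisely the estimate $\rho(\w,\ell)^{-1}\norm{b_\w-\bx}\leqslant R$ that you derived. Your commutation identity $g_t u(\bx)=u(t^{-1}\bx)g_t$ and the appeal to~\eqref{eq: A} match the paper's method identically.
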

    \begin{proof}
    The proof is completely anaolgous to that of Lemma~\ref{lem: 0-1 law}.
    \end{proof}
    
    \begin{lemma}\label{lem: f is harmonic}
    Let $\w \in \L^\ell$. Then, for all $y\in X$,
    \begin{equation*}
        f(g_{\rho(\w,\ell)} u(b_\w)y) \leqslant \frac{A}{\mu(\hK_\w)} \int_{\hK_\w} f(g_{\rho(\bx,\ell)} u(\bx)y) \;d\mu,
    \end{equation*}
    where $A$ is given by~\eqref{eq: A} and $b_\w = h_\w(0)$.
    \end{lemma}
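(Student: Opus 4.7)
The plan is to reduce the inequality to an application of the log-Lipschitz bound~\eqref{eq: A} by exploiting the renormalization identity between the diagonal flow $g_t$ and the unipotent $u(\bx)$.

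First, I would record the conjugation identity $g_t u(\bx) = u(t^{-1}\bx) g_t$, which follows directly from the matrix form~\eqref{linear forms g_t}. Next, fix any $\bx \in \hK_\w$ and write $\bx = h_\w(\mbf{y})$ with $\mbf{y} \in \Kcal$; by definition of $h_\w = \rho_\w O_\w + b_\w$, we have $\bx - b_\w = \rho_\w O_\w \mbf{y}$. Combining these two facts with $u(\bx) = u(\bx - b_\w)u(b_\w)$ gives
\[
g_{\rho_\w} u(\bx) = u\!\left(\rho_\w^{-1}(\bx - b_\w)\right) g_{\rho_\w} u(b_\w) = u(O_\w \mbf{y})\, g_{\rho_\w} u(b_\w),
\]
where $\rho_\w = \rho(\w,\ell)$ is also the common value of $\rho(\bx,\ell)$ on $\hK_\w$ (by~\eqref{eq: cocycle on K} and Lemma~\ref{lem: hK refining}).

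Now since $O_\w \in \mrm{SO}(d,\R)$ is an isometry and $\mbf{y}\in\Kcal$, we have $\norm{O_\w \mbf{y}} = \norm{\mbf{y}} \leqslant R < 2R$ by the definition of $R$ in~\eqref{eq: R}. The choice of $A$ in~\eqref{eq: A} then yields the pointwise lower bound
\[
f\!\left(g_{\rho(\bx,\ell)} u(\bx)\, y\right) = f\!\left(u(O_\w \mbf{y})\, g_{\rho_\w} u(b_\w)\, y\right) \geqslant A^{-1} f\!\left(g_{\rho_\w} u(b_\w)\, y\right),
\]
valid for every $\bx \in \hK_\w$. Integrating this inequality over $\hK_\w$ with respect to $\mu$ and dividing by $\mu(\hK_\w)$ produces exactly the claimed estimate. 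There is no real obstacle here; the only point requiring attention is verifying that $\norm{\rho_\w^{-1}(\bx - b_\w)}$ stays within the radius $2R$ where~\eqref{eq: A} is valid, which is automatic from the self-similar structure of $\hK_\w = h_\w(\hK)$.
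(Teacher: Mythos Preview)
Your proof is correct and follows essentially the same approach as the paper: both arguments rest on the fact that $\rho(\bx,\ell)=\rho(\w,\ell)$ on $\hK_\w$ together with the estimate $\rho(\w,\ell)^{-1}\norm{\bx-b_\w}\leqslant R$, which feeds into the log-Lipschitz bound~\eqref{eq: A}. The only minor slip is the parenthetical ``$\hK_\w = h_\w(\hK)$'' at the end --- there is no set $\hK$ defined, and the correct containment is $\hK_\w \subseteq \Kcal_\w = h_\w(\Kcal)$; this does not affect your argument, since all you need is that $\bx\in\hK_\w$ implies $\bx=h_\w(\mbf{y})$ for some $\mbf{y}\in\Kcal$.
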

    \begin{proof}
    The proof follows from the fact that $\rho(\bx,\ell) =\rho(\w,\ell)$ everywhere on $\hK_\w$ and the following estimate:
    \[ \rho(\w,\ell)^{-1} \norm{b_\w - \bx} \leqslant R, \]
    for all $\bx\in \hK_\w$, where $R$ is given by~\eqref{eq: R}.
    \end{proof}


\subsection{Consequences of the contraction property}

    For every $\ell\in\N$ and $\w\in \L^\ell$, we define the following elemenets of $\mrm{SO}(d+1,\R)$:
   \begin{equation}
       k_\w = \begin{pmatrix}
    	1& \mbf{0} \\
    	\mbf{0} & O_\w
    	\end{pmatrix},
    \end{equation}
    where $O_\w $ is the rotation part of the similarity $h_\w$ and is given by~\eqref{eq: composition parameters}.
    Note that $O_\w \in \mrm{SO(d,\R)}$ and that each $k_\w$ commutes with $g_t$.
    The following lemma is the first main step in the proof of Proposition~\ref{prop: H-measure bound}.
    \begin{lemma} \label{lem: contract once}
    Let $A>0$ be the constant in~\eqref{eq: A}.
    Let $\g\in [\g_0,\b]$ and let $T >0$ be the constant provided by $(3)$ of Definition~\ref{defn: height functions} with $k=1$.
    Suppose that $f( g_{\rho(\bx_0,\ell)} u(\bx_0)y) > AT $ for some $y\in X$, $\ell\in\N$, $\w\in \L^\ell$, and $\bx_0\in\hK_\w$. Then,
    
    \begin{align*}
        \int_{\hK_\w} \rho(\bx,\ell+1)^{-\g} f(g_{\rho(\bx,\ell+1)} u(\bx)y) &\;d\mu(\bx) \\
        &\leqslant cA \int_{\hK_\w} \rho(\bx,\ell)^{-\g} f(g_{\rho(\bx,\ell)} u(\bx)y) \;d\mu(\bx) 
        \left(\int \rho(\bx,1) \;d\mu(\bx)\right)^{\b-\g},
    \end{align*}
    where $c$ is as in $(3)$ of Definition~\ref{defn: height functions}.
    \end{lemma}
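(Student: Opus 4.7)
The plan is to change variables using the self-similarity of $\mu$ on $\hK_\w$, reduce the integrand to an expression of the form $\rho(\by,1)^{-\g} f(g_{\rho(\by,1)} u(\by) z)$ where $z$ is some auxiliary base point, apply the contraction hypothesis at the scale $k=1$, and then repackage the result using the harmonicity-type bound of Lemma~\ref{lem: f is harmonic}.

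More precisely, I would start by applying Lemma~\ref{lem: transformation of self-similar measures} to write
\begin{equation*}
\int_{\hK_\w} \rho(\bx,\ell+1)^{-\g} f(g_{\rho(\bx,\ell+1)} u(\bx) y) \, d\mu(\bx)
= \rho_\w^{s} \int_\Kcal \rho(h_\w(\by),\ell+1)^{-\g} f\!\left(g_{\rho(h_\w(\by),\ell+1)}\, u(h_\w(\by))\, y\right) d\mu(\by),
\end{equation*}
and then use the cocycle relation of Lemma~\ref{lem: time cocycle on K} (applied after Lemma~\ref{lem: hK refining} which identifies $h_\w^{-1}(\bx)$ as the relevant $\by$) to rewrite $\rho(h_\w(\by),\ell+1) = \rho_\w\, \rho(\by,1)$. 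Next, using the relations $g_t u(\bv) = u(t^{-1}\bv) g_t$ and $k_\w u(\by) k_\w^{-1} = u(O_\w \by)$ (where $k_\w\in\mrm{SO}(d+1,\R)$ commutes with $g_t$), together with $h_\w(\by) = \rho_\w O_\w \by + b_\w$, I would factor
\begin{equation*}
g_{\rho(h_\w(\by),\ell+1)}\, u(h_\w(\by)) = k_\w\, g_{\rho(\by,1)} u(\by)\, k_\w^{-1} g_{\rho(\w,\ell)} u(b_\w).
\end{equation*}

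Setting $z := k_\w^{-1} g_{\rho(\w,\ell)} u(b_\w) y$ and invoking $\mrm{SO}(d+1,\R)$-invariance of $f$ (property (2) of Definition~\ref{defn: height functions}), the integral becomes
\begin{equation*}
\rho_\w^{s-\g} \int_\Kcal \rho(\by,1)^{-\g}\, f\!\left(g_{\rho(\by,1)} u(\by)\, z\right) d\mu(\by).
\end{equation*}
To apply the contraction inequality~\eqref{eqn: CH} at $k=1$ and exponent $\g$, I need $f(z) > T$. By $\mrm{SO}(d+1,\R)$-invariance, $f(z) = f(g_{\rho(\w,\ell)} u(b_\w) y)$, and the hypothesis $f(g_{\rho(\bx_0,\ell)} u(\bx_0) y) > AT$ combined with Lemma~\ref{lem: b_omega is also bad} yields precisely $f(z) > T$. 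Thus~\eqref{eqn: CH} gives
\begin{equation*}
\int_\Kcal \rho(\by,1)^{-\g}\, f\!\left(g_{\rho(\by,1)} u(\by)\, z\right) d\mu(\by) \leq c\, f(z) \left(\int_\Kcal \rho(\by,1)\, d\mu\right)^{\b-\g}.
\end{equation*}

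Finally, I would convert the pointwise value $\rho_\w^{s-\g} f(g_{\rho(\w,\ell)} u(b_\w) y)$ back to an integral over $\hK_\w$: using $\mu(\hK_\w) = \rho_\w^s$ and the fact that $\rho(\bx,\ell) \equiv \rho_\w$ on $\hK_\w$, Lemma~\ref{lem: f is harmonic} yields
\begin{equation*}
\rho_\w^{s-\g} f(g_{\rho(\w,\ell)} u(b_\w) y) \leq A \int_{\hK_\w} \rho(\bx,\ell)^{-\g} f(g_{\rho(\bx,\ell)} u(\bx) y)\, d\mu(\bx),
\end{equation*}
which, combined with the previous display, produces the claimed estimate with constant $cA$. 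The main bookkeeping hurdle is verifying the factorization step above: ensuring that the rotations generated by $O_\w$ can be isolated into elements of $\mrm{SO}(d+1,\R)$ commuting with $g_t$, so that $\mrm{SO}(d+1,\R)$-invariance of $f$ eliminates them cleanly; once this is in place the rest is an assembly of earlier lemmas.
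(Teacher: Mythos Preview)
Your proposal is correct and follows essentially the same route as the paper. The only cosmetic difference is the order of operations: you change variables via Lemma~\ref{lem: transformation of self-similar measures} first and then carry out the group-theoretic factorization, whereas the paper first splits the cocycle via Lemma~\ref{lem: time cocycle on K}, establishes the identity $g_{\rho(\bx,\ell+1)} k_\w u(\bx) = g_{\rho(h_\w^{-1}(\bx),1)} u(h_\w^{-1}(\bx)) g_{\rho(\w,\ell)} k_\w u(b_\w)$, and only then changes variables; the auxiliary base points differ by a factor of $k_\w$ (your $z = k_\w^{-1} g_{\rho(\w,\ell)} u(b_\w) y$ versus the paper's $z = g_{\rho(\w,\ell)} k_\w u(b_\w) y$), but since $k_\w$ commutes with $g_t$ and $f$ is $\mrm{SO}(d+1,\R)$-invariant, both yield the same $f(z)$ and the arguments coincide from that point on.
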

    
    \begin{proof}
    By Lemma~\ref{lem: time cocycle on K}, it follows that
    \begin{align}\label{eq: apply cocycle}
        \int_{\hK_\w} \rho(\bx,\ell+1)^{-\g} f(g_{\rho(\bx,\ell+1)} u(\bx)y) \;d\mu
        &= \int_{\hK_\w} \rho(\bx,\ell)^{-\g} \rho(h_\w^{-1}(\bx),1)^{-\g} f(g_{\rho(\bx,\ell+1)} u(\bx)y) \;d\mu \nonumber\\
        &= \rho(\w,\ell)^{-\g} \int_{\hK_\w}  \rho(h_\w^{-1}(\bx),1)^{-\g} f(g_{\rho(\bx,\ell+1)} u(\bx)y) \;d\mu,
    \end{align}
    where on the second line we used the fact that $\rho(\bx,\ell)= \rho(\w,\ell)$ everywhere on $\hK_\w$.
    
    Note that since $O_\w \in \mrm{SO}(d,\R)$, we have $O_\w^{-1} = O_\w^t$, where $O_w^t$ denotes the transpose of $O_\w$. Moreover, $h_\w^{-1}(\bx) = \rho(\w,\ell)^{-1}O_\w^{-1}(\bx-b_\w)$, where $b_\w=h_\w(0)$.
    Thus, the following identity holds.
    \begin{equation*}
        g_{\rho(\bx,\ell+1)}k_\w u(\bx) =     g_{\rho(h_\w^{-1}(\bx),1)}u(h_\w^{-1}(\bx))g_{\rho(\w,\ell)}k_\w u(b_\w).
    \end{equation*}
    Observe that $k_\w$ commutes with $g_t$ and recall that the function $f$ is $\mrm{SO}(d+1,\R)$-invariant.
    This implies
    \begin{equation}\label{eq: commutation relation}
       f(g_{\rho(\bx,\ell+1)} u(\bx)y) =
       f( g_{\rho(h_\w^{-1}(\bx),1)}u(h_\w^{-1}(\bx))g_{\rho(\w,\ell)}k_\w u(b_\w)y).
    \end{equation}
    By Lemma~\ref{lem: transformation of self-similar measures}, for every $\vp\in\mrm{L}^1(\mu)$,
    \begin{equation*}
        \frac{1}{\mu(\hK_\w)} \int_{\hK_\w} \vp(h_\w^{-1}(\bx)) \;d\mu(\bx) = \int \vp(\bx)\;d\mu(\bx).
    \end{equation*}
    Combining this fact with~\eqref{eq: commutation relation}, we obtain, for $z=g_{\rho(\w,\ell)}k_\w u(b_\w)y$,
    \begin{equation}\label{eq: apply comm rel, invariance}
        \int_{\hK_\w}  \rho(h_\w^{-1}(\bx),1)^{-\g} f(g_{\rho(\bx,\ell+1)} u(\bx)y) \;d\mu
        = \mu(\hK_\w) \int \rho(\bx,1)^{-\g} f(g_{\rho(\bx,1)}u(\bx)z) \;d\mu.
    \end{equation}
    Since $k_\w$ commutes with $g_t$ and $f$ is invariant by $k_\w$, it follows that $f(z)=f(g_{\rho(\w,\ell)}u(b_\w)y)$.
    In particular, by Lemma~\ref{lem: b_omega is also bad}, we have $f(z) > T$.
    Thus, the contraction property of $f$ in $(3)$ of Definition~\ref{defn: height functions} implies
    \begin{equation*}
        \int \rho(\bx,1)^{-\g} f(g_{\rho(\bx,1)}u(\bx)z) \;d\mu
        \leqslant c f(z)  \left(\int \rho(\bx,1) \;d\mu\right)^{\b-\g}.
    \end{equation*}
    Finally, we apply Lemma~\ref{lem: f is harmonic} to get
    \begin{equation}\label{eq: apply harmonic}
        f(z) \leqslant \frac{A}{\mu(\hK_\w)} \int_{\hK_\w} f(g_{\rho(\bx,\ell)} u(\bx)y) \;d\mu.
    \end{equation}
    Combining~\eqref{eq: apply cocycle},~\eqref{eq: apply comm rel, invariance}, and~\eqref{eq: apply harmonic}, along with the fact that $\rho(\bx,\ell)=\rho(\w,\ell)$ for all $\bx\in\hK_\w$  yields the desired estimate and concludes the proof.
    \end{proof}
    
    The following lemma uses Lemma~\ref{lem: contract once} as a base step in an inductive procedure to establish an exponentially decaying estimate for similar averages over points with long cusp excursions.
    \begin{lemma}\label{lem: induction on contraction}
    Let $A>0$ and $B>0$ be the constants in~\eqref{eq: A} and~\eqref{eq: B} respectively.
    Let $\g\in [\g_0,\b]$ and let $T >0$ be the constant provided by $(3)$ of Definition~\ref{defn: height functions} with $k=1$.
    For all $M>ABT$, $m,n\in \N$, $\a\in \L^m$ and $x\in X$,
    
    \begin{align*}
        \int_{B_x(M,m;n-1)\cap\hK_\a} \rho(\bx,m+n)^{-\g} f(g_{\rho(\bx,m+n)}& u(\bx)x) \;d\mu \leqslant \\
        &\leqslant \th^n  \int_{B_x(M,m;n-1)\cap\hK_\a} \rho(\bx,m)^{-\g} f(g_{\rho(\bx,m)} u(\bx)x) \;d\mu 
        ,
    \end{align*}
    where $\th$ is given by:
    \begin{equation}\label{eq: theta induction lem}
        \th = cA \left(\int \rho(\bx,1) \;d\mu\right)^{(\b-\g)},
    \end{equation}
    and $c$ is as in $(3)$ of Definition~\ref{defn: height functions}.
    \end{lemma}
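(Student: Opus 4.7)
I will prove the lemma by induction on $n$. The base case $n=0$ is trivial since $\th^0=1$ and both sides coincide. The induction mechanism should turn each ``step forward in time'' inside the integrand into a factor of $\th$, using Lemma~\ref{lem: contract once} as the workhorse.

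For the inductive step from $n-1$ to $n$, my first move is to partition $\hK_\a$ into its level $n-1$ refinements $\hK_{\a\w'}$ with $\w'\in\L^{n-1}$, so that the left-hand integral splits as
\[
L_n \;=\; \sum_{\w'\in\L^{n-1}} \int_{B_x(M,m;n-1)\cap\hK_{\a\w'}} \rho(\bx,m+n)^{-\g}\, f\!\left(g_{\rho(\bx,m+n)} u(\bx)x\right)\, d\mu(\bx).
\]
Only those $\w'$ for which $\hK_{\a\w'}\cap B_x(M,m;n-1)\neq\emptyset$ contribute. For such a ``good'' $\w'$, Lemma~\ref{lem: 0-1 law} upgrades this to the inclusion $\hK_{\a\w'}\subseteq B_x(M/A,m;n-1)$, which in particular guarantees, for every $\bx_0\in\hK_{\a\w'}$, that $f(g_{\rho(\bx_0,m+n-1)} u(\bx_0) x) > M/A$. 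The hypothesis $M>ABT$, combined with the log-Lipschitz constant $B$ for $g_{\rho(\cdot,1)}$ in~\eqref{eq: B}, ensures this lower bound exceeds $AT$, so that the hypothesis of Lemma~\ref{lem: contract once} is satisfied on each $\hK_{\a\w'}$ at level $\ell=m+n-1$.

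Applying Lemma~\ref{lem: contract once} piece by piece yields, for each good $\w'$,
\[
\int_{\hK_{\a\w'}}\!\!\rho(\bx,m+n)^{-\g} f\!\left(g_{\rho(\bx,m+n)}u(\bx)x\right) d\mu \;\leqslant\; \th \int_{\hK_{\a\w'}}\!\!\rho(\bx,m+n-1)^{-\g} f\!\left(g_{\rho(\bx,m+n-1)}u(\bx)x\right) d\mu.
\]
Summing over good $\w'$ produces one factor of $\th$ and shifts the time index in $f$ backwards by one. Now I re-group the resulting sum by parents $\w''\in\L^{n-2}$ (using Lemma~\ref{lem: hK refining}, so that $\hK_{\a\w''} = \bigsqcup \hK_{\a\w'}$ over children $\w'$), possibly enlarging slightly by summing over all children rather than just good ones; the threshold $M>ABT$ again guarantees the Lemma~\ref{lem: contract once} hypothesis at level $\ell=m+n-2$ on each such $\hK_{\a\w''}$. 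Iterating this ``partition, apply contract-once, regroup'' procedure $n$ times brings the time index in $f$ from $m+n$ down to $m$, accumulating the total factor $\th^n$.

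The main technical obstacle is the domain bookkeeping: each time I invoke Lemma~\ref{lem: contract once} on a whole piece $\hK_{\a\w'}$, I am integrating over more than the restricted set $B_x(M,m;n-1)\cap\hK_{\a\w'}$ demanded by the statement, and Lemma~\ref{lem: 0-1 law} only controls this excess up to a factor of $A$ in $M$. The condition $M>ABT$ is precisely calibrated to absorb the single log-Lipschitz loss incurred in verifying the hypothesis of Lemma~\ref{lem: contract once} at every level of the iteration, while ensuring that after all $n$ contractions the final restriction on the right-hand side can still be identified with $B_x(M,m;n-1)\cap\hK_\a$ (rather than a strictly larger enlargement like $B_x(M/A^n,m;n-1)\cap\hK_\a$). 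Handling this cleanly—either by absorbing all such losses into the single factor $B$ at each iteration, or by tracking the thresholds through the induction—is where I expect the bulk of the care to lie.
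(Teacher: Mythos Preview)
Your approach is essentially the paper's: cover $H:=B_x(M,m;n-1)\cap\hK_\a$ by the pieces $\hK_\w$ with $\w\in\L^{m+n-1}$ meeting $H$ (your $\hK_{\a\w'}$), apply Lemma~\ref{lem: contract once} on each, pass to coarser levels via the refining inclusion (Lemma~\ref{lem: hK refining}), and iterate down to level $m$. Two small corrections to your bookkeeping. First, you do not need Lemma~\ref{lem: 0-1 law} to verify the hypothesis of Lemma~\ref{lem: contract once}: that lemma asks only for \emph{some} $\bx_0\in\hK_\w$ with $f(g_{\rho(\bx_0,\ell)}u(\bx_0)x)>AT$, and any point of $\hK_\w\cap H$ already satisfies this with margin $\geq M>AT$ for each $\ell=m+1,\dots,m+n-1$; your detour through Lemma~\ref{lem: 0-1 law} loses a factor of $A$, and the resulting bound $M/A>BT$ does not obviously exceed $AT$. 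Second, the constant $B$ enters only at the very last step $\ell=m$, where membership in $B_x(M,m;n-1)$ gives no direct control: there, for $\bx_0\in\hK_\w\cap H$ one has $f(g_{\rho(\bx_0,m+1)}u(\bx_0)x)\geq M>ABT$, and~\eqref{eq: B} then yields $f(g_{\rho(\bx_0,m)}u(\bx_0)x)>AT$. The domain bookkeeping you flag is handled by a single enlargement $\int_H\leq\sum_{\hK_\w\cap H\neq\emptyset}\int_{\hK_\w}$ at the outset; the condition ``$\hK_\w\cap H\neq\emptyset$'' is then carried through all levels with no further loss.
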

    
    \begin{proof}
    If $n=1$, then $B_x(M,m;n-1)=\emptyset$ and the statement follows trivially. Thus, we may assume that $n>1$ and that $B_x(M,m,n-1)\neq \emptyset$.
    Let $H =B_x(M,m;n-1)\cap\hK_\a $.
    Lemma~\ref{lem: hK refining} implies that $H \subseteq \bigcup \hK_\w$, where the union is taken over words $\w\in \L^{m+n-1}$ so that $\hK_\w\cap H\neq \emptyset$. In particular, we get
    \begin{align}\label{eq: apply contraction once}
        \int_{H} &\rho(\bx,m+n)^{-\g} f(g_{\rho(\bx,m+n)} u(\bx)x) \;d\mu \leqslant \nonumber \\
        &\leqslant
        \sum_{\substack{\w\in \L^{m+n-1}\\ \hK_\w\cap H\neq\emptyset }} \int_{\hK_\w} \rho(\bx,m+n)^{-\g} f(g_{\rho(\bx,m+n)} u(\bx)x) \;d\mu\leqslant \nonumber\\
        &\leqslant 
        \theta
        \sum_{\substack{\w\in \L^{m+n-1}\\ \hK_\w\cap H\neq\emptyset }}
        \int_{\hK_\w} \rho(\bx,m+n-1)^{-\g} f(g_{\rho(\bx,m+n-1)} u(\bx)x) \;d\mu
         \quad \text{by Lemma~\ref{lem: contract once}},
    \end{align}
    where $\theta$ is given by~\eqref{eq: theta induction lem}. 
    Next, we note that the following inclusion holds by Lemma~\ref{lem: hK refining}.
    \begin{equation} \label{eqn: refining property}
        \bigcup_{\substack{\w\in \L^{m+n-1}\\ \hK_\w\cap H\neq\emptyset }} \hK_\w
         \subseteq
         \bigcup_{\substack{\w\in \L^{m+n-2}\\ \hK_\w\cap H\neq\emptyset } } \hK_\w.     
        \end{equation}
    Moreover, Proposition~\ref{prop: null overlap} shows that the indicator functions of the above unions are equal almost everywhere to sums of the indicator functions of the members of the union.
    Hence, combining~\eqref{eq: apply contraction once}, and~\eqref{eqn: refining property} yields the following estimate
    \begin{align*}
        \int_{H} \rho(\bx,m+n)^{-\g} f(g_{\rho(\bx,m+n)} &u(\bx)x) \;d\mu  \nonumber \\
        &\leqslant \theta   \sum_{\substack{\w\in \L^{m+n-2}\\ \hK_\w\cap H\neq\emptyset } }
        \int_{\hK_\w} \rho(\bx,m+n-1)^{-\g} f(g_{\rho(\bx,m+n-1)} u(\bx)x) \;d\mu.
    \end{align*}
    By an iterated application of~\eqref{eq: apply contraction once} and~\eqref{eqn: refining property}, we obtain
    \begin{align*}
        \int_{H} \rho(\bx,m+n)^{-\g} f(g_{\rho(\bx,m+n)} &u(\bx)x) \;d\mu  \nonumber \\
        &\leqslant \theta^{n-1}   \sum_{\substack{\w\in \L^{m}\\ \hK_\w\cap H\neq\emptyset } }
        \int_{\hK_\w} \rho(\bx,m+1)^{-\g} f(g_{\rho(\bx,m+1)} u(\bx)x) \;d\mu.
    \end{align*}
    To apply Lemma~\ref{lem: contract once} at this stage, we need to ensure that for each $\w\in \L^m$ such that $\hK_\w\cap H\neq\emptyset$, we have that $f(g_{\rho(\bx_0,m)} u(\bx_0)x)>AT$ for some $\bx_0 \in \hK_\w$. Recall that we are assuming that $n>1$ and $H\neq \emptyset$. 
    Let $\bx_0\in \hK_\w \cap H$ for some $\w\in \L^m$.
    Then, we have that 
    \[f(g_{\rho(\bx_0,m+1)} u(\bx_0)x) \geqslant M > ABT.\]
    Moreover, by Lemma~\ref{lem: time cocycle on K}, we can write $\rho(\bx_0,m+1) = \rho(\bx_0,m)\rho(\mbf{y},1)$, for some $\mbf{y}\in \Kcal$.
    Thus, by the choice of $B$ in~\eqref{eq: B}, this implies that $f(g_{\rho(\bx_0,m)} u(\bx_0)x) > AT$. Hence, we obtain
     \begin{align*}
        \int_{H} \rho(\bx,m+n)^{-\g} f(g_{\rho(\bx,m+n)} u(\bx)x) \;d\mu 
        \leqslant \theta^{n}   \sum_{\substack{\w\in \L^{m}\\ \hK_\w\cap H\neq\emptyset } }
        \int_{\hK_\w} \rho(\bx,m)^{-\g} f(g_{\rho(\bx,m)} u(\bx)x) \;d\mu.
    \end{align*}
    Finally, by Lemma~\ref{lem: 0-1 law}, it follows that 
    \begin{equation*}
        \bigcup_{\substack{\w\in \L^{m}\\ \hK_\w\cap H\neq\emptyset } } \hK_\w \subseteq B_x(M/A,m;n-1) \cap \hK_\a.
    \end{equation*}
    \end{proof}

    The next ingredient is to provide an upper estimate of the sum appearing in Proposition~\ref{prop: H-measure bound} using integral estimates of the height function $f$.
    The next lemma is a first step in that direction.
    \begin{lemma}\label{lem: H-measure to mu}
    For all $x\in X$, $\g\in \R$, $m,n\in \N$, and any $\a\in \L^m$,
    \begin{align*}
    \sum_{\substack{\w\in \L^{m+n} \\\hK_\w \cap B_x(M,m;n) \cap \hK_\a\neq \emptyset}}
    \mrm{diam}(\hK_\w &)^{s-\g} \leqslant\\
    &\frac{K^{s-\g}}{M/A} 
    \int_{B_x(M/A,m;n-1)\cap\hK_\a} \rho(\bx,m+n)^{-\g} f(g_{\rho(\bx,m+n)}u(\bx)x) \;d\mu.
     \end{align*}
     where $K$ is given by~\eqref{eq: R} and $A$ is as in~\eqref{eq: A}.
    
    \end{lemma}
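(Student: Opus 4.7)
The plan is to estimate each term $\diam{\hK_\w}^{s-\gamma}$ in the sum by an integral over $\hK_\w$ of a multiple of $f$, and then sum using the null-overlap property of Proposition~\ref{prop: null overlap}. The key observation is that on $\hK_\w$, the cocycle $\rho(\bx, m+n)$ is constant and equal to $\rho(\w,m+n)$, while $\mu(\hK_\w) = \rho(\w,m+n)^s$. Hence
\[
    \diam{\hK_\w}^{s-\gamma}
    = K^{s-\gamma} \rho(\w,m+n)^{-\gamma} \mu(\hK_\w)
    = K^{s-\gamma} \int_{\hK_\w} \rho(\bx,m+n)^{-\gamma} \, d\mu(\bx),
\]
reducing the task to estimating this integral by one involving $f$.

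Next, for each $\w$ contributing to the sum, I apply Lemma~\ref{lem: 0-1 law} with the hypothesis $\hK_\w \cap B_x(M,m;n) \neq \emptyset$ to conclude $\hK_\w \subseteq B_x(M/A, m; n)$, which in particular is contained in $B_x(M/A, m; n-1)$. Taking $l=n$ in the definition of $B_x(M/A, m; n)$ gives $f(g_{\rho(\bx,m+n)} u(\bx)x) \geq M/A$ for every $\bx \in \hK_\w$, hence $1 \leq \frac{A}{M} f(g_{\rho(\bx,m+n)}u(\bx)x)$ pointwise on $\hK_\w$. Inserting this inequality inside the integral yields
\[
    \diam{\hK_\w}^{s-\gamma} \leq \frac{K^{s-\gamma}}{M/A} \int_{\hK_\w} \rho(\bx,m+n)^{-\gamma} f(g_{\rho(\bx,m+n)} u(\bx)x) \, d\mu(\bx).
\]

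To conclude, I sum this inequality over all admissible $\w \in \L^{m+n}$. The two points that need care are: (i) the sum of integrals over the $\hK_\w$ can be rewritten as a single integral over $\bigcup \hK_\w$, because by Proposition~\ref{prop: null overlap} distinct $\hK_\w$ with $\w \in \L^{m+n}$ have pairwise $\mu$-null overlaps; (ii) this union is contained, up to a $\mu$-null set, in $\hK_\alpha \cap B_x(M/A,m;n-1)$. The inclusion in $B_x(M/A,m;n-1)$ is immediate from step two. The inclusion in $\hK_\alpha$ follows from Lemma~\ref{lem: hK refining}: the decomposition $\hK_\alpha = \bigcup_{\w' : \w'|_m = \alpha} \hK_{\w'}$ combined with Proposition~\ref{prop: null overlap} forces every $\w$ whose $\hK_\w$ meets $\hK_\alpha$ in a set of positive $\mu$-measure to have $\alpha$ as its length-$m$ prefix, and thus $\hK_\w \subseteq \hK_\alpha$.

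The calculation is essentially bookkeeping; the only conceptual step is the propagation of the cusp-excursion condition from a single point to the whole cell $\hK_\w$ via the log-Lipschitz property, which is encapsulated in Lemma~\ref{lem: 0-1 law}. I do not expect any genuine obstacle beyond making sure that the prefix identification is done modulo $\mu$-null sets so that the resulting upper bound is indeed an integral over the stated set.
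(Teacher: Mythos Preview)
Your proof is correct and follows the same approach as the paper: convert each $\diam{\hK_\w}^{s-\gamma}$ into $K^{s-\gamma}\int_{\hK_\w}\rho(\bx,m+n)^{-\gamma}\,d\mu$, use Lemma~\ref{lem: 0-1 law} to obtain $\hK_\w\subseteq B_x(M/A,m;n)$, bound the indicator of this set by $(M/A)^{-1}f$, and sum using the null-overlap property. One small simplification: the inclusion $\hK_\w\subseteq\hK_\a$ holds outright (not merely modulo a null set), since the sets $\hK_\b$ for $\b\in\L^m$ are pairwise disjoint by construction and Lemma~\ref{lem: hK refining} gives $\hK_\w\subseteq\hK_{\w|_m}$, so $\hK_\w\cap\hK_\a\neq\emptyset$ already forces $\w|_m=\a$.
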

    
    \begin{proof}
    
    For any finite word $\w\in \L^N$, we have
    \begin{equation*}
        \diam{\hK_\w} =K \rho(\w,N).
    \end{equation*}
    Moreover, we have that $\mu(\hK_\w)= \mu(\Kw) = \rho(\w,N)^s$.
    Indeed, this follows from the self-similarity of $\mu$ in~\eqref{eqn: self similar measure} and Proposition~\ref{prop: null overlap} showing that the distinct sets $\Kw$ have null overlap.
    To simplify notation, let $H =  B_x(M,m;n) \cap \hK_\a$, $N=m+n$ and define
     \[ B = \bigcup_{\w\in \L^N:\hat{\Kcal}_\w \cap H \neq \emptyset} \hK_\w.  \]
    
    The other ingredient is to note that $\rho(\bx,N) = \rho(\w,N)$ for every $\bx\in \hK_\w$. Hence, we get
    \begin{equation*}
        \sum_{\w\in \L^N:\hK_\w \cap H\neq \emptyset} \mrm{diam}(\hK_\w)^{s-\g} \leqslant
        K^{s-\g}
        \int \rho(\bx,m+n)^{-\g} \chi_B(\bx) \;d\mu(\bx).
    \end{equation*}
    In view of Lemma~\ref{lem: 0-1 law}, we have that
    \begin{equation*}
        B \subseteq B_x(M/A,m;n).
    \end{equation*}
    Finally, we observe that the following inequality
    \begin{equation*}
        \chi_{B_x(M/A,m;n)}(\bx) \leqslant \frac{f(g_{\rho(\bx,m+n)}u(\bx)x)}{M/A}
    \end{equation*}
    holds for all $\bx \in \R^d$ by definition, where for a set $G$, $\chi_G$ denotes its indicator function.
    \end{proof}

    \subsection{Proof of Proposition~\ref{prop: H-measure bound}}
    Fix some $k\in \N$ and define
    \begin{equation*}
        \Fcal^k = \set{h_\a:\a\in\L^k},\qquad \Fsc^k := \set{f_{\ell k}: \ell\in\N}\subseteq \Fsc.
    \end{equation*}

    Our hypothesis implies that $\mu$ satisfies the $(\Fcal^k,\Fsc^k,\b,\g_0)$-contraction hypothesis (note that $\mu$ is self-similar with respect to $\Fcal^k$).
    In particular, Lemmas~\ref{lem: time cocycle on K} -~\ref{lem: H-measure to mu} hold with $\Fcal^k$ and $\Fsc^k$ in place of $\Fcal$ and $\Fsc$ respectively. 
    In our proof below, the only dependence of the constants on $k$ is in the constant $B$ defined in~\eqref{eq: B} (since the cocycle $\rho(\cdot,1)$ for $\Fcal^k$ is given by $\rho(\cdot,k)$ for $\Fcal$), and $T$ given by $(3)$ of Definition~\ref{defn: height functions}. This dependence will appear only in the choice of the constant $M_0$ in~\eqref{eq: M_0} below.
    Since our conclusion states that $M_0$ depends on $k$ and by replacing $\Fcal$ and $\Fsc$ by $\Fcal^k$ and $\Fsc^k$ respectively, we may hence assume that $k=1$. 
    For simplicity, we use the following notation:
    \begin{equation*}
        Z_x(M,N,\d) := Z_x(M,N,1,\d), \qquad f:= f_1.
    \end{equation*}

    Let $x\in X\setminus \set{f=\infty}$ and $\g\in (\g_0,\b)$ be given.
    Let $T >0$ be the constant provided by $(3)$ of Definition~\ref{defn: height functions} with $k=1$.
    We define $M_0$ as follows
        \begin{equation} \label{eq: M_0}
        	M_0 = \max\set{AB T, f(x)},
        \end{equation}
    where $A$ and $B$ are the constants in~\eqref{eq: A} and~\eqref{eq: B} respectively.
    Suppose $M>M_0$, $N\in \N$ and $0<\d<1$ are given.
    To simplify notation, let
    \begin{equation}\label{eq: xi, zeta, theta}
        \xi = \int \rho(\bx,1)^{-\g} \;d\mu, \qquad
        \zeta= \left(\int \rho(\bx,1) \;d\mu\right)^{\b-\g}, \qquad
        \theta = cA \zeta,
    \end{equation}
    where $c\geq 1$ is the constant in $(3)$ of Definition~\ref{defn: height functions}.

    Consider a subset $Q \subseteq \set{1,\dots,N}$ containing at least $\d N$ elements.
    Define the following set of trajectories whose behavior is determined by $Q$:
    \begin{equation*}
    	Z(N,Q) = \set{\bx \in Z_x(M, N, \d): f(g_{\rho(\bx,l)}u(\bx)x) \geqslant M \textrm{ iff } l \in Q}.
    \end{equation*}
    We decompose the set $Q$ and its complement into maximal ``connected" intervals
    as follows
    \begin{equation*}
    	Q = \bigsqcup_{i=1}^q B_i, \qquad \set{1,\dots,N}\setminus Q = \bigsqcup_{j=1}^p G_j,
    \end{equation*}
    for some integers $p,q \geq 0$. Note that $|p-q| \leq 1$.
    We claim that 
    \begin{equation}\label{eq: main claim}
       \sum_{\substack{\w\in\L^N\\ \hK_\w\cap Z(N,Q)\neq\emptyset}} \mrm{diam}(\hat{\Kcal}_\w)^{s-\g} \leqslant (cA)^{3N} 
       \big(\zeta^\d\xi^{1-\d} \big)^{N}.
     \end{equation}
    Since the set $Z_x(M, N, \d)$ is a union of at most $2^N$ subsets of the form $Z(N,Q)$, the claim of the proposition follows by taking $c_0 = 2(cA)^3$.

    Order the intervals $B_i$ and $G_j$ in the way they appear in the sequence $1 \leq\cdots\leq N$.
    Write $I_r$ for the $r^{\text{th}}$ interval in this sequence for $1\leq r\leq p+q$.
    For a subset $J\subseteq \set{1,\dots,N}$, we use $|J|$ to denote its cardinality.
    For purposes of induction, we write $\L^0$ for a set with one element and $\hK_\a := \Kcal$ whenever $\a\in \L^0$.
    
    \begin{case} $I_{p+q} \subseteq Q$ so that $I_{p+q} = B_q$.
    Let $\a \in \L^{N-|B_q|}$ be such that $\hK_\a \cap Z(N,Q) \neq \emptyset$.
    Then, we note that
    \begin{equation*}
        Z(N,Q)\cap \hK_\a \subseteq B_x(M,N-|B_q|;|B_q|) \cap \hK_\a,
    \end{equation*}
    where the sets $B_x(\cdot,\cdot,\cdot)$ were defined in~\eqref{defn: B_x(M,t; m+n)}.
    Hence, we may apply Lemma~\ref{lem: H-measure to mu}
    with $m=N-|B_q|$ and $n=|B_q|$ to get
    \begin{align*}
       \sum_{\substack{\w\in\L^N\\ \hK_\w\cap Z(N,Q)\cap \hK_\a\neq\emptyset}} \mrm{diam}(\hat{\Kcal}_\w)^{s-\g} &\leqslant \sum_{\substack{\w\in\L^N\\ \hK_\w\cap  B_x(M,m;n)\cap \hK_\a\neq\emptyset}} \mrm{diam}(\hat{\Kcal}_\w)^{s-\g} \\
       &\leqslant \frac{K^{s-\g}}{M/A} 
    \int_{B_x(M/A,m;n-1)\cap\hK_\a} \rho(\bx,m+n)^{-\g} f(g_{\rho(\bx,m+n)}u(\bx)x) \;d\mu,
    \end{align*}
    where $K = \diam{\Kcal}$.
    We can then apply Lemma~\ref{lem: induction on contraction} to get
    \begin{align*}
       \sum_{\substack{\w\in\L^N\\ \hK_\w\cap Z(N,Q)\cap \hK_\a\neq\emptyset}} \mrm{diam}(\hat{\Kcal}_\w)^{s-\g}
       \leqslant  \frac{\th^n K^{s-\g}}{M/A}  \int_{B_x(M,m;n-1)\cap\hK_\a} \rho(\bx,m)^{-\g} f(g_{\rho(\bx,m)} u(\bx)x) \;d\mu,
    \end{align*}
    where $\th$ is defined in~\eqref{eq: xi, zeta, theta}.
    
    Recall that $\a\in \L^{m}$ was chosen so that $\hK_\a \cap Z(N,Q)\neq \emptyset$.
    Moreover, the choice of $M_0$ implies that $1\in G_1$. In particular, since $I_{p+q}=B_q$ is a maximal sub-interval of $Q$, we see that $f(g_{\rho(\bx_0,m)} u(\bx_0)x) <M$ for some $\bx_0 \in \hK_\a$.
    The choice of the constant $A$ in~\eqref{eq: A} then implies that $f(g_{\rho(\bx,m)} u(\bx)x) \leqslant MA$ for all $\bx\in \hK_\a$. 
    
    Moreover, $\rho(\bx,m)$ is constant everywhere on $\hK_\a$. It follows that
    \begin{align*}
        \sum_{\substack{\w\in\L^N\\ \hK_\w\cap Z(N,Q)\cap \hK_\a\neq\emptyset}} \mrm{diam}(\hat{\Kcal}_\w)^{s-\g}
       &\leqslant  \frac{\th^n K^{s-\g}}{M/A} MA \rho(\a,m)^{-\g} \mu(\hK_\a) 
       = \theta^n A^2 \diam{\hK_\a}^{s-\g}.
    \end{align*}
    In the last equality, we used the fact that $\diam{\hK_\a} =K\rho(\a,m)$ and $\mu(\hK_\a)=\mu(\Kcal_\a)= \rho(\a,m)^s$.
    This follows from Lemmas~\ref{lem: diam K = K hat} and~\ref{lem: transformation of self-similar measures} respectively.
    We thus arrive at the following estimate
    \begin{align*}
        \sum_{\substack{\w\in\L^N\\ \hK_\w\cap Z(N,Q)\neq\emptyset}} \mrm{diam}(\hat{\Kcal}_\w)^{s-\g} &\leqslant
        \sum_{\substack{\a \in \L^{N-|B_q|}\\\hK_\a\cap Z(N,Q) \neq \emptyset }} \text{ }
        \sum_{\substack{\w\in\L^N\\ \hK_\w\cap Z(N,Q)\cap \hK_\a\neq\emptyset}} \mrm{diam}(\hat{\Kcal}_\w)^{s-\g}\\
        &\leqslant \theta^{|B_q|} A^2 \sum_{\substack{\a \in \L^{N-|B_q|}\\\hK_\a\cap Z(N,Q) \neq \emptyset }} \diam{\hK_\a}^{s-\g}.
    \end{align*}
    In view of the inclusion $Z(N,Q) \subseteq Z(N-|B_q|,Q\setminus B_q)$,
    it follows that
    \begin{align}\label{eq: case 1}
        \sum_{\substack{\w\in\L^N\\ \hK_\w\cap Z(N,Q)\neq\emptyset}} \mrm{diam}(\hat{\Kcal}_\w)^{s-\g} \leqslant \theta^{|B_q|} A^2 \sum_{\substack{\a \in \L^{N-|B_q|}\\\hK_\a\cap Z(N-|B_q|,Q\setminus B_q) \neq \emptyset }} \diam{\hK_\a}^{s-\g}.
    \end{align}
    \end{case}
    
    \begin{case} $I_{p+q} \subseteq \set{1,\dots,N}\setminus Q $ so that $I_{p+q} = G_p$.
    In this case, we apply Lemma~\ref{lem: bound over good intervals} to obtain
    \begin{align*}
    \sum_{\substack{\w\in\L^N\\ \hK_\w\cap Z(N,Q)\neq\emptyset}} \mrm{diam}(\hat{\Kcal}_\w)^{s-\g} &\leqslant
        \sum_{\substack{\a \in \L^{N-|G_p|}\\\hK_\a\cap Z(N,Q) \neq \emptyset }} \sum_{\substack{\w\in\L^N\\ \hK_\w\cap Z(N,Q)\cap \hK_\a\neq\emptyset}} \mrm{diam}(\hat{\Kcal}_\w)^{s-\g}\\
        &\leqslant \sum_{\substack{\a \in \L^{N-|G_p|}\\\hK_\a\cap Z(N,Q) \neq \emptyset }} \diam{\hK_\a}^{s-\g} \int \rho(\bx,|G_p|)^{-\g} \;d\mu(\bx).
    \end{align*}
    Then, using Lemma~\ref{lem: average of submul coc is subadditive}, it follows that
    \begin{align*}
    \sum_{\substack{\w\in\L^N\\ \hK_\w\cap Z(N,Q)\neq\emptyset}} \mrm{diam}(\hat{\Kcal}_\w)^{s-\g} &\leqslant
    \xi^{|G_p|}
    \sum_{\substack{\a \in \L^{N-|G_p|}\\\hK_\a\cap Z(N,Q) \neq \emptyset }} \diam{\hK_\a}^{s-\g}.
    \end{align*}
    Finally, using that $Z(N,Q) \subseteq Z(N-|G_p|,Q)$, we obtain
    \begin{equation}\label{eq: case 2}
        \sum_{\substack{\w\in\L^N\\ \hK_\w\cap Z(N,Q)\neq\emptyset}} \mrm{diam}(\hat{\Kcal}_\w)^{s-\g} \leqslant
    \xi^{|G_p|}
    \sum_{\substack{\a \in \L^{N-|G_p|}\\\hK_\a\cap Z(N-|G_p|,Q) \neq \emptyset }} \diam{\hK_\a}^{s-\g}.
    \end{equation}
    \end{case}
    
    Equipped with the estimates in~\eqref{eq: case 1} and~\eqref{eq: case 2}, we iteratively bound the sum in~\eqref{eq: main claim} by similar sums over covers of sets of the form $Z(L, V)$ with $L<N$ and $V\subseteq Q$ yielding the following upper bound by downward induction on $N$:
    \begin{align*}
        \sum_{\substack{\w\in\L^N\\ \hK_\w\cap Z(N,Q)\neq\emptyset}} \mrm{diam}(\hat{\Kcal}_\w)^{s-\g} &\leqslant
        \theta^{|Q|} A^{2q} \xi^{(N-|Q|)}.
    \end{align*}
    Recall that $ \th=cA\zeta $ and $q\leq N$.
    Moreover, since $\g$ is positive and strictly less than $\b$, we have that $\zeta <1$, $\xi >1$.
    Hence, since $|Q|\geq \d N$, it follows that
    \begin{align}
        \sum_{\substack{\w\in\L^N\\ \hK_\w\cap Z(N,Q)\neq\emptyset}} \mrm{diam}(\hat{\Kcal}_\w)^{s-\g} 
         \leqslant (cA)^{3N} \big(\zeta^{\d } \xi^{(1-\d)}\big)^N.
    \end{align}
    This implies the main claim~\eqref{eq: main claim}.

\subsection{Proof of Theorem~\ref{thrm: Hdim and non-divergence}}
	
    Having established Proposition~\ref{prop: H-measure bound}, the proof of Theorem~\ref{thrm: Hdim and non-divergence} follows from the definition of Hausdorff dimension.
    Recall the definition of the Hausdorff (outer) measures given in Section~\ref{sec: hdim}.
    
    Let $x\in X$ and let $Z_x \subseteq \Kcal$ denote the set of vectors $\bx$ for which the trajectory $a_t u(\bx)x$ diverges on average.
    As we noted in~\eqref{eq: doa characterization}, in view of the log Lipschitz property $(2)$ of Definition~\ref{defn: height functions}, the set $Z_x$ is related to the sets $Z_x(M,N,k,\d)$, defined in~\eqref{defn: Z_x(M, N, delta)}, via the following inclusion:
        \begin{align} \label{Z contained in liminf set}
        	Z_x \subseteq \liminf_{N\r\infty} Z_x(M,N,k,\d)
        		= \bigcup_{N_0\geq 1} \bigcap_{N\geq N_0} Z_x(M,N,k,\d),
        \end{align}
        for all $M,\d>0$ and $k\in\N$.
        We wish to apply Proposition~\ref{prop: H-measure bound}.
        Fix some $\g \in (\g_0,\b)$ and let $c_0 \geq 1$ be as in the conclusion of the proposition.
        Recall the definition of $\xi>1$ and $\zeta<1$ in~\eqref{eq: xi, zeta, theta}.
        Let $\d \in (0,1)$ be sufficiently close to $1$ such that 
        \begin{equation*}
            \zeta^\d \xi^{1-\d} <1.
        \end{equation*}
        Note that $\d\r 1$ as $\g\r\b$.
        Choose $k\in \N$ large enough so that
        \begin{equation}\label{eq: meaning of contraction}
            \big(\zeta^\d \xi^{1-\d}\big)^k= 
            \bigg(\int \rho(\bx,k) \;d\mu\bigg)^{(\b-\g)\d}
            \bigg(\int \rho(\bx,k)^{-\g} \;d\mu\bigg)^{1-\d}
            < 1/c_0,
        \end{equation}
        where we used Lemma~\ref{lem: average of submul coc is subadditive} in the first equality.
        Let $M_0=M_0(k,x,\g) >0$ be as in Proposition~\ref{prop: H-measure bound} and suppose $M>M_0$.
        For each $N\in \N$, define
        \begin{equation*}
            \kappa(N) = \max\set{\diam{\Kcal_\w}: \w\in \L^{kN}}.
        \end{equation*}
        
        Suppose $N_0 \in \N$ is given.
        Then, for every $J\geq N_0$, Proposition~\ref{prop: H-measure bound} and the first equality in~\eqref{eq: meaning of contraction} show that
        \begin{equation*}
            H_{\kappa(N_0)}^{s-\g} \left( \bigcap_{N\geq N_0} Z_x(M,N,k,\d) \right) \leqslant
            H_{\kappa(N_0)}^{s-\g} \left(  Z_x(M,J,k,\d) \right)\leqslant
            \left(c_0 \big(\zeta^\d \xi^{1-\d}\big)^k\right)^{J}.
        \end{equation*}
        By taking $J$ to infinity, it follows that
        \begin{equation*}
            H^{s-\g}\left(\bigcap_{N\geq N_0} Z_x(M,N,k,\d) \right) =0.
        \end{equation*}
        Recall that $\dim_H(\cup_n A_n) = \sup_n \dim_H(A_n)$ for any countable collection of Borel sets $A_n$.
        Since $Z_x$ is contained in the union of countably many sets of the form $\bigcap_{N\geq N_0} Z_x(M,N,k,\d)$ by~\eqref{Z contained in liminf set}, it follows that $\dim_H(Z_x) \leqslant s-\g$.
        This being true for $\g_0\leq \g<\b$, this implies that $\dim_H(Z_x)\leqslant s-\b$ as desired.

    
    \section{Transversality of Expanding Coordinates}
\label{section: transverse}

  In this section, we establish the first step towards verifying the contraction hypothesis on the space of lattices $\mrm{SL}(d+1,\R)/\mrm{SL}(d+1,\Z)$.
  The goal is to prove a key observation which allows us to obtain optimal average contraction rates with respect to any measure $\mu$ for which $\a_\ell(\mu)>0$ for every $\ell$, where $\a_\ell(\mu)$ is defined in~\eqref{eqn: proj spec}.
  The main results are Proposition~\ref{prop: transversality} and~\ref{prop: transverse implies integrable}.
  
  \subsection{The exterior power representation}
  We begin by giving a description of the coordinates of the fundamental representation of $G=\mrm{SL}(d+1,\R)$ on the following vector space
  \[ V= \bigoplus_{\ell=1}^{d} \bigwedge^\ell \R^{d+1}. \]
  An element $g \in G $ acts on $V$ via the linear map $\bigoplus_{k=1}^{d} \bigwedge^\ell g$.
  Consider the basis $\set{\mbf{e}_0,\dots, \mbf{e}_d}$ of $\R^{d+1}$, where $\mbf{e}_i$ denotes the $i^{th}$ standard basis element.
  For each index set $I = \set{ i_1 < \cdots < i_\ell } \subset\set{0,\dots,d}$, we let
  \begin{equation} \label{eqn: basis elements}
  	\mbf{e}_I := \mbf{e}_{i_1} \wedge \cdots \wedge \mbf{e}_{i_\ell}.
  \end{equation}
  The collection of monomials $\mbf{e}_I$ gives a basis of $V_\ell = \bigwedge^\ell \R^{d+1}$ for each $ 1\leq \ell\leq d+1$.
  We denote by $\langle \cdot,\cdot \rangle$ the standard Euclidean inner product on $V$, making the monomials $\mbf{e}_I$ an orthonormal basis of $V$.
  Note that this basis consists of joint eigenvectors of the linear maps $\bigoplus_{\ell=1}^{d} \bigwedge^\ell g$, where $g\in G$ is any diagonal matrix.
  
  Let $v\in V_l \backslash\set{0}$ and write
    \begin{equation}\label{eq: v}
        v = \sum_{I \subset \set{1,\dots,d+1}} v_I \mbf{e}_I.
    \end{equation} 
  where the sum is over index sets of cardinality $l$.  
  Let $\mbf{x}= (\mbf{x}_1,\dots,\mbf{x}_d) \in  \R^{d}$.
  First, we note that $u(\bx)$ fixes $\mbf{e}_0$ and maps $\mbf{e}_i$ to $\mbf{e}_i+\bx_i \mbf{e}_0$ for $i=1,\dots,d$.
  This implies the following.
  \begin{equation*}
  	u(\bx)  \mbf{e}_I = 
      \begin{cases}
		\mbf{e}_I \qquad \qquad \qquad \qquad \qquad \qquad  0\in I,\\
        \mbf{e}_I + \sum_{i\in I} \pm \bx_i \mbf{e}_{(I\cup \set{0})\backslash\set{i}}, \qquad \textrm{otherwise},
      \end{cases}
  \end{equation*}
  where the sign depends on $I$.
  In particular, we get that
  \begin{equation}\label{eq: unipotent coordinates}
    	u(\bx) v = \sum_{\substack{I\subset \set{0,\dots,d} \\ 0\notin I}} v_I \mbf{e}_I
        + \sum_{\substack{I\subset \set{0,\dots,d} \\ 0\in I}} 
        \left( v_I + \sum_{i\notin I} \pm v_{(I\cup \set{i})\backslash\set{0}} 
        \bx_i \right) \mbf{e}_I.
    \end{equation}

  \subsection{Transversality of the expanding coordinates}

    Let $v\in V_l$ and write $v_I = \langle v,e_I\rangle$ for each index set $I$.
    For each index set $I$ containing $0$, let $\mc{L}_I(v)$ be the affine subspace defined by
    \begin{equation*}
    \mc{L}_I(v) = \set{\bx\in\R^d: \langle u(\bx)v,\mbf{e}_I\rangle = 0 }
    = \set{ \bx\in \R^d:   v_I + \sum_{i\notin I} \pm v_{(I\cup \set{i})\backslash\set{0}}  \bx_i =0}.
    \end{equation*}
    Note  that it is possible that $\mc{L}_I(v) = \emptyset$ or $\mc{L}_I(v) = \R^d$.
    Denote by $n_I$ the normal vector of $\mc{L}_I(v)$ given by
    \begin{equation}\label{eqn: normal vecs}
        n_I =  \sum_{i\notin I} \left(\langle u(e_i)v,\mbf{e}_I\rangle - v_I\right) e_i
        = \sum_{i\notin I} \pm v_{(I\cup \set{i})\backslash\set{0}} e_i,
    \end{equation}
    where $e_i$ denotes the standard basis of $\R^d$ and the choice of the signs is the same as in~\eqref{eq: unipotent coordinates}.
    Given an index set $J\subset \set{1,\dots,d}$ ($0\notin J$) of size $\ell = |J|$, define $\mc{J}(J) $ by
    \begin{equation}\label{eq: fancy J}
        \mc{J}(J) = \set{I\subset \set{0,\dots,d} : 
        0\in I,
        J = (I\cup \set{i})\backslash\set{0}, \textrm{ for some } i\notin I  }.
    \end{equation}
    We note if $v_J \neq 0$, then $I\in \mc{J}(J)$ if and only if $\pm v_J$ appears as a coordinate of $n_I$.
    
    The following elementary proposition is a key observation for our proof.
    \begin{proposition} [Transversality]
    \label{prop: transversality}
    Suppose $v\in V_\ell$ and let $I\subset \set{1,\dots,d}$ be an index set. Let $\ell = \# \mc{J}(I)$. Then,
    \[ \norm{ \bigwedge_{J \in \mc{J}(I) } n_J } \geqslant \left| v_I\right|^\ell.  \]
    \end{proposition}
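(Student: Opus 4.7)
The plan is to parametrize $\mc{J}(I)$ explicitly and then reduce the desired lower bound to a single minor computation via Cauchy--Binet. First I would note that the inequality is trivial unless $|I|=\ell$, since otherwise $v_I=0$. Under this assumption each $J\in \mc{J}(I)$ is uniquely of the form $J(i) := (I\cup\set{0})\setminus \set{i}$ for some $i\in I$, giving a natural bijection $I\leftrightarrow \mc{J}(I)$ and confirming that $\#\mc{J}(I)=|I|=\ell$.

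Next I would compute, for each $i\in I$, the $j$-th coordinate of $n_{J(i)}$ for $j\in I$. Using the formula~\eqref{eqn: normal vecs}, the indices $k\notin J(i)$ are precisely $k=i$ together with the elements of $\set{1,\dots,d}\setminus I$. For $k=i$ one has $(J(i)\cup\set{k})\setminus\set{0}=I$, producing a coordinate $\pm v_I$ in the $e_i$-direction; for $k\in \set{1,\dots,d}\setminus I$ the corresponding coordinate lies in the direction $e_k$, which is disjoint from $I$. Consequently, restricted to the coordinates indexed by $I$, each $n_{J(i)}$ carries a single non-zero entry equal to $\pm v_I$ in position $e_i$.

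Finally I would invoke the Cauchy--Binet formula: the squared norm of a wedge of $\ell$ vectors in $\R^d$ equals the sum of squares of all $\ell\times\ell$ minors of the corresponding $\ell\times d$ coefficient matrix, and is therefore at least as large as the square of any one such minor. Taking the minor whose columns are indexed by $I$, the coordinate computation above shows the resulting $\ell\times \ell$ submatrix is diagonal with entries $\pm v_I$, so its determinant has absolute value $|v_I|^\ell$, and the claimed lower bound on $\|\bigwedge_{J\in \mc{J}(I)} n_J\|$ follows immediately.

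I do not anticipate any serious obstacle. The only delicate step is the bookkeeping of index sets in the definition of $\mc{J}(I)$ and $n_J$, specifically verifying that the off-diagonal entries of the relevant $I\times I$ submatrix vanish; the signs appearing in~\eqref{eqn: normal vecs} play no role since one is estimating an absolute value.
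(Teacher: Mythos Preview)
Your proposal is correct and follows essentially the same approach as the paper: both arguments parametrize $\mc{J}(I)$ via the bijection $i\mapsto (I\cup\{0\})\setminus\{i\}$, observe that the $I$-indexed submatrix of the coefficient matrix of the $n_J$'s is diagonal with entries $\pm v_I$, and conclude by bounding the wedge norm below by a single $\ell\times\ell$ minor. The only cosmetic difference is that you phrase the last step via Cauchy--Binet, whereas the paper phrases it as $\bigl|\langle \bigwedge_{J} n_J, e_I\rangle\bigr| = |v_I|^\ell$; these are equivalent.
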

    
    \begin{proof}
    Note that $0\notin I$ by definition and in particular $\mc{J}(I)\neq \emptyset$. 
    Consider the map $\vp: I\r \mc{J}(I) $ defined by
    \begin{equation*}
        \vp(j) = (I\cup\set{0})\setminus\set{j},
    \end{equation*}
    for every $j\in I$.
    One easily verifies that $\vp$ is a bijection.
    In particular, $\ell = |I|$.
    We claim that for each $j\in I$, the following holds:
    \begin{equation}\label{eq: transversality as a dichotomy}
        \left|  \langle n_{J}, e_j\rangle \right| =
        \begin{cases}
        | v_I| & J = \vp(j),\\
        0 & J \in \mc{J}(I), J\neq \vp(j).
        \end{cases}
    \end{equation}
    Indeed, when $J = \vp(j)$, then $ \left|  \langle n_{J}, e_j\rangle \right| = |v_I|$ by definition of $n_J$ in~\eqref{eqn: normal vecs}.
    Otherwise, if $J'\in \mc{J}(I)$ satisfies $J'\neq \vp(j)$, then we observe that $j\in J'$. Indeed, $J'=(I\cup\set{0})\setminus\set{j'}$ for some $j'\in I$, not equal to $j$.
    In this case, the definition of $n_{J'}$ in~\eqref{eqn: normal vecs} shows that the coefficient of $e_j$ is $0$.
    This completes the proof of~\eqref{eq: transversality as a dichotomy}.
    
    Equation~\eqref{eq: transversality as a dichotomy} implies that the $(\ell\times\ell)$-matrix $( \langle n_{J}, e_j\rangle)_{J\in \mc{J}(I), j\in I}$ is diagonal, up to a permutation of the rows, with $\pm v_I$ on the diagonal.
    It then follows from the definition of the inner product on $\bigwedge^\ell \R^{d+1}$ that 
    \[ \left|\left\langle \bigwedge_{J \in \mc{J}(I) } n_J, e_I \right\rangle\right|
        = \left|\mrm{det}(  ( \langle n_{J}, e_j\rangle)_{J\in \mc{J}(I), j\in I} ) \right|
    = |v_I|^\ell. \]
    This completes the proof.
    \end{proof}
    
    For an affine subspace $\mc{L} \subset \R^d$ and $\d >0$, recall that we denote by $\mc{L}^{(\d)}$ the open $\d-$neighborhood of $\mc{L}$. More precisely,
    \begin{equation*}
        \mc{L}^{(\d)} = \set{\mbf{x}\in \R^d: d(\mbf{x},\mc{L}) <\d  },
    \end{equation*}
    where $d(\cdot,\cdot)$ is the Euclidean distance.
    
    \begin{proposition}[From Transversality to Integrability]
    \label{prop: transverse implies integrable}
    Suppose $\mc{L}_1,\dots,\mc{L}_\ell$ are affine hyperplanes in $\R^d$ with $1\leq \ell \leq d-1$. Suppose that $n_k$ is a unit normal vector of $\mc{L}_k$ for each $1\leq k\leq \ell$. Assume further that 
    \[\norm{n_1 \wedge \cdots \wedge n_\ell } \geqslant \kappa, \]
    for some $\kappa>0$. Then, there exists $C \geqslant 1$, depending only on $\kappa$ and d, so that for all $\e >0$,
    \begin{equation} \label{eqn: intersection of nbhd in nbhd of intersection}
        \bigcap_{k=1}^{\ell} \mc{L}_k^{(\e)} \subseteq \left(\bigcap_{k=1}^{\ell} \mc{L}_k \right)^{(C \e)}.
    \end{equation}
    \end{proposition}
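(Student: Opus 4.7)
My plan is to recast the problem in linear-algebraic terms and use the wedge-product bound as quantitative linear independence of the normals to invert the defining system with controlled norm.

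Write each hyperplane as $\mc{L}_k = \{x \in \R^d : \langle n_k, x\rangle = c_k\}$ for some $c_k \in \R$, so that $\mc{L}_k^{(\e)} = \{x : |\langle n_k, x\rangle - c_k| < \e\}$. Assemble the normals into the $\ell \times d$ matrix $A$ whose $k$-th row is $n_k^T$, and let $c = (c_1,\dots,c_\ell) \in \R^\ell$. Then
\[ \bigcap_{k=1}^\ell \mc{L}_k^{(\e)} = \{x \in \R^d : \|Ax - c\|_\infty < \e\}, \qquad \bigcap_{k=1}^\ell \mc{L}_k = \{y \in \R^d : Ay = c\}. \]
The Cauchy--Binet identity gives $\det(AA^T) = \|n_1 \wedge \cdots \wedge n_\ell\|^2 \geqslant \k^2$, so in particular $A$ has full row rank $\ell$, which ensures the intersection of the hyperplanes is non-empty (an affine subspace of dimension $d-\ell$).

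Given $x \in \bigcap_k \mc{L}_k^{(\e)}$, define the candidate correction using the Moore--Penrose pseudoinverse $A^+ = A^T(AA^T)^{-1}$, and set $y = x - A^+(Ax - c)$. Then $Ay = c$, so $y \in \bigcap_k \mc{L}_k$, and
\[ \|x - y\| \leqslant \|A^+\|_{\mrm{op}} \cdot \|Ax - c\|_2 \leqslant \sqrt{\ell}\,\|A^+\|_{\mrm{op}} \cdot \e. \]
It remains to bound $\|A^+\|_{\mrm{op}}$ solely in terms of $\k$ and $d$. Since each $n_k$ is a unit vector, the diagonal entries of $AA^T$ are $1$ and $\|AA^T\|_{\mrm{op}} \leqslant \ell$. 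Combined with $\det(AA^T) \geqslant \k^2$, the smallest eigenvalue $\l_{\min}$ of the positive definite matrix $AA^T$ satisfies $\l_{\min} \geqslant \det(AA^T)/\|AA^T\|_{\mrm{op}}^{\ell-1} \geqslant \k^2/\ell^{\ell-1}$. Hence $\|A^+\|_{\mrm{op}}^2 = \|(AA^T)^{-1}\|_{\mrm{op}} = \l_{\min}^{-1} \leqslant \ell^{\ell-1}/\k^2$, so setting $C = \sqrt{\ell^\ell/\k^2}$ (which depends only on $\k$ and $d$, since $\ell \leqslant d-1$) yields $\|x - y\| \leqslant C\e$ and establishes~\eqref{eqn: intersection of nbhd in nbhd of intersection}.

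I do not expect any significant obstacle: the only nonroutine step is recognizing that $\|n_1 \wedge \cdots \wedge n_\ell\|^2$ equals the Gram determinant $\det(AA^T)$, and that this lower bound, together with the trivial operator norm upper bound from unit normals, pins the smallest singular value of $A$ away from zero by a constant depending only on $\k$ and $d$.
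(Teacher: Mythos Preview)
Your proof is correct. Both your argument and the paper's hinge on the same linear-algebraic fact---that the wedge-product lower bound controls the smallest singular value of the matrix $A$ of normals---but the packaging differs. The paper first translates so that $0$ lies in the intersection $P_\ell=\bigcap_k\mc{L}_k$, then projects orthogonally onto $\R^d/P_\ell\cong\R^\ell$; this reduces to the square case, where $|\det A|=\norm{n_1\wedge\cdots\wedge n_\ell}$ directly, and the inverse is bounded via the elementary estimate $\norm{g^{-1}}\leqslant\norm{g}^\ell$ for $g\in\mrm{SL}(\ell,\R)$ after rescaling. You instead stay in $\R^d$ and use the Moore--Penrose pseudoinverse $A^+=A^T(AA^T)^{-1}$, invoking Cauchy--Binet to identify $\det(AA^T)$ with the squared wedge norm and then bounding the smallest eigenvalue of $AA^T$ by $\det(AA^T)/\norm{AA^T}_{\mrm{op}}^{\ell-1}$. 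Your route is marginally more streamlined (no translation or quotient needed, and it yields the explicit constant $C=\ell^{\ell/2}/\k$), while the paper's reduction makes the geometry more transparent; since $A^+$ is precisely inversion after orthogonal projection onto $(\ker A)^\perp=P_\ell^\perp$, the two arguments are really the same computation in different coordinates.
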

    
    \begin{proof}
    Let $P_\ell = \bigcap_{k=1}^{\ell}  \mc{L}_k$. First, we claim that $P_\ell \neq \emptyset$.
    For each $k$, let $v_k\in \R$ be such that $\Lcal_k = \set{\bx: \langle n_k,\bx\rangle = v_k}$.
    Let $A'$ denote the $\ell\times d$ matrix whose $k$\textsuperscript{th} row is $n_k$ and write $\mbf{v}\in \R^\ell$ for the column vector whose $k$\textsuperscript{th} entry is $v_k$.
    Then, $P_\ell$ is the set of solutions of the system $A' \bx = \mbf{v}$.
    The assumption that $n_1\wedge\cdots \wedge n_\ell\neq 0$ implies that $A'$ has full rank. In particular, this implies that $P_\ell$ is non-empty.
    
    By applying a translation, we may assume that $0\in P_\ell$. Let $\pi:\R^d \r \R^d/P_\ell \cong \R^\ell$ denote the canonical projection parallel to $P_\ell$. Let $L_k = \pi(\Lcal_k)$ for each $k$. We note that it suffices to show
    \begin{equation*}
        \bigcap_{k=1}^{\ell} \left( L_k\right)^{(\e)} \subseteq \left(\bigcap_{k=1}^{\ell} L_k \right)^{(C \e)},
    \end{equation*}
    for every $\e >0$. Note that $\pi(n_k)$ is a unit vector orthogonal to $L_k$ for each $k$. We continue to denote by $n_k$ the image of $n_k$ under $\pi$.
    
    Suppose $v \in  \bigcap_{k=1}^{\ell} \left( L_k\right)^{(\e)}$.
    Since each $L_k$ passes through the origin and each $n_k$ is a unit normal to $L_k$, the Euclidean distance of $v$ to $L_k$ is given by $d(v,L_k) = \left| \langle n_k,v\rangle \right|$.
    In particular, $\left|\langle n_k,v\rangle\right| <\e$.
    Note that $\bigcap_{k=1}^{\ell} L_k = \set{0}$. Thus, our task is to show that $\norm{v} \leqslant C\e$ for an appropriate uniform constant $C>0$.
    
    Denote by $A$ the square matrix whose rows are $n_k$.
    As $n_k$ are unit vectors, there is a constant $K >0$, depending only on $d$, such that $\norm{A} \leqslant K$.
    Moreover, we have that $\left|  \det A \right| = \norm{n_1 \wedge \cdots \wedge n_\ell} \geqslant \kappa$ and in particular that $A$ is invertible.
    Recall that if $g\in \mrm{SL}(\ell,\R)$, then $\norm{g^{-1}}\leqslant \norm{g}^{\ell}$.
    Thus, the following norm estimates follow:
    \begin{equation*}
        \norm{A^{-1}} \leqslant \left| \det A \right|^{-1-1/\ell} \norm{A}^\ell \leqslant \kappa^{-1-1/\ell} K^\ell.
    \end{equation*}
    Moreover, we have that 
    \[ \norm{A^{-1}}^{-1} \norm{v} \leqslant \norm{Av} \leqslant \max_k \left|\langle n_k,v\rangle\right| <\e. \]
    Together those two inequalities imply that $\norm{v} \leqslant C\e$, where $C = \kappa^{-1-1/\ell} K^\ell$.

    \end{proof}


\section{Decay Exponents, Transversality, and Expansion}
\label{section: linear expansion}

    This section is dedicated to proving estimates on the average rate of expansion of vectors in linear representations with respect to general measures, Proposition~\ref{propn: expansion in linear representations}.
    The main point of the result is the precise integrability exponent for the functions $\norm{g_{\t(\bx)} u(\mbf{x})v}^{-1}$.
    A key ingredient in the proof is the transversality result obtained in the previous section.
  Recall the definition of $\a_\ell(\mu)$ in~\ref{eqn: proj spec} and the parametrization of $g_t$ in~\eqref{linear forms g_t}.
  \begin{proposition} \label{propn: expansion in linear representations}
    Let $V_\ell =  \bigwedge^\ell \R^{d+1}$ for some $1\leqslant \ell < d+1$.
    Suppose $\mu$ is a compactly supported Borel probability measure on $\R^d$ and suppose $\a_\ell = \a_\ell (\mu)>0$.
    Let $0<\l \leqslant \a_\ell$ be given.
    Then, for all $0<\d <1$, there exists a constant $C = C(\d,\mu)\geq 1 $ so that the following holds: for every $v \in V_\ell\backslash\set{0}$, $\g\in \R$, and every measurable function $\t:\R^d\r (0,1)$ :
    \begin{equation*}
    	 \int \t(\bx)^\g \norm{g_{\t(\bx)} u(\mbf{x})v}^{-\d\l} \;d\mu(\mbf{x})
        \leqslant C \norm{v}^{-\d\l}
        \left(\int \t(\bx)^{p(\g+\k)}\;d\mu\right)^{1/p},
    \end{equation*}
    where
    \begin{equation*}
        \k=\frac{\d\l (d-\ell+1)}{d+1}, \qquad p = \frac{1+\d}{1-\d}.
    \end{equation*}
  \end{proposition}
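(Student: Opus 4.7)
The plan is to apply H\"older's inequality with conjugate exponents $p=(1+\delta)/(1-\delta)$ and $q=(1+\delta)/(2\delta)$ to the factorization
\[
\tau(\bx)^\gamma\,\|g_{\tau(\bx)} u(\bx) v\|^{-\delta\lambda}
= \tau(\bx)^{\gamma+\kappa}\cdot\bigl[\tau(\bx)^{-\kappa}\,\|g_{\tau(\bx)} u(\bx) v\|^{-\delta\lambda}\bigr].
\]
The $L^p$-norm of the first factor is precisely the $\tau$-integral on the right-hand side of the target, so it suffices to bound the $L^q$-norm of the bracket by a multiple of $\|v\|^{-\delta\lambda}$. A direct computation shows that $g_\tau$ acts on $V_\ell$ diagonally with eigenvalue $\tau^{-(d-\ell+1)/(d+1)}$ on $\mbf{e}_I$ when $0\in I$ and $\tau^{\ell/(d+1)}$ when $0\notin I$. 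Setting $F(\bx)^2 := \sum_{0\in I}\langle u(\bx)v,\mbf{e}_I\rangle^2$, this gives $\tau^{(d-\ell+1)/(d+1)}\|g_\tau u(\bx) v\|\geq F(\bx)$. Since $\delta\lambda q=\lambda(1+\delta)/2=:\mu_1$ and $\kappa q=\mu_1(d-\ell+1)/(d+1)$, the $\tau$ dependence in the bracket cancels, and the problem reduces to the $\tau$-free inequality
\[
\int_{\R^d} F(\bx)^{-\mu_1}\,d\mu(\bx) \;\leq\; C\,\|v\|^{-\mu_1}.
\]

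By homogeneity we may assume $\|v\|=1$. Split $v=v^{(0)}+v^{(1)}$ according to whether $0\in I$ or not. From~\eqref{eq: unipotent coordinates}, $(u(\bx)v)_I$ for $0\in I$ is affine in $\bx$ with constant term $v_I$ coming from $v^{(0)}$ and linear coefficients drawn from $v^{(1)}$. Fix a threshold $c>0$ (depending on $d,\ell$ and $R:=\sup_{\bx\in\mrm{supp}\,\mu}\|\bx\|$). If $\|v^{(1)}\|<c$, then $\|v^{(0)}\|$ is bounded below, so some $|v_{I_*}|$ with $0\in I_*$ is bounded below; taking $c$ so small that $cR$ is dominated by this quantity, the constant term dominates the linear part uniformly on $\mrm{supp}\,\mu$, hence $F$ is pointwise bounded below by a positive constant and the estimate is immediate.

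In the complementary regime $\|v^{(1)}\|\geq c$, some $|v_{I_0}|$ with $I_0\subseteq\{1,\ldots,d\}$, $|I_0|=\ell$, is bounded below. Proposition~\ref{prop: transversality} then gives $\|\bigwedge_{J\in\mc{J}(I_0)} n_J\|\geq |v_{I_0}|^\ell$, and since $\|n_J\|\leq\|v\|=1$, the corresponding wedge of unit normals remains bounded below. The event $\{F<t\}$ forces $|(u(\bx)v)_J|<t$ for every $0\in J$; for $J\in\mc{J}(I_0)$ we have $\|n_J\|\geq|v_{I_0}|$, placing $\bx$ in $\mc{L}_J^{(t/|v_{I_0}|)}$. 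Proposition~\ref{prop: transverse implies integrable} then yields $\{F<t\}\subseteq P^{(C_1 t)}$, where $P:=\bigcap_{J\in\mc{J}(I_0)}\mc{L}_J$ has dimension $d-\ell$. Since $\delta<1$ forces $\mu_1<\lambda\leq\a_\ell(\mu)$ strictly, we may pick $\lambda''\in(\mu_1,\a_\ell)$; the definition of $\a_\ell$ together with boundedness of $\mrm{supp}\,\mu$ gives $\mu(P^{(\e)})\leq C_2\e^{\lambda''}$ for all $\e>0$. The layer-cake formula
\[
\int F^{-\mu_1}\,d\mu = \mu_1\int_0^\infty \mu(\{F<t\})\,t^{-\mu_1-1}\,dt,
\]
combined with $\mu(\{F<t\})\leq\min(1,C_3 t^{\lambda''})$ and $\lambda''>\mu_1$, delivers the required bound.

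The main obstacle is the second regime: one must track the dependence of constants on $\|v\|$ carefully after the normalization, and make sure that the combinatorial transversality of Proposition~\ref{prop: transversality} combines cleanly with the Frostman-type estimate provided by $\a_\ell(\mu)$. The key role of the hypothesis $\delta<1$ is to generate the slack $\mu_1<\a_\ell$, ensuring a Frostman exponent strictly larger than $\mu_1$ is available so that the layer-cake integral converges at the origin.
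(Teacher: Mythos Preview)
Your proof is correct and follows essentially the same route as the paper's: apply H\"older with the same exponents $p,q$, reduce to a uniform bound on $\int \|\pi_+(u(\bx)v)\|^{-q\delta\lambda}\,d\mu$ (your $F$ is the paper's $\|\pi_+(u(\bx)v)\|$ and your $\mu_1$ is its $q\beta$), split into the two cases according to the size of $v^-$, and in the nontrivial case invoke Propositions~\ref{prop: transversality} and~\ref{prop: transverse implies integrable} together with the Frostman-type decay from $\alpha_\ell(\mu)$ and a layer-cake computation. The only cosmetic differences are that the paper bounds $\|g_\tau u(\bx)v\|$ below before applying H\"older (you do it after), and it writes the layer-cake integral in the variable $r=t^{-\mu_1}$ rather than $t$.
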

  
  \begin{remark}
  It is worth noting that the constant $C$ in Proposition~\ref{propn: expansion in linear representations} has a delicate dependence on the measure $\mu$. In particular, it depends on the rate of convergence of the $\liminf$ in the definition of $\a_\ell(\mu)$.
  Moreover, in our proof, $C\r\infty$ as $\d\l\r \a_\ell$. In particular, it is not clear whether Proposition~\ref{propn: expansion in linear representations} holds with $\d \l=\a_\ell$ and with a finite constant $C$.
  \end{remark}
  
  Before the proof, we state two elementary lemmas which will be useful for us.
   The first lemma is immediate from the definition.
    \begin{lemma} \label{lem: meaning of alpha_ell}
  Suppose $\mu$ is a Borel measure on $\R^d$ such that $\a_\ell(\mu)$ exists for some $1\leq \ell \leq d$.
  Then, for every $\eta>0$, there exists $0<\e_0<1$ so that for all $0<\e\leq \e_0$
  \[ \sup\set{ \mu\left( \Lcal^{(\e)}   \right): \Lcal \textrm{ is a proper affine subspace of dimension } d-\ell } \leqslant \e^{\a_\ell -\eta}.  \]
  \end{lemma}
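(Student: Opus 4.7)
The statement is a direct unwinding of the $\liminf$ in the definition of $\alpha_\ell(\mu)$, so the proof plan is short. Set
\[
S(\e) \;=\; \sup\bigl\{\mu(\Lcal^{(\e)}) : \Lcal \in \mc{A}(d,d-\ell)\bigr\},
\]
so that, by~\eqref{eqn: proj spec},
\[
\a_\ell(\mu) \;=\; \liminf_{\e \to 0^+} \frac{\log S(\e)}{\log \e}.
\]
I would then simply apply the definition of $\liminf$: given $\eta>0$, there exists $\e_0 \in (0,1)$ such that for every $0 < \e \leq \e_0$ one has $\log S(\e)/\log \e \;\geq\; \a_\ell - \eta$.

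To finish, multiply through by $\log \e$, noting that $\log \e < 0$ for $\e \in (0,1)$, which reverses the inequality and yields $\log S(\e) \leq (\a_\ell - \eta)\log \e$. Exponentiating both sides gives the desired bound $S(\e) \leq \e^{\a_\ell - \eta}$.

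The only minor point to address is the degenerate case $S(\e)=0$, in which $\log S(\e) = -\infty$ and the inequality holds trivially; this is why nothing beyond the definition is needed. I expect no real obstacle, which is consistent with the author's description of the lemma as \emph{immediate}.
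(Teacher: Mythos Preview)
Your proposal is correct and matches the paper's approach: the paper simply states that the lemma is ``immediate from the definition'' and provides no further argument, which is exactly what your unwinding of the $\liminf$ accomplishes.
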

  
  The next lemma is a simple application of Fubini's Theorem.
  \begin{lemma} \label{lemma: cavalieri}
  Let $\mu$ be a Borel measure and $f$ a non-negative Borel function on a separable metric space $X$. Then,
  \[ \int_X f\;d\mu = \int_0^\infty \mu\left(\set{x\in X: f(x)\geqslant t}\right) \;dt \]
  \end{lemma}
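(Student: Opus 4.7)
The plan is to reduce the identity to a direct application of Tonelli's theorem via the layer-cake representation of a non-negative function. The key identity to exploit is the tautology
\[
    f(x) = \int_0^\infty \mathbf{1}_{\{t \leqslant f(x)\}}\;dt = \int_0^\infty \mathbf{1}_{\{f(x) \geqslant t\}}\;dt,
\]
valid for every $x\in X$ (with the understanding that if $f(x)=\infty$ both sides are $\infty$). Integrating against $\mu$ and swapping the order of integration formally yields the desired equality; the entire proof is then a matter of justifying the swap.

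To make the swap rigorous I would introduce the set
\[
    E = \set{(x,t) \in X\times [0,\infty) : f(x) \geqslant t}
\]
and verify that $E$ is measurable with respect to the product $\sigma$-algebra $\mc{B}(X)\otimes \mc{B}([0,\infty))$. This is immediate: the map $\Phi:(x,t)\mapsto f(x)-t$ is Borel measurable as the difference of the Borel functions $(x,t)\mapsto f(x)$ and $(x,t)\mapsto t$ (both are compositions of coordinate projections, which are continuous, with Borel maps), and $E = \Phi^{-1}([0,\infty])$. The separability of $X$ is invoked only to guarantee that $\mc{B}(X)\otimes \mc{B}([0,\infty))$ coincides with the Borel $\sigma$-algebra on the product, so that no pathological measurability issues arise.

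With joint measurability in hand, I would apply Tonelli's theorem to the non-negative measurable function $\mathbf{1}_E$ on the $\sigma$-finite product space $(X\times[0,\infty),\mu\otimes\mrm{Leb})$. Sigma-finiteness of $\mu$ is not needed in full, but Lebesgue measure on $[0,\infty)$ is $\sigma$-finite, which together with non-negativity is enough to justify the interchange. One slice computation gives
\[
    \int_0^\infty \mathbf{1}_E(x,t)\;dt = \int_0^\infty \mathbf{1}_{\{f(x)\geqslant t\}}\;dt = f(x),
\]
so integrating in $x$ produces $\int_X f\;d\mu$, while the other slice gives $\int \mathbf{1}_E(x,t)\;d\mu(x) = \mu(\set{x:f(x)\geqslant t})$, and integrating in $t$ produces the right-hand side. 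Both sides being equal to $\int_{X\times[0,\infty)} \mathbf{1}_E\,d(\mu\otimes\mrm{Leb})$ completes the proof. There is no real obstacle here; the only step requiring any care is the joint measurability, and the statement is essentially a textbook consequence of Tonelli's theorem.
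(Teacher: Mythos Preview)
Your proof is correct and matches the paper's approach exactly: the paper simply remarks that the lemma is ``a simple application of Fubini's Theorem'' and gives no further details, so your layer-cake representation plus Tonelli argument is precisely the intended expansion. One small caveat: your assertion that $\sigma$-finiteness of $\mu$ can be dropped because the other factor is $\sigma$-finite is not a standard form of Tonelli's theorem (the usual hypothesis requires both factors to be $\sigma$-finite), but this is immaterial here since in every application in the paper $\mu$ is a compactly supported probability measure.
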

  
  \begin{proof}[Proof of Proposition~\ref{propn: expansion in linear representations}]

    Let $0\neq v\in V_\ell $ and without loss of generality assume $\norm{v} = 1$.
    Denote by $\mc{B}$ the following collection of vectors
    \begin{equation} \label{defn: basis of V+}
    	\mc{B} = \set{ \mbf{e}_{I}: 0\in I\subset \set{0,\dots,d}   }.
    \end{equation}
    We use $V^+$ to denote the linear span of $\mc{B}$ and we let $V_\ell^+ = V^+ \cap V_\ell$.
    Then, $V^+ $ is the expanding subspace corresponding to $g_t$.
    Denote by $\pi_+: V_\ell \r V_\ell^+ $ be the canonical projection.
    Denote by $v^+$ the image of $v$ under $\pi_+$ and let $v^- = v- v^+$. Define $K$ as follows:
    \[ K = \max\set{1, \sup_{\bx \in \mrm{supp}(\mu)} \norm{\bx}}. \]
    Then, $K < \infty$ since $\mu$ is compactly supported.
    Observe that for $t>0$, we have that
    \begin{equation} \label{eqn: expansion by g_t}
    	\norm{g_{t} u(\bx)v} \geqslant \norm{g_{t} \pi_+(u(\bx)v) }
        = t^{-\frac{d-\ell+1}{d+1}} \norm{\pi_+(u(\bx))v)}
    \end{equation}
    Fix some $\d \in (0,1)$ and let $0<\l\leqslant \a_\ell$ be given.
    We wish to apply H\"older's inequality.
    To this end, let
    \begin{equation*}
        \b=\d\l,\qquad q = \frac{1+\d}{2\d}, \qquad p= \frac{1+\d}{1-\d}.
    \end{equation*}
    Note that $p$ and $q$ are H\"older conjugates.
    Then, H\"older's inequality and~\eqref{eqn: expansion by g_t} imply
    \begin{align}\label{eq: apply Holder}
        \int \t(\bx)^\g \norm{g_{\t(\bx)} u(\bx)v}^{-\b}  \;d\mu &\leqslant
        \int \t(\bx)^{\g+\kappa} \norm{\pi_+(u(\bx))v)}^{-\b}  \;d\mu \nonumber \\
        &\leqslant \left( \int \t(\bx)^{p(\g+\k)} \;d\mu \right)^{1/p}
        \left( \int \norm{\pi_+(u(\bx))v)}^{-q\b}  \;d\mu \right)^{1/q}
    \end{align}
     Hence, it remains to show that the integral of $\norm{\pi_+(u(\bx)v)}^{-q\b}$ is uniformly bounded.
     We split the analysis into two cases based on the size of $v^-$. Recall the expression of $u(\bx)v$ in standard coordinates given in~\eqref{eq: unipotent coordinates}.
    \setcounter{case}{0}
    \begin{case}
    $\norm{v^-} \leqslant 1/3K$. Then, since $\norm{v} = 1$, there is an index set $I$ containing $0$ so that $|v_I| \geqslant 2/3 $. It follows that for each $\bx\in \mrm{supp} (\mu)$
     \begin{align*} 
     \norm{\pi_+(u(\bx))v)} 
        \geqslant 
         \left| v_I + \sum_{i\notin I} \pm v_{(I\cup \set{i})\backslash\set{0}} 
        \bx_i \right| \geqslant
        |v_I| - \norm{v^-} \norm{x}
        \geqslant \frac{2}{3} - \frac{1}{3} = \frac{1}{3}. 
    \end{align*}
    This implies that 
    $\left( \int \norm{\pi_+(u(\bx))v)}^{-q\b}  \;d\mu \right)^{1/q} \leqslant 3^{-\b}$ and  concludes the proof in this case.
    \end{case}

    \begin{case} $\norm{v^-}> 1/3K$. Then, there exists some index set $J$, not containing $0$ so that $|v_J| > 1/3K$. Define $\mc{J}(J)$ by
    \begin{equation*}
         \mc{J}(J) = \set{I\subset \set{0,\dots,d} : 
        0\in I,
        J = (I\cup \set{i})\backslash\set{0}, \textrm{ for some } i\notin I  }.
    \end{equation*}
    Let $I\in \mc{J}(J)$ and define $n_I$ by
    \[n_I =  \sum_{i\notin I} \pm v_{(I\cup \set{i})\backslash\set{0}} e_i,\]
    where the choice of signs is as in~\eqref{eq: unipotent coordinates}.
    Note that $\pm v_J$ appears as a coordinate of $n_I$.
    In particular, $\norm{n_I} \geq |v_J|>1/3K \neq 0$.
    Moreover, Proposition~\ref{prop: transversality} then shows that $
         \norm{\bigwedge_{I\in \mc{J}(J)} n_I } \geqslant (3K)^{-\ell}$.
    
    Consider the hyperplane
    $\Lcal_I = \set{\bx: \langle n_I,\bx\rangle = -v_I}$. Then, a simple calculation shows
    \[ d(\bx,\Lcal_I) = \frac{\left|\langle n_I,\bx\rangle + v_I \right|}{\norm{n_I}}, \]
    where $d(\cdot,\cdot)$ denotes Euclidean distance.
    For each $\e>0$, we define the set $E(v,\e)$ as follows:
    \begin{equation*}
     E(v,\e) = \set{\bx \in \mrm{supp}(\mu) : \norm{\pi_+(u(\bx) v)} \leqslant \e}.
    \end{equation*}
    Suppose that $\bx\in E(v,\e)$. Then, for each index set $I$ containing $0$, the following holds.
    \begin{equation*}
    	\left|\langle n_I,\bx\rangle + v_I \right| =
    	 \left| v_I + \sum_{i\notin I} \pm v_{(I\cup \set{i})\backslash\set{0}} 
        \bx_i \right| \leqslant \e.
    \end{equation*} 
     It follows that $\bx \in \bigcap_{I\in \mc{J}(J)} \Lcal_I^{(3K\e)} $.
     For simplicity, denote by $\Lcal(J):= \bigcap_{I\in \mc{J}(J)} \Lcal_I$.
     Hence, Proposition~\ref{prop: transverse implies integrable}, applied with $\k = (3K)^{-\ell}$, implies
     \begin{equation} \label{eqn: bad set is intersection of planes}
         E(v,\e) \subseteq \left( \Lcal(J) \right)^{(C_1\e)},
     \end{equation}
     for some constant $C_1\geq 1$ depending only on $d$ and $K$.
    Applying Lemma~\ref{lemma: cavalieri}, we obtain
    \begin{align}\label{eq: apply cavalieri}
    \int \norm{\pi_+(u(\bx))v)}^{-q\b}  \;d\mu
        =   \int_0^\infty
        \mu\left(E\left(v,r^{-1/q\b}\right)\right) \;dr.
    \end{align}
    
    The next ingredient is to apply Lemma~\ref{lem: meaning of alpha_ell}. We observe that $q\b=(1+\d)\l/2 $ is strictly smaller than $\a_\ell$, since $0<\d<1$ and $\b=\d\l$.
    Let $\eta = (\a_\ell-q\b)/2$. Then, $0<\eta < \a_\ell$.
    Applying Lemma~\ref{lem: meaning of alpha_ell} with this $\eta$, we get that there exists $\e_0>0$ so that for all $0<\e< \e_0$, 
    \begin{equation*}
        \mu \left( \left( \Lcal(J) \right)^{(\e)} \right) \leqslant \e^{\a_\ell -\eta}.
    \end{equation*}
    Here, we use the fact that $\Lcal(J)$ is an affine subspace of dimension $d-\ell$ since it is the intersection of $\#(\mc{J}(I))=\ell$ transverse affine hyperplanes by Proposition~\ref{prop: transversality}.
    Let $R>1$ be sufficiently large such that $C_1 R^{-1/q\b} < \e_0$.
    Note that the choice of $R$ here depends only on $\d$ and $\mu$.
    Since $\mu$ is a probability measure, it follows that
   \begin{equation}\label{eq: before decay}
       \int_0^R \mu\left(E\left(v,r^{-1/q\b}\right)\right) \;dr \leqslant R.
   \end{equation}
    Moreover, by~\eqref{eqn: bad set is intersection of planes}, we obtain
    \begin{align}\label{eq: after decay}
       \int_R^\infty \mu\left(E\left(v,r^{\frac{-1}{ q\b}}\right)\right)\;dr
       &\leqslant \int_R^\infty \mu \left( \left( \Lcal(J) \right)^{\left(C_1 r^{\frac{-1}{ q\b}}\right)} \right) \;dr 
       \leqslant (C_1)^{\a_\ell-\eta} \int_R^\infty r^{-\frac{\a_\ell -\eta}{q\b}} \;dr =: C_2.
    \end{align}
    Finally, note that our choice of $\eta$ implies that
    $ \frac{\a_\ell -\eta}{q\b} = 1+ \frac{\eta}{q\b} >1$.
    In particular, we have that $C_2<\infty$.
    Combining~\eqref{eq: apply Holder},~\eqref{eq: apply cavalieri},~\eqref{eq: before decay}, and~\eqref{eq: after decay} concludes the proof.
    \end{case}
  \end{proof}

    
    \section{Height Functions and Integral Inequalities}
\label{section: integ ineq}

	In this section, we construct a proper function on the space of unimodular lattices.
	Using the results in the previous section, we verify that this function satisfies the properties listed in Definition~\ref{defn: height functions}.
	The key idea that allows converting integral estimates in linear representations into integral estimates over the space of lattices is the use of the so-called systems of integral inequalities which first appeared in~\cite{EskinMargulisMozes}.
	The main result of this section is Theorem~\ref{thm: constraction in X}.

    \subsection{Preliminary notation}
Throughout this section, we set
	\begin{equation*}
	    G = \mrm{SL}(d+1,\R),\qquad \G= \mrm{SL}(d+1,\Z),\qquad X = G/\G.
	\end{equation*}
	
	 In view of Proposition~\ref{propn: expansion in linear representations}, the results of this section apply to general Borel measures $\mu$ and time parametrizations $\t$ on $\R^d$. 
	 However, for our application, we restrict ourselves to the setting of Theorem~\ref{main thm}.
	 We fix a finite set $\L$ and an irreducible IFS $\Fcal=\set{\rho_iO_i+b_i: i\in \L}$ on $\R^d$ satisfying the open set condition with limit set $\Kcal$. We denote by $s$ the Hausdorff dimension of $\Kcal$ and $\mu$ the unique self-similar probability measure supported on $\Kcal$ for the canonical probability vector $(\rho_i^s)_i$. Recall that $\mu$ in this case coincides with the normalized restriction of the $s$-dimensional Hausdorff measure to $\Kcal$.

    We denote by $V$ and $V_\ell$ the following vector spaces, endowed with the standard representations of $G$,
    \begin{equation*}
        V_\ell = \bigwedge\nolimits^\ell \R^{d+1},\qquad   V =  \bigoplus_{\ell =1}^{d} V_\ell.
    \end{equation*}
    Motivated by Proposition~\ref{propn: expansion in linear representations}, we define exponents of the form $\b_\ell$, for $1\leq \ell \leq d$ as follows:
    \begin{equation}\label{delta_lamda}
        \varpi = \min_{1\leq \ell \leq d} \a_\ell(\mu)(d-\ell+1), \qquad 
        \b_\ell:= \frac{d-\ell+1}{\varpi},
    \end{equation}
    where $\a_\ell(\mu)$ was defined in~\eqref{eqn: proj spec}.
    Corollary~\ref{cor: alpha >0} shows that $\a_\ell(\mu) >0 $ since the IFS $\Fcal$ is irreducible.
    
    
    The space $X = G/\G$ is identified with the space of unimodular lattices in $\R^{d+1}$ via the map
    $g \mrm{SL}(d+1,\Z) \mapsto g\Z^{d+1}$.
    For $x \in X$, let $P(x)$ denote the set of all \textbf{primitive} subgroups of the lattice $x$.
    Recall that a subgroup $L$ of a lattice $x$ in $\R^{d+1}$ is primitive if $L = \Z^{d+1} \cap \mrm{span}_\R (L) $, where $\mrm{span}_\R (L)$ is the $\R$-span of any $\Z$-basis of $L$.
    We say a monomial $v_1 \wedge \cdots \wedge v_\ell \in V_\ell $ is $x$-\textbf{integral} if the abelian subgroup of $\R^{d+1}$ generated by
    $\set{v_1,\dots,v_\ell}$ is primitive, i.e. belongs to $P(x)$.

    For every $0<\ell <d+1$, we define a function $\vp_{\ell}:X\r[1,\infty)$ as follows: ,
    \begin{align} \label{defn: phi}
    \vp_{\ell}(x) =  \max \set{\norm{v}^{-1}: v\in V_\ell \text{ is an } x\text{-integral monomial}  }.
    \end{align}
    For $\ell=0,d+1$, set $\vp_\ell \equiv 1$.
    For a compact set $Q\subset G$, define
    \begin{equation} \label{eqn: omega Q}
          \norm{Q} = \sup_{g\in Q} \max\left( \norm{g}, \norm{ g^{-1}}  \right)^{d+1}.
          \end{equation}
        where $\norm{\cdot}$ is the operator norm induced by the Euclidean norm on $V $.
        It follows from the definitions that for every $h\in Q$, $\e>0$ and every $v\in \bigwedge^\ast \R^{d+1}$,
    \begin{equation}\label{eq: log smoothness}
        \norm{Q}^{-1} \vp_{\ell}(x) \leqslant \vp_{\ell}(hx) \leqslant \norm{Q} \vp_\ell(x).
    \end{equation}

    \subsection{The Contraction Hypothesis on X}

    We recall the following Lemma from~\cite{EskinMargulisMozes} which underlies the main property of Margulis functions we prove later in the section.
    \begin{lemma}[Lemma 5.6 in~\cite{EskinMargulisMozes}] \label{lemma: mini mother inequality}
    Let $x\in X$ and let $\L_1, \L_2 \in P(x)$.
    Then,
      \[ \norm{\L_1} \norm{\L_2} \geqslant \norm{\L_1 \cap \L_2} \norm{\L_1 + \L_2}.  \]
    \end{lemma}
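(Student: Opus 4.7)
The plan is to reduce the inequality to a Hadamard-type estimate for exterior products, after factoring out the contribution of the common subspace $\L_1\cap\L_2$. Throughout, for $\L\in P(x)$ of rank $r$ with $\Z$-basis $v_1,\ldots,v_r$, I interpret $\norm{\L}$ as $\norm{v_1\wedge\cdots\wedge v_r}$; by primitivity this is well-defined up to sign and equals the Euclidean covolume of $\L$ inside its $\R$-span. For $\L_1+\L_2$, which need not be primitive, $\norm{\L_1+\L_2}$ denotes the norm of its primitive hull $\widehat{\L_1+\L_2} := \mrm{span}_\R(\L_1+\L_2)\cap x \in P(x)$.

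First, set $\L_0 := \L_1 \cap \L_2 \in P(x)$ and choose a $\Z$-basis $e_1,\ldots,e_k$ of $\L_0$. Using that $\L_0$ is primitive inside each $\L_i$, I extend this to $\Z$-bases $\set{e_1,\ldots,e_k, f_1,\ldots,f_m}$ of $\L_1$ and $\set{e_1,\ldots,e_k, g_1,\ldots,g_n}$ of $\L_2$. The concatenated system $\set{e_1,\ldots,e_k, f_1,\ldots,f_m, g_1,\ldots,g_n}$ $\Z$-generates $\L_1+\L_2$ and is $\R$-linearly independent, since any non-trivial relation would produce an element of $\mrm{span}_\R(\L_1)\cap\mrm{span}_\R(\L_2)\setminus\mrm{span}_\R(\L_0)$, contradicting primitivity of $\L_0$ inside each $\L_i$. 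Since passing to the primitive hull can only decrease the covolume in a fixed subspace (the index $[\widehat{\L_1+\L_2} : \L_1+\L_2]$ being a positive integer), this yields
\begin{equation*}
\norm{\L_1+\L_2} \leqslant \norm{e_1\wedge\cdots\wedge e_k\wedge f_1\wedge\cdots\wedge f_m\wedge g_1\wedge\cdots\wedge g_n}.
\end{equation*}

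Next, let $\pi$ denote orthogonal projection of $\R^{d+1}$ onto the orthocomplement of $\mrm{span}_\R(\L_0)$. The standard volume factorization for Gram determinants yields
\begin{equation*}
\norm{\L_1} = \norm{\L_0}\cdot\norm{\pi(f_1)\wedge\cdots\wedge\pi(f_m)},
\end{equation*}
together with the analogous identities for $\norm{\L_2}$ and for $\norm{e_1\wedge\cdots\wedge e_k\wedge f_1\wedge\cdots\wedge f_m\wedge g_1\wedge\cdots\wedge g_n}$. Substituting into the previous inequality and cancelling $\norm{\L_0}$ reduces the claim to
\begin{equation*}
\norm{\pi(f_1)\wedge\cdots\wedge\pi(f_m)}\cdot\norm{\pi(g_1)\wedge\cdots\wedge\pi(g_n)} \geqslant \norm{\pi(f_1)\wedge\cdots\wedge\pi(f_m)\wedge\pi(g_1)\wedge\cdots\wedge\pi(g_n)},
\end{equation*}
which is the classical Hadamard inequality for exterior products, immediate from the Cauchy--Binet identity (or from Gram--Schmidt applied to the concatenated system).

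The main obstacle is purely bookkeeping around the non-primitivity of $\L_1+\L_2$: the inequality is stated for the norm of its primitive hull, and one must verify that the passage to the primitive hull decreases rather than increases this norm, so as not to reverse the desired direction. Once this is handled, the compatible-basis construction together with the orthogonal decomposition collapses everything to a textbook Hadamard estimate.
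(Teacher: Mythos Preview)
The paper does not include a proof of this lemma; it is quoted verbatim from Eskin--Margulis--Mozes and used as a black box. Your argument is correct and is essentially the standard proof given in that reference: extend a basis of $\L_0=\L_1\cap\L_2$ to compatible bases of $\L_1$ and $\L_2$, factor out the $\L_0$-contribution via orthogonal projection, and reduce to the Hadamard inequality for exterior products.

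One small remark on conventions: in the Eskin--Margulis--Mozes setting, $\norm{\L}$ is defined for \emph{any} discrete subgroup of $x$ as the norm of the wedge of a $\Z$-basis, not just for primitive ones. Under that convention, your concatenated system $\{e_i,f_j,g_l\}$ is exactly a $\Z$-basis of $\L_1+\L_2$, so the step you wrote as an inequality (passage to the primitive hull) is in fact an equality, and the only genuine inequality in the argument is the Hadamard step. Your primitive-hull reading yields a formally weaker statement, but your proof establishes the stronger one anyway, and either version suffices for the application in Proposition~\ref{prop: combine complexities}.
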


    The following proposition establishes the fundamental property of Margulis functions obtained in~\cite{EskinMargulisMozes}. It is obtained via the method of integral inequalities first introduced in~\cite{EskinMargulisMozes}
    \begin{proposition}\label{prop: combine complexities}
    For every $0< \varrho  <1$, there exists a constant $C$, depending only on $\varrho$ and $\mu$, such that 
    for every $k\in \N$, there exists $\w_0=\w_0(k,\mu)\geqslant 1$, so that for all $x_0\in X$ and and all $\g\in \R$
        \begin{align*}
        	 \int \rho(\bx,k)^\g \vp_{\ell}^{\varrho/\b_\ell}(g_{\rho(\bx,k)} u(\mbf{x})x_0) \;d\mu(\mbf{x})
            \leqslant C  &\vp_\ell^{\varrho/\b_\ell}(x_0)\left(\int  \rho(\bx,k)^{p(\g+\k)}\;d\mu(\mbf{x})\right)^{1/p}\\
            &+ \omega_0^{2\varrho/\b_\ell} \max_{1 \leqslant j \leqslant \min\set{\ell,d+1-\ell}} \left(\sqrt{\vp_{\ell+j}(x)\vp_{\ell-j}(x) }\right)^{\varrho/\b_\ell} 
            ,
        \end{align*}
        where
    \begin{equation}\label{eq: conclusion of combine complexities prop}
        \k=\frac{\varrho\varpi}{d+1}, \qquad p = \frac{1+\varrho}{1-\varrho},
    \end{equation}
    and $\varpi$ was defined in~\eqref{delta_lamda}.
    \end{proposition}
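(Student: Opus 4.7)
The plan is to follow the method of integral inequalities pioneered in \cite{EskinMargulisMozes}. Set $a(\bx) = g_{\rho(\bx,k)}u(\bx)$ and observe that $L \mapsto a(\bx)L$ is a rank-preserving bijection $P(x_0) \to P(a(\bx)x_0)$ with $\|a(\bx)L\| = \|a(\bx)v_L\|$, where $v_L \in V_\ell$ is the monomial representing $L$. Fix $\ell$ and apply Proposition~\ref{propn: expansion in linear representations} with $\d=\varrho$ and $\d\l = \varrho/\b_\ell$. This choice is admissible: by definition of $\varpi$ in~\eqref{delta_lamda}, $\varrho/\b_\ell = \varrho\varpi/(d-\ell+1) \leq \varrho\,\a_\ell(\mu)$. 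The resulting bound
\[
\int \rho(\bx,k)^\g \|a(\bx)v_L\|^{-\varrho/\b_\ell}\,d\mu(\bx) \leq C\|v_L\|^{-\varrho/\b_\ell}\Bigl(\int \rho(\bx,k)^{p(\g+\k)}\,d\mu\Bigr)^{1/p}
\]
produces precisely the exponents in~\eqref{eq: conclusion of combine complexities prop}, with $C$ depending only on $\varrho$ and $\mu$.

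To replace the maximum defining $\vp_\ell$ by a summable expression, I would bound $\vp_\ell(y)^{\varrho/\b_\ell}$ pointwise by the sum $\sum_{L\in \Sigma_\ell(y)} \|L\|^{-\varrho/\b_\ell}$, where $\Sigma_\ell(y) \subset P(y)$ consists of primitive rank-$\ell$ subgroups whose norm is within a fixed factor of $\vp_\ell(y)^{-1}$. A standard Minkowski-type packing argument shows $|\Sigma_\ell(y)|$ is finite for each $y$, but not uniformly bounded as $y$ varies over $X$. The key device for controlling the non-uniformity is the mini mother inequality (Lemma~\ref{lemma: mini mother inequality}): if two distinct $L_1,L_2 \in \Sigma_\ell(a(\bx)x_0)$ coexist, then
\[
\|L_1 \cap L_2\|\cdot \|L_1 + L_2\| \leq \|L_1\|\cdot \|L_2\|,
\]
which forces a short primitive subgroup in some rank $\ell \pm j$ with $1 \leq j \leq \min(\ell, d+1-\ell)$. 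One splits the integrand accordingly: the \emph{diagonal} part, where $\Sigma_\ell$ contains a single distinguished minimizer, corresponds (via $a(\bx)^{-1}$) to a primitive subgroup of $x_0$ whose representing monomial is directly estimated by the linear bound above, summing up to yield the first term $C\,\vp_\ell^{\varrho/\b_\ell}(x_0)\,(\int\rho^{p(\g+\k)})^{1/p}$.

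The \emph{off-diagonal} part is discarded into the boundary contribution. By the inequality above, each pair $(L_1,L_2)$ in $\Sigma_\ell(a(\bx)x_0)$ yields the pointwise estimate
\[
(\|L_1\|\|L_2\|)^{-\varrho/(2\b_\ell)} \leq \bigl(\vp_{\ell-j}\vp_{\ell+j}\bigr)^{\varrho/(2\b_\ell)}(a(\bx)x_0),
\]
for some $j$ in the stated range. To transport this bound from $a(\bx)x_0$ back to $x_0$, I would invoke the log-Lipschitz estimate~\eqref{eq: log smoothness} applied to the compact set $Q_k \subset G$ covering $\{a(\bx) : \bx \in \Kcal\}$; this is bounded since $\rho(\bx,k) \leq (\max_i \rho_i)^k$ and $\Kcal$ is bounded. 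Setting $\omega_0 = \|Q_k\|$ gives exactly the second term $\omega_0^{2\varrho/\b_\ell}\max_j \sqrt{\vp_{\ell+j}\vp_{\ell-j}}^{\varrho/\b_\ell}(x_0)$.

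The main obstacle will be the bookkeeping in step three: one must track which elements of $\Sigma_\ell(a(\bx)x_0)$ correspond under $a(\bx)^{-1}$ to a controlled set in $P(x_0)$ so that the resulting sum over $L \in P(x_0)$ is of bounded cardinality up to the boundary remainder, and simultaneously ensure that the exponent $\varrho/\b_\ell$ is tuned so that $\sqrt{\vp_{\ell+j}\vp_{\ell-j}}$ (rather than some mixed power) appears. The separation of the constant $C=C(\varrho,\mu)$ from the $k$-dependent $\omega_0$ is clean because only the log-Lipschitz transport step involves the size of $Q_k$; the averaged linear estimate from Proposition~\ref{propn: expansion in linear representations} is $k$-independent thanks to the cocycle form of the power of $\rho(\bx,k)$.
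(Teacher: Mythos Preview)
Your high-level strategy matches the paper's: feed Proposition~\ref{propn: expansion in linear representations} with $\d=\varrho$ and $\l=1/\b_\ell$ for the ``diagonal'' contribution, and invoke Lemma~\ref{lemma: mini mother inequality} for the boundary term. The computation checking $\varrho/\b_\ell\leqslant\varrho\,\a_\ell(\mu)$ and the identification of $\k,p$ are correct.

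However, the ``bookkeeping obstacle'' you flag is genuine, and the paper sidesteps it by a simple but important change: the near-minimizer set is defined at the \emph{base point} $x_0$, not at $a(\bx)x_0$. Concretely, set $\w=\norm{Q_k}$ and let $\Psi_\ell\subset P(x_0)$ be the (finite) set of primitive rank-$\ell$ subgroups $L$ of $x_0$ with $\norm{L}^{-1}\geqslant\w^{-1}\vp_\ell(x_0)$. This makes the dichotomy \emph{independent of $\bx$}. If $\Psi_\ell=\{\Lambda_\ell\}$ is a singleton, then by~\eqref{eq: log smoothness} the maximum defining $\vp_\ell(a(\bx)x_0)$ is realized (up to a harmless factor) by the single vector $a(\bx)\Lambda_\ell$ for \emph{every} $\bx\in\Kcal$, so Proposition~\ref{propn: expansion in linear representations} applies directly with the fixed $v=\Lambda_\ell$---no sum over a varying collection is needed. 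If $|\Psi_\ell|\geqslant 2$, the mini mother inequality is applied at $x_0$ itself (not at $a(\bx)x_0$), yielding $\vp_\ell(x_0)\leqslant\w\sqrt{\vp_{\ell+j}(x_0)\vp_{\ell-j}(x_0)}$ for some $j$, and then the uniform bound $\vp_\ell(a(\bx)x_0)\leqslant\w\vp_\ell(x_0)$ transports this to the integrand.

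Your route, defining $\Sigma_\ell$ at $a(\bx)x_0$, forces the distinguished minimizer to depend on $\bx$; you then cannot apply the linear-expansion estimate with a single $v$, and summing over all possible pullbacks in $P(x_0)$ is exactly the uncontrolled cardinality issue you anticipated. Placing the dichotomy at $x_0$ dissolves this entirely. One further detail: in the off-diagonal case the weight $\rho(\bx,k)^\g$ remains inside the integral and must be absorbed into $\w_0$, so $\w_0$ is not simply $\norm{Q_k}$ but rather $\norm{Q_k}\cdot\max_{\bx\in\Kcal}\rho(\bx,k)^\g$ (or a suitable bound thereof); this is why the paper allows $\w_0$ to depend on $k$ separately from $C$.
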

    
    \begin{proof}
     Let $0<\varrho <1$ be given.
    Fix $k\in \N$ and let
    \begin{equation*}
        Q = \set{g_{\rho(\bx,k)}u(\bx): \bx\in \mrm{supp}(\mu) }.
    \end{equation*}
    Let $\w=\norm{Q}$ as defined in~\eqref{eqn: omega Q} for $Q$ as above.
    Following~\cite{EskinMargulisMozes}, let $\Psi_\ell$ denote the finite subset of $P(x_0)$ of rank $\ell$ subgroups $L$ 
    of $x_0$ satisfying 
    \[ \norm{L}^{-1} \geq \omega^{-1} \vp_\ell(x_0).  \]
    The finiteness of $\Psi$ follows from the discreteness of the lattice $x_0$.
    Suppose that $\Psi_\ell$ consists of a single element and denote it by $\L_\ell$.
    In this case, by~\eqref{eq: log smoothness}, we see that for all $\bx\in \mrm{supp}(\mu)$, 
    \[ \vp_\ell( g_{\rho(\bx,k)} u(\mbf{x})x_0) = \norm{g_{\rho(\bx,k)} u(\mbf{x}) \L_\ell}^{-1}. \]
    Observe further that the definition of the exponents $\b_\ell$ implies
    $ \l = \frac{1}{\b_\ell}\leqslant \a_\ell(\mu)$,
    for each $\ell$.
    In particular, upon applying Proposition~\ref{propn: expansion in linear representations} with $v=\L_\ell$, $\l=1/\b_\ell $, $\t(\cdot)=\rho(\cdot,k)$, and $\d=\varrho$, we obtain
    \begin{align} \label{eqn: integral estimate for high points}
    	\int \rho(\bx,k)^\g \vp_{\ell}^{\varrho/\b_\ell}(g_{\rho(\bx,k)} u(\mbf{x})x_0) \;d\mu
    	&=  \int \rho(\bx,k)^\g \norm{g_{\rho(\bx,k)} u(\mbf{x}) \L_\ell}^{-\varrho/\b_\ell} \;d\mu \nonumber \\
        &\leqslant C \vp_\ell^{\varrho/\b_\ell}(x_0) \left(\int  \rho(\bx,k)^{p(\g+\k)}\;d\mu(\mbf{x})\right)^{1/p},
    \end{align}
    where $C \geq 1$ is the constant in Proposition~\ref{propn: expansion in linear representations} and $p$ and $\k$ are as in~\eqref{eq: conclusion of combine complexities prop}.
    
    Alternatively, suppose the cardinality of $\Psi_\ell$ is at least $2$.
    Let $\L_\ell\in \Psi_\ell$ be such that $\vp_{\ell}(x_0) = \norm{\L_\ell}^{-1}$,
    and let $L \neq \L_\ell$ be another element of $\Psi_\ell$.
    Then, the group $\L_\ell + L$ has rank $\ell+j$ for some $j >0$.
    Moreover, by definition of $\Psi_\ell$, 
    $\norm{\L_\ell}^{-1} \leqslant  \w\norm{L}^{-1}$.
    Hence, in view of Lemma~\ref{lemma: mini mother inequality},
    for all $\bx\in\mrm{supp}(\mu)$,
    \begin{align*}
    \vp_{\ell}(g_{\rho(\bx,k)} u(\mbf{x})x_0) \leqslant \omega \vp_{\ell}(x_0) = \frac{\omega}{\norm{\L_\ell}} 
    	&\leqslant \frac{\omega^{2}}{\sqrt{\norm{\L_\ell} \norm{L}} } 
        \leqslant \frac{\omega^{2}}{ \sqrt{ \norm{\L_\ell+L}\norm{\L_\ell\cap L}  } }\\
        &\leqslant \omega^2 \max_{1\leqslant j \leqslant \min\set{\ell,d+1-\ell}} \sqrt{\vp_{\ell+j}(x_0)\vp_{\ell-j}(x_0) }.        
    \end{align*}
    Combining this estimate with~\eqref{eqn: integral estimate for high points}, we get the desired conclusion with 
    \begin{equation*}
        \w_0 = \w \max\set{\rho(\bx,k)^\g: \bx\in\mrm{supp}(\mu)  }.
    \end{equation*}
    \end{proof}

    Given $\e >0$ and $0<\varrho <1$, we define the function $f_{\e,\varrho}: X \r \R_+$ by 
    \begin{align} \label{defn: Margulis fn}
    	f_{\e,\varrho}(x) = 2 + \sum_{\ell=1}^{d} \e^{\ell}  \vp_{\ell}^{\varrho/\b_\ell }(x).
    \end{align}
    Define $\b$ as follows
    \begin{equation}\label{eq: beta}
        \b = \min_{1\leq \ell \leq d} \frac{\a_\ell(\mu)(d-\ell+1) }{d+1} = \frac{\varpi}{d+1}.
    \end{equation}
    
    We are now ready to verify the contraction hypothesis in our context.
    The idea of the deduction of the following result from Proposition~\ref{prop: combine complexities} is due to Margulis and first appeared in~\cite{EskinMargulisMozes}.
    In that article, all the exponents of $\vp_\ell$ were the same.
    Since our exponents $1/\b_\ell$ are distinct, this introduces a complication which we address in the proof.
    The reader may wish to consult variants of this idea in~\cite[Proposition 4.1]{KKLM} and~\cite[Claim 5.9]{BQ-RandomWalkRecurrence}.

    \begin{theorem}\label{thm: constraction in X}
    For every $0< \varrho  <1$, there exists a constant $C_1$, depending only on $\varrho$ and $\mu$, such that 
    for every $k\in \N$, there exists $b=b(k,\mu)\geqslant 1$ and $0<\e_0<1$, depending only on $k$ and $\mu$, so that for all $x_0\in X$ and and all $\g\in \R$ satisfying $\varrho\b - \frac{1-\varrho}{1+\varrho} \leq \g \leq \varrho\b$,
    
    \begin{equation*}
        \int \rho(\bx,k)^{-\g} f_{\e_0,\varrho}(g_{\rho(\bx,k)} u(\mbf{x})x_0)\;d\mu(\bx) \leqslant C_1 \left(\int  \rho(\bx,k)\;d\mu(\mbf{x})\right)^{\varrho\b-\g}
        f_{\e_0,\varrho}(x_0) + b.
    \end{equation*}
    
    \end{theorem}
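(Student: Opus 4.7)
The plan is to apply Proposition~\ref{prop: combine complexities} to each summand $\vp_\ell^{\varrho/\b_\ell}$ of $f_{\e_0,\varrho}$ with the exponent $-\g$ in place of $\g$, then sum over $\ell$ with the weights $\e_0^\ell$. Since $\k=\varrho\varpi/(d+1)=\varrho\b$, the hypothesis $\varrho\b-\frac{1-\varrho}{1+\varrho}\leqslant\g\leqslant\varrho\b$ is exactly the statement that $p(\k-\g)\in[0,1]$, where $p=(1+\varrho)/(1-\varrho)$. Jensen's inequality applied to the concave map $t\mapsto t^{p(\k-\g)}$ then gives $\int\rho(\bx,k)^{p(\k-\g)}\,d\mu\leqslant \bigl(\int\rho(\bx,k)\,d\mu\bigr)^{p(\k-\g)}$, and taking $1/p$-th powers produces the target contraction factor $\bigl(\int\rho\,d\mu\bigr)^{\varrho\b-\g}$ on the main contribution $C\vp_\ell^{\varrho/\b_\ell}(x_0)$.

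The cross error $\w_0^{2\varrho/\b_\ell}\max_{j}\sqrt{\vp_{\ell+j}\vp_{\ell-j}}^{\,\varrho/\b_\ell}$ produced by the proposition is handled via the weighted arithmetic-geometric mean inequality $a^tb^{1-t}\leqslant ta+(1-t)b$, exploiting the decisive algebraic identity that $\b_\ell=(d-\ell+1)/\varpi$ is linear in $\ell$ and hence $\b_{\ell+j}+\b_{\ell-j}=2\b_\ell$. Choosing $t=\b_{\ell+j}/(2\b_\ell)\in(0,1)$ together with $a=\vp_{\ell+j}^{\varrho/\b_{\ell+j}}(x_0)$ and $b=\vp_{\ell-j}^{\varrho/\b_{\ell-j}}(x_0)$ yields
\[
\sqrt{\vp_{\ell+j}(x_0)\vp_{\ell-j}(x_0)}^{\,\varrho/\b_\ell}\leqslant \frac{\b_{\ell+j}}{2\b_\ell}\vp_{\ell+j}^{\varrho/\b_{\ell+j}}(x_0)+\frac{\b_{\ell-j}}{2\b_\ell}\vp_{\ell-j}^{\varrho/\b_{\ell-j}}(x_0),
\]
so that every cross term is re-expressed in exactly the exponents appearing in the summands of $f_{\e_0,\varrho}$. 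The boundary cases $\ell+j=d+1$ or $\ell-j=0$ fall under the convention $\vp_0\equiv\vp_{d+1}\equiv 1$ and are dispatched by the elementary estimate $r^s\leqslant r+1$ for $s\in[0,1]$, $r\geqslant 0$, contributing only to the additive constant $b$.

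Multiplying by $\e_0^\ell$ and summing over $1\leqslant\ell\leqslant d$, the main contributions assemble into $C\,(f_{\e_0,\varrho}(x_0)-2)\bigl(\int\rho\,d\mu\bigr)^{\varrho\b-\g}$, which furnishes the constant $C_1=C_1(\varrho,\mu)$. The rearranged error, indexed by $\ell'=\ell\pm j\in\{1,\ldots,d\}$, contributes a coefficient of $\vp_{\ell'}^{\varrho/\b_{\ell'}}(x_0)$ of the form $C'\sum_{\ell\neq\ell'}\e_0^\ell\,\w_0(k,\mu)^{2\varrho/\b_\ell}$. The main obstacle, and the reason both $\e_0$ and $b$ must depend on $k$, is that $\w_0(k,\mu)\to\infty$ as $k\to\infty$ (since $\|g_{\rho(\bx,k)}\|$ blows up); one therefore chooses $\e_0=\e_0(k,\mu)$ sufficiently small as a function of $\w_0(k,\mu)$ so that these rearranged coefficients are dominated by a universal constant times $\e_0^{\ell'}$, letting the error be absorbed into $C_1\bigl(\int\rho\,d\mu\bigr)^{\varrho\b-\g}f_{\e_0,\varrho}(x_0)$ plus a $k$-dependent additive constant going into $b$. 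Without the linearity $\b_{\ell+j}+\b_{\ell-j}=2\b_\ell$, the Young conjugate exponents would not land on the natural exponents of $f_{\e_0,\varrho}$, and the distinct decay rates $1/\b_\ell$ across levels could not be reconciled into a single integral inequality.
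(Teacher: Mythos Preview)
Your handling of the cross terms has a genuine gap. After the AM--GM step, the summand $\e_0^\ell\cdot\tfrac{\b_{\ell+j}}{2\b_\ell}\,\vp_{\ell+j}^{\varrho/\b_{\ell+j}}(x_0)$ carries the weight $\e_0^\ell$, whereas the corresponding summand of $f_{\e_0,\varrho}$ is $\e_0^{\ell+j}\vp_{\ell+j}^{\varrho/\b_{\ell+j}}(x_0)$. Since $\e_0<1$ and $j\geqslant 1$, the ratio $\e_0^\ell/\e_0^{\ell+j}=\e_0^{-j}$ tends to $\infty$ as $\e_0\to 0$. Thus your claim that ``these rearranged coefficients are dominated by a universal constant times $\e_0^{\ell'}$'' by choosing $\e_0$ small is false for $\ell'=\ell+j$: shrinking $\e_0$ makes this comparison worse, not better. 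The $\w_0$ factors only compound the problem. The AM--GM split destroys precisely the convexity information needed to close the argument.

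The paper's proof avoids this by \emph{not} linearising the geometric mean. Instead, it bounds each factor by the full function, $\vp_m^{\varrho/\b_m}(x_0)\leqslant \e_0^{-m}f_{\e_0,\varrho}(x_0)$ for $m=\ell\pm j$, and takes the weighted geometric mean of these two inequalities (with weights $\b_{\ell\pm j}/(2\b_\ell)$, which sum to $1$ by the identity $\b_{\ell+j}+\b_{\ell-j}=2\b_\ell$ you correctly identified). This yields
\[
\bigl(\vp_{\ell+j}(x_0)\vp_{\ell-j}(x_0)\bigr)^{\varrho/(2\b_\ell)}\leqslant \e_0^{-[(\ell+j)\b_{\ell+j}+(\ell-j)\b_{\ell-j}]/(2\b_\ell)}\,f_{\e_0,\varrho}(x_0).
\]
After multiplying by $\e_0^\ell$, the net exponent of $\e_0$ is $\ell-\tfrac{(\ell+j)\b_{\ell+j}+(\ell-j)\b_{\ell-j}}{2\b_\ell}$, and Lemma~\ref{lem: concavity magic} (the strict concavity of $\ell\mapsto\ell(d+1-\ell)$) shows this is at least $1/d>0$. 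Hence the total cross contribution is bounded by $d\,\w_0^{2\varpi}\e_0^{1/d}f_{\e_0,\varrho}(x_0)$, which \emph{can} be absorbed by choosing $\e_0$ small (depending on $k$ through $\w_0$). The linearity of $\b_\ell$ in $\ell$ is necessary but not sufficient; the missing ingredient in your argument is this strict concavity gap.
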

    
    \begin{remark}
    The proof of Theorem~\ref{thm: constraction in X} will show that the lower bound restriction on $\g$ is only for aesthetic reasons.
    \end{remark}
    
    We first prove the following elementary but crucial lemma.
    \begin{lemma} \label{lem: concavity magic}
    For all natural numbers $0<j,\ell <d+1$, we have
    \begin{equation}
        \ell - \frac{\b_{\ell-j}(\ell-j) + \b_{\ell+j}(\ell+j)}{2\b_\ell} \geqslant \frac{1}{d}.
    \end{equation}
    \end{lemma}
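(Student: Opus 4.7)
The plan is to verify the identity by a direct algebraic computation: the quantity on the left-hand side will turn out to have a simple closed form, namely $\frac{j^2}{d+1-\ell}$, from which the lower bound $\frac{1}{d}$ is immediate.

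First, I would substitute the definition $\b_m = (d-m+1)/\varpi$ from~\eqref{delta_lamda} throughout the expression. Since $\varpi$ appears in both the numerator (via $\b_{\ell\pm j}$) and the denominator (via $\b_\ell$), it cancels, and the claim reduces to a purely combinatorial identity involving only $d$, $\ell$, and $j$. Specifically,
\[
\frac{\b_{\ell-j}(\ell-j) + \b_{\ell+j}(\ell+j)}{2\b_\ell}
= \frac{(\ell-j)(d+1-\ell+j) + (\ell+j)(d+1-\ell-j)}{2(d+1-\ell)}.
\]

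Next, I would expand the numerator. The terms linear in $j$ cancel by symmetry between the two summands, and the cross terms $\pm 2\ell j$ also cancel, leaving $2\ell(d+1-\ell) - 2j^2$. Dividing by $2(d+1-\ell)$ yields exactly $\ell - \frac{j^2}{d+1-\ell}$, so
\[
\ell - \frac{\b_{\ell-j}(\ell-j) + \b_{\ell+j}(\ell+j)}{2\b_\ell} \;=\; \frac{j^2}{d+1-\ell}.
\]

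Finally, the hypothesis $j\geq 1$ forces $j^2\geq 1$, while $\ell\geq 1$ gives $d+1-\ell\leq d$, so the right-hand side is at least $1/d$, which is the desired inequality. There is no real obstacle here; the only thing to watch carefully is the sign bookkeeping during the expansion and the boundary cases $\ell-j = 0$ or $\ell+j = d+1$, where the formula $\b_m = (d-m+1)/\varpi$ still gives the correct contribution and is consistent with the convention $\vp_0 \equiv \vp_{d+1} \equiv 1$ under which the corresponding factors from Proposition~\ref{prop: combine complexities} drop out.
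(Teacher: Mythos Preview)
Your proof is correct and essentially identical to the paper's: both compute the left-hand side exactly as $\frac{j^2}{d+1-\ell}$ and then use $j\geq 1$, $\ell\geq 1$ to conclude. The only cosmetic difference is that the paper packages the expansion via the quadratic $q(x)=x(d+1-x)$ and its concavity identity $\tfrac{1}{2}\bigl(q(\ell-j)+q(\ell+j)\bigr)=q(\ell)-j^2$, whereas you expand directly.
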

    \begin{proof}
    We note that the function $q(x) = x(d+1-x)$ satisfies the following concavity property:
    \begin{equation*}
        \frac{q(\ell-j) + q(\ell+j)}{2} = q(\ell) -j^2.
    \end{equation*}
    It follows that
    \begin{equation*}
       \ell- \frac{\b_{\ell-j}(\ell-j) + \b_{\ell+j}(\ell+j)}{2\b_\ell} =\ell- \frac{q(\ell)-j^2}{d-\ell+1} = \frac{j^2}{d-\ell+1}.
    \end{equation*}
    Since $j^2 \geqslant 1$ and $d+1-\ell \leqslant d$, the lemma follows.
    \end{proof}
    
    \begin{proof}[Proof of Theorem~\ref{thm: constraction in X}]
    Fix $k\in \N$ and $0<\varrho <1$. Let $C,\w_0 \geq 1$ be the constants provided by Proposition~\ref{prop: combine complexities}.
    To simplify notation, let
    \begin{equation*}
        a = C \left(\int  \rho(\bx,k)^{p(\k-\g)}\;d\mu(\mbf{x})\right)^{1/p},
    \end{equation*}
    where $p$ and $\k$ are as in~\eqref{eq: conclusion of combine complexities prop}.
    Note that $a$ depends on $k$.
    Let $0<\e_0<1$ be a constant to be determined.
    Suppose $x_0 \in X$.
    It follows from Proposition~\ref{prop: combine complexities} that
    \begin{align} \label{eq: apply comb comp prop}
        \int \rho(\bx,k)^{-\g} &f_{\e_0,\varrho}(g_{\rho(\bx,k)} u(\mbf{x})x_0)\;d\mu =
        2 + \sum_{\ell=1}^d \e_0^{\ell} \int \rho(\bx,k)^{-\g} \vp_{\ell}^{\varrho/\b_\ell}(g_{\rho(\bx,k)} u(\mbf{x})x_0) \;d\mu  \leqslant \nonumber\\
        &\leqslant 2 + a \sum_{\ell=1}^d \e_0^{\ell} \vp_{\ell}^{\varrho/\b_\ell}(x_0)
        + \w_0^{2\varpi}   \sum_{\ell=1}^d \e_0^{\ell} 
        \max_{1 \leqslant j \leqslant \min\set{\ell,d+1-\ell}} \left(\sqrt{\vp_{\ell+j}(x_0)\vp_{\ell-j}(x_0) }\right)^{\varrho/\b_\ell}\nonumber\\
        &=  a f_{\e_0,\varrho}(x_0) + 2(1-a) +  
        \w_0^{2\varpi}   \sum_{\ell=1}^d \e_0^{\ell} 
        \max_{1 \leqslant j \leqslant \min\set{\ell,d+1-\ell}} \left(\sqrt{\vp_{\ell+j}(x_0)\vp_{\ell-j}(x_0) }\right)^{\varrho/\b_\ell},
    \end{align}
    where we used the fact that $\w_0^{2\varrho/\b_\ell} \leqslant \w_0^{2\varpi}$.
    We observe that the exponents $\b_\ell$ satisfy the following relation:
    \begin{equation*}
        \b_{\ell-j} + \b_{\ell+j} = 2\b_\ell,
    \end{equation*}
    for all $0<j,\ell <d+1$.
    In particular, this implies
    \begin{equation}\label{eq: Margulis magic}
        \left(\sqrt{\vp_{\ell+j}(x_0)\vp_{\ell-j}(x_0) }\right)^{\varrho/\b_\ell}
        \leqslant \e_0^{\frac{-\left(\b_{\ell-j}(\ell-j) + \b_{\ell+j}(\ell+j)\right)}{2\b_\ell}} f_{\e_0,\varrho}(x_0).
    \end{equation}
    Moreover, Lemma~\ref{lem: concavity magic} shows that for all $0<j,\ell <d+1$:
    \begin{equation}\label{eq: concavity of delta_ell}
        \ell - \frac{\b_{\ell-j}(\ell-j) + \b_{\ell+j}(\ell+j)}{2\b_\ell} \geqslant \frac{1}{d}.
    \end{equation}
    Applying the estimates~\eqref{eq: Margulis magic} and~\eqref{eq: concavity of delta_ell} to the last sum in~\eqref{eq: apply comb comp prop}, using the fact that $\e_0<1$, yields
    \begin{align*}
     \int \rho(\bx,k)^{-\g} &f_{\e_0,\varrho}(g_{\rho(\bx,k)} u(\mbf{x})x_0)\;d\mu
     \leqslant a f_{\e_0,\varrho}(x_0) + 2(1-a) +  d \e_0^{1/d} \w_0^{2\varpi}
        f_{\e_0,\varrho}(x_0).
    \end{align*}
    Choosing $\e_0^{1/d} = \frac{a}{\w_0^{2\varpi}d}$, $b=2(1-a)$, and $C_1 = 2C$, we obtain
    \begin{equation*}
        \int_\Kcal \rho(\bx,k)^{-\g} f_{\e_0,\varrho}(g_{\rho(\bx,k)} u(\mbf{x})x_0)\;d\mu(\bx) \leqslant C_1 \left(\int_\Kcal  \rho(\bx,k)^{p(\k-\g)}\;d\mu(\mbf{x})\right)^{1/p}
        f_{\e_0,\varrho}(x_0) + b.
    \end{equation*}
    Note that $b$ also depends on $k$.
    Since $\varrho\b - \frac{1}{p} \leq \g \leq \varrho\b=\k$, Jensen's inequality implies that
    \begin{equation*}
        \left(\int_\Kcal  \rho(\bx,k)^{p(\k-\g)}\;d\mu(\mbf{x})\right)^{1/p} \leqslant
         \left(\int_\Kcal  \rho(\bx,k)\;d\mu(\bx)\right)^{\varrho\b-\g},
    \end{equation*}
    thus completing the proof.
    \end{proof}
    
    
    To demonstrate the power of the method of integral inequalities, we state a consequence of the above analysis, which can be obtained with a little more work.
    We do not need this statement for our purposes, so we omit the proof and refer the interested reader to~\cite[Claim 5.9]{BQ-RandomWalkRecurrence} for the proof of a similar statement.
    Given $\e >0$, consider the function $F_\e: X \r \R_+$ defined by 
    \begin{align} 
    	F_\e(x) = \max \e^{\varpi\ell }  \norm{v}^{\frac{-1}{\b_{\ell}}},
    \end{align}
    where the maximum is taken over all $x$-integral monomials $v\in V_\ell$ and all $0<\ell<d+1$.
    
    \begin{proposition}
\label{prop: isolation ala BQ}
 For every compact set $Q\subset G$, there exist constants $C_1 \geqslant 1$ and $\e_0 >0$, depending only on $Q$, such that for every $x\in X$, whenever $F_{\e_0}(x) > C_1$, the set $\Psi(x)$ of $x$-integral monomials $v$ satisfying
        \begin{equation} \label{eqn: defn of psi}
        	 \e_0^{\varpi\ell }  \norm{v}^{\frac{-1}{\b_{\ell}}} \geqslant F_{\e_0}(x)/\norm{Q}^{2\varpi},
        \end{equation}
        contains at most one primitive vector up to a sign in each $V_\ell$ with $0<\ell<d+1$.
        
\end{proposition}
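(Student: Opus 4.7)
The plan is to argue by contradiction: suppose that for some $0 < \ell < d+1$ there exist two $x$-integral primitive monomials $v_1, v_2 \in V_\ell$, distinct up to sign, both satisfying~\eqref{eqn: defn of psi}. Let $L_1, L_2 \in P(x)$ be the rank-$\ell$ primitive subgroups they represent; since $v_1 \not\equiv \pm v_2$, we have $L_1 \neq L_2$, and there is a unique integer $j \geq 1$ such that $L_\cap := L_1 \cap L_2$ is primitive of rank $\ell - j$ and $L_+$, the primitive closure of $L_1 + L_2$, is primitive of rank $\ell + j$. Multiplying the two instances of~\eqref{eqn: defn of psi} and invoking the mini-mother inequality (Lemma~\ref{lemma: mini mother inequality}) yields the lower bound
\[
\norm{L_\cap}^{-1}\, \norm{L_+}^{-1} \;\geq\; (\norm{L_1}\,\norm{L_2})^{-1} \;\geq\; \norm{Q}^{-4\varpi\b_\ell}\, \e_0^{-2\varpi\ell\b_\ell}\, F_{\e_0}(x)^{2\b_\ell}.
\]

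Next I would compare this with the opposing upper bound obtained by applying the definition of $F_{\e_0}(x)$ directly to $L_\cap$ and $L_+$. In the interior case, where both $\ell\pm j$ lie in $(0,d{+}1)$, this upper bound takes the form $\e_0^{-\varpi((\ell-j)\b_{\ell-j}+(\ell+j)\b_{\ell+j})}\, F_{\e_0}(x)^{\b_{\ell-j}+\b_{\ell+j}}$. The crucial algebraic identity $\b_{\ell-j}+\b_{\ell+j}=2\b_\ell$, which follows from the linearity of $k\mapsto (d-k+1)/\varpi$ and is already the engine behind the proof of Theorem~\ref{thm: constraction in X}, forces the powers of $F_{\e_0}(x)$ on the two sides to cancel exactly. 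What remains is a pure inequality between $\e_0$ and $\norm{Q}$; by Lemma~\ref{lem: concavity magic} the residual exponent of $\e_0$ is at least $2\varpi\b_\ell/d > 0$, and any choice $\e_0 < \norm{Q}^{-2d}$ (say) will contradict it, closing out the interior case.

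The hard part will be the boundary configurations $\ell - j = 0$ or $\ell + j = d+1$, in which $L_\cap$ or $L_+$ is trivial or of full rank respectively, has covolume $1$, and is therefore not captured by the maximum defining $F_{\e_0}$. There, one of the two $F_{\e_0}$-upper-bounds must be replaced by the trivial estimate $\norm{\cdot}^{-1} \leq 1$, so the powers of $F_{\e_0}(x)$ no longer cancel and one is left with an inequality of the form
\[
F_{\e_0}(x)^{c'} \;\leq\; \norm{Q}^{4\varpi\b_\ell}\, \e_0^{-c''}
\]
for some positive constants $c', c''$ depending on $(\ell,j)$. This is precisely where the hypothesis $F_{\e_0}(x) > C_1$ enters: after $\e_0$ has been fixed small enough to handle every interior configuration, I would then pick $C_1$ large enough, depending only on $Q$ and the finitely many admissible pairs $(\ell,j)$, so that $C_1^{c'}$ exceeds every right-hand side arising in the boundary analysis. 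The main bookkeeping obstacle is verifying that a single pair $(\e_0, C_1)$ depending only on $\norm{Q}$ rules out all $(\ell,j)$ and both boundary variants simultaneously, but since there are only finitely many such pairs and the dependencies in the exponents come from the explicit linear/quadratic formulas for the $\b_k$, this should reduce to taking a maximum over a finite list.
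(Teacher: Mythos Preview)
The paper explicitly omits the proof of this proposition, referring instead to \cite[Claim 5.9]{BQ-RandomWalkRecurrence} for a similar statement; there is therefore nothing in the paper to compare against directly. Your argument is correct and is exactly the Benoist--Quint scheme the paper is pointing to, transported to the present exponents $\b_\ell$: contradiction via Lemma~\ref{lemma: mini mother inequality}, cancellation of the $F_{\e_0}(x)$-powers through the linear identity $\b_{\ell-j}+\b_{\ell+j}=2\b_\ell$, control of the residual $\e_0$-exponent via Lemma~\ref{lem: concavity magic}, and then a separate treatment of the boundary configurations $\ell\pm j\in\{0,d+1\}$ using the threshold $C_1$.

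One small point worth tightening in a write-up: Lemma~\ref{lemma: mini mother inequality} is stated for $L_1+L_2$, not for its primitive closure $L_+$, so strictly speaking your first displayed inequality uses an extra step. This is harmless, since $\|L_1+L_2\|\geq\|L_+\|$ (the primitive closure has the same real span and contains $L_1+L_2$), so passing to $L_+$ only improves the lower bound. With that noted, your choice $\e_0<\norm{Q}^{-2d}$ for the interior case and a finite maximum over the boundary pairs $(\ell,j)$ to fix $C_1$ is exactly right.
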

    

    \subsection{Proof of Theorem~\ref{main dynamics thm} and its corollaries}
    Let $0<\varrho < 1$ be given.
    For every $k$, let $\e_0(k)$ be the constant provided by Theorem~\ref{thm: constraction in X}. Consider the collection of height functions:
    \begin{equation*}
        \Fsc=\set{f_k=f_{\e_0(k),\varrho}:k\in\N}. 
    \end{equation*}
    Theorem~\ref{thm: constraction in X} shows that the action of $G$ on $X=G/\G$ satisfies the $(\Fcal,\Fsc,\b,\g_0)$-contraction hypothesis with $\b$ and $\g_0$ given by:
    \begin{equation*}
        \b=\frac{\varrho \varpi}{d+1}, \qquad \g_0 = \frac{\varrho \varpi}{d+1} - \frac{1-\varrho}{1+\varrho}, 
    \end{equation*}
    where $\varpi$ was defined in~\eqref{delta_lamda}.
    Indeed, the contraction property $(3)$ of Definition~\ref{defn: height functions} follows from Theorem~\ref{thm: constraction in X}, where $T$ can be chosen as follows:
    \begin{equation*}
        T = \frac{b}{C_1 \left(\int  \rho(\bx,k)\;d\mu(\mbf{x})\right)^{\varrho\b-\g}}.
    \end{equation*}
    The remaining properties follow directly from the definition of $f_{\e_0,\varrho}$.
    
    Hence, Theorem~\ref{thrm: Hdim and non-divergence} applies and shows that the dimension of the set in the conclusion of Theorem~\ref{main dynamics thm} is at most $s-\varrho \varpi/(d+1)$.
    Since $0<\varrho < 1$ was arbitrary, this completes the proof.
    
    Theorem~\ref{main thm} follows from Theorem~\ref{main dynamics thm} by taking $x_0$ to be the identity coset and using Dani's correspondence.
    Corollary~\ref{cantor prod cor} follows from Theorem~\ref{main thm} along with Lemma~\ref{lem: alpha_1 for Cantor product} showing that $\a_1(\mu)=\log 2/\log 3$ in this case.
    Finally, in the setting of Corollary~\ref{homogeneous cor}, it is shown in~\cite[Theorem 8.2]{Shmerkin} that $\dim_\infty(\pi_\th\mu) = \min\set{\dim_H(\Kcal), 1}$ for every $\th\in [0,2\pi)$, where $\pi_\th \mu$ is the projection of $\mu$ in direction $\th$ and $\dim_\infty(\pi_\th\mu)$ is the Frostman exponent of $\pi_\th \mu$ defined in~\eqref{def: frostman}.
    Hence, applying Theorem~\ref{thm: alpha equal inf}, we get that $\a_1(\mu)=\min\set{\dim_H(\Kcal), 1}$.
    Corollary~\ref{homogeneous cor} then follows from Theorem~\ref{main thm}.
    
    
    \section{Fractals and the Teichm\"uller Flow} \label{sec: Teich}
    The goal of this section is to prove Theorem~\ref{teich thm}.
    The height functions needed to apply Theorem~\ref{thrm: Hdim and non-divergence} in this context were constructed by Eskin and Masur in~\cite{EskinMasur}.
    In using this construction, we apply Proposition~\ref{propn: expansion in linear representations} with $G=\mrm{SL}(2,\R)$. We remark that the proof of Proposition~\ref{propn: expansion in linear representations} simplifies in this case and does not require the results of Section~\ref{section: transverse}.

    \subsection{Background and Definitions}\label{sec: teich defs}
    For background on Teichm\"uller dynamics and translation flows, the reader is referred to~\cite{ForniMatheus} for an excellent survey.
    Suppose $S$ is a compact oriented surface of genus $g\geqslant 1$.
    An abelian differential on $S$ is an isotopy class of pairs 
$(M,\omega)$, where $M$ is a Riemann surface structure on $S$ and $\omega$ is a holomorphic $1$-form.
    Then, $\w$ induces a (possibly singular) flat metric on $S$.
    A unit area abelian differential is one in which $M$ has area $1$ in the induced flat metric.
    A saddle connection of $(M,\w)$ is a flat geodesic segment joining zeros of $\w$. 
    
    If $\Sigma\subset S$ denotes the set of zeros of $\w$, then $S\setminus \Sigma$ admits an atlas of charts to the complex plane so that all transition maps are given by translations. In these coordinates, $\w$ is given by the pull-back of the canonical holomorphic $1$-form $dz$ on $\C$. Moreover, the ``vertical" vector field (parallel to the imaginary axis) on $\C$ induces a well-defined vector field on $S\setminus \Sigma$. The flow defined by this vector field is referred to as the vertical flow on $(M,\w)$. The induced area measure by $\w$ is invariant for this flow. The flow is said to be uniquely ergodic if this is the only invariant measure.
    
    Let $\alpha=(\alpha_1, \dots, \alpha_n)$ be an integral partition of $2g-2$, i.e. $\a_i\in \N$ and $\sum \a_i = 2g-2$.
    By a stratum of abelian differentials of order $\a$, we mean the space of unit area abelian differentials on $S$ whose zeroes have multiplicities $\alpha_1, \dots, \alpha_n$.
    Strata of abelian differentials are non-compact. This can be seen by taking a sequence of abelian differentials in which the systole on the associated Riemann surface tends to $0$. If the integral partition definining the stratum contains two distinct elements, then a sequence of abelian differentials may ``diverge" if the distance between two distinct zeros tends to $0$.
    
    There are local coordinates on a stratum into $\C^N$ for appropriate $N$, called period coordinates (e.g., see \cite[Section 2.3]{ForniMatheus} for details), such that all changes of coordinates are given by affine maps. In period coordinates, $\mathrm{SL}(2,\mathbb R)$ acts naturally on each copy of $\mathbb C$.
    The Teichm\"uller geodesic flow is the flow induced by the action of the diagonal group in $\mrm{SL}(2,\R)$. The behavior of the orbit of $(M,\w)$ under $\mrm{SL}(2,\R)$ and its various subgroups determines many of the ergodic properties of the vertical flow.
    Most relevant to our application is Masur's criterion asserting that the $a_t$-orbit of $(M,\w)$ diverges if the vertical flow defined by $\w$ is not uniquely ergodic.

     In the sequel, we fix one such stratum and denote it by $\mc{H}$. For simplicity, we use $\w$ to denote elements of $\mc{H}$.

    \subsection{The Contraction Hypothesis on strata of abelian differentials}
    
    Suppose $\nu$ is a Borel probability measure on $\mrm{SL}(2,\R)$. We say $\nu$ is $(a,\a)$-\textbf{linearly expanding} on $\R^2$ for some constants $a,\a>0$if for all $v\in \R^2\backslash\set{0}$ the following holds
        \begin{equation*}
            \int \norm{gv}^{-\a} \;d\nu(g) \leqslant a \norm{v}^{-\a}. 
        \end{equation*}
    
    For any $g\in \mrm{SL}(2,\R)$, denote by $\norm{g}$ the operator norm of $g$ in its action on $\R^2$.
    Given a compact set $Q\subset \mrm{SL}(2,\R)$, define $\norm{Q}$ as follows:
    \begin{equation}\label{eq: set norm}
        \norm{Q} = \sup_{g\in Q} \max \left(\norm{g}, \norm{g^{-1}}\right).
    \end{equation}

    \begin{theorem}[Lemma 7.5,~\cite{EskinMasur}]
    \label{thm: eskin masur}
    There exist $p_0 = p_0(\mc{H}) \in \N$ such that the following holds.
    Suppose $\a>0$ is given. Then, there exist functions $f_{i,\a}:\mc{H}\r [1,\infty) $ for each $1\leq i\leq p_0$ such that $f_{1,\a}$ is a proper function and each $f_{i,\a}$ satisfies the log-Lipschitz property in Def.~\ref{defn: height functions}(2).
    Moreover, suppose $\nu$ is a compactly supported Borel probability measure on $\mrm{SL}(2,\R)$ which is $(a,\a)$-linearly expanding on $\R^2$.
    Then, there exist constants $\w $ and $b$, depending only on $a$, $\a$ and $\norm{\mrm{supp}(\nu)}$ (cf.~\eqref{eq: set norm}), such that for every $1\leq i \leq p_0$ and $x_0\in \mc{H}$:
    \begin{equation*}
        \int f_{i,\a}(g x_0) \;d\nu(g) \leqslant a f_{i,\a}(x_0) + b + w \sum_{j>i} f_{j,\a}(x_0).
    \end{equation*}
    \end{theorem}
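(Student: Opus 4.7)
The plan is to build the functions $f_{i,\a}$ from weighted sums over configurations of short saddle connections. Fix a small constant $\d_0 > 0$, depending only on the stratum $\mc{H}$, such that for every $\w \in \mc{H}$ the set $V_{\d_0}(\w)$ of saddle connections of $\w$ of flat length at most $\d_0$ consists of pairwise non-crossing segments, none of which bound an embedded disc of large area. Such a $\d_0$ exists by the standard geometry of the thin part of strata. The saddle connections in $V_{\d_0}(\w)$ span a subspace of relative homology $H_1(M,\Sigma;\R)$ whose rank provides a natural stratification of the neighborhood of infinity.

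Let $p_0$ be the maximal possible rank of such a configuration (a function of the stratum alone). I would then define, for each $1 \leqslant i \leqslant p_0$,
\begin{equation*}
    f_{i,\a}(\w) \;=\; 1 \;+\; \sum_{W} \prod_{v \in W} |v|^{-\a},
\end{equation*}
where $W$ ranges over admissible configurations of $i$ linearly independent short saddle connections of $\w$, with a combinatorial compatibility condition ensuring that the sum is finite. Properness of $f_{1,\a}$ follows from Mumford's compactness criterion, since a surface with all saddle connections of length bounded below must lie in a compact set. The log-Lipschitz property in Def.~\ref{defn: height functions}(2) holds because, for $g$ in a bounded neighborhood of the identity, $|gv|$ and $|v|$ are comparable up to a constant depending only on the neighborhood, so each term in the defining sum changes by a multiplicative constant, and the combinatorial set $W$ changes in a controlled way.

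To prove the integral inequality, I would apply the $(a,\a)$-linear expansion of $\nu$ termwise. Each saddle connection $v$ corresponds to a vector in $\R^2$ via period coordinates, so for a fixed $v$ one has $\int \|gv\|^{-\a}\,d\nu(g) \leqslant a\|v\|^{-\a}$. Summing over configurations $W$ of rank $i$ that are already short on $\w$ gives the leading term $a f_{i,\a}(\w)$. The remaining mass comes from configurations on $g\w$ whose saddle connections were not short on $\w$: such ``new'' short saddle connections must be written as integer combinations of elements of $V_{\d_0'}(\w)$ for some larger $\d_0'$, by a triangulation/homology argument in the spirit of Eskin-Masur. Each such decomposition strictly increases the rank of the configuration, and contributes to some $f_{j,\a}(\w)$ with $j > i$; the one ``bounded-length'' piece that may appear in such decompositions absorbs into the additive constant $b$.

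The main obstacle is the combinatorial geometric lemma controlling how new short saddle connections can appear under the action of $g \in \mrm{supp}(\nu)$. One must show that the set of decompositions of a new short saddle connection on $g\w$ into saddle connections on $\w$ is finite, of cardinality bounded in terms of $\norm{\mrm{supp}(\nu)}$, and that the total weight picked up by all such decompositions is controlled by $w \sum_{j>i} f_{j,\a}(\w)$ for a uniform constant $w = w(a, \a, \norm{\mrm{supp}(\nu)})$. This is the technical heart: it requires a careful analysis of flat triangulations and the stratification of the cusps of $\mc{H}$ by which subspaces of relative homology become short.
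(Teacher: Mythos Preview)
Your sketch is essentially correct and follows the Eskin--Masur construction that the paper invokes: height functions built from admissible complexes of short saddle connections, stratified by rank, with the integral inequality coming from termwise linear expansion and the observation that new short saddle connections force an increase in rank. The paper, however, does not reprove any of this; its entire argument is to cite \cite[Lemma~7.5]{EskinMasur} (where the result is stated for the rotation-averaged measures $d\nu = \delta_{a_t r_\theta}\,d\theta$) and to observe that, upon inspection of that proof, the only place the specific form of the measure enters is through the $(a,\a)$-linear expansion estimate and the bound on $\norm{\mrm{supp}(\nu)}$, so the conclusion transfers verbatim to any compactly supported $(a,\a)$-linearly expanding $\nu$. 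In other words, you are reconstructing the content of the cited reference, whereas the paper's contribution is the one-line remark that the measure-dependent input is isolated to the linear expansion hypothesis. One small point: in Eskin--Masur the functions $\a_i$ are defined via \emph{maxima} over admissible complexes rather than sums over configurations as you wrote; this does not affect the structure of the argument but does matter for the details of the ``combinatorial geometric lemma'' you flag as the main obstacle.
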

    
    \begin{proof}
        
        Theorem~\ref{thm: eskin masur} was obtained in~\cite[Lemma 7.5]{EskinMasur} for the special measures $d\nu = \d_{a_t r_\th} \;d\th$ for any $t>0$, where $d\th$ is the normalized Lebesgue measure on $[0,2\pi)$. 
        The main part of the proof is the construction of the functions $f_{i,\a}$ in~\cite[p.464]{EskinMasur} (denoted $\a_i$ in \textit{loc. cit.}) using the notion of admissible complexes. The definition of these functions depends on a parameter $\d$ which we take to be $\a-1$ in our notation.
        Inspection of the (short) proof of~\cite[Lemma 7.5]{EskinMasur} shows that the only input used to establish the desired contraction property is the $(a,\a)$-linear expansion and the compactness of the support of the measure. The other parts of the argument are independent of the shape of the measure.
    \end{proof}
    
    \begin{remark}
    The function $f_{1,\a}$ in Theorem~\ref{thm: eskin masur} is given by a power of the reciprocal of the length of the shortest saddle connection.
    \end{remark}
    
    \begin{corollary}[Lemma 2.10,~\cite{Athreya2006}]
    \label{cor: teich f}
        Suppose $p_0$, $\nu$, $b$ and $\w$ are as in Theorem~\ref{thm: eskin masur}. Then, there exist constants $\e_i>0$, depending only on $a$ and $\w$, such that the following holds. Let $\e=(\e_i)\in \R^{p_0}$ and
        let $f_\e = \sum_i \e_i f_{i,\a}$. Then, for all $x_0\in\mc{H}$:
        \begin{equation*}
            \int f_\e(gx_0) \;d\nu(g) \leqslant 2a f_\e(x_0) + bp_0.
        \end{equation*}
    \end{corollary}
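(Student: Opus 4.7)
My plan is to derive the corollary by taking a weighted sum of the Eskin--Masur inequalities from Theorem~\ref{thm: eskin masur} and choosing the weights $\e_i$ as a fast-growing geometric sequence so that the ``lower-triangular'' error terms $\w \sum_{j>i} f_{j,\a}$ can be absorbed into the diagonal $a f_{i,\a}$ terms, at the cost of doubling the contraction constant from $a$ to $2a$.

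More precisely, I would multiply the inequality
\[
\int f_{i,\a}(gx_0)\,d\nu(g) \leqslant a f_{i,\a}(x_0) + b + \w \sum_{j>i} f_{j,\a}(x_0)
\]
by $\e_i$ and sum over $1\leqslant i\leqslant p_0$. Interchanging the order of summation in the double sum gives
\[
\int f_\e(gx_0)\,d\nu(g) \leqslant \sum_{j=1}^{p_0} f_{j,\a}(x_0)\Bigl(a\e_j + \w\sum_{i<j}\e_i\Bigr) + b\sum_{i=1}^{p_0}\e_i.
\]
It therefore suffices to choose positive weights satisfying the two conditions
\[
\w\sum_{i<j}\e_i \leqslant a\e_j \quad\text{for each }2\leqslant j\leqslant p_0, \qquad \sum_{i=1}^{p_0}\e_i \leqslant p_0.
\]

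For the first condition I would take $\e_j = \e_1(1+\w/a)^{j-1}$; a direct geometric-series computation gives $\w\sum_{i<j}\e_i = a\e_1((1+\w/a)^{j-1}-1)\leqslant a\e_j$. For the second, I would simply rescale, choosing
\[
\e_1 = \frac{p_0\,\w/a}{(1+\w/a)^{p_0}-1},
\]
so that $\sum_i \e_i = p_0$; this depends only on $a$, $\w$, and the fixed constant $p_0=p_0(\mc{H})$, as required. Combining the two estimates yields exactly $\int f_\e(gx_0)\,d\nu(g)\leqslant 2a f_\e(x_0)+bp_0$.

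There is essentially no obstacle here beyond bookkeeping: the argument is purely algebraic, and the only subtle point is to confirm that the constants $\e_i$ produced by this recipe indeed depend only on $a$ and $\w$ (they do, since $p_0$ is fixed by $\mc{H}$). The log-Lipschitz and properness properties of $f_\e$ are inherited termwise from the $f_{i,\a}$, so no further work is required for later applications.
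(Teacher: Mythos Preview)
Your proof is correct and follows essentially the same approach as the paper's: choose the weights $\e_i$ as a geometric sequence with ratio $1+\w/a$ so that the lower-triangular error terms $\w\sum_{i<j}\e_i$ are absorbed into $a\e_j$. Your additional rescaling step to force $\sum_i\e_i=p_0$ is a careful touch that the paper omits---indeed, without it the additive constant would be $b\sum_i\e_i$ rather than $bp_0$, so your version is slightly cleaner.
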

    
    \begin{proof}
        
        Let $\e_i = (1+ a/\w)^{i-1}$. Then, one verifies that $\sum_{j< i}\e_j \leqslant \e_i a/\w$ for each $i$. It follows that
        \begin{align*}
         \int f_\e(gx_0) \;d\nu(g) &\leqslant af_\e(x_0) + p_0 b + \w \sum_{i=1}^{p_0} \e_i 
         \sum_{j>i} f_{j,\a}(x_0) \\
         &= af_\e(x_0) + p_0 b + \w \sum_{i=1}^{p_0} f_{i,\a}(x_0) \sum_{j< i}\e_j
         \leqslant 2 af_\e(x_0) + p_0b.
        \end{align*}
        
    \end{proof}

    \subsection{Proof of Theorem~\ref{teich thm} and Corollary~\ref{cor: nue}}
    We wish to apply Theorem~\ref{thrm: Hdim and non-divergence}.
    Let $\Fcal=\set{h_i:i\in \L}$ be an IFS as in the statement of the theorem and let $\Kcal$ be its limit set.
    Denote by $s =\dim_H(\Kcal)$ and $\mu$ the normalized restriction of $H^s$ to $\Kcal$.
    Then, $\a_1(\mu) = s$ by Proposition~\ref{propn: properties of self similar measure}.
    
    Fix $k\in \N$ and $0<\d <1$.
    Let $\k = \d s/2$ and $p = (1+\d)/(1-\d)$.
    Suppose $\g$ is any number satisfying $\k - (1/p) \leqslant \g \leqslant \k$ and let $\t(\bx) = \rho(\bx,k)$.
    Define a Borel measure $\nu$ on $\mrm{SL}(2,\R)$ by
    \[ \int \vp \;d\nu = \int \vp(g_{\t(\bx)} u(\bx)) \t(\bx)^{-\g}\;d\mu(\bx),\]
    for every compactly supported continuous function $\vp$ on $\mrm{SL}(2,\R)$.

    Applying Proposition~\ref{propn: expansion in linear representations} with $G=\mrm{SL}(2,\R)$, $V_1=\R^2$, and $\l=\a_1(\mu)=s$ then shows that $\nu$ is $(a,\d s)$-linearly expanding on $\R^2$, with
    \[ a = C      \left(\int \t(\bx)^{p(\k-\g)}\;d\mu\right)^{1/p},\]
    for a constant $C\geqslant 1$ depending only on $\d$ and $\mu$.
    Hence, Theorem~\ref{thm: eskin masur} and Corollary~\ref{cor: teich f} apply and provide, for every $k\in \N$, constants $\e(k)=(\e_i) \in \R^{p_0}_{>0}$ and $\bar{b}$ (depending on $k$) and a function $f_{\e(k)}$ such that for every $\w\in \mc{H}$, 
    \begin{equation*}
            \int \t(\bx)^{-\g} f_{\e(k)}(g_{\t(\bx)} u(\bx)\w) \;d\mu(\bx) \leqslant 2a f_{\e(k)}(\w) + \bar{b} \leqslant 2C f_{\e(k)}(\w) \left(\int \t(\bx)\;d\mu\right)^{\k-\g} + \bar{b},
    \end{equation*}
   by Jensen's inequality. For $k\in\N$, define
   \begin{equation*}
       f_k := f_{\e(k)}, \qquad T = \frac{\bar{b}}{4C  \left(\int \t(\bx)\;d\mu\right)^{\k-\g}}.
   \end{equation*}
   
   Fix $k\in\N$ and suppose that $f_k(\w)>T$. Then, the above estimate becomes
    \begin{equation*}
            \int \t(\bx)^{-\g} f_k(g_{\t(\bx)} u(\bx)\w) \;d\mu(\bx)  \leqslant 2C f_k(\w) \left(\int \t(\bx)\;d\mu\right)^{\k-\g} + \bar{b}
            \leqslant 4C f_k(\w) \left(\int \t(\bx)\;d\mu\right)^{\k-\g}.
    \end{equation*}
    
    Let $c=4C$, $ \b = \k, \g_0 = \k-(1/p)$, and $\Fsc=\set{f_k:k\in\N}$.
    The above argument shows that $\mu$ satisfies the $(\Fcal,\Fsc,\b,\g_0)$-contraction hypothesis.
   In particular, this completes the verification of the hypotheses of Theorem~\ref{thrm: Hdim and non-divergence} in this setting and shows that the dimension of divergent on average directions belonging to $\Kcal$ is at most $\d s/2$. Since $\d\in (0,1)$ was arbitrary, Theorem~\ref{teich thm} follows.
   
   To prove Corollary~\ref{cor: nue}, we first observe that the restriction of the map $\arctan$ to (the compact set) $\Kcal$ is bi-Lipschitz onto its image.
   Moreover, we note that for $\th\in(-\pi/2,\pi/2)$, 
   \[r_\th = \prescript{t}{}{u}(-\tan\th) a_{\log \cos\th}u(\tan\th) ,\]
   where for $g\in\mrm{SL}(2,\R)$, $\prescript{t}{}{g}$ denotes its transpose.
   Since $a_t$ contracts $\prescript{t}{}{u}(-\tan\th)$ and commutes with $a_{\log \cos\th}$, it follows that the orbit $(a_t r_\th\w)_{t\geqslant0}$ diverges on average in $\mc{H}$ if and only if the orbit $(a_t u(\tan\th)\w)_{t\geqslant0}$ does. 
   Since bi-Lipschitz maps preserve Hausdorff dimension, the corollary follows from Theorem~\ref{teich thm}.

    
    \appendix
    \section{Frostman Exponents of Projections of Self-similar Measures}
\label{sec: dim theory}

    The goal of this section is to complete the proofs of Corollary~\ref{cantor prod cor} by computing $\a_1(\mu)$ in this case in Lemma~\ref{lem: alpha_1 for Cantor product}. We also provide a proof of Theorem~\ref{thm: alpha equal inf} relating $\a_1(\mu)$ to the Frostman exponents of projections of $\mu$ for planar homogeneous fractal measures, thus completing the proof of Corollary~\ref{homogeneous cor}.
    Finally, we show that the limit in the definition of the exponents $\a_\ell(\mu)$ exists for every $\ell$ (Proposition~\ref{prop: existence proj spec}) in full generality.

 \subsection*{Notation} If $(X,\mu)$ is a measure space and $f: X \r Y$ is a measurable map, we denote by $f \mu$ the push-forward measure.

 
 \subsection{Projections of products of Cantor sets}
 
 Consider the IFS $\Fcal$ on $\R^2$ given by maps of the form
 \begin{equation*}
     h_v(\bx) = \frac{\bx+v}{3}, \qquad v\in \L:=\set{0,2}^2.
 \end{equation*}
 The limit set $\Kcal$ of $\Fcal$ coincides with a product of $2$ copies of Cantor's middle thirds set.
  For convenience, let $\L^0$ denote the set consisting of a single point and, for $\w\in \L^0$, denote by $h_\w$ the identity mapping.
  For $\w\in \L^n$, $h_\w$ takes the form $\bx\mapsto 3^{-n}\cdot \bx + b_\w $, where $b_\w \in \Ccal\times \Ccal$ is a rational vector satisfying $3^n b_\w \in \Z^2$.
 
 The following lemma computes the value of $\a_1(\mu)$ in this case and completes the proof of Corollary~\ref{cantor prod cor}.
 We thank Pablo Shmerkin for providing its proof.
 \begin{lemma}\label{lem: alpha_1 for Cantor product}
 Suppose $\Kcal =\Ccal \times \Ccal$, where $\Ccal$ is Cantor's middle-thirds set and let $s=\dim_H(\Kcal)$. Let $\mu$ denote $H^s\vert_\Kcal$. Then, $\a_1(\mu) = \log 2/\log 3$.
 \end{lemma}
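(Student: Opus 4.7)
The plan is to exploit the product structure of $\Kcal = \Ccal\times \Ccal$ together with the Ahlfors regularity of the natural Cantor measure. By uniqueness of self-similar measures and Proposition~\ref{propn: properties of self similar measure}, the measure $\mu$ is proportional to the product $\mu_\Ccal\otimes \mu_\Ccal$, where $\mu_\Ccal$ is the normalized restriction of $H^{s_1}$ to $\Ccal$ and $s_1 = \log 2/\log 3 = \dim_H(\Ccal)$. Moreover, Proposition~\ref{propn: properties of self similar measure} applied to the one-dimensional Cantor IFS yields the Ahlfors regularity $\mu_\Ccal(B(x,r))\asymp r^{s_1}$ for all $x\in \Ccal$ and $r\in (0,1)$; a standard recentering argument extends the upper bound $\mu_\Ccal(B(x,r))\ll r^{s_1}$ to every $x\in \R$.

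For the upper bound $\a_1(\mu)\leqslant s_1$, I would test the definition on a single well-chosen line: take the vertical line $\Lcal = \set{0}\times \R$ through $0\in \Ccal$. Since $\Lcal^{(\e)} \supseteq (B(0,\e)\cap \Ccal)\times \Ccal$, Fubini together with the lower Ahlfors bound yields $\mu(\Lcal^{(\e)})\gg \e^{s_1}$, from which $\a_1(\mu)\leqslant s_1$ is immediate.

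For the matching lower bound $\a_1(\mu)\geqslant s_1$, I want to show that $\mu(\Lcal^{(\e)})\ll \e^{s_1}$ uniformly over every affine line $\Lcal\subset \R^2$. By swapping the coordinate axes if needed, I may assume that either $\Lcal$ is vertical or $\Lcal = \set{(x,y): y=mx+c}$ with $|m|\leqslant 1$. Vertical lines are disposed of immediately by Fubini and the (extended) upper Ahlfors regularity of $\mu_\Ccal$. For non-vertical $\Lcal$, I parametrize the neighborhood via the linear functional $P(x,y) = y-mx$: the condition $(x,y)\in \Lcal^{(\e)}$ becomes $|y - (mx+c)|<\e\sqrt{1+m^2}\leqslant \sqrt{2}\e$. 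Applying Fubini in the $y$-variable, each fiber is of the form $\mu_\Ccal\left(B(mx+c,\sqrt{2}\e)\right)$, which is bounded by $O(\e^{s_1})$ uniformly in $x$ and in $m$ by the upper Ahlfors regularity; integrating in $x$ against the probability measure $\mu_\Ccal/\mu_\Ccal(\Ccal)$ then gives the uniform bound $\mu(\Lcal^{(\e)})\ll \e^{s_1}$.

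I do not anticipate any serious obstacle; the proof is a direct application of the product structure, Fubini, and Ahlfors regularity of $\mu_\Ccal$. The only technical subtlety is ensuring uniformity of the implicit constant in the slope $m$ of the line, which is handled cleanly by the preliminary coordinate swap reducing to $|m|\leqslant 1$.
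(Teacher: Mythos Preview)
Your argument is correct, and it takes a genuinely different route from the paper's.

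The paper (following Shmerkin) argues combinatorially: it shows that for every line $\Lcal$, the number of level-$n$ squares $h_\w(I^2)$, $\w\in\L^n$, meeting $\Lcal$ is $O(2^n)$, and then uses that each such square has $\mu$-mass $4^{-n}$ to conclude $\mu(\Lcal^{(3^{-n})})\ll 2^{-n}$. The bulk of the work there is handling the ``exceptional'' lines of slope $\pm 1$ that pass through corners of the squares; for these a recursive count is needed to show one still gets $O(2^n)$.

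You instead exploit the product structure $\mu\propto\mu_\Ccal\otimes\mu_\Ccal$ directly: after reducing to slope $|m|\leqslant 1$ by the coordinate swap, Fubini in the $y$-variable turns $\mu(\Lcal^{(\e)})$ into $\int_\Ccal \mu_\Ccal\bigl(B(mx+c,\sqrt{2}\,\e)\bigr)\,d\mu_\Ccal(x)$, and the one-dimensional upper Ahlfors bound $\mu_\Ccal(B(\cdot,r))\ll r^{s_1}$ (extended to arbitrary centers by recentering into $\Ccal$) finishes the job. This is shorter, avoids the case analysis of exceptional lines, and generalizes verbatim to any product $\Ccal_1\times\Ccal_2$ of Ahlfors-regular subsets of $\R$, giving $\a_1=\min(\dim_H\Ccal_1,\dim_H\Ccal_2)$. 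The paper's counting argument, by contrast, does not rely on a product structure and is closer in spirit to what one would attempt for a non-product planar self-similar set.
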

 
 \begin{proof}
 That $\a_1(\mu) \leqslant  \log 2/\log 3$ follows from projecting $\mu$ onto the coordinate axes.
 For the reverse inequality, it suffices to show that for each $n$ and each affine line $\Lcal$
 \begin{equation}\label{eq: lines and cantor sets}
     \mu\left(\Lcal^{(3^{-n})}\right) \ll 2^{-n}, 
 \end{equation}
 where the implied constant is absolute.
 Let $I=[0,1]$.
 We claim that for every line $\Lcal$, the number of squares of the form $h_\w(I^2)$ with $\w\in \L^n$ which meet $\Lcal$ is $\ll 2^n$, with an absolute implied constant.
 Assuming the claim, fix a line $\Lcal$ and $n\in\N$, and let $\Lcal_1,\Lcal_2$ denote the $2$ lines parallel to $\Lcal$ and bounding the neighborhood $\Lcal^{(3^{-n})}$.
 Then, we note that $\Lcal^{(3^{-n})}\cap \Kcal$ is contained in the union of the squares $h_\w(I^2)$ which meet either of $\Lcal,\Lcal_1$, or $\Lcal_2$, for some $\w\in \L^n$.
 Moreover, $\mu$ assigns mass $4^{-n}$ to each such square. It follows that $\mu\left(\Lcal^{(3^{-n})}\right)\ll 3 \times 2^{-n}$, and hence~\eqref{eq: lines and cantor sets} follows.
 
 For a line $\Lcal$, denote by $b_\Lcal$ the $x$-coordinate of the intersection of $\Lcal$ with the $x$-axis.
 For $i=0,1$, denote by $\Lcal_{i}$ the line of slope $1$ such that $b_{\Lcal_i} = (-1)^i/3 $.
 Denote by $\Lcal'_i$ the line of slope $-1$ such that $b_{\Lcal'_i} = (-1)^i/3 +1$.
    We say a line $\Lcal$ is exceptional if 
    \begin{equation*}
        \Lcal \in \set{h_\w(\Lcal_i): \w\in \bigcup_{n\geqslant 0} \L^n, i=1,2  } \bigcup 
        \set{h_\w(\Lcal'_i): \w\in \bigcup_{n\geqslant 0} \L^n, i=1,2  }.
    \end{equation*}
    
    We note that each of the exceptional lines $\Lcal_i$ and $\Lcal'_i$ meet $3$ of the squares $h_v(I^2)$ at their corners. 
    Moreover, a line $\Lcal'$ meets three squares of the form $h_\w (h_v(I^2)) \subset h_\w(I^2)$, $\w\in \cup_n \L^n$ if and only if $\Lcal'$ is exceptional.

 Suppose $\Lcal$ is an affine line which is not expceptional.
 Then, we observe that $\Lcal$ meets at most $2$ squares of the form $h_v(I^2)$, where $v\in \L$.
 It follows by induction that $\Lcal$ meets at most $2^n$ squares of the form $h_\w(I^2)$, for $n\in\N$ and $\w\in\L^n$.
 Indeed, if $\w'\in \L^{n-1}$ is the prefix of $\w$, then $h_{\w'}^{-1}(\Lcal)$ is a non-exceptional affine lineand hence meets at most $2$ squares of the form $h_v(I^2)$, $v\in \L$.

    Finally, assume $\Lcal$ is an exceptional line of slope $=1$, the case of $-1$ slope being identical.
    Let $\w\in \L^k$ be such that $\Lcal=h_\w(\Lcal_i)$ for $i\in\set{0,1}$.
    Writing $b_\w = (b_{\w_1},b_{\w_2})$, the line $h_\w(\Lcal_i)$ has the form:
    \begin{equation}\label{eq: exceptional lines}
        h_\w(\Lcal_i) = \set{\bx=(x_1,x_2) \in\R^2: x_2 = x_1 -b_{\w_1}+b_{\w_2} + (-1)^i\cdot 3^{-(k+1)} }.
    \end{equation}
    Since $b_{\w_1},b_{\w_2}\in 3^{-k}\Z$, it follows that $b_{\Lcal} = a/3^{k+1}$, for $a\in\Z$ coprime to $3$.
    Moreover, suppose $\a\in \L^n$, for $n\neq k$. Since $h_\a(\Lcal_i)$ is parallel to $\Lcal$, $h_\a(\Lcal_i) = \Lcal$ if and only if $b_\Lcal = b_{h_\a(\Lcal_i)}$.
    A calculation similar to that yielding~\eqref{eq: exceptional lines} shows that $b_{h_\a(\Lcal_i)}= m\cdot 3^{-n-1}$, where $m\in \Z$ is coprime to $3$. 
    Thus, it follows that
    \begin{equation}\label{eq: L rigid}
        \Lcal \notin \set{h_\a(\Lcal_i): \a\in \bigcup_{n\geqslant 0, n\neq k} \L^n, i=1,2  }.
    \end{equation}
    
    Let $B_n$ denote the set of squares of the form $h_\a(I^2)$ which meet $\Lcal$ with $\a\in \L^n$ and denote by $q_n$ the cardinality of $B_n$.
    We show that~\eqref{eq: L rigid} implies that $q_n \leqslant (3/2)\cdot 2^n$, which will conclude the proof.
    Indeed, suppose $n\neq k$ and let $h_\a(I^2)\in B_n$. Then, since $h_\a^{-1}(\Lcal)\neq \Lcal_0,\Lcal_1$, it follows that $h_\a^{-1}(\Lcal)$ meets at most $2$ squares of the form $h_v(I^2)$.
    Thus, $q_{n+1} \leqslant 2 q_n$. 
    Alternatively, if $n=k$, then $h_\a^{-1}(\Lcal)$ meets at most $3$ such squares and, hence, $q_{k+1} \leqslant 3 q_k$.
    By induction, we conclude that $q_n \leqslant (3/2)\cdot 2^n$ as desired.

 \end{proof}


\subsection{Frostman exponents of projections and Theorem~\ref{thm: alpha equal inf}}

    Let the notation be as in the statement of the theorem.
    A key ingredient in the argument is a a generalization of the sub-additive ergodic theorem for uniquely ergodic systems due to Furman in~\cite{Furman-MultErgThm}. It gives information about the behavior of every orbit, as opposed to the almost everywhere statement of the subadditve ergodic theorem.
    Recall that if $(X,\mu,T)$ is a measure preserving system, a sequence of functions $(\phi_n)$ is a \textbf{sub-additive cocycle} over $T$ if for all $m,n$ and almost every $x\in X$:
    \begin{align*}
        \phi_{m+n}(x) \leqslant \phi_m(x) + \phi_n(T^mx).
    \end{align*}

    \begin{theorem}[Theorem 1,~\cite{Furman-MultErgThm}]
    \label{thm subadd UE}
    Suppose $(X,T,\nu)$ is a uniquely ergodic probability measure preserving system, where $X$ is a compact metric space and $T$ is continuous. 
    Let $\phi_n:X\r\R$ be a continuous sub-additive cocycle over $T$.
    Then, the following holds
    \begin{equation}
        \limsup_{n\r\infty} \frac{\phi_n(x)}{n} \leqslant \inf_{n\in\N} \left\{ \int\frac{\phi_n}{n} \;d\nu  \right\},
    \end{equation}
    uniformly over all $x\in X$.
    \end{theorem}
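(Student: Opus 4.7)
The plan is to deduce the uniform bound by combining iterated subadditivity of the cocycle with the classical fact that, in a uniquely ergodic system, Birkhoff averages of continuous functions converge uniformly in $x$ to the integral against the unique invariant measure. The non-obvious ingredient is a small averaging trick in the first step, which is needed because $T^N$ need not be uniquely ergodic even when $T$ is (consider a rotation by $1/N$).

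First, I would fix $N \in \N$, note that each $\phi_r$ for $0 \leqslant r < N$ is bounded on $X$ by continuity and compactness, and for each shift $l \in \set{0, \ldots, N-1}$ and any $n > l$ write $n - l = k_l N + r_l$ with $0 \leqslant r_l < N$. Iterating the subadditive inequality yields
\[
    \phi_n(x) \leqslant \phi_l(x) + \sum_{j=0}^{k_l - 1} \phi_N(T^{l+jN}x) + \phi_{r_l}(T^{l+k_l N}x).
\]
Now sum this inequality over $l = 0, 1, \ldots, N-1$. The crucial combinatorial observation is that the double sum $\sum_{l=0}^{N-1}\sum_{j=0}^{k_l-1} \phi_N(T^{l+jN}x)$ equals $\sum_{m \in S} \phi_N(T^m x)$ for an index set $S \subset \set{0, 1, \ldots, n-1}$ satisfying $\set{0,\ldots,n-2N} \subseteq S$, since every integer $m \geqslant 0$ has a unique expression $m = l + jN$ with $0 \leqslant l < N$, and is included precisely when $j < k_l$. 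Hence $S$ differs from $\set{0, \ldots, n-1}$ by at most $2N$ elements, and since $\phi_N$ is bounded this gives
\[
    N \phi_n(x) \leqslant \sum_{l=0}^{N-1}\phi_l(x) + \sum_{m=0}^{n-1} \phi_N(T^m x) + O_N(1),
\]
where the implied constant depends only on $N$ through $\sup_X |\phi_r|$ for $r \leqslant N$.

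Dividing by $nN$, the first and last terms on the right are $O_N(1/n)$ uniformly in $x$. By the uniform Birkhoff theorem (which holds in the uniquely ergodic setting for any continuous observable), the middle term converges uniformly in $x$ to $N^{-1}\int \phi_N \, d\nu$. Therefore
\[
    \limsup_{n \r \infty} \frac{\phi_n(x)}{n} \;\leqslant\; \frac{1}{N} \int \phi_N \, d\nu
\]
uniformly in $x \in X$. Taking the infimum over $N \in \N$ yields the stated inequality. The main conceptual obstacle is the avoidance of unique ergodicity of $T^N$; the averaging-over-$l$ device is exactly what converts $T^N$-orbit averages of $\phi_N$ into $T$-orbit averages, to which the standard uniform ergodic theorem can be applied. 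The rest is routine bookkeeping of boundary terms, all of which are uniformly $o(1)$ as $n \r \infty$.
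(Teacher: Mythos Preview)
The paper does not give its own proof of this statement; it is quoted as a black box from Furman's article. Your argument is correct and is essentially the standard proof: iterate subadditivity along blocks of length $N$ with all possible offsets $l\in\{0,\dots,N-1\}$, average over $l$ to turn the resulting $T^N$-orbit sums into a genuine $T$-orbit sum of the continuous function $\phi_N$ (up to a bounded boundary correction), and then invoke the uniform convergence of Birkhoff averages in a uniquely ergodic system. Your remark that the averaging over $l$ is what circumvents the possible failure of unique ergodicity of $T^N$ is exactly the point; the remaining bookkeeping (boundedness of $\phi_r$ for $r<N$ on the compact space, the $O_N(1)$ discrepancy between $S$ and $\{0,\dots,n-1\}$) is handled correctly.
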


   Given an affine line $\mc{L}$ and $\th\in [0,2\pi)$, we write $\mc{L}\perp \th$ whenever $\mc{L}$ is orthogonal to any line of slope $\tan \th$.
   Recall that $\mc{A}(d,\ell)$ denotes the collection of all affine subspaces of $\R^d$ of dimension $\ell$.
    For every $\th\in [0,2\pi)$ and $n\in \N$, $\e>0$, we define
    \begin{equation*}
        t(\th,\e) := \sup_{\mc{L} \in\mc{A}(2,1): \mc{L} \perp \th } \mu\left(\mc{L}^{(\e)} \right),\qquad  
        \t(\th,n) := t(\th,\rho^n).
    \end{equation*}

    The first step in applying Theorem~\ref{thm subadd UE} is the following continuity result.
    \begin{proposition}\label{prop: tau cont}
        For every $\e>0$, the function $\log t(\cdot,\e)$ is continuous on $[0,2\pi)$.
    \end{proposition}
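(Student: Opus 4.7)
The plan is to exploit the elementary Lipschitz estimate $|\bx\cdot(\cos\th,\sin\th)-\bx\cdot(\cos\th',\sin\th')|\leqslant K|\th-\th'|$ valid for $\bx\in B(0,K)$ with $K=\diam{\Kcal}$. Since $\mu$ is supported in $B(0,K)$, any line $\mc{L}$ perpendicular to $\th'$ satisfies $\mc{L}^{(\e)}\cap\mrm{supp}(\mu)\subseteq (\mc{L}')^{(\e+K|\th-\th'|)}$ for the parallel translate $\mc{L}'$ perpendicular to $\th$. Taking suprema and swapping the roles of $\th$ and $\th'$ gives the sandwich
\begin{equation*}
t(\th,\e-K|\th-\th'|)\;\leqslant\;t(\th',\e)\;\leqslant\;t(\th,\e+K|\th-\th'|).
\end{equation*}
Since $t(\th,\cdot)$ is monotone and left-continuous (using $\mc{L}^{(\e)}=\bigcup_{\e'<\e}\mc{L}^{(\e')}$), continuity of $t(\cdot,\e)$ at $\th$ reduces to right-continuity of $\e\mapsto t(\th,\e)$ at every $\e>0$. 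Positivity of $t(\th,\cdot)$ on $(0,\infty)$ is immediate from $\mu(\Kcal)=1$ and $\Kcal$ having positive diameter, so continuity of $t(\cdot,\e)$ yields the same for $\log t(\cdot,\e)$.

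The only real obstacle is right-continuity of $\e\mapsto t(\th,\e)$. I will reduce it to the following claim: $\mu(\mc{L})=0$ for every affine line $\mc{L}\subset\R^2$. Granting the claim, $\mu(\mc{L}^{(\e)})=\mu(\overline{\mc{L}^{(\e)}})$ for every line, whence a standard compactness-plus-extraction argument applies. For $\d_k\downarrow 0$ pick near-maximizers $\mc{L}_k\perp\th$ for $t(\th,\e+\d_k)$; by compactness of the parameter space of lines perpendicular to $\th$ that meet $B(0,K+1)$, a subsequence converges to some $\mc{L}_\infty\perp\th$, the strips $\mc{L}_k^{(\e+\d_k)}$ are eventually contained in $\mc{L}_\infty^{(\e+\d_k+\eta_k)}$ with $\eta_k\to 0$, and the claim forces $\mu(\mc{L}_\infty^{(\e+\d_k+\eta_k)})\to\mu(\mc{L}_\infty^{(\e)})\leqslant t(\th,\e)$, yielding the required right-continuity.

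It remains to verify the claim; this is where the genuine content lies. Proposition~\ref{propn: properties of self similar measure} gives $\mu(\{x\})\leqslant br^s$ for every $r>0$, so $\mu$ is non-atomic. The key consequence is that for any $\eta>0$ only finitely many directions $\vartheta\in[0,\pi)$ can admit a line of $\mu$-mass at least $\eta$: any $N$ such lines $\mc{L}_1,\ldots,\mc{L}_N$ have pairwise intersections that are single points (distinct directions) and hence $\mu$-null, so $\sum_{i=1}^{N}\mu(\mc{L}_i)=\mu(\bigcup_i\mc{L}_i)\leqslant 1$ forces $N\leqslant 1/\eta$. Suppose for contradiction that $\mu(\mc{L}_0)=c>0$. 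Iterated self-similarity $\mu=\sum_{\w\in\L^n}\l_\w(h_\w)_\ast\mu$ applied to the set $\mc{L}_0$, together with $\sum_\w\l_\w=1$ and averaging, yields for each $n$ some $\w_n\in\L^n$ with $\mu(h_{\w_n}^{-1}(\mc{L}_0))\geqslant c$. Because $\Fcal$ is homogeneous, $h_{\w_n}^{-1}(\mc{L}_0)$ is a line whose direction equals that of $\mc{L}_0$ rotated by $-n\a$; irrationality of $\a$ then produces infinitely many pairwise distinct such directions, each carrying a line of $\mu$-mass at least $c$, contradicting the finiteness above and completing the outline.
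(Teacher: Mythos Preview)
Your proof is correct and takes a genuinely different route from the paper.

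The paper proceeds by first invoking $\a_1(\mu)>0$ (which in turn relies on the submultiplicativity machinery of Lemma~\ref{lemma: tau submultiplicative} via Corollary~\ref{cor: alpha >0}) to obtain a quantitative bound $\mu(\Lcal^{(r)})\leqslant r^{\b}$ for small $r$. It then rotates a line $\Lcal_1\perp\th$ to a nearby line $\Lcal_2\perp\vp$ (Lemma~\ref{lem: projective continuity}) and controls the excess strip $\Lcal_2^{(\e+A\d)}\setminus \Lcal_2^{(\e)}$ directly by the polynomial decay, comparing this additive error to a lower bound $t(\vp,\e)\geqslant \e^s/C$ from Ahlfors regularity. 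In effect the paper proves a uniform modulus of continuity.

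Your argument is more elementary: you bypass the quantitative decay entirely, using instead the sandwich $t(\th,\e-K|\th-\th'|)\leqslant t(\th',\e)\leqslant t(\th,\e+K|\th-\th'|)$ to reduce everything to continuity of $\e\mapsto t(\th,\e)$, and then to the purely qualitative statement $\mu(\Lcal)=0$ for every line. Your proof of this last fact via the pigeonhole bound on directions carrying mass $\geqslant c$ combined with the irrational rotation is clean and self-contained. The upshot is that your route avoids any forward reference to the $\a_1(\mu)>0$ machinery, at the cost of not yielding any quantitative modulus of continuity (which the paper does not need here anyway).

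Two small cosmetic points: your constant $K$ should be $\sup_{\bx\in\Kcal}\norm{\bx}$ rather than $\diam{\Kcal}$ for the Lipschitz estimate to hold as stated, and the line $\Lcal'$ you construct is not literally a ``parallel translate'' of $\Lcal$ but rather the line with the same signed distance to the origin in the rotated direction. Neither affects the argument.
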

    
    The key ingredient is the compactness of the support of $\mu$, which is used in the proof of the following lemma.
    \begin{lemma}\label{lem: projective continuity}
        Suppose $\Lcal_1$ is an affine line and suppose $0\leq \d \leq \pi/2$ is given. Let $K=\diam{\Kcal}$. Then, there exists an affine line $\Lcal_2$ which meets $\Lcal_1$ at an angle $\d$ and such that for every $\e>0$,
        \begin{equation*}
            \Lcal_1^{(\e)} \cap \Kcal \subset \Lcal_2^{(\e+A\d)},
        \end{equation*}
        where $A = 4(K+\e)$.
    \end{lemma}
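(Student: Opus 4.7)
The plan is to construct $\Lcal_2$ by rotating $\Lcal_1$ by angle $\d$ around a pivot point $p \in \Lcal_1$ chosen close to $\Kcal$. The key geometric observation is that all of $\Lcal_1^{(\e)} \cap \Kcal$ is confined to a ball of radius $O(K+\e)$ about $p$, so rotating by a small angle only perturbs such points by a controlled amount.

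First I would dispose of the trivial case: if $\Lcal_1^{(\e)}\cap\Kcal=\emptyset$, any line $\Lcal_2$ meeting $\Lcal_1$ at angle $\d$ satisfies the conclusion vacuously. Otherwise, fix an arbitrary $q_0\in \Lcal_1^{(\e)}\cap\Kcal$, let $p$ be its orthogonal projection onto $\Lcal_1$ (so $\|q_0-p\|\leq\e$), and define $\Lcal_2$ as the image of $\Lcal_1$ under rotation by angle $\d$ about $p$. This $\Lcal_2$ passes through $p$ and meets $\Lcal_1$ at angle $\d$ by construction.

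Next I would bound the distance from an arbitrary $x\in\Lcal_1^{(\e)}\cap\Kcal$ to $\Lcal_2$. Let $x_0$ be the orthogonal projection of $x$ onto $\Lcal_1$, so $\|x-x_0\|\leq\e$. Writing $x_0=p+tv$ for $v$ a unit direction of $\Lcal_1$, a direct computation shows that the perpendicular distance from $x_0$ to $\Lcal_2$ equals $|t|\sin\d\leq\|x_0-p\|\,\d$, since $\Lcal_2$ is the line through $p$ in direction $R_\d v$. The triangle inequality applied through $q_0$ gives
\[
\|x_0-p\|\leq \|x_0-x\|+\|x-q_0\|+\|q_0-p\|\leq \e+K+\e = K+2\e \leq 4(K+\e)=A,
\]
where the bound $\|x-q_0\|\leq K$ uses that $x,q_0\in\Kcal$. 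Combining this with the triangle inequality $d(x,\Lcal_2)\leq \|x-x_0\|+d(x_0,\Lcal_2)$ yields $d(x,\Lcal_2)\leq \e+A\d$, which is the desired inclusion.

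This is entirely elementary Euclidean geometry; the only point requiring care is choosing the pivot $p$ inside $\Lcal_1$ \emph{near} $\Kcal$ rather than at an arbitrary distinguished point of $\Lcal_1$ (e.g.\ the foot of perpendicular from the origin), which would make the effective ``lever arm'' $\|x_0-p\|$ unbounded as $\Lcal_1$ drifts away from the origin. No serious obstacle arises.
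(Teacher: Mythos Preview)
Your argument is correct and follows essentially the same geometric idea as the paper: choose a pivot point on $\Lcal_1$ that lies within $O(K+\e)$ of every point of $\Lcal_1^{(\e)}\cap\Kcal$, and rotate $\Lcal_1$ about that pivot by angle $\d$. The paper picks the pivot as any point of $\Lcal_1$ inside a ball of radius $K+\e$ containing $\Kcal$ and computes $d(x,\Lcal_2)$ via normal vectors (bounding $\|n_2-n_1\|\leq 2\d$), whereas you pick the pivot as the foot of perpendicular from a point of $\Lcal_1^{(\e)}\cap\Kcal$ and compute via the direction vector (bounding $|t|\sin\d\leq |t|\d$); these are interchangeable variants of the same estimate, and your lever-arm bound $K+2\e$ is in fact slightly sharper than the paper's $2(K+\e)$.

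One remark: as written, both your construction and the paper's let $\Lcal_2$ depend on $\e$, which does not literally match the quantifier order in the lemma (``there exists $\Lcal_2$ \dots\ for every $\e$''). This is harmless for the application in Proposition~\ref{prop: tau cont}, where $\e$ is fixed before invoking the lemma; if you wanted the statement as literally phrased, you could simply choose $q_0$ to be a point of $\Kcal$ \emph{closest} to $\Lcal_1$, which makes $p$ independent of $\e$ while preserving $\|q_0-p\|\leq \e$ whenever $\Lcal_1^{(\e)}\cap\Kcal\neq\emptyset$.
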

    
    \begin{proof}
    If $\d=0$, we can take $\Lcal_2 = \Lcal_1$. Hence, we will assume $\d>0$.
    Let $B$ be a ball of radius $K+\e$ containing $\Kcal$. If $\Lcal_1\cap B = \emptyset$, then $\Lcal_1^{(\e)} \cap\Kcal=\emptyset$ and the statement follows trivially.
    Otherwise, let $x_0\in \Lcal_1\cap B$. Let $\Lcal_2$ be the line passing through $x_0$ at an angle $\d$ with $\Lcal_1$.
    
    For $i=1,2$, let $n_i$ be the unit normal vectors to $\Lcal_i$ so that the angle between $n_1$ and $n_2$ is $\d$. Then, for $x\in\R^2$, $d(x,\Lcal_i) =\left|\langle x-x_0,n_i\rangle\right| $. Let $x\in \Lcal_1^{(\e)} \cap \Kcal$. We then have
    \begin{equation*}
        d(x,\Lcal_2) = \left|\langle x-x_0,n_2\rangle\right|
        \leqslant \left|\langle x-x_0,n_1\rangle\right|+
        \left|\langle x-x_0,n_2-n_1\rangle\right|
        \leqslant \e + \norm{x-x_0}\norm{n_2-n_1}.
    \end{equation*}
    Since $x,x_0\in B$, we have $\norm{x-x_0} \leqslant 2(K+\e)$. Moreover, using the law of sines, one verifies that
    \begin{equation*}
        \norm{n_2-n_1} = \frac{\sin \d}{\sin (\frac{\pi-\d}{2})} \leqslant \frac{ \pi\d}{\pi-\d } \leqslant 2\d,
    \end{equation*}
    where we used the fact that $0<\d\leq \pi/2$.
    
    \end{proof}

    \begin{proof}[Proof of Proposition~\ref{prop: tau cont}]
    We may assume without loss of generality that the similarity dimension of $\Fcal$ is positive. Indeed, otherwise, $\Fcal$ consists of a single map, $\Kcal$ is a single point, and all the quantities in question are $0$.
    We then observe that the irrationality of the rotation angle $\a$ implies that $\Fcal$ is irreducible and in particular, that $\a_1(\mu) >0$ by Corollary~\ref{cor: alpha >0}.
    Fix some $0<\b<\a_1(\mu)$. It follows from the definition of $\a_1(\mu)$ that there exists $r_0>0$ so that for all $0<r<r_0$ and all affine lines $\Lcal$,
    \begin{equation} \label{eq: decay near lines}
        \mu \left( \Lcal^{(r)} \right) \leqslant r^\b.
    \end{equation}
    Moreover, by Proposition~\ref{propn: properties of self similar measure}, there exists a constant $C\geq 1$ such that
    \begin{equation} \label{decay near points}
        \frac{1}{C} r^s  \leqslant \mu(B(x,r)) \leqslant C r^s,
    \end{equation}
    for every $x\in \Kcal$ and every $r>0$, where $s=\dim_H(\Kcal)$.
    Fix $\e>0$ and assume $0<\eta<1$ is given.
    Denote by $K=\diam{\Kcal}$ and let $A = 4(K+\e)$.
    Let $0<\d\leq \pi/2$ be sufficiently small so that 
    \[\d^{\b/2} \leqslant  \min\left\{ \frac{\e^s}{2(A+1)^\b C},\eta\right\}  . \]
    Suppose $\vp, \th \in [0,2\pi)$ are two angles at distance at most $\d$ in $\mathbb{S}^1$.
    Let $\Lcal_1$ be a line satisfying $\Lcal_1 \perp \th$.
    By Lemma~\ref{lem: projective continuity}, there exists a line $\Lcal_2\perp \vp$ such that 
    \begin{equation*}
         \mc{L}_1^{(\e)} \cap \Kcal \subseteq 
        \mc{L}_2^{(\e+A\d)}.
    \end{equation*}
    Moreover, we can find lines $\Lcal_3,\Lcal_4$ parallel to $\Lcal_2$ and satisfying $\cup_{j=3,4} \Lcal_j\subset \mc{L}_2^{(\e+A\d)}\backslash \mc{L}_2^{(\e)}$ and $\mc{L}_2^{(\e+A\d)}\backslash \mc{L}_2^{(\e)}\subseteq \cup_{j=3,4} \Lcal_j^{((A+1)\d)} $.
    This, along with~\eqref{eq: decay near lines}, imply
    \begin{equation*}
        \mu\left(\mc{L}_1^{(\e)} \right)\leqslant \mu\left(\mc{L}_2^{(\e)} \right) + \mu\left(\mc{L}_3^{((A+1)\d)} \right) 
        + \mu\left(\mc{L}_4^{((A+1)\d)} \right)
        \leqslant t(\vp,\e) + 2 ((A+1)\d)^\b.
    \end{equation*}
    The next ingredient is to observe that $t(\vp,\e) \geqslant \e^s/C$. This follows by taking $\Lcal_0 \perp \vp$ to be a line passing through a point in $\Kcal$. Then, $\Lcal_0^{(\e)}$ contains a ball of radius $\e$ centered in $\Kcal$. The claim thus follows from the estimate in~\eqref{decay near points}.
    Our choice of $\d$ hence implies that
    \begin{equation*}
        \mu\left(\mc{L}_1^{(\e)} \right)\leqslant (1+\d^{\b/2})t(\vp,\e).
    \end{equation*}
    Since $\Lcal_1$ was arbitrary, we see that $\log t(\th,\e)\leqslant \eta+ \log t(\vp,\e)$.
    Since $\d$ is independent of $\th$ and $\vp$, one can run the above argument with the roles of $\vp$ and $\th$ reversed, to get the reverse inequality and conclude the proof.
    \end{proof}
    
    The next ingredient in the proof of Theorem~\ref{thm: alpha equal inf} is establishing the following cocycle property of the functions $\t$.
    \begin{proposition} \label{prop: tau submul cocycle}
    There exists a constant $D\geqslant 1$, such that for every $\th\in [0,2\pi)$ and $m,n\in\N$, 
    \begin{equation*}
        \t(\th,m+n) \leqslant D  \t(\th,m) \t(R^m_{-\a}( \th), n).
    \end{equation*}
    \end{proposition}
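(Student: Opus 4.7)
The plan is to exploit homogeneity of $\Fcal$ together with the self-similar decomposition of $\mu$ to decompose a tube $\Lcal^{(\rho^{m+n})}$ around a line $\Lcal \perp \th$ into rescaled copies, each of which is a tube around a line perpendicular to $R^m_{-\a}(\th)$ at scale $\rho^n$. The number of active pieces in the decomposition is controlled by $\t(\th,m)$, while the $\mu$-mass of each piece is controlled by $\t(R^m_{-\a}(\th),n)$. Multiplying gives the desired submultiplicative estimate.

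First, fix $\Lcal \perp \th$ and let $K=\diam{\Kcal}$. By homogeneity, each $h_\w$ with $\w\in\L^m$ has the form $\rho^m R_\a^m + b_\w$. By Proposition~\ref{prop: null overlap} and Lemma~\ref{lem: transformation of self-similar measures},
\[
\mu(\Lcal^{(\rho^{m+n})}) = \sum_{\w\in\L^m} \mu\!\left(\hK_\w\cap \Lcal^{(\rho^{m+n})}\right)
= \sum_{\w\in\L^m} \rho^{ms}\,\mu\!\left(\Kcal\cap h_\w^{-1}(\Lcal^{(\rho^{m+n})})\right).
\]
Because $h_\w^{-1}$ is an isometry rescaled by $\rho^{-m}$, its image sends $\Lcal^{(\rho^{m+n})}$ onto the $\rho^n$-neighborhood of the line $h_\w^{-1}(\Lcal)$, which is perpendicular to $R^m_{-\a}(\th)$. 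In particular,
\[
\mu\!\left(\Kcal\cap h_\w^{-1}(\Lcal^{(\rho^{m+n})})\right) \leq \t(R^m_{-\a}(\th),n).
\]

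Next, I will restrict the sum to the indices $\w\in \L^m$ for which $\hK_\w\cap \Lcal^{(\rho^{m+n})}\neq\emptyset$. For such $\w$, since $\diam{\hK_\w} = K\rho^m$ (Lemma~\ref{lem: diam K = K hat}), we have $\hK_\w\subset \Lcal^{(C\rho^m)}$ with $C:=K+1$. Using null overlaps, this gives
\[
\#\{\w\in\L^m : \hK_\w\cap \Lcal^{(\rho^{m+n})}\neq\emptyset\}\cdot \rho^{ms}
\leq \mu\!\left(\Lcal^{(C\rho^m)}\right).
\]
Covering the slab $\Lcal^{(C\rho^m)}$ by at most $\lceil C\rceil$ parallel slabs of half-width $\rho^m$ each perpendicular to $\th$ yields
\[
\mu\!\left(\Lcal^{(C\rho^m)}\right) \leq \lceil C\rceil\, t(\th,\rho^m) = \lceil C\rceil\,\t(\th,m).
\]
Setting $D:=\lceil C\rceil$ and combining these inequalities,
\[
\mu(\Lcal^{(\rho^{m+n})}) \leq D\,\t(\th,m)\,\t(R^m_{-\a}(\th),n).
\]
Since $\Lcal\perp\th$ was arbitrary, taking the supremum over such $\Lcal$ on the left gives the claim.

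The only slightly delicate point is ensuring the constant $D$ is absolute. This is guaranteed because $C = K + 1$ depends only on the diameter of $\Kcal$ (hence only on $\Fcal$), and the covering argument yields a constant independent of $\th$, $m$, and $n$.
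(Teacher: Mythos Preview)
Your proof is correct and follows essentially the same approach as the paper: decompose $\mu(\Lcal^{(\rho^{m+n})})$ over the level-$m$ cylinder sets, bound each term by $\tau(R^m_{-\alpha}(\th),n)$ using that $h_\w^{-1}$ scales by $\rho^{-m}$ and rotates by $R^m_{-\alpha}$, and then control the number of contributing cylinders by $\mu(\Lcal^{((K+1)\rho^m)}) \leq D\,\tau(\th,m)$. The only cosmetic differences are that the paper uses $\Kcal_\xi$ rather than $\hK_\w$ and invokes a separate doubling lemma (Lemma~\ref{lem: doubling of projection}) instead of your inline covering argument with $\lceil C\rceil$ parallel slabs.
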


    We will need the following doubling property of the functions $t(\cdot)$.
    \begin{lemma}\label{lem: doubling of projection}
    For every $A\geq 1$, there exists $D\geq 1$ such that for every $\th$ and $\e>0$, \[t(\th,A\e)\leqslant D t(\th,\e).\]
    \end{lemma}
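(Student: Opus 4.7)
The plan is to prove the doubling estimate by covering an $A\varepsilon$-neighborhood of a line by finitely many $\varepsilon$-neighborhoods of parallel lines, where the number of such neighborhoods depends only on $A$. Since parallel lines have the same perpendicular direction, each piece of the cover contributes at most $t(\theta,\varepsilon)$ to the measure.

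Concretely, fix $\theta \in [0,2\pi)$, $\varepsilon > 0$, and a line $\mathcal{L} \perp \theta$ realizing (or nearly realizing) the supremum defining $t(\theta, A\varepsilon)$. Let $n_\theta$ denote a unit vector in the direction $\theta$, which is a unit normal to $\mathcal{L}$. For each integer $k$ with $|k| \leq \lceil A \rceil$, let $\mathcal{L}_k$ be the affine line obtained from $\mathcal{L}$ by translating by $2k\varepsilon \cdot n_\theta$. Then each $\mathcal{L}_k$ is parallel to $\mathcal{L}$, hence also perpendicular to $\theta$, and an elementary calculation in the single normal coordinate shows
\[
    \mathcal{L}^{(A\varepsilon)} \subseteq \bigcup_{|k| \leq \lceil A \rceil} \mathcal{L}_k^{(\varepsilon)}.
\]
Indeed, any point in $\mathcal{L}^{(A\varepsilon)}$ has signed distance in $(-A\varepsilon, A\varepsilon)$ from $\mathcal{L}$ along $n_\theta$, and the strips $\mathcal{L}_k^{(\varepsilon)}$ cover $(-A\varepsilon, A\varepsilon)$ by construction.

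Applying $\mu$ and using the definition of $t(\theta,\varepsilon)$ to each parallel line then yields
\[
    \mu\bigl(\mathcal{L}^{(A\varepsilon)}\bigr) \leq \sum_{|k|\leq \lceil A\rceil} \mu\bigl(\mathcal{L}_k^{(\varepsilon)}\bigr) \leq (2\lceil A\rceil + 1)\, t(\theta, \varepsilon).
\]
Taking the supremum over $\mathcal{L} \perp \theta$ on the left-hand side gives $t(\theta, A\varepsilon) \leq D\, t(\theta,\varepsilon)$ with $D = 2\lceil A\rceil + 1$, which is independent of $\theta$ and $\varepsilon$.

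The argument is entirely geometric and requires no properties of $\mu$ beyond its being a (non-negative) Borel measure; in particular it does not use self-similarity or the hypotheses on $\mathcal{F}$. There is no real obstacle here since the covering is explicit; the only point to be careful about is that the covering strips are genuinely perpendicular to $\theta$ (so that each of them falls within the supremum defining $t(\theta,\varepsilon)$), which follows immediately from the fact that parallel translates of a line share its normal direction.
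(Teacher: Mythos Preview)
Your approach is essentially the same as the paper's: both use a covering argument, with the paper phrasing it via the pushforward $\pi_\theta\mu$ on $\R$ (covering a ball of radius $A\e$ by finitely many balls of radius $\e$) and you phrasing it directly in $\R^2$ with parallel strips.

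There is one small slip in your explicit cover. With step $2\e$ the open strips $\mathcal L_k^{(\e)}$ correspond in the normal coordinate to the open intervals $((2k-1)\e,(2k+1)\e)$, which are pairwise \emph{disjoint}; their union omits the lines at signed distances $(2k+1)\e$, so the inclusion $\mathcal L^{(A\e)}\subseteq\bigcup_k \mathcal L_k^{(\e)}$ fails whenever $A>1$. Since you want the bound for an arbitrary Borel measure, you cannot dismiss these missed lines as null. The fix is immediate: use step $\e$ instead, i.e.\ translate by $k\e\cdot n_\theta$ for $|k|\le\lceil A\rceil$. Then consecutive strips overlap and the union genuinely contains $\mathcal L^{(A\e)}$, with the same constant $D=2\lceil A\rceil+1$.
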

    \begin{proof}
    Let $D$ denote the cardinality of a finite cover of a ball of radius $1$ in $\R$ by balls of radius $1/A$.
        By applying scaling and translation, it follows that for every $\e >0$, any ball of radius $A\e$ in $\R$ can be covered by at most $D$ balls of radius $\e$.
        Hence, for every Borel measure $\nu$ on $\R$, one has
        \begin{equation*}
            \nu(B(x,A\e)) \leqslant D \sup_{y\in \R }
            \nu(B(y,\e)),
        \end{equation*}
        for all $x\in \R$ and all $\e>0$.
    \end{proof}
    
    \begin{proof}[Proof of Proposition~\ref{prop: tau submul cocycle}]
    
    Let $\Lcal \perp \th$ and $m,n\in\N$ be given and let $P = \L^m$.
    For every $\xi\in P$ and $\e>0$, observe that $h_\xi^{-1}(\Lcal^{(\e)}) = (h_\xi^{-1}(\Lcal))^{(\e/\rho^m)}$.
    Moreover, one has $h_\xi^{-1}(\Lcal)\perp R_{-\a}^m(\th)$
    Then, by equation~\eqref{eq: iterated P_l} and Lemma~\ref{lem: transformation of self-similar measures}, it follows that
    \begin{align*}
        \mu \left( \Lcal^{(\rho^{m+n})}  \right) &= \sum_{\xi\in P} \mu \left( \Lcal^{(\rho^{m+n})} \cap \Kcal_\xi \right)
        = \sum_{\xi\in P} \rho^{ms}
        \mu \left( h_\xi^{-1}\left(\Lcal^{(\rho^{m+n})}\right) \right)\\
        &= \sum_{\xi\in P} \rho^{ms}
        \mu \left( h_\xi^{-1}\left(\Lcal\right)^{(\rho^n)} \right)
        \leqslant \t(\th-m\a,n)
        \sum_{\substack{\xi\in P\\ \Kcal_\xi\cap \Lcal^{(\rho^{m+n})}\neq \emptyset}} \rho^{ms}.
    \end{align*}
    The other ingredient is the observation that for every $\xi\in P$ satisfying $\Kcal_\xi\cap \Lcal^{(\rho^{m+n})}\neq \emptyset$, we have that $\Kcal_\xi\subset \Lcal^{((1+K)\rho^{m})}$, where $K=\diam{\Kcal}$.
    Indeed, this follows from the fact that the diameter of $\Kcal_\xi$ is $K\rho^m$.
    Moreover, we have $\mu(\Kcal_\xi) = \rho^{ms}$ for every $\xi$.
    Hence, Proposition~\ref{prop: null overlap} on the null overlaps between the distinct $\Kcal_\xi$ implies that
    \begin{equation*}
        \sum_{\substack{\xi\in P\\ \Kcal_\xi\cap \Lcal^{(\rho^{m+n})}\neq \emptyset}} \rho^{ms}
        \leqslant \mu\left(\bigcup_{\substack{\xi\in P\\ \Kcal_\xi\cap \Lcal^{(\rho^{m+n})}}} \Kcal_\xi  \right)
        \leqslant \mu\left( \Lcal^{((1+K)\rho^{m})}\right)
        \leqslant t(\th,(1+K)\rho^{m}).
    \end{equation*}
    Finally, we apply the doubling property from Lemma~\ref{lem: doubling of projection} with $A=K+1$ to get that $t(\th,(1+K)\rho^m)\leqslant D \t(\th,m)$, for a constant $D\geqslant 1$ depending only on $K$.
    \end{proof}
    
    We are now ready for the proof of Theorem~\ref{thm: alpha equal inf}.
    \begin{proof}[Proof of Theorem~\ref{thm: alpha equal inf}]
    Let $\phi_n(\th):=\log\t(\th,n) + \log D$, where $D$ is the constant in the conclusion of Proposition~\ref{prop: tau submul cocycle}.
    One can then verify that for every $\th\in[0,2\pi)$:
    \begin{equation*}
        \dim_\infty(\pi_\th\mu) = \liminf_{n\r\infty} \frac{\log \t(\th,n)}{n\log \rho} = \liminf_{n\r\infty} \frac{\log\phi_n(\th)}{n\log\rho}.
    \end{equation*}
    Let $\nu$ be the Lebesgue probability measure on $\mathbb{S}^1$.
    Then, $\phi_n$ is a sub-additive cocycle over the transformation $T= R_{-\a}$ of $X= \mathbb{S}^1$.
    In particular, Kingman's sub-additive ergodic theorem implies that
    \begin{equation*}
        \dim_\infty(\pi_\th\mu)= \phi_\ast := \sup_{n\geqslant 1}  \int \frac{\phi_n}{n\log\rho} \;d\nu =\lim_{n\r\infty}
          \int \frac{\phi_n}{n\log\rho} \;d\nu, \qquad
          \text{for }\nu\text{-almost every } \th.
    \end{equation*}
    
    Moreover, the cocycle $(\phi_n)$ is continuous by Proposition~\ref{lem: projective continuity}.
    Hence, Theorem~\ref{thm subadd UE} implies that
    \begin{equation}\label{eq: consequence of furman}
        \dim_\infty(\pi_\th\mu) \geqslant \phi_\ast, \qquad 
        \text{for every }\th.
    \end{equation}
    It remains to show that $\a_1(\mu) = \phi_\ast$. That $\a_1(\mu)\leqslant \phi_\ast$ follows by definition.
    For the reverse inequality, we use the uniformity in covergence provided by Theorem~\ref{thm subadd UE}.
    Fix $0<\e<1$. For every $n\in \N$, let $\th_n$ be such that
    \begin{equation} \label{eq: choose th_n}
        \log \sup_\th \t(\th,n) =  \sup_{\th} \log\t(\th,n)\leqslant (1-\e)\log \t(\th_n,n).
    \end{equation}
    Here we used the fact that $\log\t(\cdot) \leqslant 0$ since $\mu$ is a probability measure.
    From uniform convergence of the $\liminf$ in $\th$ in~\eqref{eq: consequence of furman}, we can find $n_0\in\N$ so that for all $n\geq n_0$ and for all $\th$,
    \begin{equation*}
        \frac{\log\t(\th,n)}{n\log\rho} \geqslant \phi_\ast - \e.
    \end{equation*}
    Combining this with~\eqref{eq: choose th_n} and the fact that $\log \rho <0$, we get
    \begin{equation*}
        \frac{\log \sup_{\th\in\mathbb{S}^1} \t(\th,n) }{n\log\rho} \geqslant (1-\e)(\phi_\ast-\e),
    \end{equation*}
    for all $n\geqslant n_0$. Since $\e$ was arbitrary, it follows that $\a_1(\mu)\geq \phi_\ast$ as desired.
    \end{proof}

    
\subsection{Existence of the limit in Definition~\ref{eqn: proj spec}}    
    Throughout this section, we fix a finite set $\L$ and an IFS $\Fcal=\set{h_i:i\in \L}$ on $\R^d$ satisfying the open set condition. We denote by $\Kcal$ the limit set of $\Fcal$, $s$ the similarity dimension of $\Fcal$, and $\mu$ the restriction of $H^s$ to $\Kcal$.
    
    \begin{proposition} 
    \label{prop: existence proj spec}
    Suppose $\mu$ is as above. Then, for every $1\leq \ell \leq d$, the limit in the definition of $\a_\ell(\mu)$ exists.
    \end{proposition}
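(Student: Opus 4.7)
The plan is to show that the function $M_\ell(\e) := \sup_{\Lcal \in \mc{A}(d, d-\ell)} \mu(\Lcal^{(\e)})$ is sub-multiplicative up to a multiplicative constant, namely that there exists $C\geq 1$ such that for all $\e_1,\e_2\in (0,1)$,
\begin{equation}\label{eq: submul plan}
    M_\ell(\e_1\e_2) \leq C\cdot M_\ell(\e_1)\cdot M_\ell(\e_2).
\end{equation}
Once \eqref{eq: submul plan} is established, set $h(t) := -\log M_\ell(e^{-t})$. Then \eqref{eq: submul plan} reads $h(t_1+t_2) \geq h(t_1) + h(t_2) - \log C$, and the lower bound $M_\ell(\e) \geq a\e^s$ from Proposition~\ref{propn: properties of self similar measure} ensures that $h$ is finite and bounded on any compact interval. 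A standard Fekete-type argument then yields the existence of $\lim_{t\r\infty} h(t)/t$, which is exactly $\lim_{\e\r 0}\log M_\ell(\e)/\log\e$.

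Two ingredients feed into \eqref{eq: submul plan}. First, a doubling estimate: for every $A\geq 1$ there is a constant $C_A = O(A^\ell)$ with $M_\ell(A\e) \leq C_A M_\ell(\e)$. This follows by covering the ball of radius $A\e$ inside $\Lcal^\perp\cong\R^\ell$ by $O(A^\ell)$ balls of radius $\e$, which lifts to a cover of $\Lcal^{(A\e)}$ by $O(A^\ell)$ translates $(\Lcal+v_i)^{(\e)}$, each of $\mu$-measure at most $M_\ell(\e)$. Second, and more substantial, a stopping-time decomposition using self-similarity: let $\rho_{\min}=\min_i\rho_i$ and put
\[
    W(\e_1) := \set{\w \in \bigcup_n \L^n : \rho_\w < \e_1 \leq \rho_{\w^-}},
\]
where $\w^-$ denotes $\w$ with the last letter dropped, so that $\rho_\w\in[\rho_{\min}\e_1,\e_1]$ for every $\w\in W(\e_1)$. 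Iterating~\eqref{eqn: self similar measure} as in~\eqref{eq: iterated P_l}, one obtains $\mu = \sum_{\w\in W(\e_1)} \rho_\w^s (h_\w)_\ast \mu$, and since $h_\w^{-1}(\Lcal^{(\e)}) = (h_\w^{-1}\Lcal)^{(\e/\rho_\w)}$,
\[
    \mu(\Lcal^{(\e_1\e_2)}) = \sum_{\w\in W(\e_1)} \rho_\w^s\, \mu(\Lcal_\w^{(\e_1\e_2/\rho_\w)}), \qquad \Lcal_\w := h_\w^{-1}\Lcal \in \mc{A}(d,d-\ell).
\]

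Now bound each factor. Since $\e_1\e_2/\rho_\w \leq \e_2/\rho_{\min}$, the doubling estimate gives $\mu(\Lcal_\w^{(\e_1\e_2/\rho_\w)}) \leq C_{1/\rho_{\min}} M_\ell(\e_2)$. For the weights, restrict to those $\w$ with $\Kcal_\w\cap \Lcal^{(\e_1\e_2)}\neq\emptyset$ (others contribute $0$); since $\diam \Kcal_\w \leq K\rho_\w \leq K\e_1$ with $K=\diam \Kcal$, each such $\Kcal_\w$ lies inside $\Lcal^{((K+1)\e_1)}$. The OSC implies null overlap between distinct $\Kcal_\w$ with $\w\in W(\e_1)$ (by the same argument as Proposition~\ref{prop: null overlap}, since distinct elements of $W(\e_1)$ are not prefixes of one another), so $\sum_\w \rho_\w^s = \sum_\w \mu(\Kcal_\w) \leq \mu(\Lcal^{((K+1)\e_1)}) \leq C_{K+1} M_\ell(\e_1)$. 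Combining yields $\mu(\Lcal^{(\e_1\e_2)}) \leq C_{K+1} C_{1/\rho_{\min}} M_\ell(\e_1) M_\ell(\e_2)$, and taking the supremum over $\Lcal$ proves \eqref{eq: submul plan}.

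The main obstacle is the non-homogeneity of the contraction ratios, which prevents directly writing down a clean subadditive cocycle indexed by integers; the stopping-time section $W(\e_1)$ is what lets us trade different word lengths for comparable scales in a single self-similar identity, and the OSC-based null-overlap for stopping times is the only non-routine verification needed.
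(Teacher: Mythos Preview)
Your proof is correct and follows essentially the same route as the paper: a doubling lemma for $M_\ell$, a submultiplicativity estimate proved via a complete prefix set (your stopping-time section $W(\e_1)$ is the paper's $P(\e_1)$), and a Fekete argument. The only organizational difference is that the paper phrases submultiplicativity as a subadditive cocycle $\t_\ell(\w,n)=M_\ell(\rho(\w,n))$ over the shift, specializes to the constant sequence $\w=(i,i,\dots)$ to get a subadditive sequence in $n$, and then sandwiches general $\e$ between consecutive powers of $\rho_i$; your direct continuous-parameter formulation $M_\ell(\e_1\e_2)\leq C\,M_\ell(\e_1)M_\ell(\e_2)$ avoids that detour but is otherwise the same argument.
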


   For every $1\leq \ell\leq d$, $\w\in \L^\N$ and $n\in \N$, $\e>0$, we define
   \begin{equation*}
       t_\ell(\e) = \sup_{\mc{L}\in \mc{A}(d,\ell)} 
       \mu\left( \mc{L}^{(\e)} \right)
       , \qquad
       \t_\ell(\w,n)= t_\ell(\rho(\w,n)),
   \end{equation*}
    where $\rho(\w,n)$ is defined by~\eqref{defn: rho}.
    The functions $t_\ell$ satisfy the following doubling property.
    \begin{lemma}\label{lem: doubling t_ell}
    For every $A\geqslant 1$, there exists a constant $D \geqslant 1$, depending only on $A$ and $\ell$, such that for every $\e>0$,
    \[t_\ell(A\e) \leqslant D t_\ell(\e).\]
    \end{lemma}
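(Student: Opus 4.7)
The plan is to prove the doubling property by reducing a fatter tube to a bounded union of thinner parallel tubes. Given $\mc{L}\in\mc{A}(d,\ell)$, write $\mc{L}=v_0+W$ where $W$ is an $\ell$-dimensional linear subspace of $\R^d$, and let $W^\perp$ denote its $(d-\ell)$-dimensional orthogonal complement. Then $\mc{L}^{(A\e)}$ consists of points of the form $v_0+w+n$ with $w\in W$ and $n\in W^\perp$, $\|n\|<A\e$. The key observation is that the translated subspaces $\mc{L}+n=v_0+n+W$ also lie in $\mc{A}(d,\ell)$, so each tube $(\mc{L}+n)^{(\e)}$ has $\mu$-mass at most $t_\ell(\e)$.

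First, I would cover the Euclidean ball $B_{W^\perp}(0,A\e)\subset W^\perp$ by a family of balls $\{B_{W^\perp}(n_i,\e)\}_{i=1}^N$ of radius $\e$ centered at points $n_i\in W^\perp$. By standard volume-packing arguments in $\R^{d-\ell}$, one can arrange $N\leq N_0$ where $N_0=N_0(A,d-\ell)$ is independent of $\e$ and of the particular subspace $W^\perp$ (it depends only on the dimension and the ratio $A$). Next I would verify the inclusion
\[
\mc{L}^{(A\e)} \ \subseteq\ \bigcup_{i=1}^{N} (\mc{L}+n_i)^{(\e)},
\]
which is immediate: any $v_0+w+n$ with $\|n\|<A\e$ satisfies $\|n-n_i\|<\e$ for some $i$, and then $v_0+w+n \in (v_0+n_i+W)^{(\e)} = (\mc{L}+n_i)^{(\e)}$.

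Applying $\mu$ and using subadditivity, together with the defining bound $\mu((\mc{L}+n_i)^{(\e)})\leq t_\ell(\e)$ for each $i$, one obtains
\[
\mu\bigl(\mc{L}^{(A\e)}\bigr)\ \leq\ \sum_{i=1}^N \mu\bigl((\mc{L}+n_i)^{(\e)}\bigr)\ \leq\ N_0\, t_\ell(\e).
\]
Taking the supremum over $\mc{L}\in\mc{A}(d,\ell)$ on the left gives $t_\ell(A\e)\leq D\,t_\ell(\e)$ with $D=N_0$, depending only on $A$ and $\ell$ (and $d$).

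There is no real obstacle here; the only minor subtlety is ensuring that the covering constant $N_0$ is uniform in the choice of subspace $W$. This is automatic because all $(d-\ell)$-dimensional Euclidean subspaces are isometric to $\R^{d-\ell}$, so the covering number of $B_{W^\perp}(0,A\e)$ by radius-$\e$ balls depends only on $A$ and $d-\ell$ and not on $W$.
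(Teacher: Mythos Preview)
Your proof is correct and is essentially the same as the paper's: the paper refers back to Lemma~\ref{lem: doubling of projection}, whose argument is precisely to cover a ball of radius $A\e$ in the orthogonal complement by boundedly many balls of radius $\e$, which is exactly your tube-covering argument phrased via the projection onto $W^\perp$. The only cosmetic difference is that the paper works with the pushforward measure on $W^\perp$ while you work directly with the parallel tubes in $\R^d$; these are the same computation.
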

    
    \begin{proof}
        The proof is completely analogous to Lemma~\ref{lem: doubling of projection}.
        
    \end{proof}

    The key step in proving Proposition~\ref{prop: existence proj spec} is to show that $\t_\ell$ is submultiplicative. This was essentially shown in~\cite{KleinbockLindenstraussWeiss}. We include a proof for completeness.
    \begin{lemma} \label{lemma: tau submultiplicative}
    There exists a constant $C_\ell \geqslant 1$ so that 
    \[ \t_\ell( \w,m+n) \leqslant C_\ell \t_\ell(\w,m) \t_\ell(\s^m \w,n) \]
    for all $\w \in \L^\N$ and all $m,n\in \N$.
    \end{lemma}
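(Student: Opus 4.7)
The plan is to derive the submultiplicativity from a scale-free estimate of the form $t_\ell(ab) \leq C_\ell\, t_\ell(a)\, t_\ell(b)$ for all $a, b \in (0,1)$, with $C_\ell$ depending only on $\Fcal$ and $\ell$, and then specialize to $a = \rho(\w,m)$ and $b = \rho(\s^m\w, n)$. This follows the spirit of the proof of Proposition~\ref{prop: tau submul cocycle}, where the homogeneity of the IFS allowed a decomposition of $\mu$ along $\L^m$; in the present non-homogeneous setting I would replace this by a stopping-time decomposition so that the scale of the cylinders is uniformly comparable to $a$.

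Concretely, for $a \in (0,1)$ I would introduce the stopping-time antichain
\begin{equation*}
    \Xi_a = \set{\xi \in \bigcup_{k\geq 1} \L^k : \rho_\xi \leq a < \rho_{\xi|_{|\xi|-1}}}
\end{equation*}
(with the convention $\rho_\varnothing := 1$), so that every infinite word has a unique prefix in $\Xi_a$ and $\rho_{\min} a \leq \rho_\xi \leq a$ for each $\xi \in \Xi_a$, where $\rho_{\min} = \min_i \rho_i$. Iterating~\eqref{eq: iterated P_l} on $\L^M$ for $M \geq \max_{\xi \in \Xi_a} |\xi|$ and grouping by the unique prefix in $\Xi_a$ yields the decomposition $\mu = \sum_{\xi \in \Xi_a} \rho_\xi^s (h_\xi)_*\mu$, and applying Proposition~\ref{prop: null overlap} at the first coordinate where two distinct $\xi, \xi' \in \Xi_a$ differ shows that the images $\set{\Kcal_\xi : \xi \in \Xi_a}$ have pairwise $\mu$-null overlaps.

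With this in hand, for any affine subspace $\mc{L}$ of the appropriate dimension I would estimate
\begin{equation*}
    \mu(\mc{L}^{(ab)}) = \sum_{\xi \in \Xi_a} \rho_\xi^s\, \mu\bigl((h_\xi^{-1}(\mc{L}))^{(ab/\rho_\xi)}\bigr) \leq t_\ell(b/\rho_{\min}) \!\!\sum_{\substack{\xi \in \Xi_a \\ \Kcal_\xi \cap \mc{L}^{(ab)} \neq \emptyset}}\!\! \rho_\xi^s,
\end{equation*}
using that $ab/\rho_\xi \leq b/\rho_{\min}$ and that $h_\xi^{-1}(\mc{L})$ is again affine of the same dimension. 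For every $\xi$ contributing to the remaining sum, the inequalities $\diam{\Kcal_\xi} \leq K a$ and $b \leq 1$ together force $\Kcal_\xi \subseteq \mc{L}^{((1+K)a)}$ (with $K = \diam{\Kcal}$); combined with the null-overlap property, this bounds the sum by $\mu(\mc{L}^{((1+K)a)}) \leq t_\ell((1+K)a)$. Two applications of the doubling Lemma~\ref{lem: doubling t_ell} absorb the factors $\rho_{\min}^{-1}$ and $1+K$ into a single constant, and taking the supremum over $\mc{L}$ yields the required scale-free inequality.

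The only non-routine point is justifying the stopping-time decomposition of $\mu$ and the null-overlap statement on the non-uniform partition $\set{\Kcal_\xi : \xi \in \Xi_a}$; both follow from a short finite induction on $\max_{\xi \in \Xi_a} |\xi|$ that reduces to~\eqref{eq: iterated P_l} and Proposition~\ref{prop: null overlap} at uniform depth. Once this is set up, the remainder of the argument is a direct adaptation of the proof of Proposition~\ref{prop: tau submul cocycle}.
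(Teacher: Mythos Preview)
Your proposal is correct and follows essentially the same route as the paper's proof: the paper's complete prefix set $P(\rho(\w,m))$ (defined in~\eqref{eq: complete prefix example}) is exactly your stopping-time antichain $\Xi_a$ with $a=\rho(\w,m)$, and the decomposition and null-overlap you justify by induction are precisely the content of Lemma~\ref{lem: comp prefix} together with Proposition~\ref{prop: null overlap}. Your packaging as the scale-free inequality $t_\ell(ab)\leqslant C_\ell\,t_\ell(a)\,t_\ell(b)$ is a clean reformulation (and indeed equivalent, since $\t_\ell(\w,n)=t_\ell(\rho(\w,n))$ depends on $\w$ only through the scale), but the underlying argument --- pull back through $h_\xi$, bound each term by $t_\ell(b/\rho_{\min})$, bound the remaining sum by $t_\ell((1+K)a)$ via null overlaps, and finish with two applications of doubling --- is identical to the paper's.
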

    
    Before proving this lemma, we record the following useful corollary.
    \begin{corollary}[Lemma 8.2,~\cite{KleinbockLindenstraussWeiss}]
    \label{cor: alpha >0}
    Suppose $\Fcal$ is an irreducible IFS on $\R^d$ satisfying the open set condition. Let $\Kcal$ be its limit set and let $s=\dim_H(\Kcal)$. Then, for each $1\leq \ell \leq d$, $\a_\ell(\mu)>0$, where $\mu$ is the restriction of $H^s$ to $\Kcal$.
    \end{corollary}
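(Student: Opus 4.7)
The plan is to derive exponential decay of $t_\ell(\e)$ as $\e\to 0$ for every $0\leq \ell\leq d-1$, which directly yields $\a_{d-\ell}(\mu)>0$. The case $\ell=0$ (balls) is immediate from Proposition~\ref{propn: properties of self similar measure}, which gives $t_0(\e)\leq b\e^s$. So henceforth fix $1\leq\ell\leq d-1$. The key intermediate step will be to show
\[
    \beta_\ell \;:=\; \lim_{\e\to 0} t_\ell(\e) \;=\; 0.
\]
Once this is in hand, I choose $n_0\in\N$ with $t_\ell(\rho_{\max}^{n_0})<1/(2C_\ell)$, where $\rho_{\max}:=\max_i\rho_i$ and $C_\ell$ is as in Lemma~\ref{lemma: tau submultiplicative}. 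Since $t_\ell$ is monotone increasing and $\rho(\w,n_0)\leqslant \rho_{\max}^{n_0}$ for every $\w$, one has $\sup_\w\t_\ell(\w,n_0)<1/(2C_\ell)$. Iterating Lemma~\ref{lemma: tau submultiplicative} along the constant word corresponding to $\rho_{\max}$ yields $t_\ell(\rho_{\max}^{kn_0})\leqslant 2^{-k}/C_\ell$ for every $k$, and interpolation via the doubling Lemma~\ref{lem: doubling t_ell} extends this to $t_\ell(\e)\leqslant \e^c$ for some $c>0$ and all small $\e$, which is exactly $\a_{d-\ell}(\mu)\geqslant c$.

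To prove $\beta_\ell=0$, I will first show, using irreducibility, that $\mu(\mc{L})=0$ for every proper affine subspace $\mc{L}\subset\R^d$. Suppose otherwise, and let $\ell^\ast$ be the smallest dimension for which $p:=\sup\{\mu(\mc{L}):\mc{L}\in\mc{A}(d,\ell^\ast)\}>0$. By minimality of $\ell^\ast$, any two distinct elements of $\mc{A}(d,\ell^\ast)$ intersect in an affine subspace of dimension $<\ell^\ast$, hence of $\mu$-measure zero. A standard compactness argument (restrict to affine $\ell^\ast$-planes meeting a fixed bounded neighborhood of $\Kcal$; extract a subsequential limit $\mc{L}^\ast$ of a maximizing sequence; and use $\mc{L}_n\subseteq (\mc{L}^\ast)^{(\e)}$ for $n$ large together with outer regularity) shows that $p$ is attained. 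Hence $\mc{V}_0:=\{\mc{L}\in\mc{A}(d,\ell^\ast):\mu(\mc{L})=p\}$ is non-empty, and pairwise $\mu$-disjointness bounds $|\mc{V}_0|\leqslant \mu(\Kcal)/p<\infty$. Finally, the self-similarity identity
\[
    \mu(\mc{L})=\sum_{i\in\L}\rho_i^s\,\mu(h_i^{-1}\mc{L}),
\]
combined with maximality of $p$, forces $\mu(h_i^{-1}\mc{L})=p$ for every $\mc{L}\in\mc{V}_0$ and every $i\in\L$, so $h_i^{-1}\mc{L}\in\mc{V}_0$. Thus $\mc{V}_0$ is a finite collection of proper affine subspaces invariant under the family $\{h_i^{-1}\}_{i\in\L}$, contradicting irreducibility of $\Fcal$.

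To upgrade $\mu(\mc{L})=0$ for all proper $\mc{L}$ to $\beta_\ell=0$, assume for contradiction $\beta_\ell>0$ and choose $\mc{L}_n\in\mc{A}(d,\ell)$ with $\mu(\mc{L}_n^{(1/n)})\geqslant \beta_\ell/2$. Since each $\mc{L}_n$ must meet a bounded neighborhood of $\Kcal$, the same compactness scheme produces a subsequential limit $\mc{L}^\ast$; the inclusion $\mc{L}_n^{(1/n)}\subseteq (\mc{L}^\ast)^{(\e)}$ for $n$ sufficiently large gives $\mu((\mc{L}^\ast)^{(\e)})\geqslant \beta_\ell/2$ for every fixed $\e>0$, and letting $\e\to 0$ yields $\mu(\mc{L}^\ast)\geqslant \beta_\ell/2>0$, a contradiction. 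The main obstacle in this proof is the finiteness and attainment claims embedded in the irreducibility argument: one must carefully verify the compactness of the relevant subfamily of affine subspaces and exploit the minimality of $\ell^\ast$ to kill the pairwise intersections. The remaining steps amount to routine Fekete-style submultiplicative iteration.
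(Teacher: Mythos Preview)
Your proposal is correct and follows essentially the same route as the paper: show $\mu(\mathcal{L})=0$ for all proper affine subspaces, upgrade this via compactness to $t_\ell(\e)\to 0$, then iterate the submultiplicativity of Lemma~\ref{lemma: tau submultiplicative} to obtain exponential decay. The only differences are cosmetic: the paper outsources the $\mu(\mathcal{L})=0$ step to~\cite{KleinbockLindenstraussWeiss} while you supply a self-contained maximality/finiteness argument, and the paper iterates along an arbitrary word bounding each factor by $t_\ell(\rho_{\max}^k)$ whereas you iterate along the constant word with ratio $\rho_{\max}$; both lead to the same conclusion.
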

    
    \begin{proof}
     Since Lemma 8.2 of~\cite{KleinbockLindenstraussWeiss} is stated in a different form, we provide here a proof for completeness.
     It is shown over the course of the proof of~\cite[Theorem 2.3]{KleinbockLindenstraussWeiss} that the irreducibility of $\Fcal$ implies that $\mu(\Lcal)=0$ for all proper affine subspaces $\Lcal\subset \R^d$.
     We claim that this implies that
     \begin{equation*}
         \lim_{\e\r 0} t_\ell(\e) =0.
     \end{equation*}
     Indeed, suppose not. Then, there exist $\d>0$ and  sequences $\e_i \r 0$ and $\Lcal_i\in\mc{A}(d,\ell)$ such that $\Lcal_i\cap \Kcal \neq \emptyset$ and
     \begin{equation*}
         \mu\left(\Lcal^{(\e_i)}\right) >\d.
     \end{equation*}
     Since $\Kcal$ is compact, the subset of $\mc{A}(d,\ell)$ consisting of affine subspaces that meet $\Kcal$ is compact. In particular, by passing to a subsequence if necessary, we may assume that $\Lcal_i \r \Lcal$ for some $\Lcal\in \mc{A}(d,\ell)$ such that $\Lcal\cap\Kcal\neq \emptyset$. Hence, it follows by the dominated convergence theorem that $\mu(\Lcal) \geqslant \d >0$, which is a contradiction.
     
     Let $C_\ell \geqslant 1$ be the constant in Lemma~\ref{lemma: tau submultiplicative}. Let $\e_0>0$ be sufficiently small so that
     \begin{equation*}
         t_\ell(\e) < 1/2C_\ell,
     \end{equation*}
      for all $0<\e\leq\e_0$. Let $\rho_{\max} $ and $\rho_{\min}$ denote the largest and smallest contraction ratios of the maps in $\Fcal$ respectively. Then, $0<\rho_{\min}\leq \rho_{\max} <1$. 
     In particular, we can find $k\in \N$ sufficiently large so that $\rho_{\max}^k < \e_0$. Fix one such $k$.
     Then, for all $\a\in \L^\N$, we have
     \begin{equation}\label{eq: radius below cutoff}
         \rho(\a,k) \leqslant \rho_{\max}^k < \e_0.
     \end{equation}
     
     Fix some $\w\in \L^\N$. It follows from Lemma~\ref{lemma: tau submultiplicative} and~\eqref{eq: radius below cutoff} that for all $n\in\N$,
     \begin{equation*}
         \t_\ell(\w, nk) \leqslant C_\ell^n \prod_{i=0}^{n-1} \t_\ell(\s^i\w,k) \leqslant C_\ell^n t_\ell(\e_0)^n < 2^{-n}.
     \end{equation*}
     Moreover, note that $t_\ell(\e) \leq 1$ for all $\e>0$ since $\mu$ is a probability measure. 
     In particular, $\log t_\ell(\e)/\log \e \geqslant 0$ for all $\e>0$.
     Combined with that fact that $\rho(\w,nk) \geqslant \rho_{\min}^{nk}$, this implies that
     \begin{align*}
         \frac{\log \t_\ell(\w,nk)}{\log \rho(\w,nk)} \geqslant \frac{\log \t_\ell(\w,nk)}{nk\log \rho_{\min}} \geqslant 
         \frac{-\log 2}{k \log \rho_{\min}} >0,
     \end{align*}
     for all $n\in\N$. This proves that $\liminf_{n\r\infty}  \frac{\log \t_\ell(\w,nk)}{\log \rho(\w,nk)} >0$. 
     To conclude the proof, suppose that $0<\e <\rho(\w,k)$ is given and let $n(\e)\in \N$ be such that
     \begin{equation*}
         \rho(\w,(n(\e)+1)k) \leqslant \e < \rho(\w,n(\e)k).
     \end{equation*}
     In particular, we have
     $
         t_\ell(\e) \leqslant \t_\ell(\w,(n(\e))k)
     $.
    Moreover, the choice of $n(\e)$ implies
     \begin{equation*}
         \e \geqslant \rho(\w,n(\e)k) \rho_{\min}^{k}.
     \end{equation*}
     Thus, we obtain the following estimate:
     \begin{equation*}
         \frac{\log t_\ell(\e)}{\log\e} \geqslant \frac{\log \t_\ell(\w,n(\e)k)}{\log \e} \geqslant 
         \frac{\log \t_\ell(\w,n(\w)k)}{\log \rho(\w,n(\e)k) + k\log \rho_{\min} }.
     \end{equation*}
     Note that $n(\e)\r\infty $ as $\e\r 0$. In particular, $\log \rho(\w,n(\e)k)\r - \infty$. This shows that
     \begin{equation*}
         \a_\ell(\mu) = \liminf_{\e\r 0} \frac{\log t_\ell(\e)}{\log \e } \geqslant \liminf_{n\r\infty} \frac{\log \t_\ell(\w,nk)}{\log \rho(\w,nk)} >0,
     \end{equation*}
     as desired.
    \end{proof}
    
    Following~\cite{KleinbockLindenstraussWeiss}, we say a finite set $P\subset \cup_{k\geq 1} \L^k$ is a \textbf{complete prefix set} if every $\w\in \L^\N$, there is a unique word $\a\in P$ which occurs as a prefix for $\w$. It is easy to check that the set
    \begin{equation}\label{eq: complete prefix example}
        P(\e) :=\set{\a \in \cup_{k\geq 1} \L^k: \e \rho_{\min}  \leq \rho_\a \leq \e}
    \end{equation}
    forms a complete prefix set, where $\rho_{\min} = \min\set{\rho_i:i\in \L}$, and $\rho_\a$ is as in~\eqref{eq: composition parameters}. The following lemma allows us to handle the case where the contraction ratios are not all the same.
    
    \begin{lemma}\label{lem: comp prefix}
    Suppose $P$ is a complete prefix set. Then, for every continuous function $h$ on $\R^d$,
    \begin{equation*}
        \int h d\mu = \sum_{\a\in P} \int_{\Kcal_\a} h d\mu.
    \end{equation*}
    \end{lemma}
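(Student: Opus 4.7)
The plan is to reduce the statement to the already established null-overlap and self-similarity results by passing to a common word length.

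Set $N = \max_{\alpha\in P}|\alpha|$. The defining property of a complete prefix set is that each infinite word $\w\in \L^{\N}$ has a unique prefix in $P$; in particular, each finite word $\tau\in \L^N$ has a unique prefix $\alpha\in P$, and can then be written uniquely as $\alpha\sigma$ with $\sigma\in \L^{N-|\alpha|}$. Equivalently, $\L^N$ decomposes as the disjoint union $\L^N = \bigsqcup_{\alpha\in P}\alpha\cdot\L^{N-|\alpha|}$.

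First I would use this decomposition, together with the self-similarity identity $\Kcal=\bigcup_{\tau\in\L^N}\Kcal_\tau$ from~\eqref{eq: K self-similar}, to write
\[
\Kcal \;=\; \bigcup_{\alpha\in P}\bigcup_{\sigma\in\L^{N-|\alpha|}}\Kcal_{\alpha\sigma} \;=\; \bigcup_{\alpha\in P}\Kcal_\alpha,
\]
where in the last step I invoke $\Kcal_\alpha = h_\alpha(\Kcal)=h_\alpha\bigl(\bigcup_{\sigma\in\L^{N-|\alpha|}}\Kcal_\sigma\bigr)$. Next I would verify that the overlaps $\Kcal_\alpha\cap\Kcal_\beta$, for distinct $\alpha,\beta\in P$, are $\mu$-null: by writing $\Kcal_\alpha=\bigcup_{\sigma}\Kcal_{\alpha\sigma}$ and $\Kcal_\beta=\bigcup_{\sigma'}\Kcal_{\beta\sigma'}$ over the appropriate $\sigma,\sigma'$, every pair $\alpha\sigma,\beta\sigma'$ gives two distinct words of $\L^N$ (distinctness follows from the uniqueness-of-prefix property of $P$), and Proposition~\ref{prop: null overlap} yields $\mu(\Kcal_{\alpha\sigma}\cap\Kcal_{\beta\sigma'})=0$; a finite union then gives $\mu(\Kcal_\alpha\cap\Kcal_\beta)=0$.

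Finally, these two facts combine to yield the desired identity: for any continuous (hence $\mu$-integrable, since $\mu$ is compactly supported) function $h$,
\[
\int h\,d\mu \;=\; \int_{\bigcup_{\alpha\in P}\Kcal_\alpha} h\,d\mu \;=\; \sum_{\alpha\in P}\int_{\Kcal_\alpha} h\,d\mu,
\]
by finite additivity of $\mu$ modulo the null overlaps. The only step that requires any care is the null-overlap extension from words of equal length to words of possibly unequal length inside $P$; this is handled cleanly by the common refinement at level $N$ described above. No new tools beyond Proposition~\ref{prop: null overlap} and the self-similar decomposition~\eqref{eq: K self-similar} are needed.
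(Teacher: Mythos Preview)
Your proof is correct and follows the same approach as the paper's (one-line) argument: show that $\{\Kcal_\alpha:\alpha\in P\}$ covers $\Kcal$ with $\mu$-null pairwise overlaps, then invoke finite additivity. In fact you supply a detail the paper leaves implicit---namely, the reduction from words of possibly unequal length in $P$ to words of common length $N$ so that Proposition~\ref{prop: null overlap} (stated only for words of equal length) applies---so your version is, if anything, more complete.
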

    \begin{proof}
    Proposition~\ref{prop: null overlap} implies that $\set{\Kcal_\a: \a\in P}$ forms a measurable partition of the support of $\mu$ with null overlaps.
    \end{proof}
    
    \begin{proof}[Proof of Lemma~\ref{lemma: tau submultiplicative}]
    Suppose $\w\in\L^\N$ and $m,n\in\N$ are given. Let $P=P(\rho(\w,m))$ be the set defined in~\eqref{eq: complete prefix example} with $\e=\rho(\w,m)$.
    Let $\Lcal$ be an affine subspace of dimension $d-\ell$.
    For simplicity, we write $\Lcal^{(\w,k)}:= \Lcal^{(\rho(\w,k))}$ for every $k\in\N$.
    Note that for each $\xi\in P$, $\rho(\w,m+n)/\rho_\xi \leqslant \rho_{\min}^{-1}\rho(\s^m(\w),n)$.
    By Lemma~\ref{lem: comp prefix} and a similar argument to the proof of Proposition~\ref{prop: tau submul cocycle}, one obtains
    \begin{align}\label{eq: remove first factor}
        \mu \left( \Lcal^{(\w,m+n))}  \right) &= \sum_{\xi\in P} \mu \left( \Lcal^{(\w,m+n)} \cap \Kcal_\xi \right)
        = \sum_{\xi\in P} \rho_\xi^{s}
        \mu \left( h_\xi^{-1}\left(\Lcal^{(\w,m+n)}\right) \right) \nonumber
        \\
        &= \sum_{\xi\in P} \rho_\xi^{s}
        \mu \left( h_\xi^{-1}\left(\Lcal\right)^{(\rho(\w,m+n)/\rho_\xi)} \right)
        \leqslant   t_\ell(\rho_{\min}^{-1} \rho(\s^m(\w),n)  )      
        \sum_{\substack{\xi\in P\\ \Kcal_\xi\cap \Lcal^{(\w,m+n)}\neq \emptyset}}  \rho_\xi^{s}.
    \end{align}
    Proposition~\ref{prop: null overlap} implies that $\mu(\Kcal_{\xi_1}\cap\Kcal_{\xi_2})=0$ for every $\xi_1,\xi_2\in P$ with $\xi_1\neq \xi_2$.
    Moreover, by definition of $P$, for every $\xi\in P$ satisfying $\Kcal_\xi\cap \Lcal^{(\w,m+n)}\neq \emptyset$, we have that $\Kcal_\xi\subset \Lcal^{((1+K)\rho(\w,m))}$, where $K=\diam{\Kcal}$.
    Hence, arguing as in the proof of Proposition~\ref{prop: tau submul cocycle}, we obtain
    \begin{equation}\label{remove second factor}
        \sum_{\substack{\xi\in P\\ \Kcal_\xi\cap \Lcal^{(\w,m+n)}\neq \emptyset}}  \rho_\xi^{s}
        \leqslant  t_\ell((1+K)\rho(\w,m)).
    \end{equation}
    In view of the doubling property of $t_\ell$ provided by Lemma~\ref{lem: doubling t_ell}, the conclusion follows by combining~\eqref{eq: remove first factor} and~\eqref{remove second factor}.
    \end{proof}
    
    We now deduce Proposition~\ref{prop: existence proj spec} from Lemma~\ref{lemma: tau submultiplicative}.
    \begin{proof}[Proof of Proposition~\ref{prop: existence proj spec}]
        Let $C_\ell$ be the constant given by Lemma~\ref{lemma: tau submultiplicative}.
        Consider the function $\vp_\ell: \L^\N \times \N \r \R_+ $ defined by
        $ \vp_\ell = \log C_\ell + \log \t_\ell$.
        Lemma~\ref{lemma: tau submultiplicative} implies that $\vp_\ell$ is a subadditive cocycle over $\s:\L^\N \r \L^\N$.
        
        Consider the constant sequence $\w = (i)_k \in \L^\N$, for some for some fixed $i\in\L$. Then, the sequence $a_n = \vp_\ell(\w,n)$ is a subadditive sequence. This follows from the fact that $\s \w = \w$.
        Thus, by Fekete's lemma, we obtain the following equalities
        $\l_\ell = \lim_{n\r\infty} \frac{a_n}{n} = \inf_{n\geqslant 1} \frac{a_n}{n}$.
        
        We claim that $\a_\ell(\mu) = \l_\ell/\log \rho_i$ and in particular that the limit defining $\a_\ell(\mu)$ exists.
        To see this, suppose $\e_k$ is a sequence tending to $0$. For each $k$, let $n_k \in \N$ be such that
        $\rho_i^{n_k +1} < \e_k \leqslant \rho_i^{n_k}$.
        Since $0<\rho_i <1$, it follows that
        \[ \frac{\log \sup_{\mc{L}:\dim \mc{L}=d-q} \mu \left(\mc{L}^{(\rho_i^{n_k})} \right) }{ n_k \log \rho_i } \leqslant
        \frac{\log \sup_{\mc{L}:\dim \mc{L}=d-\ell} \mu \left(\mc{L}^{(\e_k)} \right) }{ \log \e_k } \frac{n_k + 1}{n_k}. \]
        In particular, we see that
        \[ \liminf_{\e\r 0} \frac{\log \sup_{\mc{L}:\dim \mc{L}=d-\ell} \mu \left(\mc{L}^{(\e)} \right) }{ \log \e } \geqslant \l_\ell/\log \rho_i. \]
        The opposite inequality involving the $\limsup$ follows analogously.

    \end{proof}

}   

\bibliography{bibliography}{}
\bibliographystyle{amsalpha}

\end{document}